\documentclass[11pt,reqno]{amsart}

\RequirePackage{amsthm,amsmath,amsfonts}

\usepackage{color}
\usepackage{amsmath}
\usepackage{amssymb}
\usepackage{amsthm}
\usepackage{amsfonts}
\usepackage{bbm}
\usepackage{cancel}
\usepackage{dsfont}
\usepackage{verbatim}
\usepackage{mathtools}
\usepackage{enumerate}
\usepackage{enumitem}
\usepackage{mathrsfs}
\usepackage{stmaryrd}
\usepackage{graphicx}
\usepackage[left=2.6cm, right=2.6cm, top=2.6cm, bottom= 2.6cm]{geometry}

\newtheorem{assumption}{Assumption}

\definecolor{darkgreen}{rgb}{0,0.5,0}

\usepackage{setspace}
\numberwithin{equation}{section}

\usepackage[colorlinks=true, pdfstartview=FitV, linkcolor=blue, citecolor=blue, urlcolor=blue,pagebackref=false]{hyperref}

\newcommand{\bP}{\mathbb{P}}

\newcommand{\Q}{\mathbb{Q}}

\newcommand{\E}{\mathbb{E}}
\newcommand{\N}{\mathbb{N}}

\newcommand{\Z}{\mathbb{Z}}

\newcommand{\R}{\mathbb{R}}

\newcommand{\eps}{\epsilon}

\newcommand{\cA}{\mathcal{A}}

\newcommand{\cL}{\mathcal{L}}

\newcommand{\cB}{\mathcal{B}}
\newcommand{\cU}{\mathcal{U}}
\newcommand{\cZ}{\mathcal{Z}}

\def\restrict#1{\raise-.5ex\hbox{\ensuremath|}_{#1}}

\newcommand{\Xrap}{\mathbb{X}_{\mathrm{rap}}}
\newcommand{\Xtemp}{\mathbb{X}_{\mathrm{temp}}}

\newcommand{\indc}{\mathds{1}}

\newcommand{\op}{\bar{p}}

\newcommand{\sL}{\mathscr{L}}
\newcommand{\sM}{\mathscr{M}}

\newcommand{\sF}{\mathscr{F}}

\definecolor{darkgreen}{rgb}{0,0.5,0}

\newcommand\restr[2]{{
  \left.\kern-\nulldelimiterspace 
  #1 
  \vphantom{\big|} 
  \right|_{#2} 
  }}

\newtheorem{theorem}{Theorem}[section]
\newtheorem{lemma}[theorem]{Lemma}
\newtheorem{proposition}[theorem]{Proposition}
\newtheorem{corollary}[theorem]{Corollary}

\theoremstyle{definition}
\newtheorem{remark}[theorem]{Remark}
\newtheorem{definition}[theorem]{Definition}

\usepackage[foot]{amsaddr}

\begin{document}

\title[Compact support for infinite-dimensional SDEs]{A compact support property for infinite-dimensional SDEs with Hölder continuous coefficients}

\author{Thomas Hughes \and Marcel Ortgiese}
\address{Department of Mathematical Sciences, University of Bath, UK}

\keywords{compact support property, infinite-dimensional stochastic differential equations, superprocesses, H\"older continuous coefficients, local times}
\subjclass[2020]{Primary 60H10; Secondary 60H15, 60J68}



\maketitle

\begin{abstract}
We consider non-negative solutions to some infinite-dimensional SDEs on $\Z^d$ with 
H\"older continuous noise coefficients. We prove that if the H\"older exponent is less than $1/2$, solutions 
are compactly supported for almost all times, a variant of the classical compact support property for SPDEs.
Our results imply that the instantaneous propagation of supports for superprocesses associated to discontinuous spatial motions is effectively sharp. 
We also show in a special case that the support is unbounded on a dense set of times.

The proof uses a general approach which we expect can be applied to prove similar results for non-local SPDEs. It is based on an analysis of the excursions and zero sets of semimartingales whose quadratic variation satisfies a certain lower bound. As a corollary of our method, we show that the zero sets of non-negative solutions to some simple one-dimensional SDEs have positive Lebesgue measure, despite the absence of ``sticky'' dynamics.
\end{abstract}

\tableofcontents
\section{Introduction}

How quickly a random spatial process spreads mass is determined by the competition between local fluctuations, which may locally lead to extinction, and the underlying spatial motion, which transports mass back in from neighbouring regions. The behaviour of the support of the process is often determined by the interplay and relative strength of these effects.
This paper concerns the propagation of supports of spatial stochastic models, notably non-negative solutions to infinite-dimensional {\it stochastic differential equations} (SDEs) and  {\it stochastic partial differential equations} (SPDEs). Our main result establishes compact support-type behaviour for a family of stochastic models in which the underlying spatial motion is discontinuous. The behaviour established is distinct from the classical compact support property exhibited by solutions to certain SPDEs, while it is also, to our knowledge, the first result concerning compact supports for a spatial process associated to a discontinuous motion.

Our results concern solutions to infinite-dimensional SDEs, but they are better understood in the context of the literature on SPDEs, so we begin our discussion there. Consider the multiplicative stochastic heat equation on the real line given formally by
\begin{equation} \label{eq_SHE}
	\partial_t u(t,x) = \frac 1 2 \Delta u(t,x) + u(t,x)^\gamma \dot{W}(t,x),\quad t >0,  \,x \in \R,
\end{equation}
where $\dot{W}$ is a space-time white Gaussian noise and $\gamma >0$. We will always consider non-negative solutions in this work, and $u(t,x)^\gamma$ is therefore a suitable shorthand for $|u(t,x)|^\gamma \indc (u(t,x) \geq 0)$. It is well-known that the behaviour of the supports of solutions to \eqref{eq_SHE}, and of more general parabolic SPDEs for which the noise term has the same form, exhibits a transition when the parameter $\gamma$ is varied. In particular, if $\gamma \in (0,1)$, a solution to \eqref{eq_SHE} with compactly supported initial data has the {\it compact support property} (CSP): for every $T>0$, 
\begin{equation} \label{CSP}
	\bP\left( \mathrm{cl} \left( \cup_{t \in [0,T]} \, \mathrm{supp}(u(t,\cdot)) \right) \text{ is compact} \right) = 1, 
\end{equation}
where $\mathrm{cl}(A)$ denotes the closure of $A$. On the other hand, if $\gamma \geq 1$, solutions a.s. satisfy $u(t,x) > 0$ for all $(t,x) \in (0,\xi) \times \R$, where $\xi \in (0,+\infty]$ is a blow-up time until which solutions are defined, which may or may not be finite with positive probability depending on $\gamma$ \cite{M2000}. The strict positivity result for $\gamma \geq 1$ is originally due to Mueller \cite{M1990}, see also \cite{MF2014} in the case $\gamma  = 1$. Strict positivity in the linear case has important implications, as it allows one to take the logarithm of a solution, which defines the Cole-Hopf solution of the KPZ equation \cite{BG1997}.

The present paper is concerned with the case of (strictly) Hölder continuous coefficients, i.e.\ $\gamma \in (0,1)$. Early work on the CSP for \eqref{eq_SHE} is closely related to the study of {\it super-Brownian motion} (SBM), which is the universal measure-valued scaling limit of critical branching Markov processes under finite variance conditions on the spatial motion and the branching mechanism (see e.g. \cite{Perkins}). In the case $\gamma = 1/2$, a solution to \eqref{eq_SHE} is the density of a one-dimensional SBM \cite{KS1988, R1989}, and the compact support property is due to Iscoe \cite{I1988}. Early work on the CSP for general $\gamma$, including the first proof for all $\gamma \in (0,1)$, which is due to Mueller and Perkins \cite{MP1992}, used techniques from superprocess theory; see also \cite{S1994}. Later, Krylov gave a proof using only stochastic calculus \cite{K1997}. The CSP and its close relative the compact interface property often play an important role in the analysis of SPDEs of reaction-diffusion type; for a recent reference see \cite{MMR2021}. Recently, the compact support property has also been shown for SPDEs with correlated noise \cite{HKY2023} and stable (non-Gaussian) noise \cite{H2025}, both using generalizations of Krylov's method. Sub-polynomial perturbations of the noise coefficient at the critical parameter $\gamma=1$ were considered in \cite{HKY2024}.



In order to situate our results, we return now to the super-Brownian case, i.e.\ $\gamma =1/2$. If the Brownian motion describing the spatial motion in the SBM is replaced with a general Lévy process, one obtains a measure-valued process called a super-Lévy process. Under certain conditions on the associated Lévy measure, the super-Lévy process has a density. For concreteness, we take the motion to be a symmetric $\alpha$-stable process with $\alpha \in (1,2)$. In this case, as shown in \cite{KS1988}, the density of a super-$\alpha$-stable process in dimension one exists and is a solution to the non-local SPDE
	\begin{equation} \label{eq_SHE_superstable}
	\partial_t u(t,x) =-(-\Delta)^{\alpha/2} u(t,x) + u(t,x)^{1/2} \dot{W}(t,x),\quad t >0, \, x \in \R.
\end{equation}
Superprocesses associated to discontinuous motions do not have the compact support property. Rather, they exhibit {\it instantaneous propagation of supports}: specialized to \eqref{eq_SHE_superstable}, this means that an integrable non-negative solution to \eqref{eq_SHE_superstable}, which is the density of a super-$\alpha$-stable process with finite mass, satisfies
\begin{equation} \label{eq_inst}
	\bP\left(\mathrm{supp}(u(t,\cdot)) = \R  \, | \, u(t,\cdot) \neq 0 \right) = 1
\end{equation}
for every $t \geq 0$, as shown in \cite{P1990} and generalized to other spatial motions in \cite{EP1991}. 

More generally, one may consider solutions to the SPDE
\begin{equation} \label{eq_SHE_stable}
	\partial_t u(t,x) = -(-\Delta)^{\alpha/2} u(t,x) + u(t,x)^\gamma \dot{W}(t,x),\quad t >0, \, x \in \R,
\end{equation}
for $\gamma >0$. For the corresponding equation with diffusion (i.e.\ continuous motion) \eqref{eq_SHE}, solutions have the compact support property for $\gamma \in (0,1)$, but by the discussion above, the compact support property fails for \eqref{eq_SHE_stable} at $\gamma =1/2$ by \eqref{eq_inst}. It is natural to ask how supports of solutions to \eqref{eq_SHE_stable} behave for other values of $\gamma \in (0,1)$, but this has not been studied before. Here, we investigate the behaviour of a family of related models for $\gamma \in (0,1/2)$, and prove that they exhibit a variant of the compact support property, indicating that $\gamma = 1/2$ is the critical parameter in this setting. One way to see that smaller values of $\gamma$ lead to stronger compact support-type behaviour is as follows: one decomposes the noise term as
\[u(t,x)^\gamma \dot{W}(t,x) = u(t,x)^{\gamma - \frac 12} \sqrt{u(t,x)} \dot{W}(t,x). \]
The term $\sqrt{u(t,x)} \dot{W}(t,x)$ can be interpreted as continuous state branching, 
and the residual coefficient represents a density-dependent branching rate which, if $\gamma < 1/2$, is large when solutions are small, and therefore there are large fluctuations which may cause the solution to touch zero. 



In this work, we adapt this question to the setting of countable systems of SDEs, which may be viewed as spatially discretized SPDEs. This allows us to initiate the study of support behaviour for equations similar to \eqref{eq_SHE_stable} while keeping technicalities at a minimum. We remark, however, that the framework developed here is general, and we expect that large parts of our argument will be applicable in the setting of SPDEs. In particular, the majority of the paper is spent proving a result which does not depend on the discreteness of the underlying space. 

Consider the system of SDEs on the $d$-dimensional integer lattice $\Z^d$ given by
\begin{equation} \label{eq_sys_intro}
	dX_t(i) = \cL X_t(i) dt + X_t(i)^\gamma dB_t(i), \quad i \in \Z^d,
\end{equation}
where $\{(B_t(i))_{t \geq 0} : i \in \Z^d\}$ is a family of independent Brownian motions and $\cL$ denotes the generator of a Markov process on $\Z^d$, that is, it generates some continuous-time random walk. Because the underlying motion is discontinuous, one expects the propagation of supports for solutions to \eqref{eq_sys_intro} to be similar to that for solutions to \eqref{eq_SHE_stable}. Indeed, for $\gamma = 1/2$, the solution is a super-random walk, and therefore exhibits instantaneous propagation of supports \cite{EP1991}. We prove that this is sharp. Our main result, Theorem~\ref{thm_main_compact}, establishes that for solutions to \eqref{eq_sys_intro} with $\gamma \in (0,1/2)$—in fact we treat a more general equation—the set of times at which the solution has compact support has full measure almost surely. At the same time, we show (in a special case) that \eqref{CSP} cannot hold, because there exists a dense set of times on which the support is unbounded. 
We conjecture that the same results hold for solutions to \eqref{eq_SHE_stable}; as we discuss in the next section, we believe that some of the methods developed in this paper should be applicable to \eqref{eq_SHE_stable} and other non-local SPDEs.

We conclude this introduction with a discussion on uniqueness of solutions. For $\gamma \in (0,1/2)$, it is an open problem to prove or disprove weak uniqueness for solutions to \eqref{eq_SHE}, \eqref{eq_sys_intro}, or the more general spatially discrete system which we introduce in the next section. Thus, it is unknown if the solutions we consider are unique in law. For $\gamma < 1/2$, the noise coefficient $\sigma(u) = u^\gamma$ is not $\frac 1 2$-Hölder at $0$, placing it below the threshold of the Yamada-Watanabe criterion for pathwise uniqueness. In the case of signed solutions, it is known that weak uniqueness fails for $\gamma \in (0,1/2)$. This is already true in the one-dimensional setting. A classical example of an SDE for which weak uniqueness fails is the Girsanov SDE
\begin{equation}\label{eq_Girsanov}
	dX_t = |X_t|^\gamma dB_t
\end{equation}
with $\gamma \in (0,1/2)$. One can construct many solutions with different laws by time-changing a Brownian motion (see Section V.26 of \cite{RogersWilliams2}); however, if one restricts to non-negative solutions, i.e.\ the equation $dX_t = X_t^\gamma dB_t$, then $0$ is an absorbing state and solutions are pathwise unique. 
The restriction to non-negative solutions removes the issue of solutions spontaneously leaving zero, which is the sole mechanism for generating a family of solutions with different laws \cite{ES1985}, and hence it is at least plausible that non-negative solutions to \eqref{eq_sys_intro} are unique in law. However, proving or disproving this is an open problem. 

In the case of SPDEs, it is proved in \cite{MMP2014} that for $\gamma \in (0,3/4)$, weak uniqueness fails for the equation
\begin{equation} \label{eq_SHE_signed}
		\partial_t u(t,x) = \frac 1 2 \Delta u(t,x) + |u(t,x)|^\gamma \dot{W}(t,x),\quad t >0, \, x \in \R.
\end{equation}
Once again, restriction to non-negative solutions re-opens the problem. Indeed, non-negative solutions to \eqref{eq_SHE_signed} (i.e.~solutions to \eqref{eq_SHE}) with $\gamma \geq 1/2$ are unique-in-law \cite{M1998}. It is less clear what should be expected for non-negative solutions when $\gamma \in (0,1/2)$. For example, it is known that pathwise uniqueness fails for \eqref{eq_SHE} with $\gamma \in (0,1/2]$ if one adds a non-negative, non-trivial immigration term \cite{BMP2010,C2015}. 


\section{Main results}
Let $d \in \N$. We will consider an infinite system of SDEs on $\Z^d$ in which the inter-site drift corresponds to the motion of a continuous time random walk. Let $\ell^1 = \ell^1(\Z^d)$ denote the space of integrable functions on $\Z^d$ and let $\ell^1_+ = \ell^1_+(\Z^d)$ denote the elements of $\ell^1$ with non-negative entries. We fix $q\in \ell^1_+$ satisfying $q(0) = 0$, 
which specifies the jump rates of a continuous-time random walk on $\Z^d$ with generator
\begin{equation} \label{def_gen}
	\mathcal{L} \phi (i) := \sum_{j \in \Z^d} q(i-j) (\phi(j) - \phi(i)).
\end{equation}
We now introduce a system of SDEs on $\Z^d$ generalizing the system \eqref{eq_sys_intro} from the Introduction. For $q$ as above, will consider solutions to 
\begin{equation} \label{e_sdesystem}
dX_t(i) =   \bigg(\sum_{j \in \Z^d} q(i-j) \left( X_t(j) - X_t(i)\right) \bigg)dt + f(X_t(i))dt +  \sigma(X_t(i)) dB_t(i) , \quad i \in \Z^d.
\end{equation}
In the above, $\{ (B_t(i))_{t \geq 0} : i \in \Z^d\}$ is a family of independent Brownian motions. The first drift term is simply the generator $\cL$ applied to $X_t$, which we express in terms of the jump rates for clarity's sake. In addition to migration of mass between sites governed by $q$, the solution is subject to on-site drift and diffusion, corresponding respectively to the functions $f$ and $\sigma$. The assumptions imposed on $q$, $f$ and $\sigma$ are discussed shortly. We always consider non-negative solutions, and as such will consider $f$ and $\sigma$ as functions on $\R_+ = [0,\infty)$.

Our chief interest in this paper is in the support properties of solutions to \eqref{e_sdesystem}, in particular when $\sigma(x) \approx x^{\gamma}$ for small values of $x$ for some $\gamma \in (0,1/2)$ (this is made precise in Assumption~\ref{assumption2}). For the compact support-type behaviour we are interested in, it is natural to consider solutions with finite mass, and indeed we prove our main result, Theorem~\ref{thm_main_compact}, for solutions to \eqref{e_sdesystem} taking values in $\ell^1_+$. In order to focus on our analysis of supports, we wish to keep our discussion on the existence of solutions brief, and hence to construct solutions with a minimum of difficulty. For this reason we prove existence of solutions taking values in a space of rapidly decaying functions (which we introduce shortly) instead of $\ell^1_+$. Proving existence of rapidly decaying solutions requires an additional assumption on the tails of the jump rates $q(\cdot)$. 


We will always assume the following on $f$ and $\sigma$.

\begin{assumption} \label{assumption1}  
	(a) $f : \R_+ \to \R$ is globally Lipschitz and satisfies $f(0) = 0$. \\
	(b) $\sigma: \R_+ \to \R_+$ is continuous, $\sigma(0) = 0$, $\sigma(x) > 0$ for $x >0$, and there exist constants $C \geq 1$ and $\theta \in (0,1)$ such that 
	\begin{equation} \label{eq_sigma_assumption1}
		\sigma(x) \leq C(x^\theta + x) \,\, \text{ for all } x \geq 0.
	\end{equation}
\end{assumption}

As noted above, our existence theorem is stated for solutions taking values in the space of rapidly decaying functions. We define
\begin{equation} \label{def_Xrap}
    \Xrap = \Xrap(\Z^d) := \left\{ f : \Z^d \to \R : \langle f, \Phi_\lambda \rangle < \infty \,\,\, \forall \, \lambda > 0 \right\}
\end{equation}
where $\Phi_\lambda(i) := e^{\lambda |i|}$ for $\lambda \in \R$, $|\cdot|$ denotes some fixed norm on $\Z^d$, and $\langle f,g\rangle := \sum_{j \in \Z^d} |f(i) g(i)|$. We denote by $\Xrap^+$ the elements of $\Xrap$ with non-negative entries. Note that $\Xrap$ embeds continuously into $\ell^1$ by inclusion, so that continuous $\Xrap^+$-valued solutions to \eqref{e_sdesystem} are also continuous $\ell^1_+$-valued solutions.


In order to construct solutions taking values in $\Xrap^+$, we impose the following conditions on $q(\cdot)$.

\begin{assumption} \label{assumption1.5} (a) $q$ is symmetric, i.e.\ $q(i) = q(-i)$ for all $i \in \Z^d$. \\
(b) For all $\lambda >0$, $\langle q , \Phi_\lambda \rangle < \infty$.
\end{assumption}
Note that part (b) implies that the random walk with jump rates $q(\cdot)$ has exponential moments of all orders.

\begin{proposition} \label{prop_exist}
    Under Assumptions~\ref{assumption1} and~\ref{assumption1.5}, for any $X_0 \in \Xrap^+$, there exists a continuous $\,\Xrap^+$-valued solution to \eqref{e_sdesystem} started from $X_0$.
\end{proposition}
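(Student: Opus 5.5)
We construct a solution by finite-dimensional approximation followed by a tightness and martingale-problem argument. For $n \in \N$, let $\Lambda_n = \{i \in \Z^d : |i| \le n\}$. We consider the finite system on $\Lambda_n$ obtained from \eqref{e_sdesystem} by setting $X(j)=0$ for $j \notin \Lambda_n$. In this system the migration term becomes $\sum_{j\in\Lambda_n} q(i-j)X_t(j) - \|q\|_1 X_t(i)$, so mass jumping outside $\Lambda_n$ is killed. Since $\sigma$ is only continuous, we also regularize it: take $\sigma_m$ Lipschitz, non-negative, with $\sigma_m(0)=0$, $\sigma_m \to \sigma$ locally uniformly, and $\sigma_m(x) \le C'(x^\theta + x)$. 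The truncated, regularized system is a finite-dimensional SDE with Lipschitz coefficients. Its drift at a site with $X(i)=0$ is $\sum_j q(i-j)X(j) \ge 0$, and its diffusion coefficient vanishes there, so standard comparison arguments (or Itô's formula applied to $X(i)^-$) show the solution stays non-negative. Alternatively, we can skip the $\sigma_m$ step and invoke Skorokhod's weak existence theorem for continuous coefficients with linear growth, then check non-negativity the same way. Either way we get non-negative continuous solutions $X^n$ on $\Z^d$, extended by zero off $\Lambda_n$.

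Next we need moment bounds that are uniform in $n$ in the weighted norms $\langle X, \Phi_\lambda\rangle$. By Itô's formula,
\[
\langle X^n_t,\Phi_\lambda\rangle = \langle X_0|_{\Lambda_n},\Phi_\lambda\rangle + \int_0^t \langle X^n_s, \cL\Phi_\lambda + f\text{-term}\rangle ds + M_t .
\]
Here we use the symmetry of $q$ (Assumption~\ref{assumption1.5}(a)) to move $\cL$ onto $\Phi_\lambda$; the killing outside $\Lambda_n$ only decreases the expression. The key estimate is
\[
\cL\Phi_\lambda(i) \le \sum_j q(j)\left(e^{\lambda|j|}-1\right)\Phi_\lambda(i) \le C_\lambda \Phi_\lambda(i),
\]
which uses $|i+j|\le |i|+|j|$ and Assumption~\ref{assumption1.5}(b). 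Since $f$ is Lipschitz with $f(0)=0$, we have $|f(x)|\le L x$. A Gronwall argument combined with the Burkholder–Davis–Gundy inequality then gives
\[
\E\Big[\sup_{t\le T}\langle X^n_t,\Phi_\lambda\rangle^p\Big]\le C(\lambda,p,T)\,\langle X_0,\Phi_\lambda\rangle^p .
\]
For BDG we bound the quadratic variation $\sum_i \Phi_\lambda(i)^2\sigma(X_s(i))^2 \le C\sum_i \Phi_{2\lambda}(i)\big(X_s(i)^{2\theta}+X_s(i)^2\big)$. The sublinear term is the delicate one. We would handle it with Hölder's inequality,
\[
\sum_i \Phi_{2\lambda}(i) x_i^{2\theta} \le \Big(\sum_i \Phi_{2\lambda/\theta}(i) x_i\Big)^{\theta}\Big(\sum_i e^{-c|i|}\Big)^{1-\theta},
\]
with exponents adjusted so the weights match; this works because $\Xrap$ controls all $\lambda$ simultaneously. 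If needed, we first localize with stopping times. The resulting estimate closes as a system of inequalities over $\lambda$, or alternatively we can use $x^{2\theta}\le 1+x^2$ restricted to a summable weight.

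Tightness in $C([0,T],\Xrap)$ follows from these bounds. For each coordinate, Kolmogorov's criterion applies using the uniform moments of the drift and the martingale part. The tail control $\sup_n\E\sup_t\sum_{|i|>K}\Phi_\lambda(i)X^n_t(i)\le e^{-K}\,\E\sup_t\langle X^n_t,\Phi_{\lambda+1}\rangle\to 0$ upgrades coordinatewise tightness to tightness in each weighted $\ell^1$ norm, and hence in $\Xrap$ with its Fréchet topology. Along a subsequence, a limit point solves the martingale problem for \eqref{e_sdesystem}: for test functions depending on finitely many coordinates, the coefficients are continuous and the uniform integrability comes from the moment bounds. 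We then obtain the Brownian motions $B(i)$ by the standard martingale representation on an enlarged probability space. Non-negativity passes to the limit.

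I expect the main obstacle to be the uniform weighted moment bound, specifically closing the BDG estimate for the non-Lipschitz sublinear part of $\sigma^2$ in weighted norms. The other steps are standard.
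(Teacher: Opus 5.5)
Your construction is correct, but it is organized differently from the paper's. The paper does not build the solution in $\Xrap$ directly. Proposition~\ref{prop_exist} is obtained as a corollary of Theorem~\ref{thm_sol}(a),(c), in two stages:
\begin{enumerate}
\item[(i)] construct a continuous $\Xtemp^+$-valued solution for tempered initial data. For Lipschitz $\sigma$ this is done by Picard iteration or finite-volume approximation as in \cite{S1994,SS1980,DP1998}; the general case follows from uniform Lipschitz approximations $\sigma_n\to\sigma$ and tightness.
\item[(ii)] show separately, as in \cite[Theorem~2.2(c)]{DP1998}, that \emph{every} $\Xtemp^+$-valued solution with $X_0\in\Xrap^+$ is automatically $\Xrap^+$-valued.
\end{enumerate}

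You instead build the rapid decay into the approximation scheme. You prove bounds on $\langle X^n_t,\Phi_\lambda\rangle$, uniform in $n$, for every $\lambda>0$, using $\cL\Phi_\lambda\le C_\lambda\Phi_\lambda$ together with a H\"older step that pays for the sublinear part of $\sigma^2$ with a larger weight. You then get tightness directly in $C([0,T],\Xrap)$.

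Your route is shorter and self-contained for the proposition as stated. It also does not actually need the symmetry in Assumption~\ref{assumption1.5}(a), since $|i\pm k|\le|i|+|k|$ gives the same bound for $\cL^*\Phi_\lambda$. The paper's route gives more: existence for tempered data, the mild form and first-moment bound, and the fact that rapid decay holds for all solutions rather than only the one constructed. The last point matters because uniqueness is open.

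Two details need fixing when you write it out.
\begin{itemize}
\item For $2\theta<1$ the H\"older exponents should be $2\theta$ and $1-2\theta$, e.g.\ $\sum_i\Phi_{2\lambda}(i)x_i^{2\theta}\le\langle x,\Phi_{(2\lambda+c)/(2\theta)}\rangle^{2\theta}\bigl(\sum_i e^{-c|i|/(1-2\theta)}\bigr)^{1-2\theta}$. For $2\theta\ge1$, simply use $\|\cdot\|_{\ell^{2\theta}}\le\|\cdot\|_{\ell^1}$.
\item The moment bound cannot have the homogeneous form $C\langle X_0,\Phi_\lambda\rangle^p$, because for small data the sublinear noise dominates. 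The bound instead involves $\langle X_0,\Phi_{\lambda'}\rangle$ for larger $\lambda'$ at lower powers. The hierarchy terminates because the required power is multiplied by $\theta<1$ at each step, and once it drops to at most $1$ the first-moment bound closes it. This is harmless since $X_0\in\Xrap$.
\end{itemize}
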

A more thorough discussion of the existence of solutions and some elementary properties is given in Section~\ref{s_sol}, see in particular Theorem~\ref{thm_sol}.


Our main results concern solutions to \eqref{e_sdesystem} when $\sigma(x)$ behaves like $x^\gamma$ near zero for some $\gamma \in (0,1/2)$. This is encoded in the following assumption.

\begin{assumption} \label{assumption2} For some $\gamma \in (0,1/2)$,
	\begin{equation} \label{eq_sigma_assumption2}
		\liminf_{x \downarrow 0} \frac{\sigma(x)}{x^\gamma} >0.
	\end{equation}
\end{assumption}

Note that for the central example of interest $\sigma(x) = x^\gamma$ with $\gamma \in(0,1/2)$, \eqref{eq_sigma_assumption1} and \eqref{eq_sigma_assumption2} are satisfied (with $\theta = \gamma$). Our main result is that under Assumption~\ref{assumption2}, solutions to \eqref{e_sdesystem} with compactly supported initial data have compact support for almost all times. Let $\mathrm{Leb}(A)$ denote the $1$-dimensional Lebesgue measure of $A \subseteq \R$. 

\begin{theorem} \label{thm_main_compact}
	Suppose Assumptions~\ref{assumption1} and \ref{assumption2} hold and let $X = (X_t)_{t \geq 0}$ be a continuous $\ell^1_+$-valued solution to \eqref{e_sdesystem} with compactly supported initial state. Then for every $T>0$, with probability one, 
	\begin{equation*}
		\mathrm{Leb}\left( \left\{ t \in [0,T] : \mathrm{supp}(X_t) \text{ is compact}\, \right\} \right) = T.
	\end{equation*}
\end{theorem}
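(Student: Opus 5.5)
The plan is to reduce the statement to a one-site estimate on the time spent away from zero, and then sum over sites using the decay of the total mass. The paper's abstract points to the key tool: semimartingales whose quadratic variation satisfies a lower bound such as $d\langle Y\rangle_t \geq c\, Y_t^{2\gamma} dt$ near $0$, with $\gamma<1/2$, have excursions that are short and zero sets of positive Lebesgue measure.

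For a fixed site $i$, we write $X_t(i)$ as a non-negative semimartingale. Its drift is
\[
\sum_j q(i-j)X_t(j) - |q|_1 X_t(i) + f(X_t(i)),
\]
and its martingale part has quadratic variation $\sigma(X_t(i))^2\,dt \geq c X_t(i)^{2\gamma}dt$ on $\{X_t(i)\leq \delta\}$ by Assumption~\ref{assumption2}. The incoming drift $D_t(i):=\sum_j q(i-j)X_t(j)$ is small for $|i|$ large. The core estimate I would aim for is a bound of the form
\[
\E\,\mathrm{Leb}\{t\leq T: X_t(i)>0\} \;\lesssim\; \E\int_0^T D_t(i)^{\beta}\,dt + (\text{contribution of } X_0(i))
\]
for some $\beta>0$. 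The heuristic comes from time-changing by the quadratic variation: in the clock $A_t=\int_0^t\sigma(X_s(i))^2ds$ the process is roughly a Brownian motion with drift $D$, reflected/absorbed at $0$. Time spent at height $x$ in real time is then weighted by $x^{-2\gamma}$, which is integrable near $0$ precisely because $2\gamma<1$. Thus an excursion reaching height $h$ costs real time about $h^{2-2\gamma}$, while only drift-size pushes $D$ create such excursions. To make this rigorous I would use Tanaka's formula for a function like $F(x)=x^{2-2\gamma}$ (or $x^{1-2\gamma}$), whose generator term $\tfrac12F''(x)\sigma(x)^2$ is bounded below by a positive constant on $(0,\delta]$. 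Integrating gives
\[
c\,\mathrm{Leb}\{t\le T:0<X_t(i)\le\delta\} \le \E F(X_T(i)) - F(X_0(i)) + \text{drift terms},
\]
where the drift terms involve $F'(X)D\approx X^{1-2\gamma}D$ and are controlled by Hölder/Young in terms of powers of $D$ and of $X(i)$. The time spent above $\delta$ is bounded by Markov's inequality via $\int_0^T X_t(i)\,dt/\delta$.

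Next we sum over $i$. It suffices to show that a.s.\ $\sum_i \mathrm{Leb}\{t\le T:X_t(i)>0\}<\infty$, or rather finiteness of the sum over $|i|\ge R$ for large $R$. Given this, by Fubini, for a.e.\ $t$ the set $\{i:X_t(i)>0\}$ is finite, since $\int_0^T \#\{i:X_t(i)>0\}\,dt<\infty$, which gives compact support for a.e.\ $t$. The summability requires controlling $\sum_i\int_0^T (D_t(i)^\beta + X_t(i)^{\beta'})\,dt$ with exponents $\beta<1$. This is not implied by finite mass alone, since $\ell^1$ does not embed into $\ell^\beta$. So I would localize with stopping times at which the total mass $\langle X_t,1\rangle$ is bounded, and also exploit that solutions tend to vanish on most sites.

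The main obstacle is this summation step with sublinear powers. To overcome it, I would first try an iterative argument. Sites that are zero for most of the time contribute small $D$, and the estimate can be refined so that $\mathrm{Leb}\{X(i)>0\}$ is bounded by the quantity $\int D_t(i)\,dt$ itself, which is linear in $X$ and hence summable with total $\le |q|_1\int_0^T\langle X_t,1\rangle dt$. The bound should be linear, up to a logarithmic factor, with a cutoff. Concretely, I would choose $F$ with $F'(0+)$ finite, e.g.\ $F(x)=x^{2-2\gamma}$ truncated, so that $F'(x)D\le CD$, yielding
\[
\mathrm{Leb}\{0<X(i)\le\delta\}\lesssim \int_0^T D_t(i)\,dt + \sup_t X_t(i)^{2-2\gamma}.
\]
Both terms are summable in $i$, since $x^{2-2\gamma}\le x$ for $x\le1$ and $\sum_i\sup_t X_t(i)$ can be handled using $\E\int$ bounds together with the growth of $X$ at each site. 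The initial sites are finitely many and do not matter.
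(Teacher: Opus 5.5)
Your route differs from the paper's, and once the last step is carried out in expectation it is correct and much shorter.

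Take $F(x)=x^{2-2\gamma}$ on $[0,1]$, $F=0$ on $(-\infty,0]$, and extend $F$ linearly with slope $2-2\gamma$ on $[1,\infty)$. Then $F$ is convex and $C^1$, with $0\le F'\le 2-2\gamma$ and $F(x)\le(2-2\gamma)x^+$. Your alternative $x^{1-2\gamma}$ is concave, so it fails. It\^o--Tanaka, the occupation-times formula and $\sigma(x)\ge x^\gamma$ on $(0,1]$ give
\[
(1-\gamma)(1-2\gamma)\,\mathrm{Leb}\{s\le t:0<X_s(i)<1\}\le F(X_t(i))+(2-2\gamma)(\|q\|_1+L)\int_0^t X_s(i)\,ds-\int_0^t F'(X_s(i))\sigma(X_s(i))\,dB_s(i).
\]
Here the inflow term $-\int_0^t F'(X_s(i))D_s(i)\,ds$ was dropped because it is non-positive. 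So you need neither H\"older/Young nor smallness of $D$, and the ``sublinear powers'' obstacle never arises.

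Your final display needs repair in two places:
\begin{itemize}
\item It is not valid pathwise, because it omits this stochastic integral.
\item The claim that $\sum_i\sup_t X_t(i)$ can be controlled is unsupported, and it is not needed.
\end{itemize}
Instead, stop at $\rho_n=\inf\{t:\|X_t\|_1\ge n\}$, where the integrand is bounded, then take expectations and sum over $i$. For $n\ge\|X_0\|_1$ one has $\sum_i F(X_{T\wedge\rho_n}(i))\le(2-2\gamma)n$ and $\sum_i\int_0^{T\wedge\rho_n}X_s(i)\,ds\le nT$. Combined with $\mathrm{Leb}\{s\le T\wedge\rho_n:X_s(i)\ge1\}\le\int_0^{T\wedge\rho_n}X_s(i)\,ds$, this yields $\sum_i\E\,\mathrm{Leb}\{s\le T\wedge\rho_n:X_s(i)>0\}\le C\,n(1+T)$. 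Tonelli and $\rho_n\uparrow\infty$ then finish the proof.

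\textbf{Comparison with the paper.} The paper never works site by site.
\begin{itemize}
\item It studies the half-space masses $Y_t(i)$ and gets $d\langle Y(i)\rangle_t\ge\frac12 Y_t(i)^{2\gamma}dt$ from subadditivity of $x\mapsto x^{2\gamma}$.
\item It dominates $Y(i)$ up to $T^i_\delta\wedge\tau^i_1$ by a class-$\sM_{\gamma,\delta}$ process.
\item Via the pseudo-strong Markov property, dyadic hitting and exit-time estimates, and downcrossing counts for the local time, it shows such a process sits at $0$ most of the time when $\delta$ is small.
\item Compactness then comes from far half-spaces being empty at most times.
\end{itemize}

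Your argument bypasses the coupling, the pseudo-strong Markov property and the excursion analysis, and needs no smallness of the inflow. It also proves more: the conclusion holds for every $X_0\in\ell^1_+$, not only compactly supported ones, and in fact $\int_0^T\#\,\mathrm{supp}(X_t)\,dt<\infty$ a.s.

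What the paper's route buys is a reduction to a statement about class-$\sM_{\gamma,\delta}$ semimartingales that does not refer to the lattice, which the authors intend to reuse for non-local SPDEs. Your argument uses the lattice essentially: each coordinate is a semimartingale with its own noise, and compactness is deduced by counting occupied sites.

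Incidentally, the same computation run under $\bP\in\sM_{\gamma,\delta}(0)$, using $d\langle M\rangle\ge dI$, gives $\E\,\mathrm{Leb}\{s\le t:Y_s>0\}\le C\delta(t+t^2)$. By Markov's inequality, this already implies Theorem~\ref{thm_Y_zeroset}(a).
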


This is the first compact support-type result for solutions to an infinite-dimensional SDE or a SPDE in which the underlying spatial motion is discontinuous. Indeed, for lattice SDEs such as \eqref{e_sdesystem} the discrete spatial domain means that any spatial motion discontinuous, which makes this, so far as we are aware, the first compact support result for an infinite-dimensional SDE. In general, Theorem~\ref{thm_main_compact} cannot be improved to state that the support is compact for all times; we prove in Theorem~\ref{thm_unbounded} that (in a special case) there exist exceptional times when the support is unbounded, which distinguishes the support behaviour of solutions to \eqref{e_sdesystem} under Assumption~\ref{assumption2} from that of solutions to the stochastic heat equation \eqref{eq_SHE} with $\gamma \in (0,1)$. 



As discussed in the Introduction, Theorem~\ref{thm_main_compact} should be contrasted with the case of the corresponding superprocess, or the super-random walk, which exhibits instantaneous propagation of supports. We recall that one obtains a superprocess from \eqref{e_sdesystem} by setting $\sigma(x) = \sqrt{x}$, which corresponds to $\gamma = 1/2$, and $f \equiv a$ for $a \in \R$; for general Lipschitz $f$ the superprocess with drift $f$ can be realized and studied via Dawson's Girsanov theorem \cite{D1978} (see also \cite[Theorem~IV.1.6]{Perkins}). In either case, we have \[\bP(X_t(i) >0 \,\, \forall \, i \in \Z^d \, | \,  X_t \neq 0) = 1\] for every $t \geq 0$ \cite[Theorem~5.1]{EP1991}. (This assumes the irreducibility of the underlying random walk kernel; without this assumption, the natural analogous statements hold.) In particular, until the extinction time, the set of times at which the solution has full support is a.s.\ a set of full measure. Theorem~\ref{thm_main_compact} proves that this is essentially sharp, because setting $\sigma(x) = x^\gamma$ for any $\gamma  \in (0,1/2)$ leads to the converse: the set of times where the support is compact is a.s.\ a set of full measure.

As noted above, despite the compact support-type behaviour established in Theorem~\ref{thm_main_compact}, support propagation in the models we consider is qualitatively different from the classical compact support property. In particular, as the next result shows, for a solution to \eqref{e_sdesystem} there a.s.\ exists a dense set of exceptional times at which the support is unbounded. Hence, the full measure set of times on which $X_t$ has compact support cannot be improved to all times. For parabolic SPDEs with the compact support property, e.g. \eqref{eq_SHE} with $\gamma \in (0,1)$, such exceptional times do not exist, because the classical compact support property (i.e.~\eqref{CSP}) establishes the boundedness of the union of the supports of the solution at all times $t \in [0,T]$. To state our result on exceptional times, we define the extinction time

\begin{equation} \label{def_extinction}
	\eta := \inf\{t \geq 0: X_t(i) = 0 \,\, \forall i\in \Z^d \}.
\end{equation}
We prove the following under simplified conditions to shorten the proof, but there is little doubt that it also holds in the full generality of Assumption~\ref{assumption1}.
\begin{theorem} \label{thm_unbounded}
	Let $\cL = \Delta$, i.e.\ $q(\cdot)$ corresponds to the jump rates of a nearest-neighbour random walk, $f \equiv 0$, and suppose $\sigma$ satisfies Assumption~\ref{assumption1}(b). Let $X$ be a continuous $\ell^1_+$-valued solution to \eqref{e_sdesystem} with non-zero initial condition $X_0$. Then with probability one, $\eta>0$ and $\{t \geq 0 : \mathrm{supp}(X_t) \text{ is unbounded} \}$ is a dense, uncountable subset of $[0,\eta]$.
\end{theorem}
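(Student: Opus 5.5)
The plan is to show that for any fixed rational interval $(a,b)\subset(0,\eta)$, there is a.s.\ a time in $(a,b)$ at which every site is occupied. A Baire-category argument then gives density and uncountability.

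\emph{Positivity of $\eta$ and of occupied sites.} Since $X_0\neq 0$, there is a site $i_0$ with $X_0(i_0)>0$. By continuity of $t\mapsto X_t(i_0)$ we get $\eta>0$ a.s.

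\emph{Mass spreads to far sites after a zero.} Fix a site $i$ and suppose it sits at zero on a time interval, while a neighbour $j$ has $X_t(j)>0$. The drift at $i$ is then $\sum_j q(i-j)X_t(j)>0$, because $\sigma(0)=0$ and $\cL=\Delta$. Hence $X(i)$ cannot stay at zero, and it leaves zero immediately. More precisely, for each $i$ define
\[ G_i=\{t<\eta: X_t(i)>0\}. \]
This set is relatively open. I would show that $G_i$ is dense in $(0,\eta)$. Suppose instead that $X(i)\equiv 0$ on some interval $I$. Then the semimartingale decomposition forces $\int_I \Delta X_s(i)\,ds=0$, so all neighbours vanish on $I$ (using $f\equiv0$). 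Propagating through $\Z^d$ by connectivity gives $X_s\equiv 0$ on $I$. Because $\ell^1$ mass is a nonnegative martingale under $f\equiv0$, this forces $s\geq \eta$ on $I$, which is a contradiction.

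\emph{Finding times with unbounded support.} The set of times with unbounded support contains
\[ \bigcap_{N}\ \bigcup_{|i|\ge N} G_i. \]
Each union $\bigcup_{|i|\geq N} G_i$ is open and dense in $(0,\eta)$ by the previous step. By Baire's theorem the intersection is a dense $G_\delta$ subset of $[0,\eta]$. A dense $G_\delta$ subset of an interval is uncountable, since it has no isolated points and is a perfect Polish set. One point needs care: at such a time, $X_t(i)>0$ for infinitely many $i$, which is exactly unbounded support. The main obstacle is the density step, specifically the claim that a component vanishing on an interval forces its neighbours to vanish. I would handle it as follows. If $X(i)\equiv0$ on $I$, then its quadratic variation and martingale part vanish on $I$. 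The equation then gives $\int_I\sum_j q(i-j)X_s(j)\,ds=0$, which needs care with the local-time and semimartingale identity for nonnegative continuous processes. Finally, the total mass $\langle X_t,1\rangle$ is a nonnegative continuous local martingale, so once it hits zero it stays there. This makes ``$X_s\equiv 0$ on $I\subset(0,\eta)$'' impossible.
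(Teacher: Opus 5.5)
Your proof is correct. It shares the paper's skeleton:
\begin{itemize}
\item a nonnegative component that vanishes on a time interval has constant martingale part there, so its nonnegative drift must integrate to zero;
\item this propagates to $X\equiv 0$ on the interval, contradicting the definition of $\eta$;
\item Baire's theorem then produces a dense uncountable $G_\delta$ of good times.
\end{itemize}

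The difference is the object you propagate. The paper works with the half-space masses $Y_t(i)=\sum_{j_1\ge i}X_t(j)$. If $Y(i)\equiv 0$ on $[a,b]$, the drift $\langle X_t,\Delta h(\cdot-\underline{i})\rangle=\frac{1}{2d}\sum_{j_1=i-1}X_t(j)$ must vanish, so $Y(i-1)\equiv 0$. The half-space then slides left until all mass is gone, and the paper intersects the dense open sets $\{t:Y_t(i)>0\}$, $i\in\Z$.

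You work site by site, using only the one-site equation \eqref{eq_sdei} and connectivity of the nearest-neighbour graph. This avoids the infinite stochastic sum of Corollary~\ref{cor_Ystochastic} in the key step, and it in fact gives more. Each $G_i$ is itself open and dense, so $\bigcap_{i\in\Z^d}G_i$ is already a dense $G_\delta$. That is, there is a dense uncountable set of times at which $X_t(i)>0$ for \emph{every} $i$, as your opening sentence anticipates, and the detour through $\bigcap_N\bigcup_{|i|\ge N}G_i$ is unnecessary. The half-space version, in turn, reuses the processes $Y_t(i)$ from the proof of Theorem~\ref{thm_main_compact}, and its propagation step only uses that mass can jump into a half-space, which is weaker than irreducibility of the walk.

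Three minor remarks:
\begin{itemize}
\item No local-time identity is needed. On an interval where $X(i)\equiv 0$ the integrand $\sigma(X_s(i))$ vanishes, so the martingale part has constant quadratic variation there, and hence is a.s.\ constant on all such intervals simultaneously.
\item The contradiction does not use absorption, since $X\equiv 0$ on $I$ gives $\eta\le\inf I$ by definition. Absorption is needed only to place the unbounded-support times inside $[0,\eta]$.
\item The heuristic that $X(i)$ ``leaves zero immediately'' is neither proved nor used, and can be dropped.
\end{itemize}
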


It is worth pointing out here that for superprocesses with discontinuous motions, which exhibit instantaneous propagation of supports, exceptional times (at which the support \textit{is} compact) were recently studied by Hong and Mytnik \cite{HM2025}, who showed that the set of such times has Hausdorff dimension one for certain super-stable processes. Thus, compactness and unboundedness of the support play opposite roles depending on if one has a superprocess (with discontinuous motion) or a solution to \eqref{e_sdesystem}. For the superprocesses, there is unbounded support for a.a.\ times and compact support at exceptional times; for solutions to \eqref{e_sdesystem}, these properties are switched. In our setting, it is a natural open problem to consider the Hausdorff dimension of the set of exceptional times at which the support is unbounded, and to investigate whether or not it depends on the tails $q$. 



We now summarize our proof strategy for Theorem~\ref{thm_main_compact}. 
The method of~\cite{K1997} does not work in this setting, as it replies both on continuous space and crucially on the continuity of the spatial motion. Instead, we rely on analyzing the evolution of mass on a half-space via a martingale decomposition together with a condition on the quadratic variation.

For simplicity, for this discussion we restrict to $d=1$, and we consider the simplified equation \eqref{eq_sys_intro} with $\cL = \Delta$, which is the same as \eqref{e_sdesystem} with $\cL = \Delta$, $f\equiv 0$ and $\sigma(x) = x^\gamma$ for some $\gamma \in (0,1/2)$. In dimension one, we recall that $\Delta g(i) = \tfrac 12(g(i+1) + g(i-1)) - g(i)$. Suppose that $(X_t)_{t \geq 0}$ is an $\ell^1_+$-valued solution under these assumptions. For $i \in \Z$, for $t \geq 0$ we define
\begin{equation}\label{def_Yti_intro}
	Y_t(i) := \sum_{j = i}^\infty X_t(j). 
\end{equation}
That is, $Y_t(i)$ is the amount of mass of $X_t$ to the right of $i$. In particular, $Y_t(i) \geq 0$, and $Y_t(i) = 0$ if and only if $X_t(j) = 0$ for all $j \geq i$; hence $X_t$ has support bounded to the right whenever $Y_t(i) = 0$, and so in order to prove our result it suffices to show that, for sufficiently large values of $i$, $Y_t(i) = 0$ for most $t$ (in the sense of Lebesgue measure). 

$(Y_t(i))_{t \geq 0}$ evolves according to
\begin{equation} \label{eq_Ytdyn_intro}
	dY_t(i) = \frac 12 \left(X_t(i-1) - X_t(i)\right)\, dt + \sum_{j = i}^\infty X_t(j)^\gamma \, dB_t(j).
\end{equation}
A priori this is only formal, but we dispense with technicalities for this discussion. Clearly $Y_t(i)$ is not Markovian with respect to its own filtration. Moreover, it cannot be easily coupled with a Markov process or diffusion. The key observation concerns its quadratic variation. Considering the martingale term in \eqref{def_Yti_intro}, we observe that since $2\gamma < 1$,
\begin{equation*}
    \sum_{j=i}^\infty X_t(j)^{2\gamma} \geq \bigg(\sum_{j=i}^\infty X_t(j)\bigg)^{2\gamma},
\end{equation*}
which implies that the quadratic variation of $Y_t(i)$ satisfies
\begin{equation} \label{eq_QVbd_informal}
    d \langle Y(i) \rangle_t \geq \frac 1 2 Y_t(i)^{2\gamma} dt.
\end{equation}
Thus, while the dynamics of $(Y_t(i))_{t \geq 0}$ are not autonomous (i.e.\ it is not Markov), its quadratic variation satisfies an autonomous lower bound. Based on the heuristic that larger martingale fluctuations increase the chances of a solution hitting zero, we should therefore expect the zero set of $(Y_t(i))_{t\geq 0}$ to be at least as large as if it had quadratic variation equal to $\tfrac 1 2 \int_0^t Y_s(i)^{2\gamma} ds$. Supposing, for a sufficiently large value of $i$, that the drift in \eqref{def_Yti_intro} can be approximated or bounded above by $\delta>0$, we may then expect that the half-space process $(Y_t(i))_{t \geq 0}$ has similar behaviour to a solution of the following SDE:
\begin{equation} \label{eq_Zsde}
    dZ_t = \delta dt + Z_t^\gamma dB_t.
\end{equation}
An understanding of the behaviour near zero of solutions to \eqref{eq_Zsde} for small $\delta>0$ and $\gamma \in (0,1/2)$ underlies our proof. However, the theory of diffusions and Markov processes is not directly applicable. Instead, we introduce a general class of (not necessarily Markovian) semimartingales, which we call $\sM_{\gamma,\delta}$, whose behaviour generalizes that of \eqref{eq_Zsde}. Ultimately, we are able to study the excursion structure of such processes, using certain uniform estimates which hold for all elements of $\sM_{\gamma,\delta}$ combined with a distributional shift-invariance property we call the ``pseudo-strong Markov property''. In Theorem~\ref{thm_Y_zeroset}, we prove that processes in $\sM_{\gamma,\delta}$ spend Lebesgue-positive time at zero by studying their local time, which we analyze via the aforementioned excursion analysis. The proof of Theorem~\ref{thm_main_compact} is concluded by constructing monotone couplings of $(Y_t(i))_{t \geq 0}$, for large values of $i$, with elements of $\sM_{\gamma,\delta}$ for small $\delta$.

Although we do not actually use the SDE \eqref{eq_Zsde} to prove our main result, our results on class-$\sM_{\gamma,\delta}$ processes have implications for its solutions. Despite \eqref{eq_Zsde} being quite an elementary equation, the following is apparently a novel result.
\begin{theorem} \label{thm_sde}
    Let $\gamma \in (0,1/2)$ and $\delta>0$, and suppose that $(Z_t)_{t  \geq 0}$ is a (non-negative) solution to \eqref{eq_Zsde} with $Z_0 = 0$.     Then $\mathrm{Leb}(\{t \in [0,T] : Z_t = 0\}) > 0$ a.s for any $T>0$.
\end{theorem}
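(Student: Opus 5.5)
Since $Z$ is a one-dimensional time-homogeneous diffusion, I would prove Theorem~\ref{thm_sde} directly with scale function and speed measure. The idea is to show that the boundary $0$ is regular and that the speed measure puts an atom there. This approach does not need the class $\sM_{\gamma,\delta}$ at all.

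The generator on $(0,\infty)$ is $\frac12 x^{2\gamma}\partial_{xx} + \delta\partial_x$. Its scale density is
\[
s'(x)=\exp\Bigl(-\int_1^x 2\delta y^{-2\gamma}\,dy\Bigr),
\]
which is bounded and bounded away from $0$ near $0$, because $2\gamma<1$ makes $y^{-2\gamma}$ integrable. The speed density is $m(x)=2/(s'(x)x^{2\gamma})$, again integrable near $0$. Hence $0$ is a regular boundary in Feller's classification, so a boundary condition must be specified there.

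The next step is to identify which boundary behaviour the SDE solution has, and this is where the argument has real content. Take any non-negative weak solution and the time change $A_t=\int_0^t Z_s^{2\gamma}\,ds$. On $\{Z>0\}$, the process $Z$ is a time change of the regular diffusion. The key computation is the occupation-time identity: for bounded $g$ supported in $(0,\infty)$,
\[
\int_0^t g(Z_s)Z_s^{2\gamma}\,ds=\int L^a_t\, g(a)\,da ,
\]
where $L^a_t$ are semimartingale local times. Applying Tanaka's formula to $s(Z)$ then shows that $s(Z_t)-\delta\int_0^t s'(0)\indc(Z_u=0)\,du$ is a local martingale. Equivalently, $Z$ satisfies $d Z=\delta\,dt+Z^\gamma dB$ with the drift still acting at $0$, which is a sticky reflection. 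The amount of stickiness is read off from the generator condition $\delta f'(0)=\rho\frac12(\ldots)$, giving a speed atom $m(\{0\})=c/\delta>0$.

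To conclude, I would argue by contradiction. Suppose $\mathrm{Leb}\{s\le T: Z_s=0\}=0$. Then $\int_0^T \indc(Z_s=0)\,ds=0$, so $Z_T=\delta T+\int_0^T Z^\gamma dB$ behaves as an instantaneously reflecting diffusion. But for an instantaneously reflecting diffusion, the Tanaka/local time at $0$ gives $dZ=Z^\gamma dB+\frac12 dL^0$ with drift only off zero. Since $0$ is regular, $Z$ returns to $0$, and the local time $L^0$ grows on the zero set. Comparing the two semimartingale decompositions of $Z$ forces
\[
\int_0^t \delta\,\indc(Z_s=0)\,ds=\tfrac12 L^0_t ,
\]
with $L^0_t>0$ for $t>0$, because $Z$ hits $0$ immediately: the process starts at $0$ and $0$ is regular. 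This contradicts the assumption.

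The main obstacle is justifying that $L^0_t>0$, i.e.\ that $Z$ really returns to $0$ arbitrarily soon rather than escaping because of the drift $\delta$. I would handle this with the scale function: $s(Z)$ is a time-changed Brownian motion away from $0$ with finite $s(0)$. The finiteness of the speed measure near $0$ shows, via the time change by $A$, that starting near $0$ one hits $0$ before leaving $[0,\varepsilon]$ with positive probability. A Blumenthal 0-1 argument then upgrades this to almost surely along times accumulating at $0$.
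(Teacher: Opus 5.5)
Your contradiction argument rests on the claim that $L^0_t>0$ for $t>0$, ``because $Z$ starts at $0$ and $0$ is regular'', and that implication is false. The identity $\delta|\cZ_t|=\tfrac12 L^0_t$ holds for every non-negative solution, with no contradiction hypothesis (it is the paper's \eqref{eq_localtime_zeroset}). So positivity of the semimartingale local time at $0$ is equivalent to the theorem itself, and knowing that $Z$ returns to $0$ says nothing about it. A concrete counterexample: take $\gamma=1/2$ and $\delta<1/2$. Then $4Z$ is a squared Bessel process of dimension $4\delta\in(0,2)$, and its scale function and speed measure are both finite near $0$. Hence $0$ is regular and $Z$ revisits $0$ at times accumulating at $0$, yet its zero set is Lebesgue-null and $L^0\equiv 0$. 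Your argument uses $\gamma<1/2$ only to get regularity of $0$, so it would equally ``prove'' the false statement in that case. Your last paragraph therefore addresses the wrong obstacle: it only aims to show that $Z$ returns to $0$.

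Separately, the framework you invoke is not available. Uniqueness in law for \eqref{eq_Zsde} is open, which the paper stresses, so a weak solution is not known to be a Markov diffusion. Without that, you cannot use Feller's boundary classification, a speed-measure atom at $0$, or a Blumenthal $0$--$1$ law for the germ field of $Z$. This is exactly why the paper derives the result from Theorem~\ref{thm_Y_zeroset}(b) for the whole class $\sM_{\gamma,\delta}(0)$, via downcrossing counts and excursion estimates.

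That said, the one computation you actually carried out already gives a complete proof that needs no Markov property. Set $s'(x)=\exp\bigl(-\tfrac{2\delta}{1-2\gamma}x^{1-2\gamma}\bigr)$ for $x\ge0$, $s'\equiv1$ on $(-\infty,0)$, and $s(x)=\int_0^x s'$. Because $\gamma<1/2$, this $s$ is $C^1$ and concave with locally integrable $s''$. It\^o--Tanaka together with the occupation formula then give
\[ s(Z_t)=N_t+\delta\,|\cZ_t|,\qquad N_t=\int_0^t s'(Z_u)Z_u^{\gamma}\,dB_u . \]
Let $\rho=\inf\{t:|\cZ_t|>0\}$. The stopped process $s(Z_{t\wedge\rho})=N_{t\wedge\rho}$ is a non-negative continuous local martingale started at $0$, hence identically $0$. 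Thus $Z\equiv0$ on $[0,\rho]$, which gives $|\cZ_\rho|=\rho$. But $|\cZ_\rho|=0$ by continuity, so $\rho=0$ a.s. (Equivalently, Skorokhod's lemma identifies $\delta|\cZ_t|=\sup_{u\le t}(-N_u)$.)

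Here $\gamma<1/2$ enters precisely through $0<s'(0)<\infty$, which lets the natural scale pass through $0$. At $\gamma=1/2$ one has $s'(0+)=\infty$, and the argument correctly breaks down. This route is much shorter than the paper's, but it uses the exact quadratic variation $Z^{2\gamma}\,dt$. For class-$\sM_{\gamma,\delta}$ processes only a lower bound on $\langle M\rangle$ is known, so the scale change does not cancel the drift away from $0$; that is why the paper needs its heavier machinery for Theorem~\ref{thm_main_compact}.
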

One observes easily that it is impossible to have $Z_t = 0$ for all $t \in [a,b]$ for any $a<b$. Thus, the zero set of $(Z_t)_{t \geq 0}$ has an interesting topological character: it is a (random) fat Cantor set, i.e.\ a nowhere dense set of positive Lebesgue measure. Another continuous process whose zero set is a fat Cantor set is the sticky Brownian motion. (For a treatment of this process via SDEs, see \cite{EP2014}.) However, sticky Brownian motion is virtually engineered to have this property; it is more surprising that it should occur in a setting with no intrinsic stickiness, and we believe that this behaviour has not previously been observed for non-sticky SDEs. We do not explore it here in detail here, but our methods, in particular the arguments in Section~\ref{s_zeros}, can also be used obtain quantitative information on the Lebesgue measure of $\{t \in [0,T] : Z_t>0\}$ when $\delta T \ll 1$. The behaviour of the zero set of solutions to \eqref{eq_Zsde} for $\gamma \in (0,1/2)$ can also be compared with the case $\gamma =1/2$. For $\gamma = 1/2$, the solution is a (time-changed) squared-Bessel process of dimension $\delta / 4$. If $\delta \geq 8$, $\{0\}$ is a polar set. For $\delta \in (0,8)$, $0$ is reflecting, and the Hausdorff dimension of the zero set equals $1 - \tfrac{\delta}{8}$, which is a consequence of the identification of the inverse local time (at $0$) as a stable subordinator. 



We also remark, pursuant to our discussion of uniqueness in the Introduction, that as far as we are aware it is unknown if weak uniqueness holds for \eqref{eq_Zsde} when $\gamma \in (0,1/2)$. As before, we observe that non-negativity removes the known pathologies and so it is plausible that weak uniqueness could hold. In \cite{BBC2007}, Bass, Burdzy and Chen proved pathwise uniqueness of solutions to Girsanov's SDE \eqref{eq_Girsanov} among solutions which spend Lebesgue-null time at zero. While this fails here (evidently!), it remains a possibility that the ``chasing phenomenon'' they use to prove pathwise uniqueness could be shown for solutions to \eqref{eq_Zsde}. 

We prove Theorem~\ref{thm_sde} as a consequence of our more general analysis of class-$\sM_{\gamma,\delta}$ semimartingales; however, if it were shown that weak uniqueness held for \eqref{eq_Zsde}, then solutions would be Markov, and one could prove Theorem~\ref{thm_sde} quite easily using the speed measure and scale function which would therefore exist.

We conclude by discussing the applicability of our method to related problems. Indeed, as discussed in the Introduction, the original motivation for this work was an understanding of the supports of solutions to \eqref{eq_SHE_stable}, and other non-local SPDEs, with $\gamma \in (0,1)$. We studied \eqref{e_sdesystem} instead, in part to avoid technicalities which have no bearing on the main methodological contributions of our work. However, we have developed these new methods in a generality which makes them easily applied in other settings. In particular, the proof of Theorem~\ref{thm_main_compact} consists of a short ``pre-processing'' argument which allows one to reduce the problem to a statement about class-$\sM_{\gamma,\delta}$ semimartingales, and then a general result (Theorem~\ref{thm_Y_zeroset}) on class-$\sM_{\gamma,\delta}$ semimartingales. Since the latter result does not depend on the underlying model, in order to prove a version of Theorem~\ref{thm_main_compact} for solutions to \eqref{eq_SHE_stable}, one simply needs to give the pre-processing argument. This argument requires uniform-in-time bounds of certain integral quantities, which are then used to construct couplings which are useful with high probability. These bounds are the only other ingredient needed to extend our result to other models. 
Thus, in addition to proving Theorem~\ref{thm_main_compact} for a particular class of models, in this work we have developed a general machinery which can be used to prove similar results for different models.

The rest of the paper is organized as follows:
\begin{itemize}
    \item Section~\ref{s_prel} discusses some preliminaries, including the existence of solutions to \eqref{e_sdesystem} and the construction of couplings with a higher-dimensional version of the process from \eqref{def_Yti_intro}.
    \item Section~\ref{s_M} introduces a family of stochastic processes called $\sM_{\gamma,\delta}$ and establishes some of their elementary properties, notably the pseudo-strong Markov property.
    \item Section~\ref{s_intervals} proves some key results concerning the behaviour of class-$\sM_{\gamma,\delta}$ processes near zero.
    \item Section~\ref{s_zeros} contains an analysis of the excursions of class-$\sM_{\gamma,\delta}$ processes from zero and uses it to prove a general result on their zero sets, Theorem~\ref{thm_Y_zeroset}.
    \item Section~\ref{s_proofs} combines the coupling arguments from Section~\ref{s_prel} with Theorem~\ref{thm_Y_zeroset} to prove Theorem~\ref{thm_main_compact}. It is also where we prove Theorem~\ref{thm_unbounded}.
\end{itemize}

\section{Preliminaries} \label{s_prel}

\subsection{Existence of solutions} \label{s_sol}
We recall the random walk jump rates $q_{ij} = q(j-i)$ and the corresponding generator $\cL$ from \eqref{def_gen}. Let $(P_t)_{t \geq 0}$ denote the corresponding semigroup, i.e.\ $P_t = e^{\cL t}$ for $t \geq 0$. We remark that the symmetry condition on $q(\cdot)$ from Assumption~\ref{assumption1.5}(a) implies that $\cL$ and $P_t$ are self-adjoint (for all $t >0$). This fact will be used implicitly in the sequel.

Let $(\Omega, \sF, (\sF_t)_{t\geq 0}, \bP)$ be a filtered probability space on which $\{(B_t(i))_{t \geq 0} : i \in \Z^d\}$ is a family of independent Brownian motions adapted to $(\sF_t)_{t \geq 0}$. An $\R^{\Z^d}_+$-valued process $(X_t)_{t \geq 0}$ is a (non-negative) solution to \eqref{e_sdesystem} if
\begin{enumerate}[label=(\roman*)]
    \item For each $i \in \Z^d$ and $t \geq 0$, $\sum_{j \in \Z^d}q(i-j)|X_t(j) - X_t(i)| < \infty$ a.s.
    \item For each $i \in \Z^d$, $(X_t(i))_{t \geq 0}$ is a continuous, non-negative, $(\sF_t)$-adapted process a.s.\ satisfying 
    \begin{equation} \label{eq_sdei}
	X_t(i) = X_0(i) + \int_0^t \mathcal{L} X_s(i) ds + \int_0^t f(X_s(i))ds +  \int_0^t \sigma(X_s(i)) dB_s(i), \quad t \geq 0.
\end{equation}
\end{enumerate}


Under Assumption~\ref{assumption1}(a), sub-exponential growth of $X_t$ is sufficient to guarantee the finiteness of $\cL X_t$. In addition to the space $\Xrap$ of rapidly decaying functions, see \eqref{def_Xrap}, we introduce the space of tempered functions
\begin{equation*}
    \Xtemp = \Xtemp(\Z^d) := \left\{ f : \Z^d \to \R : \langle f, \Phi_\lambda \rangle < \infty \,\,\, \forall \, \lambda < 0 \right\},
\end{equation*}
where we recall that $\Phi_\lambda(i) = e^{\lambda|i|}$, and denote its non-negative elements by $\Xtemp^+$. The following theorem gives the existence of solutions to \eqref{e_sdesystem} in $\Xrap^+$ and $\Xtemp^+$, the mild form representation, and a first moment bound. 

\begin{theorem} \label{thm_sol} Suppose Assumptions~\ref{assumption1} and~\ref{assumption1.5} hold. 

(a) For any $X_0 \in \Xtemp^+$, there exists a probability space supporting a continuous $\Xtemp^+$-valued solution $X = (X_t)_{t \geq 0}$ with initial state $X_0$.

(b) For a continuous $\Xtemp^+$-valued solution $X$, for any $\phi \in \Xrap$, with probability one, for all $t \geq 0$,
\begin{align} \label{eq_weak}
\langle X_t, \phi \rangle  &= \langle X_0 , \phi\rangle + \int_0^t (\langle X_s, \mathcal{L} \phi \rangle + \langle f(X_s), \phi \rangle ) ds   +  \int_0^t \sum_{j \in \Z^d} \sigma(X_s(j)) \phi(j) dB_s(j).
\end{align}
Furthermore, for all $(t,i) \in \R_+ \times \Z^d$, $X$ a.s.\ satisfies the mild form
	\begin{equation} \label{eq_mild}
		X_t(i) =  P_t X_0(i)  + \int_0^t  P_{t-s} (f(X_s))(i) ds +\int_0^t  \sum_{j \in \Z^d}    P_{t-s}(i-j) \sigma(X_s(j)) dB_s(j),
	\end{equation}
and
	\begin{equation} \label{eq_firstmoment}
		\E[X_t(i)] \leq e^{Lt} P_t X_0(i),
	\end{equation}
where $L\geq 0$ is the Lipschitz constant of $f$.

(c) If $X$ is a continuous $\Xtemp^+$-valued solution and $X_0 \in \Xrap^+$, then $X$ is a continuous $\Xrap^+$-valued solution. In this case, \eqref{eq_weak} holds for all $\phi \in \Xtemp$.
    
\end{theorem}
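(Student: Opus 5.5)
For part (a), I will follow the standard route for lattice SDEs with non-Lipschitz noise: truncate, solve, and pass to a limit by tightness. Concretely, I truncate to the finite box $\Lambda_n=\{|i|\le n\}$, setting $X^n(i)=0$ outside $\Lambda_n$ and restricting the jump kernel to $\Lambda_n$. On $\Lambda_n$, \eqref{e_sdesystem} is a finite system with continuous coefficients of linear growth; this uses \eqref{eq_sigma_assumption1} together with the Lipschitz property of $f$. A weak non-negative solution therefore exists by Skorokhod's theorem. To obtain non-negativity, I first solve with $\sigma(x^+)$ and $f(x^+)$, then use $f(0)=\sigma(0)=0$ and the non-negative off-diagonal drift $\sum_j q(i-j)X(j)\ge 0$, which forces $X^n\ge 0$ through a Tanaka-formula argument applied to $X^n(i)^-$.

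The next step is uniform moment bounds in weighted norms. For $\lambda<0$ (and for $\lambda>0$ in part (c)), Itô's formula applied to $\langle X^n_t,\Phi_\lambda\rangle$ gives a drift bounded by $\langle X^n_s, |\cL\Phi_\lambda|\rangle + L\langle X^n_s,\Phi_\lambda\rangle$. Since $|\cL\Phi_\lambda|\le C_\lambda\Phi_\lambda$ by Assumption~\ref{assumption1.5}(b), using $e^{\lambda|i|}\le e^{\lambda|j|}e^{|\lambda||i-j|}$, Gronwall yields $\sup_n\E\sup_{t\le T}\langle X^n_t,\Phi_\lambda\rangle<\infty$. Higher moments follow from BDG and \eqref{eq_sigma_assumption1}, since $\sigma^2\le C(x^{2\theta}+x^2)$ and $x^{2\theta}\le 1+x^2$.

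These bounds, combined with Kolmogorov increment estimates coordinatewise, give tightness of $X^n$ in $C(\R_+,\Xtemp^+)$ for the topology generated by the seminorms $\langle\cdot,\Phi_\lambda\rangle$, $\lambda<0$. I then identify limits via the martingale problem: for each $i$, $M_t(i)=X_t(i)-X_0(i)-\int_0^t(\cL X_s(i)+f(X_s(i)))ds$ is a continuous martingale with $\langle M(i),M(j)\rangle_t=\delta_{ij}\int_0^t\sigma^2(X_s(i))ds$. Here continuity of $\sigma$ and $f$ suffices, and the tail of $\cL X$ is controlled by the weighted bounds. A martingale representation theorem, after enlarging the space, produces the Brownian motions. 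I expect this limit-identification step, and especially passing the infinite sum $\cL X^n(i)$ to the limit uniformly in time, to be the main technical obstacle; I would handle it by splitting $\sum_j$ into $|j|\le R$ and a tail bounded by $e^{-\lambda' R}\sup_t\langle X_t,\Phi_{\lambda}\rangle$ with a slightly weaker weight.

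For part (b), I sum \eqref{eq_sdei} against $\phi\in\Xrap$, truncating to $|j|\le R$ and letting $R\to\infty$, and use symmetry to move $\cL$ onto $\phi$. The justification is the weighted bound $\langle X,\Phi_{-\lambda}\rangle<\infty$ paired with $|\phi|,|\cL\phi|\lesssim\Phi_{-2\lambda}$-type decay. The stochastic integral converges in $L^2$, with a localizing stopping time for $\sup_{s\le t}\langle X_s,\Phi_{\lambda}\rangle$ for all $\lambda<0$. The mild form then follows by taking $\phi_s=P_{t-s}\delta_i$, which lies in $\Xrap$ by the exponential moments of the walk. I apply a time-dependent version of \eqref{eq_weak}, obtained through the usual product rule, and use $\partial_s P_{t-s}+\cL P_{t-s}=0$. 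For \eqref{eq_firstmoment}, I take expectations in the mild form, for which the localization is removable by Fatou. Writing $f(x)\le Lx$ since $f(0)=0$ gives $m_t\le P_tX_0+L\int_0^tP_{t-s}m_s\,ds$, and iterating (Gronwall for semigroups) gives $e^{Lt}P_tX_0$.

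For part (c), I apply Itô or the weak form to $\langle X_t,\Phi_\lambda\wedge K\rangle$ for $\lambda>0$ with cutoffs, use $|\cL\Phi_\lambda|\le C_\lambda\Phi_\lambda$ uniformly in the cutoff, and take expectations with the first-moment bound and Fatou. This gives $\E\sup_{t\le T}\langle X_t,\Phi_\lambda\rangle<\infty$, the supremum being handled by BDG, and continuity in $\Xrap$ follows by dominated convergence from the tails. Finally, \eqref{eq_weak} extends to $\phi\in\Xtemp$ by the same truncation argument as in (b), now with the roles of the weights reversed.
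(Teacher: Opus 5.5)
Your proof of (b) is essentially the paper's argument: truncate as in Lemma~\ref{lemma_bddweak}, then use the time-dependent test function $P_{t-s}\delta_i$ as in Remark~\ref{remark_bddmild}. Your route for (a) differs from the paper's sketch. The paper regularizes the coefficient rather than space. It takes Lipschitz $\sigma_n\to\sigma$ uniformly and builds $\Xtemp^+$-valued solutions for each $\sigma_n$ on all of $\Z^d$, by Picard iteration in mild form as in \cite{S1994} or via finite domains as in \cite{SS1980,DP1998}. It then lets $n\to\infty$ by tightness. You keep the original continuous $\sigma$, truncate to boxes and use Skorokhod's weak existence theorem, so there is only one limit. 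The cost is that you never have strong or mild solutions at the approximate level. However, the tightness and martingale-problem identification you describe is exactly what the paper's $\sigma_n\to\sigma$ step needs anyway, so this is a legitimate alternative.

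\textbf{The step that fails as written is the cutoff in (c).}
\begin{itemize}
\item Since uniqueness in law is open, (c) must be proved for an arbitrary continuous $\Xtemp^+$-valued solution, not just the one built in (a). Such a solution is a priori only tempered.
\item $\Phi_\lambda\wedge K$ is bounded but does not decay, so $\langle X_t,\Phi_\lambda\wedge K\rangle$ may be infinite. It is also not covered by (b), which requires $\phi\in\Xrap$. Using it presupposes $X_t\in\ell^1$, which is part of the conclusion.
\item The truncation must make the weight decay. Take for instance $\phi_\eps:=\Phi_\lambda/(1+\eps\Phi_\mu)$ with $\mu>\lambda$. It is bounded by $\eps^{-1}\Phi_{\lambda-\mu}$ and increases to $\Phi_\lambda$ as $\eps\downarrow 0$. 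It satisfies $\phi_\eps(j)\le e^{(\lambda+\mu)|i-j|}\phi_\eps(i)$, hence $\cL\phi_\eps\le C\phi_\eps$ with $C=\langle q,\Phi_{\lambda+\mu}\rangle$, uniformly in $\eps$.
\item Since $\phi_\eps\notin\Xrap$, you first need \eqref{eq_weak} for test functions with $|\phi|+|\cL\phi|\le c\,\Phi_{-\nu}$ for some $\nu>0$. Your truncation proof of (b) gives this, because it only uses $\sup_{s\le t}\langle X_s,\Phi_{-\nu}\rangle<\infty$.
\item With $f(x)\le Lx$, the process $e^{-(C+L)t}\langle X_t,\phi_\eps\rangle$ is a non-negative local supermartingale. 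Hence $\bP\bigl(\sup_{t\le T}e^{-(C+L)t}\langle X_t,\phi_\eps\rangle>a\bigr)\le\langle X_0,\Phi_\lambda\rangle/a$, uniformly in $\eps$.
\item Monotone convergence in $\eps$ then gives $\sup_{t\le T}\langle X_t,\Phi_\lambda\rangle<\infty$ a.s.\ for every $\lambda$. This is exactly what your tail argument for continuity in $\Xrap$ requires.
\end{itemize}

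This also removes your BDG step, which does not work as sketched. The bound $x^{2\theta}\le 1+x^2$ you used in (a) fails for growing weights, since $\sum_j\Phi_{2\lambda}(j)=\infty$ when $\lambda>0$. The same issue arises for the stochastic integral when you extend \eqref{eq_weak} to $\phi\in\Xtemp$. In either place, when $2\theta<1$, bound $\sum_j\Phi_{2\lambda}(j)X(j)^{2\theta}$ by H\"older. It is at most a constant times $\langle X,\Phi_\mu\rangle^{2\theta}$ for any $\mu>\lambda/\theta$.
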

We remark that Proposition~\ref{prop_exist} is a corollary of parts (a) and (c) of the above. We will not give a full proof of Theorem~\ref{thm_sol}, but we give a brief discussion of the literature and a sketch of the proof. Tempered solutions to infinite-dimensional SDEs, in relatively similar settings, have been constructed in \cite{SS1980} and \cite{DP1998}. In fact, the closest analogue of our situation is in Shiga \cite{S1994}, which covers the exact same assumptions, but for the stochastic heat equation (with spatial operator $\Delta$). To prove Theorem~\ref{thm_sol}, one can adapt the proof in \cite{S1994} to the discrete-space setting with very few modifications. In order to construct solutions in tempered spaces, one requires sufficiently good integrability properties of the random walk heat kernel. In our setting, this is guaranteed by Assumption~\ref{assumption1.5}; see Lemma~2.1 in \cite{DP1998}, where tempered solutions are similarly constructed. 

Given a solution in the sense of \eqref{eq_sdei}, a straightforward limiting argument gives the more general weak form \eqref{eq_weak}. Within $\Xtemp^+$, the equivalence of (analytically) weak solutions and mild solutions (i.e.\ those satisfying \eqref{eq_weak} and \eqref{eq_mild}, respectively) follows from a standard argument. To construct either, the usual approach is to first assume that $\sigma$ is globally Lipschitz. In this case, one can construct a $\Xtemp^+$-valued mild solution via a Picard iteration scheme as in \cite{S1994}, or an analytically weak $\Xtemp^+$-valued solution by approximating $\Z^d$ with finite domains as in \cite{SS1980, DP1998}. The equivalence of weak and mild solutions allows one to establish \eqref{eq_weak} and \eqref{eq_mild}. For non-Lipschitz $\sigma$ satisfying \eqref{eq_sigma_assumption1}, one takes a family $X^n$ of $\Xtemp^+$-valued solutions with Lipschitz noise coefficients $\sigma_n$ such that $\sigma_n \to \sigma$ uniformly, and proves tightness of $(X^n : n \in \N)$; any limit point is a solution to \eqref{e_sdesystem} with noise coefficient $\sigma$. 
The first moment bound \eqref{eq_firstmoment} can be proved using the mild form \eqref{eq_mild} and Itô's lemma. Finally, to prove that an $\Xtemp^+$-valued solution with $X_0 \in \Xrap^+$ is actually $\Xrap^+$-valued, we argue as in \cite[Theorem~2.2(c)]{DP1998} or \cite[Theorem~2.5]{S1994}.

\subsection{The half-space process}
In this section, we obtain a stochastic integral representation for the mass of a solution on a half-space, which is the key quantity analyzed in the proof of Theorem~\ref{thm_main_compact}. We express $j \in \Z^d$ in coordinates as $j = (j_1,\dots,j_d)$. For a solution $X$ to \eqref{e_sdesystem} on~$\Z^d$, for $i \in \Z$ and $t \geq 0$ we define
\begin{equation}
	Y_t(i) := \sum_{j \in \Z^d \, :\, j_1 \geq i} X_t(j).
\end{equation}
This process generalizes to arbitrary dimensions the half-line process \eqref{def_Yti_intro} for which the heuristics were discussed in the Introduction. Let $h \in \R^{\Z^d}$ denote the half-space indicator function 
\begin{equation} \label{eq_heaviside}
	h(j) = \begin{cases}
		1, & j_1 \geq 0, \\ 0, & j_1<0.
	\end{cases}
\end{equation}
For $i \in \Z$ we set $\underline{i} := (i,0,\dots,0) \in \Z^d$, and we remark that $Y_t(i) = \langle X_t , h(\cdot - \underline{i})\rangle$. 

From this point forward, we consider $\ell^1_+$-valued solutions instead of $\Xrap^+$-valued solutions, and we dispense with Assumption~\ref{assumption1.5}. The loss of Assumption~\ref{assumption1.5}(b) has essentially no consequences in the sequel; the loss of symmetry from Assumption~\ref{assumption1.5}(a) means that $\cL$ and $P_t$ are no longer self-adjoint. Going forward, we denote their adjoints by $\cL^*$ and $P_t^*$ and remark that both are bounded operators on $\ell^\infty(\Z^d)$.



In order to obtain a semimartingale representation for $(Y_t(i))_{t\geq 0}$, we essentially use the weak form of \eqref{e_sdesystem}, i.e.~\eqref{eq_weak}, with $\phi = h(\cdot- \underline{i})$. Indeed, for an $\Xrap^+$-valued solution $X$ under Assumptions~\ref{assumption1} and~\ref{assumption1.5}, this is precisely what we obtain from \eqref{eq_weak} in Theorem~\ref{thm_sol} using $\phi = h(\cdot -\underline{i}) \in \Xtemp$. In order to obtain the representation for the more general $\ell^1_+$-valued solutions for which we state Theorem~\ref{thm_main_compact}, we need to prove that the weak form holds for bounded functions in this setting. This is straightforward, as bounded functions are the natural dual pairing for $\ell^1_+$-valued solutions, but we give the proof for completeness' sake. We remark that in the absence of the symmetry which was previously provided by Assumption~\ref{assumption1.5}(a), the adjoint operator $\cL^*$ now appears in the weak form instead of $\cL$, which is the single way in which the formula in the sequel differs from \eqref{eq_weak}.

\begin{lemma} \label{lemma_bddweak}
	 Suppose Assumption~\ref{assumption1} holds. 
    Let $i \in \Z$ and suppose that $X = (X_t)_{t\geq 0}$ is a continuous $\ell^1_+$-valued solution to \eqref{e_sdesystem}. Then for any bounded $\phi : \Z^d \to \R$, $t\mapsto \langle X_t, \phi \rangle$ is a continuous semimartingale with representation
	\begin{equation} \label{eq_bddweak}
		\langle X_t,\phi\rangle  = \langle X_0,\phi\rangle + \int_0^t (\langle X_s ,  \cL^* \phi\rangle + \langle f(X_s) ,  \phi \rangle ) ds + \int_0^t \sum_{j \in \Z^d}\sigma(X_s(j)) \phi(j) dB_s(j), \quad t \geq 0.
	\end{equation}
\end{lemma}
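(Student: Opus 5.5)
The plan is to derive \eqref{eq_bddweak} from the coordinatewise equations \eqref{eq_sdei} by first testing against finitely supported functions and then passing to a limit. Fix bounded $\phi$ and a finite box $\Lambda_n = \{j : |j|\leq n\}$, and set $\phi_n := \phi \indc_{\Lambda_n}$. Summing \eqref{eq_sdei} against $\phi_n$ over the finitely many $j\in\Lambda_n$ is legitimate and gives
\begin{equation*}
\langle X_t,\phi_n\rangle = \langle X_0,\phi_n\rangle + \int_0^t \big(\langle \cL X_s, \phi_n\rangle + \langle f(X_s),\phi_n\rangle\big) ds + \int_0^t \sum_{j\in\Lambda_n}\sigma(X_s(j))\phi(j)\, dB_s(j).
\end{equation*}
Since $\phi_n$ is finitely supported and $X_s\in\ell^1$, with $q\in\ell^1$, we can use Fubini to rewrite $\langle \cL X_s,\phi_n\rangle = \langle X_s,\cL^*\phi_n\rangle$, where $\cL^*\psi(j) = \sum_i q(i-j)(\psi(i)-\psi(j))$ is bounded on $\ell^\infty$ with $\|\cL^*\psi\|_\infty\leq 2\|q\|_1\|\psi\|_\infty$.

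Next we let $n\to\infty$ term by term, localizing with the stopping times $\tau_K := \inf\{t : \|X_t\|_{\ell^1}\geq K\}$. These satisfy $\tau_K\to\infty$ because $t\mapsto X_t$ is continuous in $\ell^1$. The left side and the initial term converge by dominated convergence, since $|\phi_n|\leq\|\phi\|_\infty$ and $X_t\in\ell^1$. For the drift, $\cL^*\phi_n\to\cL^*\phi$ pointwise and uniformly bounded, so $\langle X_s,\cL^*\phi_n\rangle\to\langle X_s,\cL^*\phi\rangle$ for each $s$, with domination by $2\|q\|_1\|\phi\|_\infty\sup_{s\le t}\|X_s\|_1<\infty$ on $[0,t]$. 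The $f$ term is handled the same way, using $|f(x)|\leq Lx$, which holds because $f(0)=0$ and $f$ is Lipschitz.

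The main obstacle is the martingale term, which needs $\sum_j \int_0^{t\wedge\tau_K}\sigma(X_s(j))^2\,ds<\infty$. By \eqref{eq_sigma_assumption1}, $\sigma(x)^2\leq 2C^2(x^{2\theta}+x^2)$. The $x^2$ part is controlled by $\|X_s\|_1^2$, but $\sum_j X_s(j)^{2\theta}$ need not be finite when $2\theta<1$ for a general $\ell^1$ sequence, so finiteness must come from the fact that the limit exists. We argue as follows. The partial sums $M^n_t := \langle X_t,\phi_n\rangle - \langle X_0,\phi_n\rangle - \int_0^t(\cdots)\,ds$ are continuous local martingales that converge a.s., uniformly on compacts (stopped at $\tau_K$), to the continuous process $M_t$ defined by the other terms. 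Taking $\phi\equiv1$ on a set $A$ shows that $\sum_{j\in A\cap\Lambda_n}\int\sigma(X_s(j))\,dB_s(j)$ is bounded on $[0,t\wedge\tau_K]$ uniformly in $n$, by a bound of order $K(1+\|q\|_1+L)t$. For independent Brownian integrals, quadratic variations add, so $\langle M^n\rangle_t = \sum_{j\in\Lambda_n}\int_0^t\sigma(X_s(j))^2\phi(j)^2\,ds$ is nondecreasing in $n$. A continuous local martingale that stays bounded has finite quadratic variation; this follows, for example, from optional stopping at the time $\langle M^n\rangle$ exceeds a level, combined with $\E\langle M^n\rangle_{\rho} = \E (M^n_\rho)^2\leq c_K^2$. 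Monotone convergence then gives $\E\sum_j\int_0^{t\wedge\tau_K\wedge\rho}\sigma(X_s(j))^2\,ds<\infty$ for $\phi\equiv1$, so the infinite stochastic integral is well defined. Finally, $M^n\to\sum_{j}\int\sigma\phi\,dB(j)$ by the $L^2$ isometry, and this limit agrees with the pathwise limit. Removing the localizations yields \eqref{eq_bddweak}, and continuity together with the semimartingale property follows from the representation.
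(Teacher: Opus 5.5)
Your proof is correct and follows the paper's route. You truncate to $\phi_n=\phi\indc_{\Lambda_n}$, pass to the limit in the drift terms by dominated convergence using $\sup_{s\le t}\|X_s\|_1<\infty$, and use the uniform-in-$n$ bound on the truncated martingales $M^n$ to control $\langle M^n\rangle$ and get convergence of the noise term. Your localization at $\tau_K$, the monotonicity of $\langle M^n\rangle$ in $n$, and the identity $\E\langle M^n\rangle_{t\wedge\tau_K}=\E(M^n_{t\wedge\tau_K})^2\le c_K^2$ spell out the paper's ``short argument using the Burkholder--Davis--Gundy inequality.'' One small ordering point: your stopping time $\rho$ depends on $n$, so send the level to infinity for fixed $n$ before letting $n\to\infty$.
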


Taking $\phi = h(\cdot - \underline{i})$ in Lemma~\ref{lemma_bddweak}, we obtain the following.
\begin{corollary}  \label{cor_Ystochastic}
	Suppose Assumption~\ref{assumption1} holds. 
    Let $i \in \Z$ and suppose that $X = (X_t)_{t\geq 0}$ is a continuous $\ell^1_+$-valued solution to \eqref{e_sdesystem}. Then for every $i \in \Z$, $t \mapsto Y_t(i)$ is a continuous, non-negative semimartingale with representation
	\begin{equation} \label{eq_Yibp}
		Y_t(i) = Y_0(i) + \int_0^t (\langle X_s ,  \cL^* h(\cdot - \underline{i})\rangle + \langle f(X_s) ,  h(\cdot - \underline{i})\rangle )ds + \int_0^t \sum_{j \in \Z^d \, :\, j_1 \geq i}\sigma(X_s(j))dB_s(j), \,\, t \geq 0.
	\end{equation}
\end{corollary}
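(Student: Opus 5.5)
The plan is to derive \eqref{eq_Yibp} from Lemma~\ref{lemma_bddweak} applied to $\phi = h(\cdot-\underline{i})$. This $\phi$ is bounded, taking values in $\{0,1\}$, so the lemma applies directly. With this choice, $\langle X_t,\phi\rangle = \sum_{j_1\geq i} X_t(j) = Y_t(i)$, since $X_t\geq 0$ and the absolute values in $\langle\cdot,\cdot\rangle$ are harmless.

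Next I would identify each term of \eqref{eq_bddweak} with the corresponding term of \eqref{eq_Yibp}.
\begin{itemize}
\item The drift terms $\langle X_s,\cL^*h(\cdot-\underline{i})\rangle + \langle f(X_s),h(\cdot-\underline{i})\rangle$ appear verbatim.
\item In the martingale term, $\sigma(X_s(j))\phi(j)$ equals $\sigma(X_s(j))$ for $j_1\geq i$ and $0$ otherwise. The stochastic integral therefore reduces to the sum over $\{j : j_1 \geq i\}$.
\end{itemize}
Continuity and the semimartingale property are inherited from the lemma. Non-negativity of $Y_t(i)$ is immediate because it is a sum of non-negative terms.

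The only point needing care is that the manipulations inside the lemma are legitimate, namely that $\cL^*\phi$ is bounded and the stochastic integral is well defined. Both are already guaranteed by the lemma for bounded $\phi$: $\cL^*$ is bounded on $\ell^\infty$ with norm at most $2\|q\|_{\ell^1}$, and the lemma's own proof handles square-integrability of the noise. So I expect no real obstacle; the corollary is a direct specialization of Lemma~\ref{lemma_bddweak}.
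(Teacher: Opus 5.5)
Your proposal is correct and is exactly the paper's argument: the corollary is obtained by taking $\phi = h(\cdot - \underline{i})$ in Lemma~\ref{lemma_bddweak}. Identifying $\langle X_t,\phi\rangle$ with $Y_t(i)$ and restricting the noise sum to $\{j : j_1 \geq i\}$ is all the paper does as well.
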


\begin{proof}[Proof of Lemma~\ref{lemma_bddweak}]
Let $X = (X_t)_{t \geq 0}$ be a continuous $\ell^1_+$-valued solution to \eqref{e_sdesystem} and let $\phi : \Z^d \to \R$ be bounded. We remark that it suffices to prove that \eqref{eq_Yibp} holds a.s.\ on $[0,T]$ for any $T>0$. We fix $T>0$. For $n \in \N$, let $\Lambda_n := [-n,n]^d \cap \Z^d$ and $\phi_n := \indc_{\Lambda n} \phi$. Note that $\phi_n \to \phi$ point-wise as $n \to \infty$. Summing \eqref{eq_sdei} over $i \in \Lambda_n$ 
	\begin{equation} \label{eq_Yibp_prelim}
		\langle X_t , \phi_n\rangle  = \langle X_0, \phi_n \rangle + D^n_t + M^n_t, \quad t \geq 0,
	\end{equation}
	where \begin{equation*}
		D^n_t := \int_0^t (\langle \cL X_s ,  \phi_n \rangle  + \langle f(X_s), \phi_n\rangle) ds \quad \text{ and } \quad M^n_t := \int_0^t \sum_{j \in \Lambda_n} \sigma(X_s(j)) \phi(j) dB_s(j).
	\end{equation*}
    We note that the first term in the integrand of $D^n_t$ is equal to $\langle X_s, \cL^* \phi_n \rangle$.
    Next, we observe that
    \begin{equation} \label{eq_phi_uniform}
        m := \sup_{n \in \N} \|\phi_n\|_{\infty} + \|\cL \phi_n\|_{\infty} < \infty.
    \end{equation}  
    Since $t \mapsto X_t$ is continuous in $\ell^1_+$, this implies by dominated convergence that $\lim_{n \to \infty} \langle X_t,\phi_n\rangle = \langle X_t,\phi \rangle$ for all $t \in [0,T]$ with probability one. 
    
    We remark that since $X$ is continuous in $\ell^1$, $V_T := \sup_{t \in [0,T]} \|X_t\|_1 < \infty$ a.s.\ for every $T>0$. Let $L$ denote the Lipschitz constant of $f$. Since $f(0) = 0$, it follows that $f : \ell^1 \to \ell^1$ (defined by applying $f$ component-wise) is a bounded operator with norm at most $L$, and in particular $\sup_{t \in [0,T]} \|f(X_t)\|_1 \leq L V_T$. In particular, by \eqref{eq_phi_uniform} we have
    \begin{align} \label{eq_ibp_uniform}
        \sup_{n \in \N} \sup_{t \in [0,T]} \left( |\langle X_t , \phi_n\rangle |+ | D^n_t| \right) \leq V_T + m(1+L)TV_T < \infty \,\text{ a.s.}
    \end{align}
    From \eqref{eq_Yibp_prelim}, this implies that 
    \[\sup_{n \in \N} \sup_{t \in [0,T]} | M^n_t| < \infty.\]
    In particular, the maximal processes of the local martingales $t\mapsto M^n_t$ are uniformly bounded in $n$. A short argument using the Burkholder-Davis-Gundy inequality then proves convergence of the local martingales $M^n$. Indeed, $\sup_{t\in[0,T]} |M^n_t - M_t| \to 0$ in probability, where
    \[ M_t := \int_0^t \sum_{j \in \Z^d} \sigma(X_s(j)) \phi(j) dB_s(j).\]
    Finally, in order to take the limit of $D^n_t$, using $V_T < \infty$ a.s.\ and \eqref{eq_phi_uniform} we observe that
    \[\sup_{n \in \N} \sup_{t \in [0,T]} |\langle X_t , \cL \phi_n \rangle|  + |\langle f(X_t), \phi_n\rangle| < \infty \, \text{ a.s.} \]
    Thus, by dominated convergence it suffices to show that $\langle f(X_t), \phi_n\rangle \to \langle f(X_t),\phi \rangle$ and $\langle X_t , \cL^* \phi_n\rangle \to \langle X_t, \cL^* \phi \rangle$
    as $n \to \infty$ for every $t \in [0,T]$. Both claims follow by dominated convergence using \eqref{eq_phi_uniform}, in the former case using $f(X_t) \in \ell^1$ as well. The proof is complete.
\end{proof}

We end the section with a final remark on $\ell^1_+$-valued solutions. 

\begin{remark} \label{remark_bddmild}
    We previously proved the mild form \eqref{eq_mild} and first moment bound \eqref{eq_firstmoment} for $\Xtemp^+$-valued solutions under Assumptions~\ref{assumption1} and~\ref{assumption1.5}. These also hold if $X$ is a continuous $\ell^1_+$-valued solution under Assumption~\ref{assumption1}. One first generalizes Lemma~\ref{lemma_bddweak} to time-dependent functions $\phi \in C^1([0,T],\ell^\infty(\Z^d))$ using a routine limiting argument, and \eqref{eq_mild} follows by choosing $\phi(t,j) = P_{T-t}^* \delta_i(j)$, where $\delta_i$ denotes the Kronecker delta function at $i$. The first moment bound \eqref{eq_firstmoment} then follows from a short argument using Itô's lemma.
\end{remark}

\subsection{A one-dimensional coupling result}
We have given a representation for $(Y_t(i))_{t \geq 0}$ in Corollary~\ref{cor_Ystochastic}. The purpose of this section is to construct a coupling which will allow us to bound $Y_t(i)$ above, until a certain stopping time, by a process which has constant drift, while maintaining an important condition on the quadratic variation. The process with which $(Y_t(i))_{t \geq 0}$ is coupled is an element of $\sM_{\gamma,\delta}$, the class of semimartingales introduced in the next section. In this section, we continue to work with $\ell^1_+$-valued solutions without assuming Assumption~\ref{assumption1.5}.

For $i \in \Z$ and $t \geq 0$, we define
\begin{equation} \label{def_dti}
d_t(i) = \langle X_t, \cL^* h (\cdot - \underline{i})\rangle + \langle f(X_t), h(\cdot - \underline{i})\rangle
\end{equation}
and
\[ D_t(i) := \int_0^t d_s(i) ds,\]
and note that $D_t(i)$ is the drift term from \eqref{eq_Yibp}.
For $\delta > 0$ and $i \in \Z$, define
\begin{equation} \label{def_T}
	T^i_\delta = \inf\{ t \geq 0 : d_t(i) > \delta\}.
\end{equation}

\begin{lemma} \label{lemma_Delta} Suppose Assumption~\ref{assumption1} holds and that $X = (X_t)_{t\geq0}$ is a continuous $\ell^1_+$-valued solution to \eqref{e_sdesystem}. For $i \in \Z$ and $\delta >0$, there exists a filtered probability space $(\Omega,\sF,(\sF_t)_{t\geq 0},\bP)$ supporting a copy of $X$ and an $(\sF_t)$-adapted pair $(W,\Delta) = ((W_t,\Delta_t))_{t \geq 0}$, where $W$ is an independent Brownian motion and $\Delta$ is a non-negative solution to
		\begin{equation} \label{eq_Delta}
		d\Delta_t = (\delta - d_t(i))_+ dt + \Delta_t^\gamma dW_t
	\end{equation}
with $\Delta_0 = 0$.
\end{lemma}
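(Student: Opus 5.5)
We plan to prove Lemma~\ref{lemma_Delta} by weak existence for an SDE with random, adapted drift and H\"older diffusion coefficient, via approximation by Lipschitz coefficients and a tightness argument. First we enlarge the probability space: take the product of the original space carrying $X$ and the $B(j)$ with a space carrying an independent Brownian motion $W$, and use the product filtration. Then $W$ is an $(\sF_t)$-Brownian motion independent of $X$, and the $B(j)$ remain Brownian motions. The drift $b_t := (\delta - d_t(i))_+$ is adapted, non-negative, and locally bounded, since $|d_t(i)| \leq (\|\cL^* h\|_\infty + L)\|X_t\|_1$ and $t \mapsto X_t$ is continuous in $\ell^1$. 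It is also measurable in $t$, because $d_t(i)$ is continuous in $t$.

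Next we approximate. Let $\sigma_n$ be globally Lipschitz with $\sigma_n(x) = 0$ for $x \le 0$, $\sigma_n \to x^\gamma$ locally uniformly on $\R_+$, and $\sigma_n(x) \le 1 + x$. On the enlarged space there is a unique strong solution $\Delta^n$ of $d\Delta^n_t = b_t\,dt + \sigma_n(\Delta^n_t)\,dW_t$ with $\Delta^n_0 = 0$. This holds because an equation with Lipschitz diffusion and an adapted, locally integrable drift not depending on the state has pathwise unique strong solutions. Moreover $\Delta^n \ge 0$. To see this, apply Tanaka's formula to $(\Delta^n)^-$: the drift is non-negative, the diffusion vanishes on $(-\infty,0]$, and the local time term at zero of $(\Delta^n)^-$ vanishes by the usual Yamada--Watanabe argument, since $\int \indc(\Delta^n_s \le 0)\,d\langle\Delta^n\rangle_s = 0$.

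We then prove tightness of the triples $(X, W, \Delta^n)$ in $C([0,\infty), \ell^1 \times \R \times \R)$, after localizing by $\tau_K = \inf\{t : \|X_t\|_1 > K\}$. On $[0,\tau_K]$ the drift is bounded by $\delta$, so BDG together with the linear growth of $\sigma_n$ gives moment bounds and Kolmogorov-type increment bounds, uniformly in $n$. Letting $K \to \infty$ removes the localization, since $\tau_K \to \infty$ almost surely. The law of $(X,W)$ does not depend on $n$, so Skorokhod's representation yields a subsequence and a new space on which $(X^n, W^n, \Delta^n) \to (\tilde X, \tilde W, \tilde\Delta)$ almost surely, with $(\tilde X, \tilde W)$ equal in law to $(X, W)$ and $\tilde\Delta \ge 0$. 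The limit drift $(\delta - \tilde d_t(i))_+$ is a continuous functional of $\tilde X$ in $\ell^1$, so the drift integrals converge.

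The main obstacle is identifying the limit, namely showing that $\tilde W$ is a Brownian motion for a filtration to which $\tilde X$, $\tilde\Delta$ and the driving noises of $\tilde X$ are adapted, and that the stochastic integrals pass to the limit. We would do this through the martingale-problem characterization. Set $N^n_t := \Delta^n_t - \int_0^t b^n_s\,ds$. Then $N^n$, $N^n W^n - \int_0^t \sigma_n(\Delta^n_s)\,ds$, and $(N^n)^2 - \int_0^t \sigma_n(\Delta^n_s)^2\,ds$ are local martingales with respect to the filtration generated by $(X^n, B^n, W^n, \Delta^n)$. Their limits are local martingales, after localization and uniform integrability from the moment bounds, and $\sigma_n \to x^\gamma$ uniformly on compacts gives the limiting brackets. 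The same reasoning applied to $W^n$ and to the noises $B^n(j)$, including their cross-variations with $W^n$ (zero in the limit), shows that $\tilde W$ and the $\tilde B(j)$ are independent Brownian motions in the joint limiting filtration. By L\'evy's characterization and the representation $\tilde N = \int \tilde\Delta^\gamma\,d\tilde W$, which holds because $\langle \tilde N - \int \tilde\Delta^\gamma\, d\tilde W\rangle = 0$, we obtain \eqref{eq_Delta}. Finally, $\tilde X$ still solves \eqref{e_sdesystem} with the noises $\tilde B(j)$ in this filtration, by the same martingale-problem passage applied to \eqref{eq_sdei}, so $\tilde X$ is a copy of $X$ as required.
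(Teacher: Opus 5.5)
Your proposal is correct and follows essentially the same route as the paper: Lipschitz approximations $\sigma_n$, strong solutions on the product of the space carrying $X$ with a space carrying $W$, tightness, and identification of a limit point through the martingale problem. Your handling of the limit, where you track $W$ and the noises $B(j)$ so that the driving Brownian motion is $\tilde W$ itself (independent of $\tilde X$ and orthogonal to the $\tilde B(j)$), is more careful than the paper's ``for some Brownian motion''; one small slip is that on $[0,\tau_K]$ the drift $(\delta - d_t(i))_+$ is bounded by $\delta + (\|\cL^* h\|_\infty + L)K$ rather than by $\delta$, since it exceeds $\delta$ when $d_t(i)<0$, but this changes nothing in the tightness argument.
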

\begin{proof}
	Without loss of generality we take $i = 0$ and we write $d_t = d_t(i)$ throughout the proof. Let $\delta>0$, and let $(\Omega,\sF, (\sF_t)_{t \geq 0},\bP)$  be a probability space on which $X$ is a continuous $\ell^1_+$-valued solution to \eqref{e_sdesystem}, and let $(\Omega',\sF',\bP')$ be another probability space on which $W = (W_t)_{t \geq 0}$ is a standard Brownian motion. For $n \in \N$, let $\sigma_n : \R_+ \to \R_+$ be a family of globally Lipschitz functions such that such that $\sigma_n(u) \to u^\gamma$ uniformly in $u \geq 0$, which satisfy $\sigma_n(u) \leq u^\gamma + u$ for all $u \geq 0$ and $\sigma_n(0) = 0$ for all $n \in \N$. Then for almost every $\omega \in \Omega$, $b_t(\omega) = (\delta - d_t(\omega))_+$ is continuous, so because $\sigma_n$ is Lipschitz, there exists a unique (non-negative) strong solution $\Delta^{n,\omega}$ to the SDE
	\[\Delta^{n,\omega}_t = \int_0^t b_s(\omega) ds + \int_0^t \sigma_n(\Delta_s)  dW_s.\]
	adapted to $(\sF^W_t)_{t \geq 0}$, the filtration of $W$. However, since $b_t \in \sF_t$, then working on the product space $\Omega \times \Omega'$, if $\Delta^n$ is defined by $\Delta^n(\omega,\omega') = \Delta^{n,\omega}(\omega')$, $\Delta^n$ is a solution to \[\Delta^{n}_t = \int_0^t b_s ds + \int_0^t \sigma_n(\Delta_s)  dW_s\]
	and is adapted to the product filtration, i.e.\ $\Delta^n_t \in \sF_t \vee \sF^W_t$.
	
	It is routine to then verify the tightness of the sequence $((X,\Delta^n))_{n \in \N}$, and hence there exists a limit point $(X,\Delta)$ of $((X,\Delta^n))_{n \in \N}$. Because the first coordinate is the same solution $X$ for every $n \in \N$, clearly any limit point has first coordinate $X$.  Since $\sigma_n \to \sigma$ uniformly, another routine argument shows that $\Delta$ must solve the martingale problem associated to \eqref{eq_Delta}, and hence is a solution to \eqref{eq_Delta} for some Brownian motion. This completes the proof.
\end{proof}

\begin{remark} \label{remark_sigmaassumpt} We note that under Assumption~\ref{assumption2}, there exist $a,b>0$ such that $\sigma(x) \geq a x^{\gamma}$ for all $x \in (0,b]$. Apart from changing the values of various constants which will appear later on, the values of $a$ and $b$ do not play any role in the proof of Theorem~\ref{thm_main_compact}, so when working under Assumption~\ref{assumption2} we may take $a=b=1$ without loss of generality. That is, hereafter when working under Assumption~\ref{assumption2} we will assume that  
\begin{equation} \label{eq_sigmaassumpstrong}
\sigma(x) \geq x^\gamma \quad \text{for all } x \in (0,1].
\end{equation}
\end{remark}

The purpose of the process $\Delta$ constructed in Lemma~\ref{lemma_coupling} is made clear by the next result. In addition to the stopping time $T^i_\delta$, we define
\begin{equation} \label{def_taui1}
	\tau_1^i := \inf\{ t \geq 0 : Y_t(i) > 1\}.
\end{equation}

\begin{lemma} \label{lemma_coupling} Suppose Assumptions~\ref{assumption1} and ~\ref{assumption2} hold, the latter in the form \eqref{eq_sigmaassumpstrong}. Suppose $X$ is a continuous $\ell^1_+$-valued solution to \eqref{e_sdesystem}. 
Let $i \in \Z$ and $\delta>0$. There exists a filtered probability space $(\Omega, \sF, (\sF_t)_{t \geq 0}, \bP)$ on which there lives a copy of $X$ and a continuous semimartingale $\bar{Y}(i) = (\bar{Y}_t(i))_{t \geq 0}$ such that
\begin{itemize}
	\item $\bar{Y}_t(i) \geq Y_t(i) \geq 0$ for all $t \in [0,T^i_\delta \wedge \tau^i_1]$ a.s.
	\item $\bar{Y}(i)$ has Doob-Meyer decomposition
		\begin{equation*}
			\bar{Y}_t(i) = Y_0(i) + \delta t + M_t, \quad t \geq 0,
		\end{equation*}
	where $M = (M_t)_{t \geq 0}$ is a continuous $(\sF_t)$-local martingale a.s.\ satisfying
	\begin{equation} \label{eq_MQV_coupling}
		\langle M\rangle_t - \langle M \rangle_s \geq \frac 1 2\int_s^t \bar{Y}_u^{2\gamma} du \quad \text{for all }\,\, t \geq s \geq 0.
	\end{equation}
\end{itemize}
\end{lemma}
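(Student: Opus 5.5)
The construction is $\bar Y_t(i) := Y_t(i) + \Delta_t$, where $(W,\Delta)$ is the pair from Lemma~\ref{lemma_Delta}, built on the same filtered probability space as $X$ with $W$ independent of the noise $\{B(j)\}$. By Corollary~\ref{cor_Ystochastic}, on $[0,T^i_\delta]$ we have $d_t(i) \le \delta$, so $d_t(i) + (\delta - d_t(i))_+ = \delta$. Hence on that interval
\[
\bar Y_t(i) = Y_0(i) + \delta t + \int_0^t \sum_{j_1 \ge i} \sigma(X_s(j))\, dB_s(j) + \int_0^t \Delta_s^\gamma\, dW_s .
\]
Positivity of $\Delta$ then gives $\bar Y_t(i) \ge Y_t(i) \ge 0$.

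Since the decomposition must hold for all $t \ge 0$, I continue $\bar Y$ after $\rho := T^i_\delta \wedge \tau^i_1$. For $t \ge \rho$ I set
\[
\bar Y_t(i) = \bar Y_\rho(i) + \delta (t-\rho) + \int_\rho^t \bar Z_s^{\gamma}\, d\tilde W_s ,
\]
where $\tilde W$ is an additional independent Brownian motion and $\bar Z$ solves $d\bar Z = \delta\, dt + \bar Z^\gamma d\tilde W$ started at $\bar Y_\rho(i)$. The solution $\bar Z$ is constructed exactly as in the proof of Lemma~\ref{lemma_Delta}, by Lipschitz approximation and tightness, after enlarging the probability space. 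Then $M$ is the stochastic integral defining $\bar Y - Y_0(i) - \delta t$ on both pieces. It is a continuous local martingale in the enlarged filtration, because the noises are independent and the pieces are glued at a stopping time.

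For the quadratic variation bound \eqref{eq_MQV_coupling} on $[0,\rho]$, independence of the Brownian motions gives
\[
d\langle M\rangle_t = \Big(\sum_{j_1\ge i} \sigma(X_t(j))^2 + \Delta_t^{2\gamma}\Big) dt .
\]
Before $\tau^i_1$ every $X_t(j)$ with $j_1 \ge i$ is at most $Y_t(i) \le 1$, so \eqref{eq_sigmaassumpstrong} gives $\sigma(X_t(j))^2 \ge X_t(j)^{2\gamma}$. Since $2\gamma < 1$, subadditivity of $x\mapsto x^{2\gamma}$ applies on the countable sum, which converges because $X_t \in \ell^1$, via monotone limits of finite sums. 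This yields
\[
\sum_{j_1 \ge i} X_t(j)^{2\gamma} \ge Y_t(i)^{2\gamma} .
\]
Then
\[
Y^{2\gamma} + \Delta^{2\gamma} \ge 2\Big(\tfrac{Y+\Delta}{2}\Big)^{2\gamma} \ge 2^{1-2\gamma}\,\bar Y^{2\gamma} \ge \tfrac12\, \bar Y^{2\gamma},
\]
using concavity, or simply $\max(Y,\Delta) \ge \bar Y/2$ together with $2^{-2\gamma} \ge 1/2$. After $\rho$ the quadratic variation density is exactly $\bar Y^{2\gamma}$, so the bound holds there as well. Integrating gives \eqref{eq_MQV_coupling}.

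The main technical point is the extension past $\rho$ together with the joint construction of the space. The solution $\Delta$ from Lemma~\ref{lemma_Delta} is only a weak solution, obtained as a limit point, so one must check two things. First, $W$ remains a Brownian motion with respect to a filtration for which the $B(j)$ are still independent Brownian motions. Second, $\langle W, B(j)\rangle = 0$. Both follow from the product-space construction and the martingale problem characterization of the limit, and I would verify them in the same routine way. The remaining work is checking that the countable-sum quadratic variation identity holds, which uses Lemma~\ref{lemma_bddweak}.
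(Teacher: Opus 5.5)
Your proposal is correct and follows the paper's proof essentially step for step. It uses the same process $Y_t(i)+\Delta_t$ with $\Delta$ from Lemma~\ref{lemma_Delta}, the same subadditivity argument (with $\sigma(x)\ge x^\gamma$ on $(0,1]$) for the quadratic variation on $[0,T^i_\delta\wedge\tau^i_1]$, and the same continuation by an independent solution of $d\tilde Y=\delta\,dt+\tilde Y^\gamma\,d\tilde W$, with the joint-construction details deferred as routine in the same way.

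One correction: the concavity step in your display goes the wrong way. Jensen gives $Y^{2\gamma}+\Delta^{2\gamma}\le 2\bigl(\tfrac{Y+\Delta}{2}\bigr)^{2\gamma}$; take $\Delta=0$ to see your version fails. So drop that chain and use either your $\max(Y,\Delta)\ge \bar Y/2$ argument or simply $Y^{2\gamma}+\Delta^{2\gamma}\ge (Y+\Delta)^{2\gamma}=\bar Y^{2\gamma}$. The latter is what the paper does, in a single subadditivity step that also handles the sum over $j$.
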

\begin{proof}
	Without loss of generality we choose $i = 0$ (and write $Y_t = Y_t(i)$, $d_t = d_t(i)$, $T_\delta = T^i_\delta$ and $\tau_1 = \tau^i_1$). Fix $\delta > 0$ and let $\Delta$ be the process constructed in Lemma~\ref{lemma_Delta}. For $t \geq 0$, we define $\hat{Y}_t$ by
	\[\hat{Y}_t := Y_t + \Delta_t.\]
	Since $\Delta_t \geq 0$, we a.s.\ have $\hat{Y}_t \geq Y_t$ for all $t \geq 0$. By Corollary~\ref{cor_Ystochastic} and \eqref{eq_Delta},
	\begin{align} \label{eq_barY_rep}
		\hat{Y}_t = Y_0 + \int_0^t (d_s + (\delta - d_s)_+)ds + \hat{M}_t,
	\end{align}
	where $\hat{M}$ is a continuous local martingale whose quadratic variation $\langle \hat{M}\rangle$ satisfies
	\begin{equation} \label{eq:couplemartQV}
		\langle \hat{M} \rangle_t - \langle \hat{M} \rangle_s = \frac 1 2 \int_s^t \bigg( \Delta_u^{2\gamma} +  \sum_{j \in \Z^d : j_1 \geq 0} \sigma(X_u(j))^2 \bigg)du, \quad t \geq s \geq 0.
	\end{equation}
    If $u \leq \tau_1$, then $Y_u(i) \leq 1$. In particular, since $X_u(j) \leq Y_u(i)$ for all $j \in \Z^d$ with $j_1 \geq 0$, $X_s(j) \leq 1$ for all such $j$ and all $u \in [0, \tau_1]$. Hence, by \eqref{eq_sigmaassumpstrong}, $\sigma(X_u(j)) \geq X_u(j)^{\gamma}$ for all $j$ with $j_1 \geq 0$, for all $u \in [0,\tau_1]$, and we conclude from \eqref{eq:couplemartQV} that
    	\begin{equation*} 
		\langle \hat{M} \rangle_t - \langle \hat{M} \rangle_s \geq \frac 1 2 \int_s^t \bigg( \Delta_u^{2\gamma} +  \sum_{j \in \Z^d : j_1 \geq 0} X_u(j)^{2\gamma} \bigg)du \quad \text{ for all }s,t \in [0,\tau_1] \text{ with }\, s\leq t.
	\end{equation*}
	Since $2\gamma < 1$, for $a,b \geq 0$, $a^{2\gamma} + b^{2\gamma} \geq (a+b)^{2\gamma}$. Applying this countably
    many times (we omit the limiting argument), it follows that 
	\[\Delta_u^{2\gamma} + \sum_{j \in \Z^d : j_1 \geq 0} X_u(j)^{2\gamma} \geq \bigg(\Delta_u +  \sum_{j \in \Z^d : j_1 \geq 0} X_u(j) \bigg)^{2\gamma} = (\Delta_u + Y_u)^{2\gamma} =\hat{Y}_u^{2\gamma},\]
	and we conclude that with probability one,
		\begin{equation} \label{eq:hatMqv}
		\langle \hat{M} \rangle_t - \langle \hat{M} \rangle_s \geq \frac 1 2  \int_s^t\hat{Y}_u^{2\gamma} du, \quad \quad \text{ for all } s,t \in [0,\tau_1] \text{ with } s \leq t.
	\end{equation}
	Noting that $d_s + (\delta - d_s)_+ = \delta$ for all $s \in [0,T_\delta]$, it follows from \eqref{eq_barY_rep} that
	\begin{align*} 
		\hat{Y}_t = Y_0 +\delta t+ \hat{M}_t, \quad t \in [0,T_\delta].
	\end{align*}
    Combined with \eqref{eq:hatMqv}, the above implies that for $t \in [0,T_\delta \wedge \tau_1]$, $\hat{Y}_t$ satisfies the desired conditions. To complete the construction, we define
	\[\bar{Y}_t := \begin{cases}
		\hat{Y}_t, &t \in [0,T_\delta \wedge \tau_1], \\ \tilde{Y}_t, &t > T_\delta \wedge \tau_1,
	\end{cases}\]
	where $\tilde{Y}$ is a solution to 
	\begin{equation}
		\tilde{Y}_t = \hat{Y}_{T_\delta \wedge \tau_1} + \delta (t - (T_\delta \wedge \tau_1)) +  \int_{T_\delta \wedge \tau_1}^t \tilde{Y}_ s^{\gamma}d\tilde{W}_s, \quad t \geq T_\delta \wedge \tau_1.
	\end{equation}
	driven by an independent Brownian motion $\tilde{W}$. The existence of a probability space supporting $X$, $\Delta$ and $\tilde{Y}$ can be shown along similar lines to the proof of Lemma~\ref{lemma_Delta}, and we omit the argument. It is readily verified that the Doob-Meyer decomposition of $\bar{Y}$ satisfies the desired properties; since $\bar{Y}_t = \hat{Y}_t$ for $t \in [0,T_\delta \wedge \tau_1]$, this completes the proof.
\end{proof}

\begin{remark} \label{remark_countablecoupling}
	The proofs of Lemmas~\ref{lemma_Delta} and \ref{lemma_coupling} can be easily generalized so that the probability space supports countably many couplings of the same type. Thus, we may work on a probability space supporting a copy of $X$ such that for all $i \in \Z$ and $n \in \N$, there exists a process $(\bar{Y}_t(i,n))_{t \geq 0}$ such that
	\[ \bar{Y}_t(i,n) = Y_0(i) + \frac{1}{n}t + M_t(i,n),  \quad t \geq 0,\]
	where $M_t(i,n)$ satisfies \eqref{eq_MQV_coupling} (with $\bar{Y}_t(i,n)$ on the right-hand side), and with probability one,
	\[ \bar{Y}_t(i,n) \geq Y_t(i) \geq 0 , \quad t \in [0, T^i_{1/n} \wedge \tau^i_1].\]
\end{remark}

We conclude the section by proving that for any $\delta > 0$, $T^i_\delta \wedge \tau^i_1$ is large with high probability provided $i$ is sufficiently large. This will allow us to use the couplings constructed above with high probability. 

\begin{lemma} \label{lemma_Tbound} Suppose Assumption~\ref{assumption1} holds and that $X$ is a continuous $\ell^1_+$-valued solution to \eqref{e_sdesystem}. Then for any $\delta>0$ and $T>0$,
	\[ \lim_{i \to \infty} \bP(T^i_\delta \wedge \tau^i_1 \leq T) = 0.\]		
\end{lemma}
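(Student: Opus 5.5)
The statement is a uniform-in-time tail estimate. Both $\tau^i_1$ and $T^i_\delta$ are controlled by the event
\[ \sup_{t\in[0,T]} \sum_{j : j_1 \geq i - R} X_t(j) \text{ is small,} \]
together with a truncation of the far-reaching jumps of $q$. I would bound $\{\tau^i_1 \le T\}$ and $\{T^i_\delta \le T\}$ separately, using only the facts that $X$ is a continuous $\ell^1$-valued process and that $q \in \ell^1$. No moment bounds are needed, which is convenient since we no longer assume Assumption~\ref{assumption1.5}.

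\textbf{Key step: uniform tail smallness.} Set $K := \{X_t : t\in[0,T]\}$. By continuity of $t\mapsto X_t$ in $\ell^1$, $K$ is a.s.\ a compact subset of $\ell^1$. Compact subsets of $\ell^1$ are uniformly summable, so for every $\omega$ outside a null set
\[ \epsilon_i(\omega) := \sup_{t\in[0,T]} \sum_{j : j_1 \geq i} X_t(j) \longrightarrow 0 \quad (i\to\infty). \]
Equivalently, one can argue directly from equicontinuity: pick a finite $\eta$-net $t_1,\dots,t_m$ of $[0,T]$ for the modulus of continuity, so that $\sup_t \sum_{j_1\ge i} X_t(j) \le \max_k \sum_{j_1 \ge i} X_{t_k}(j) + \eta$. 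Hence $\epsilon_i \to 0$ a.s., and therefore in probability. This immediately gives $\bP(\tau^i_1\le T) \le \bP(\epsilon_i \ge 1) \to 0$, since $Y_t(i)\le \epsilon_i$.

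\textbf{Bounding the drift.} Write $h_i := h(\cdot-\underline{i})$. Since $f(0)=0$ and $f$ is $L$-Lipschitz, $|\langle f(X_t),h_i\rangle| \le L\,Y_t(i) \le L\epsilon_i$. For the generator term, compute
\[ \cL^* h_i(j) = \sum_k q(k-j)\bigl(h_i(k)-h_i(j)\bigr). \]
This is bounded in absolute value by $\|q\|_1$ everywhere, and for $j_1 < i - R$ it is at most
\[ \rho(R) := \sum_{k : |k_1| > R} q(k), \]
which tends to $0$ as $R\to\infty$ because $q\in\ell^1$. Splitting $\langle X_t, \cL^*h_i\rangle$ according to whether $j_1 \ge i-R$ or $j_1 < i-R$ gives
\[ d_t(i) \le \|q\|_1\, \epsilon_{i-R} + \rho(R)\,\sup_{t\le T}\|X_t\|_1 + L\epsilon_i. \]

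\textbf{Choosing parameters.} Let $V_T := \sup_{t\le T}\|X_t\|_1$, which is a.s.\ finite. Given $\eta>0$, first choose $M$ with $\bP(V_T > M)<\eta$. Then choose $R$ with $\rho(R) M < \delta/3$. Finally, for $i$ large, the tail smallness step makes $\bP\bigl(\|q\|_1\epsilon_{i-R} + L\epsilon_i \ge 2\delta/3\bigr) < \eta$. Note that we need a strict inequality $d_t(i) \le \delta$ on $[0,T]$ to conclude $T^i_\delta > T$, so the strict inequalities above are what matter; continuity of $d_t$ handles the endpoint. This gives $\limsup_i \bP(T^i_\delta\le T) \le 2\eta$, and $\eta$ is arbitrary.

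The main obstacle is the uniform-in-time tail bound. Its clean justification is the compactness argument, which uses nothing beyond the continuity in $\ell^1$ that is already assumed. The rest is routine bookkeeping.
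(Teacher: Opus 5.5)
Your argument is correct, and it takes a genuinely different, more elementary route than the paper's.

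The paper makes the reduction you make implicitly. It sets $\psi = h + |\cL^* h|$ and shows $\sup_{t\in[0,T]}\langle X_t,\psi(\cdot-\underline{i})\rangle \to 0$ in probability, but it proves this through the dynamics:
\begin{enumerate}
\item It expands $\langle X_t,\psi(\cdot-\underline{i})\rangle$ as a semimartingale via Lemma~\ref{lemma_bddweak}.
\item It controls the two drift integrals in $L^1$ using the first-moment bound \eqref{eq_firstmoment} and the decay of $\cL^*\psi$ far to the left of the hyperplane.
\item It disposes of the martingale part by showing its quadratic variation vanishes a.s., then applying Burkholder--Davis--Gundy after localization.
\end{enumerate}

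You replace all of this with a pathwise observation. The image of $[0,T]$ under the $\ell^1$-continuous path is compact, so its half-space tails are uniformly small. The truncation level $R$, with $\rho(R)\to 0$, absorbs the mass far to the left that can still jump across through the long-range part of $q$.

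What your route buys:
\begin{itemize}
\item It uses only $\ell^1$-continuity, $q\in\ell^1$ and the Lipschitz bound on $f$.
\item In particular, it bypasses the first-moment bound for $\ell^1_+$-valued solutions, which the paper only sketches in Remark~\ref{remark_bddmild}.
\item It gives almost sure convergence: fix $\omega$, choose $R$ with $\rho(R)V_T(\omega)$ small, then let $i\to\infty$. This is stronger than what the lemma needs.
\end{itemize}

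The paper's route is phrased in terms of the equation's weak form, moment bounds and martingale estimates. These are the kind of ``uniform-in-time bounds of integral quantities'' the authors describe as the model-specific input when transplanting the method to other equations. Yours leans on the specific lattice structure: a summable kernel, which makes $\cL^*h$ bounded, and small away from the hyperplane.
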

\begin{proof}
    We recall the definitions of $d_t(i)$, $T^i_\delta$ and $\tau^i_1$ from \eqref{def_dti}, \eqref{def_T} and \eqref{def_taui1}. Let $\psi := h + |\cL^* h|$. Then it is immediate that for any $i \in \Z$, for all $t \geq 0$,
    \begin{equation*}
        Y_t(i) \leq \langle X_t, \psi(\cdot - \underline{i})\rangle.
    \end{equation*}
    Moreover, if $L>0$ denotes the Lipschitz constant of $f$, since $f(0) = 0$, we have
    \begin{equation*} 
        |d_t(i)| \leq (1+L)\langle X_t,  \psi(\cdot - \underline{i}) \rangle. 
    \end{equation*}
    Again using the definition~\eqref{def_dti} of $d_t(i)$, we conclude from these two bounds that the desired result will follow if we can prove that for any $T>0$,
    \begin{equation} \label{eq_dti_sts}
    \sup_{t \in [0,T]} \langle X_t, \psi(\cdot - \underline{i}) \rangle \xrightarrow[i \to \infty]{p} 0.
    \end{equation}
    Let $T>0$. First, we note that $\psi$ and $\cL^* \psi$ are both bounded. From the boundedness of the former and Lemma~\ref{lemma_bddweak}, we have 
    \begin{align} \label{eq_dti_psi_rep}
		\langle X_t,  \psi(\cdot - \underline{i})\rangle  &=\langle X_0,  \psi(\cdot - \underline{i})\rangle + \int_{0}^t \langle X_s, \cL^*  \psi(\cdot - \underline{i}) \rangle ds  + \int_{0}^t \langle f(X_s),   \psi(\cdot - \underline{i}) \rangle ds  \notag
        \\ &\hspace{1 cm}+ \int_{0}^t \sum_{j \in \Z^d}  \psi(j - \underline{i}) X_s(j)^\gamma dB_s(j), \quad t \geq 0.
	\end{align}
    Since $h(j) = 0$ for all $j \in \Z^d$ with $j_1 < 0$, it is not hard to see that 
    \begin{equation} \label{eq_Lhvanish}
    \lim_{i \to \infty} \sup_{j \in \Z^d: j_1 \leq -i} | \psi (j)| = 0, \qquad \lim_{i \to \infty} \sup_{j \in \Z^d: j_1 \leq -i} |\cL^* \psi (j)| = 0.
    \end{equation} 
    Boundedness of $\psi$, \eqref{eq_Lhvanish} and $X_0 \in \ell^1_+$ imply that
	\begin{equation} \label{eq_dti_d0}
		\lim_{i \to \infty} \langle X_0, \psi(\cdot - \underline{i}) \rangle = 0.
	\end{equation}
	Next, we remark that
	\[ \sup_{t \in [0,T]} \left|\int_{0}^t \langle X_s, \cL^*  \psi(\cdot - \underline{i})\rangle ds\right| \leq \int_{0}^{T} \langle X_s, |\cL^* \psi (\cdot - \underline{i})|\rangle |) ds,\]
    and hence by \eqref{eq_firstmoment} and Remark~\ref{remark_bddmild},
    \begin{equation} \label{eq_dti_drift_uniform0}
    \E\left[\sup_{t \in [0,T]} \left| \int_{0}^t \langle X_s, \cL^* \psi(\cdot - \underline{i})\rangle ds \right| \right] \leq \int_0^T \langle P_s X_0, |\cL^* \psi(\cdot- \underline{i}) | \rangle ds.
	\end{equation}
    Using $X_0 \in \ell^1_+$ and elementary properties of the random walk semigroup, it is straightforward to show that
    \[\lim_{i \to \infty} \sup_{s \in [0,T]} \langle P_s X_0, \indc_{j_1 \geq i} \rangle = 0.\]
    Combined with \eqref{eq_Lhvanish} and \eqref{eq_dti_drift_uniform0}, this implies that
	\begin{equation} \label{eq_dti_drift_uniform}
		\lim_{i \to \infty} \E\left[ \sup_{t \in [0,T]} \left| \int_{0}^t \langle X_s, \cL^* \psi(\cdot - \underline{i})\rangle ds \right| \right] = 0.
	\end{equation} 
    Since $|\langle f(X_s), \psi \rangle | \leq L \langle X_s, \psi \rangle$, we can control the third term on the right-hand side of \eqref{eq_dti_psi_rep} using the same argument as above, and we conclude that 
    \begin{equation} \label{eq_dti_drift_uniform2}
		\lim_{i \to \infty} \E\left[ \sup_{t \in [0,T]} \left| \int_{0}^t \langle f(X_s), \psi(\cdot - \underline{i})\rangle ds \right| \right] = 0.
	\end{equation} 
	We now turn our attention to the martingale term in \eqref{eq_dti_psi_rep}. We write 
	\[ M_t(i) := \int_{0}^t \sum_{j \in \Z^d } \psi (j-\underline{i}) X_s(j)^\gamma dB_s(j)\]
    and observe that its quadratic variation satisfies
	\begin{equation} \label{eq_dti_qv}
		\langle M(i) \rangle_{T} = \int_{0}^T \sum_{j \in \Z^d}\psi(j - \underline{i}) X_s(j)^{2\gamma} ds. 
	\end{equation}
    To see that the above vanishes a.s. as $i \to \infty$, we first observe that 
    \[ \int_0^T \sum_{j \in \Z^d} X_s(j)^{2\gamma} ds < \infty \text{ a.s.},\]
    which follows \eqref{eq_bddweak} using $\phi \equiv 1$. Since $\psi$ is bounded and $\psi (\cdot - \underline{i})$ vanishes point-wise as $i \to \infty$ from \eqref{eq_Lhvanish}, it follows from \eqref{eq_dti_qv} that 
    \[ \langle M(i) \rangle_T \xrightarrow[i \to \infty]{} 0 \,\text{ a.s.} \]
    A priori the quadratic variation may not be integrable, but a routine argument using stopping and the Burkholder-Davis-Gundy inequality implies that 
	\begin{equation} \label{eq_dti_mart}
		\sup_{t \in [0,T]} |M_t(i)| \stackrel{p}{\to} 0 \,\text{ as } i \to \infty.
	\end{equation}
	It now follows from \eqref{eq_dti_d0}, \eqref{eq_dti_drift_uniform} and \eqref{eq_dti_drift_uniform2} with Markov's inequality, and \eqref{eq_dti_mart} that 
    \begin{equation*} 
    \lim_{i \to \infty} \bP\left(\sup_{t \in [0,T]} \langle X_t, \psi(\cdot - \underline{i})\rangle > \epsilon \right) = 0.
    \end{equation*}
    for any $\epsilon>0$, which is equivalent to \eqref{eq_dti_sts}. This completes the proof.
\end{proof}

\section{A class of semimartingales} \label{s_M}

We now introduce a class of semimartingales whose analysis will constitute a large part of this paper. These processes generalize, in a sense, the behaviour of solutions to the SDE 
\[dY_t = \delta dt + Y_t^\gamma dB_t.\]
 The processes we consider are not in general Markovian, but we show that they possess a pseudo-strong Markov property which will allow us to analyze them almost as though they were Markovian. The process $(\bar{Y}_t)_{t \geq 0}$ constructed in Lemma~\ref{lemma_coupling}, which dominates $(Y_t(i))_{t \geq 0}$ until a certain stopping time, belongs to the family we now introduce, which we call $\sM_{\gamma,\delta}$. Thus, studying the support properties of solutions to \eqref{e_sdesystem} can be reduced to studying the zero sets of processes in $\sM_{\gamma,\delta}$.



We will work on the canonical space of continuous paths, $\Omega = C(\R_+,\R)$. We denote the Borel $\sigma$-algebra on $\Omega$ by $\cB(\Omega)$. For a typical element we write $\omega = (\omega(t) : t \geq 0)$, and for $t \geq 0$ we denote by $Y_t$ the coordinate mapping, i.e.\ $Y_t(\omega) = \omega(t)$. 
The class of processes we are interested in are semimartingales with Doob-Meyer decomposition
\begin{equation} \label{eq_DMY}
	Y_t = Y_0 + \delta t + M_t,
\end{equation}
where $\delta>0$ and $M = (M_t )_{t \geq 0}$ is a certain type of continuous local martingale. We now fix $\delta >0$. It will be convenient to introduce a second, ``tilted''  coordinate mapping on $\Omega$ corresponding to the associated martingale process. For $\omega \in \Omega$ and $t \geq 0$ we define 
\begin{equation}\label{eq_Mdef}
M_t(\omega) := Y_t(\omega) - Y_0(\omega) - \delta t.
\end{equation}
Besides constant drift $\delta$, the chief characteristic of the processes we consider is that $t \mapsto M_t$ is a continuous local martingale with a certain condition on its quadratic variation. We define
\begin{equation} \label{def_It}
	I_t(\omega) := \frac 1 2\int_0^t Y_s(\omega)^{2\gamma} ds.
\end{equation}
As alluded to in the Introduction, e.g. in \eqref{eq_QVbd_informal}, and further suggested by Lemma~\ref{lemma_coupling}, we will be interested in processes with decomposition \eqref{eq_DMY} for which $t\mapsto \langle M \rangle_t - I_t$ is a.s. non-decreasing.

Where possible, we will omit dependence on $\omega$ of $Y_t$, $M_t$ and $I_t$, although for certain arguments, especially within this section, inclusion of the argument $\omega$ will be useful. Here, and for the rest of the paper, we write $Y = (Y_t)_{t \geq 0}$ and $M = (M_t)_{t \geq 0}$ to denote the entire process, and use the same convention for other time-indexed processes. 


For $t \geq 0$, we write $\sF^0_t = \sigma(Y_s: s \in [0,t])$ and $\sF^0_{t+} = \cap_{\eps>0} \sF^0_{t+\eps}$, so that $(\sF^0_t)_{t \geq 0}$ and $(\sF^0_{t+})_{t \geq 0}$ are respectively the standard coordinate filtration and its right continuous modification. Given a probability measure $\bP$ on $(\Omega,\cB(\Omega))$, we denote by $(\sF_t)_{t \geq 0}$ the $\bP$-completed filtration satisfying the usual conditions. We remark that $M_t \in \sF^0_t$ for all $t\geq 0$ by \eqref{eq_Mdef}. We will use the shorthand $(\sF^0_t)$ to denote the filtration $(\sF^0_t)_{t\geq0}$, and likewise for the other filtrations.

\begin{definition} \label{def_M} A probability measure $\bP$ on $(\Omega,\cB(\Omega))$ is of class $\sM_{\gamma,\delta}$, written $\bP \in \sM_{\gamma,\delta}$, if the following properties hold:
\begin{itemize}
\item $\bP(Y_t \geq 0 \,\, \forall t \geq 0) = 1$.
\item The process $M = (M_t)_{t \geq 0}$ defined by \eqref{eq_Mdef} is a continuous local martingale with respect to $(\sF_t)_{t \geq 0}$ with quadratic variation $\langle M\rangle$ satisfying
\begin{equation} \label{assump_QV}
		\bP(\langle M \rangle_t - \langle M\rangle_s \geq I_t - I_s \,\,\, \forall  \,t \geq s \geq 0)  = 1.
\end{equation}
\end{itemize}
If $\bP \in \sM_{\gamma,\delta}$ and $\bP(Y_0 = x) = 1$ for some $x \geq 0$, we write $\bP \in \sM_{\gamma,\delta}(x)$. 
\end{definition}

Several remarks are in order. 
Note that, given $\bP \in \sM_{\gamma,\delta}$, $M$ is a local martingale with respect any of $(\sF^0_t)$, $(\sF^0_{t+})$ or $(\sF_t)$, and that to prove that a process belongs to $\sM_{\gamma,\delta}$, when checking the local martingale property it suffices to check that $M$ is a local martingale with respect to any of the three. Outside of the proof of the pseudo-strong Markov property (Proposition~\ref{prop_markov}), we will always assume we are working with $(\sF_t)$.
Next, we observe that for any $\bP \in \sM_{\gamma,\delta}$, the canonical process $Y$ has Doob-Meyer decomposition given by \eqref{eq_DMY}, as desired. We also note that $\langle Y \rangle_t = \langle M \rangle_t$ for all $t \geq 0$.

We recall that $\sM_{\gamma,\delta}(x)$ denotes the elements of $\sM_{\gamma,\delta}$ with initial value $x$. We will frequently denote elements of $\sM_{\gamma,\delta}(x)$ by $\bP_x$, which is natural and convenient, but we remind the reader that despite this suggestive notation, there is not a unique law $\bP_x$ of a class-$\sM_{\gamma,\delta}$ process started from $x$; there are many, and $\bP_x$ denotes any such law.

In order to reduce the number of parameters, we have not included an additional constant in the quadratic variation, i.e.\ the more general condition $\langle M\rangle_t - \langle M\rangle_s \geq c(I_t - I_s)$ for some $c >0$. As will be clear from the analysis in Sections~\ref{s_intervals} and \ref{s_zeros}, varying this constant will not change the behaviour we study in any significant way. Thus, the class $\sM_{\gamma,\delta}$ as defined above is calibrated to the special form \eqref{eq_sigmaassumpstrong} for Assumption~\ref{assumption2}, but handling the general case is the same.


%

We now introduce some formalism around shifts and stopping times. For $s \geq 0$ let $\theta_s$ denote the shift operator on $\Omega$, i.e.\ $\theta_s(\omega) = (\omega(s+t) : t \geq 0)$, or equivalently $Y_t(\theta_s(\omega)) = Y_{s+t}(\omega)$ for all $s,t \geq 0$. Now, for $\bP \in \sM_{\gamma,\delta}$, let $\tau$ be an $(\sF_t)$-stopping time. For all $\omega$ such that $\tau(\omega)$ is finite, we define the random shift $\theta_\tau$ by $\theta_\tau(\omega) = \theta_{\tau(\omega)}(\omega)$. In order to make $\theta_\tau$ well defined for all~$\omega$, we redefine $\tau$ by the arbitrary value $\tau(\omega) = 0$ on the event $\{ \omega : \tau(\omega) = \infty\}$. 
We remark that if $\tau$ is an a.s.\ finite stopping time, then as $\{ \omega : \tau(\omega) = \infty\}$ is a null set and so contained in $\sF_0$, this modification of $\tau$ is still an $(\sF_t)$-stopping time.



Processes in $\sM_{\gamma,\delta}$ are not, in general, Markov. However, as we now show, they have another property which for our purposes is just as useful, which we refer to as the pseudo-strong Markov property. It is an essential tool in our analysis. This property has already appeared in the literature and is useful when studying SDEs for which uniqueness in law is a priori unknown, or does not hold, for example see \cite{BBC2007}. Because the processes we consider are not solutions to an SDE or martingale problem, but instead have a condition on their quadratic variation, the variant of the pseudo-strong Markov property which we use here differs from those in the literature (that we are aware of). It is a powerful tool, as it allows us given an ``autonomous" or Markov-like analysis of processes whose dynamics are not autonomous but for which a certain quantity can be bounded in terms of the process itself, in this case the quadratic variation. Certain notations and arguments in the sequel are based on those appearing in \cite{Bass}.

Let $\bP \in \sM_{\gamma,\delta}$, and let $\tau$ be an a.s.\ finite $(\sF_t)$-stopping time $\tau$, and assume as usual that we redefine $\tau$ to be zero on the potential null set where it is infinite. We consider the shifted measure $\bP \circ \theta_\tau$ on $(\Omega,\mathcal{B}(\Omega))$ defined by
\begin{equation}
	\bP \circ \theta_\tau (A) = \bP(A \circ \theta_\tau).
\end{equation} 
Then, let $\Q_\tau^\omega(\cdot)$ be a regular conditional probability for $\bP \circ \theta_\tau (\cdot \, | \, \sF_\tau)$. This exists, for example, by the results in Sections 69-73 of \cite{DellacherieMeyerA}. $\Q_\tau^\omega(\cdot)$ is characterized by the following:
\begin{itemize}
 	\item For $\bP$-a.e.\ $\omega$, $B(\Omega)\ni A \mapsto \Q^\omega_\tau(A)$ is a probability measure on $(\Omega, \sF)$.
	\item For every $A \in B(\Omega)$, $\omega \mapsto  \Q^\omega_\tau(A)$ is a version of the conditional probability $\bP \circ \theta_\tau (A \,| \, \sF_\tau)$.
\end{itemize}
We will sometimes suppress dependence on $\omega$ of $\Q^\omega_\tau$, as we will do with other random variables, in which case we simply write $\Q_\tau$. In particular, for some random variable $Z$ on $(\Omega, \mathcal{B}(\Omega))$, the conditional expectation $\Q_\tau(Z)$, which is a version of $\E[Z \circ \theta_\tau \, | \, \sF_\tau]$, denotes the $\sF_\tau$-measurable random variable $\omega \mapsto \Q^\omega_\tau(Z)$.

We also define, for an event $A \in \sF_\tau$ with $\bP(A) >0$, the probability measure
\begin{equation}
	\bP^A(\cdot) = \frac{1}{\bP(A)} \bP(\cdot \, \cap  A).
\end{equation}
The measure $\bP^A \circ \theta_\tau$ then denotes $\bP^A$ shifted by $\tau$.

The pseudo-strong Markov property (hereafter PSMP) is established in the following result.
\begin{proposition} \label{prop_markov}
	Let $\bP \in \sM_{\gamma,\delta}$ and let $\tau$ be an a.s.  finite $(\sF_t)$-stopping time. \\
	(a) For $\bP$-a.e. $\omega$, $\Q^\omega_\tau \in \sM_{\gamma,\delta}(Y_\tau(\omega))$. \\
	(b) For $A \in \sF_\tau$ with $\bP(A) > 0$, $\bP^A \circ \theta_\tau \in \sM_{\gamma,\delta}$. Moreover, if $\bP(Y_\tau = x \, | \, A) = 1$, then $\bP^A \circ \theta_\tau \in \sM_{\gamma,\delta}(x)$.
\end{proposition}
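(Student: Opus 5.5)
We need to check three properties for the shifted law: non-negativity, the drift/local martingale structure, and the quadratic-variation lower bound. The plan is to write everything in terms of the shifted processes $\tilde Y_t := Y_{\tau+t}$ and $\tilde M_t := M_{\tau+t}-M_\tau$ under $\bP$, with the shifted filtration $\mathcal{G}_t := \sF_{\tau+t}$. Since $M_t\circ\theta_\tau = Y_{\tau+t}-Y_\tau-\delta t = \tilde M_t$, the canonical martingale coordinate under $\bP\circ\theta_\tau$ corresponds exactly to $\tilde M$.

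\textbf{Step 1: the shifted process under $\bP$.} By optional stopping, $\tilde M$ is a continuous $(\mathcal G_t)$-local martingale; a localizing sequence is $\rho_n - \tau$ with $\rho_n := \inf\{t\ge\tau: |M_t - M_\tau|\ge n\}$. Its quadratic variation is
\[
\langle\tilde M\rangle_t=\langle M\rangle_{\tau+t}-\langle M\rangle_\tau .
\]
The almost sure, all-times inequality \eqref{assump_QV}, applied pathwise at the random times $\tau+s\le\tau+t$, gives
\[
\langle\tilde M\rangle_t-\langle\tilde M\rangle_s\ge \tfrac12\int_s^t\tilde Y_u^{2\gamma}\,du .
\]
Non-negativity of $\tilde Y$ is immediate.

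\textbf{Step 2: part (b).} Under $\bP^A$ with $A\in\sF_\tau=\mathcal G_0$, local martingales remain local martingales, since conditioning on a $\mathcal G_0$ event preserves the martingale property: $\E[\indc_A \tilde M_{t\wedge\cdot}\,|\,\mathcal G_s]=\indc_A\tilde M_{s\wedge\cdot}$. The quadratic variation is unchanged because it is a pathwise limit in probability, and absolute continuity preserves convergence in probability. So the law of $\tilde Y$ under $\bP^A$, namely $\bP^A\circ\theta_\tau$, is the law of a continuous process whose martingale part is a local martingale in the filtration $\mathcal G$. To transfer this to the canonical filtration, we use that $\sigma(\tilde Y_s:s\le t)\subset\mathcal G_t$. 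A local martingale adapted to a smaller filtration remains a local martingale there, provided the localizing times are chosen as hitting times of $\tilde M$, which are stopping times of the smaller filtration; since $\tilde M$ is continuous, such localization works. The quadratic variation is also pathwise, hence it is a functional of the canonical path, so the QV inequality holds $\bP^A\circ\theta_\tau$-a.s. By the remark after Definition~\ref{def_M}, this suffices. If $Y_\tau=x$ a.s.\ on $A$, then the initial value is $x$.

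\textbf{Step 3: part (a), the main obstacle.} Here we must pass from unconditional statements to statements holding for a.e.\ $\omega$, with only countably many null sets. First, the properties ``$Y_t\ge0$ for all $t$'' and the QV inequality are measurable path events of full $\bP\circ\theta_\tau$-measure, with the QV taken as a limit along dyadic partitions along a fixed subsequence. Hence $\Q^\omega_\tau$ assigns them probability one for a.e.\ $\omega$; likewise $\Q^\omega_\tau(Y_0=Y_\tau(\omega))=1$ because $Y_\tau\in\sF_\tau$. For the local martingale property, we localize with $S_n:=\inf\{t:|M_t|\ge n\}$, which are canonical stopping times, so $M^{S_n}$ is bounded. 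We then need $\Q^\omega_\tau(\indc_F\,(M^{S_n}_t-M^{S_n}_s))=0$ for all $F\in\sF^0_s$. By Step 1 and the tower property, for each fixed $F,s,t,n$ this holds for $\bP$-a.e.\ $\omega$, because $E[\indc_{F}\circ\theta_\tau\,(\tilde M^{S_n}_t-\tilde M^{S_n}_s)\,|\,\sF_\tau]=0$, using $F\circ\theta_\tau\in\mathcal G_s$. Restricting to rational $s<t$, to $n\in\N$, and to $F$ ranging over a countable $\pi$-system generating $\sF^0_s$ (cylinder sets with rational times and rational intervals), we obtain a single null set outside of which the identity holds for all such parameters. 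Extending to all $F\in\sF^0_s$ by a monotone class argument, and to all real $s<t$ by continuity and bounded convergence, shows that $M$ is an $(\sF^0_t)$-local martingale under $\Q^\omega_\tau$, hence $\Q^\omega_\tau\in\sM_{\gamma,\delta}(Y_\tau(\omega))$. The delicate point is ensuring that the QV identified under $\Q^\omega_\tau$ coincides with the path functional used under $\bP$. Defining $\langle M\rangle$ canonically as the $\limsup$ of dyadic sums of squares, which converge a.s.\ along a subsequence under any continuous semimartingale law, handles this.
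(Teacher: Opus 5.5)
Your proposal is correct. Its treatment of the local martingale property in (a) matches the paper's: localize with the canonical hitting times of $|M|$, condition on $\sF_\tau$ using $F\circ\theta_\tau\in\sF_{\tau+s}$, collect countably many null sets via a countable generating class of $\sF^0_s$, then extend by a monotone class argument and path continuity. You depart from the paper in two places.

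\textbf{The quadratic-variation bound under $\Q^\omega_\tau$.} The paper uses the equivalence of \eqref{assump_QV} with $M^2-I$ being a local submartingale, via uniqueness of the Doob--Meyer decomposition. This turns the QV condition into inequalities $\E_{\Q^\omega_\tau}[\indc_F\,(M^2-I)_{t\wedge\sigma_n}]\ge\E_{\Q^\omega_\tau}[\indc_F\,(M^2-I)_{s\wedge\sigma_n}]$. These have the same shape as the martingale identity, so the countable-class transfer applies verbatim and no pathwise construction of $\langle M\rangle$ is needed.

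You instead treat the bound as a full-measure path event built from sums of squared increments. This avoids Doob--Meyer but makes the choice of partitions matter, and your closing justification is slightly off. The subsequence along which dyadic sums converge a.s.\ depends on the law. A $\limsup$ along a fixed subsequence, without convergence under $\Q^\omega_\tau$, would only dominate the $\Q^\omega_\tau$-quadratic variation, which says nothing about its increments.

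The clean version is as follows. Pick $(n_k)$ for the single law $\bP\circ\theta_\tau$, and include ``the sums along $(n_k)$ converge at every rational time'' in the full-measure event. Then for a.e.\ $\omega$ these sums converge $\Q^\omega_\tau$-a.s. Once $M$ is known to be a $\Q^\omega_\tau$-local martingale, they also converge in $\Q^\omega_\tau$-probability to its quadratic variation, so the two limits agree. The inequality then passes from rational $s<t$ to all $s\le t$ by continuity.

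\textbf{Part (b).} The paper averages (a) over $A$, using $\bP^A\circ\theta_\tau(B)=\bP(A)^{-1}\int_A\Q^\omega_\tau(B)\,\bP(d\omega)$. You prove (b) directly, by conditioning on the $\sF_\tau$-event $A$ and restricting to the canonical filtration of the shifted path. Your route is self-contained and needs no regular conditional probabilities. The paper's is a one-liner once (a) is available.
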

	Part (a) is stronger and is the ``correct'' notion of a strong Markov property. However, part~(b), which asserts the same conclusion for naïve conditional probabilities, will be a useful simplified version. A typical application of the PSMP is as follows: suppose that $\tau$ is a finite $(\sF_t)$-stopping time, $Z_1$ and $Z_2$ are bounded random variables on $(\Omega,\mathcal{B}(\Omega))$, and $Z_1 \in \sF_\tau$. Then
    \begin{align} \label{eq:markov_sample}
        \E[Z_1 \cdot (Z_2 \circ \theta_\tau)] = \E[Z_1 \cdot \E[Z_2 \circ \theta_\tau \, | \, \sF_\tau]] &= \E[Z_1 \cdot \Q_\tau(Z_2)].
    \end{align} 
    By the PMSP, we a.s. have $\Q_\tau \in \sM_{\gamma,\delta}(Y_\tau)$. For the sake of illustration, suppose $Y_\tau = x$ a.s. for some $x \geq 0$; if we can compute uniform bounds on $\E_x[Z_2]$ over all $\bP_x \in \sM_{\gamma,\delta}(x)$, then we can compute conditional expectations of the form \eqref{eq:markov_sample} more or less as one would do for a Markov process.

	\begin{proof}[Proof of Proposition~\ref{prop_markov}]
	First, we remark that
	\[ \bP \circ \theta_\tau(Y_t \geq 0 \, \forall t \geq 0) = \bP(Y_{\tau + t} \geq 0 \, \forall t \geq 0) \geq \bP(Y_t \geq 0 \, \forall t \geq 0) = 1.\]
	In particular, $\bP \circ \theta_\tau(Y_t \geq 0 \, \forall t\geq0\, | \, \sF_\tau)(\omega) = 1$ for $\bP$-a.e.\ $\omega$, and so by the definition of $\Q^\omega_\tau$ we have
	\begin{equation*}
		\Q^\omega_\tau(Y_t \geq 0 \, \forall t \geq 0) = 1 \text{ for $\bP$-a.e.\ $\omega$.}
	\end{equation*}
	Next, we establish that $M$ is a local martingale under $\Q^\omega_\tau$ for $\bP$-a.e.\ $\omega$. Here we follow the proof of \cite[Proposition~VI.2.1]{Bass}, but we modify it to include localization. First, since $t \mapsto M_{\tau + t} - M_\tau$ is a $(\sF_{\tau+t})$-local martingale, there exists a family of reducing $(\sF_{\tau + t})$-stopping times $(\hat{\sigma}_n : n\in \N)$ so that $t\mapsto M_{\tau + t \wedge \hat{\sigma}_n} - M_\tau$ is a martingale. In fact, we can and will choose to use the explicit stopping times defined by
	\[ \hat{\sigma}_n := \inf \{t \geq 0 : |M_{\tau + t} - M_\tau| = n\},\]
	which satisfy $\lim_{n \to \infty} \hat{\sigma}_n(\omega) = \infty$ for all $\omega$ due to path continuity. We remark that
	\begin{align*}
		M_t \circ \theta_\tau(\omega) = Y_{\tau+t}(\omega) - Y_\tau(\omega) - \delta t &=  (Y_{\tau+t}(\omega) - Y_0(\omega) - \delta(\tau+t))- (Y_\tau(\omega)  - Y_0(\omega) - \delta \tau)
		\\ &= M_{\tau + t}(\omega) - M_\tau(\omega).
	\end{align*}
	By this identity, we have shown that $t \mapsto M_t \circ \theta_\tau$ is an $(\sF_{\tau+t})$-local martingale with the same localizing sequence, which we may express as
	\begin{equation*}
		\hat{\sigma}_n(\omega) = \inf \{ t \geq 0 : |M_t \circ \theta_\tau(\omega)| = n\} = \sigma_n \circ \theta_\tau(\omega),
	\end{equation*}
	where we define $\sigma_n(\omega) := \inf\{t \geq 0 : |M_t(\omega)| = n\}$. In particular, $t\mapsto M_{t \wedge \sigma_n} \circ \theta_\tau$ is a $\sF_{\tau+t}$-martingale for all $n$.

	
	Let $0 \leq s < t$. For $E \in \sF_\tau$ and $F \in \sF_s^0$, by the martingale property,
	\begin{align*}
		\E [ (M_{t \wedge \sigma_n} \circ \theta_\tau )\indc_{E \cap (F \circ \theta_\tau)}] &= \E [\indc_{E \cap (F\circ \theta_\tau)} \E [M_{\tau+(t \wedge \sigma_n)} \, | \, \sF_{\tau + s}]]
		\\ &= \E [\indc_{E \cap (F\circ \theta_\tau)} M_{\tau+(s \wedge \sigma_n)}] 
		\\&=  \E [ (M_{s \wedge \sigma_n} \circ \theta_\tau )\indc_{E \cap (F \circ \theta_\tau)}].
	\end{align*}
	This is equivalent to
	\begin{equation*}
		\E [ ((M_{t \wedge \sigma_n} \indc_{F}) \circ \theta_\tau )\indc_{E}] =  \E [ ((M_{s \wedge \sigma_n} \indc_{F} )\circ \theta_\tau )\indc_{E}].
	\end{equation*}
	Since this holds for all $E \in \sF_\tau$, it implies that for $\bP$-a.e.\ $\omega$,
	\begin{equation*}
	\E [ (M_{t \wedge \sigma_n} \indc_{F})\circ \theta_\tau \, | \, \sF_\tau](\omega) =  \E [ (M_{s \wedge \sigma_n} \indc_{F} ) \circ \theta_\tau \, | \, \sF_\tau](\omega)
	\end{equation*}
	for any version of this conditional expectation. By the definition of $\Q^\omega_\tau$, $\omega \mapsto \E_{\Q^\omega_\tau}[M_{t \wedge \sigma_n }\indc_F]$ is a version of this conditional expectation, and hence for $F \in \sF_s^0$, for $\bP$-a.e.\ $\omega$,
	\begin{equation} \label{eq_rcd1}
		\E_{\Q^\omega_\tau}[M_{t \wedge \sigma_n} 
        \indc_F] = \E_{\Q^\omega_\tau}[M_{s \wedge \sigma_n}\indc_F]. 
	\end{equation}
	Since $ \sF_s^0$ defined over the space of continuous functions is generated by the coordinate mappings at a countable dense set of times, and each of these coordinate $\sigma$-algebras is itself countably generated, there exists a countable monotone class of events $F$ generating $\sF^0_s$ such that \eqref{eq_rcd1} holds for all~$F$ for $\bP$-a.e.\ $\omega$. Thus, by the monotone class theorem, we conclude that for $\bP$-a.e.\ $\omega$,
	\begin{equation*}
		\E_{\Q^\omega_\tau}[M_{t \wedge \sigma_n } \, | \, \sF^0_s ] = M_{s \wedge \sigma_n } \, \text{ $\Q^\omega_\tau$-a.s.},
	\end{equation*}
	where $\E_{\Q^\omega_\tau}[X \, | \, \sF^0_s]$ denotes the conditional expectation (given $ \sF^0_s$) of $X$ under the measure $\Q^\omega_\tau$ . Let $\mathcal{I} \subset \R_+$ be a countable dense subset. The above implies that there exists $\Omega_0 \subset \Omega$ such that $\bP(\Omega_0) = 1$ and 
	\begin{align*}
		&\text{For all $\omega \in \Omega_0$ and $s,t \in \mathcal{I}$ with $s <t$,} \quad \E_{\Q^\omega_\tau}[M_{t \wedge \sigma_n } \, | \, \sF^0_s ] = M_{s \wedge \sigma_n } \, \text{ $\Q^\omega_\tau$-a.s.}
	\end{align*}
	Thus, for $\bP$-a.e.\ $\omega$, the (local) martingale property holds for $M$ under $\Q^\omega_\tau$ for all $s$ and $t$ in $\mathcal{I}$. We now show that it extends to all $s$ and $t$ using path regularity. Fix $\omega\in \Omega_0$. Let $t \in \R_+$ and suppose that $t > s \in \mathcal{I}$. Since $t\mapsto M_{t \wedge \sigma_n}$ is bounded for every $n \in \N$ (recall our assumption on $\sigma_n$), we may take a sequence $(t_k)_{k \in \N} \subset \mathcal{I}$ such that $t_k \downarrow t$ and conclude from continuity of $M$ and the (conditional) dominated convergence theorem that 
	\begin{equation*}
		\lim_{k\to \infty} \E_{\Q^\omega_\tau}[ M_{t_k \wedge \sigma_n} \, | \, \sF^0_s ] = E_{\Q^\omega_\tau}[ M_{t \wedge \sigma_n} \, | \, \sF^0_s ] \, \text{ $\Q^\omega_\tau$-a.s.}
	\end{equation*}
	Since $t_k \in \mathcal{I}$, the left hand side equals $M_{s\wedge\sigma_n}$ a.s.\ for each $k$, and hence the $\sF^0_t$-local martingale property for times $t \in \R_+$ and $s \in \mathcal{I}$. Now let $s,t \in \R_+$ with $s<t$ and let $(s_k)_{k \in \N} \subset \mathcal{I}$ be a sequence decreasing to $s$. By the continuity of paths, we have
	\begin{equation*}
		\E_{\Q^\omega_\tau}[ M_{t \wedge \sigma_n} \, | \, \sF^0_{s+} ] = \lim_{k \to \infty} \E_{\Q^\omega_\tau}[ M_{t \wedge \sigma_n} \, | \, \sF^0_{s_k} ]  =  \lim_{k \to \infty} M_{s_k \wedge \sigma_n} \ = M_{s \wedge \sigma_n} \,  \text{ $\Q^\omega_\tau$-a.s.}
	\end{equation*}
	Thus, we have proved that for $\bP$-a.e.\ $\omega$, $M$ is a $\sF^0_{t+}$-local martingale under $\Q^\omega_\tau$.
	
	Next, we verify the condition \eqref{assump_QV} on the quadratic variation of $M$ under $\Q^\omega_\tau$. First, recall that the quadratic variation of a local martingale can be characterized by the condition that $t \mapsto M_t^2 - \langle M\rangle_t$ is a local martingale. We will prove \eqref{assump_QV} by verifying that $M_t^2 - I_t$ is a local sub-martingale. Indeed, if this is the case, the Doob-Meyer representation implies that there exists a local martingale $t\mapsto N_t$ and a non-decreasing process $t \mapsto A_t$ such that
    \[ M_t^2 - I_t = N_t + A_t.\]
    Since $t \mapsto M_t^2 - \langle M\rangle_t$ is a local martingale, the uniqueness of the decomposition implies that $A_t = \langle M\rangle_t - I_t$ for $t \geq 0$. Since $t\mapsto A_t$ is a.s.\ non-decreasing, this is equivalent to \eqref{assump_QV}. Thus, \eqref{assump_QV} is equivalent to the condition that $t \mapsto M_t^2 - I_t$ is a local submartingale. One can prove that $t \mapsto M_t^2 - I_t$ is a submartingale under $\Q^\omega_\tau$ for $\bP$-a.e.\ $\omega$ using virtually the same argument just used to prove that $M$ is a local martingale, and we omit the details.
	
	It remains to verify that for $\bP$-a.e.\ $\omega$, we have $Y_0(\omega') = Y_\tau(\omega)$ for $\Q^\omega_\tau$-a.e.\ $\omega'$. This uses the same argument as the proof of \cite[Proposition VI.2.1]{Bass}, which we reproduce for completeness. We define the event $A(\omega) = \{\omega' : Y_0(\omega') = Y_\tau(\omega)\}$. Then for any $E \in \sF_\tau$, by definition of $\Q^\omega_\tau$, 
	\begin{align*}
		\E [ \indc_E  \,\Q^\omega_\tau(A(\omega))] = \E[ \indc_E \, \bP\circ \theta_\tau (A \, | \sF_\tau) ] &= \E [ \indc_E \,\bP(Y_\tau = Y_0 \circ \theta_\tau \, | \, \sF_\tau) ]
		\\ &=\E [ \indc_E \,\bP(Y_\tau = Y_\tau \, | \, \sF_\tau) ]
		\\ &= \bP(E).
	\end{align*}
	This implies that $\bP(Q^\omega_\tau(A(\omega)) = 1)=1$. We have now shown that $\bP(\Q^\omega_\tau \in \sM_{\gamma,\delta}(Y_\tau)) = 1$, which proves part (a).
	
	Part (b) is a consequence of part (a). For any $B \in \mathcal{B}(\Omega)$, one has
	\[\bP^A \circ \theta_\tau(B) = \bP(A)^{-1} \int_A \Q^\omega_\tau(B) \bP(d\omega),\]
	and the desired properties can be easily inferred from the result on $\Q^\omega_\tau$. \end{proof}



The next lemma establishes that any stopping time until which $Y_t$ remains bounded is finite, and in fact has exponentially decaying tails. This will allow us to apply the PSMP at all such stopping times. 

\begin{lemma} \label{lemma_bddstop_tail} Let $b > 0$ and $\delta >0$. If $\bP \in \sM_{\gamma,\delta}$ and $\tau$ is any $(\sF_t)$-stopping time such that $\bP(Y_t \leq b \text{ for all }t \in [0, \tau]) = 1$, then 
	\[\bP(\tau > t) \leq 2 e^{-ct}\,\, \text{ for all } \,t \geq 0,\]
	where $c = c(b,\delta) = \delta / (18b)$.
\end{lemma}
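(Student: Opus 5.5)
The plan is a drift-versus-fluctuation argument combined with the pseudo-strong Markov property. Since $Y_t = Y_0 + \delta t + M_t$ and $0 \le Y_t \le b$ on $[0,\tau]$, we get $M_t \le b - \delta t$ for $t \le \tau$. Hence on $\{\tau > t\}$ the martingale satisfies $M_t \le -(\delta t - b)$, a large negative deviation when $t \gg b/\delta$. We do not control $\langle M\rangle$ from above, so we will not use Bernstein-type bounds directly. Instead we use a one-step estimate over a window of fixed length and iterate it.

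\textbf{Step 1 (one-step bound).} Fix $t_0 := 3b/\delta$. We claim that for any $\bP' \in \sM_{\gamma,\delta}$ and any stopping time $\tau'$ with $Y \le b$ on $[0,\tau']$, we have $\bP'(\tau' > t_0) \le 1/2$ (or some constant $p<1$). On $\{\tau' > t_0\}$ we have $M_{t_0 \wedge \tau'} \le b - 3b = -2b$. Moreover, the stopped martingale $N_t := M_{t\wedge\tau'}$ is bounded below, since $N_t \ge -Y_0 - \delta (t\wedge \tau') \ge -b - \delta t_0$ using $Y \ge 0$. It is also bounded above by $b$, since $M_{t} \le Y_t - Y_0 \le b$ on $[0,\tau']$. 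Therefore $N$ is a bounded local martingale, hence a true martingale with $\E'[N_{t_0}] = 0$.

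Writing $p = \bP'(\tau'>t_0)$, we get
\[0 = \E'[N_{t_0}] \le -2b\,p + b(1-p).\]
This gives $p \le 1/3$. If the constants differ slightly, we adjust $t_0$, for instance $t_0 = 9b/\delta$ with a factor-of-two slack, which is where a constant such as $18$ would come from.

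\textbf{Step 2 (iteration via the PSMP).} Set $A_k = \{\tau > k t_0\}$, which lies in $\sF_{kt_0}$. On $A_k$ the shifted process is controlled by Proposition~\ref{prop_markov}(b): $\bP^{A_k}\circ\theta_{kt_0} \in \sM_{\gamma,\delta}$. The shifted time $(\tau - kt_0)\circ$ is handled by taking $\tau' = (\tau - kt_0)$ composed appropriately. More cleanly, apply Step 1 to the conditional measure directly, since $N$ is a martingale for the filtration shifted by $kt_0$ and the bounds persist. This yields $\bP(A_{k+1}\mid A_k) \le p$, hence $\bP(\tau > k t_0) \le p^k$.

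\textbf{Step 3 (interpolation).} For general $t$, take $k = \lfloor t/t_0 \rfloor$. Then
\[\bP(\tau>t) \le p^{k} \le p^{-1} e^{-(t/t_0)\log(1/p)}.\]
With $p = 1/2$ and $t_0$ of order $b/\delta$, this gives $2e^{-ct}$ with $c$ of order $\delta/b$. Tuning the constants gives $c = \delta/(18b)$.

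The main obstacle is Step 2. The condition $Y\le b$ on $[0,\tau]$ must transfer correctly under the shift, and $\tau - kt_0$ is not literally a functional of the shifted path. To avoid this, we will bypass the regular conditional probability and prove the conditional bound directly. Using that $M_{t\wedge\tau} - M_{kt_0\wedge\tau}$ is a bounded $(\sF_t)$-martingale on $[kt_0, (k+1)t_0]$, we apply the Step 1 argument conditionally on $\sF_{kt_0}$, multiplying by $\indc_{A_k}$ and taking expectations. In fact this requires only optional stopping, not the PSMP.
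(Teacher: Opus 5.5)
Your argument is correct, and it takes a different route from the paper's.

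\textbf{The paper's route.} The paper does not iterate. It notes that the stopped process $M^\tau$ is a true martingale whose increments satisfy $|M^\tau_t - M^\tau_s| \le 2b + \delta(t-s)$; this uses both $Y \ge 0$ and $Y \le b$. It samples $M^\tau$ on the grid $t_k = bk/\delta$, giving a discrete martingale with increments bounded by $3b$. It observes that $\{\tau > t_n\} \subseteq \{m_n \le -b(n-1)\}$ and applies Azuma's inequality once. The constant $18 = 2\cdot 3^2$ is exactly Azuma's constant for increment bound $3b$.

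\textbf{Your route.} You replace the concentration inequality with a first-moment (gambler's-ruin) bound on each window $[kt_0,(k+1)t_0]$, with $t_0 = 3b/\delta$. The three facts you need all hold:
\begin{itemize}
\item $A_k=\{\tau>kt_0\}\in\sF_{kt_0}$;
\item $M_{t\wedge\tau}-M_{kt_0\wedge\tau}$ is bounded on the window, hence a true martingale there;
\item this increment is at most $b$ on $A_k$ and at most $-2b$ on $A_{k+1}$.
\end{itemize}
The tower property then gives $\bP(A_{k+1})\le \tfrac13\bP(A_k)$ directly. You are right that the PSMP is neither needed nor convenient here, so write up the direct version from your last paragraph rather than the shifted-measure version.

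\textbf{What each approach buys.} Your approach is more elementary: it uses only optional stopping at deterministic times. It also gives a better rate, $\bP(\tau>t)\le 3^{-\lfloor \delta t/(3b)\rfloor}$. The paper's approach gives a one-shot estimate with no iteration, at the cost of weaker constants.

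\textbf{One step to make explicit.} Replace ``tuning the constants'' in Step~3 with the actual comparison. The naive interpolation $3^{-\lfloor s\rfloor}\le 3\cdot 3^{-s}$ has prefactor $3$, not $2$. Instead, for $s=\delta t/(3b)\in[k,k+1)$ check that $3^{-k}\le 2e^{-(k+1)/6}\le 2e^{-\delta t/(18b)}$. The first inequality holds for every $k\ge 0$, since $3^{-k}e^{(k+1)/6}\le e^{1/6}<2$.
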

\begin{proof}
	Recall that $M_t = Y_t - \delta t - Y_0$. Hence with probability one, $ M_t \leq b - \delta t$ for all $t \leq \tau$. On the other hand, since $Y_t \geq 0$ for all $t \geq 0$ a.s., it follows that with probability one, $M_t \geq -b - \delta t$ for all $t \geq 0$. In particular, the stopped local martingale $M^\tau = (M_{\tau \wedge t})_{t \geq 0}$ satisfies $M^\tau_t \in [-b - \delta (t \wedge \tau), b - \delta (t \wedge \tau)]$ for all $t \geq 0$. One straightforward consequence of this is that $M^\tau$ is in fact a martingale, which follows from a dominated convergence argument on the local martingale. A second consequence is that, for all $0 \leq s < t$, we have
	\[ - 2b - \delta(t-s) \leq M^\tau_t - M^\tau_s \leq 2b - \delta(t-s),\]
	and in particular 
	\[ | M^\tau_t - M^\tau_s | \leq 2b + \delta(t-s).\]
	For $k \in \N_0$, let $t_k = b\delta^{-1} k$. We define the discrete-time process $m = (m_k)_{k \in \N_0}$ by $m_k := M^\tau_{t_k}$. Then $m_0 = 0$, and moreover $m$ is a discrete-time martingale. 
	The previous inequality for the increments of $M^\tau$ implies that $|m_{k+1} - m_k| \leq 3b$ for all $k$. 
	
	Let $n \in \N$. We remark that if $\tau > t_n$, then $m_n = M^\tau_{t_n} = M_{t_n} \leq b -\delta t_n = -b(n-1)$. Because $m$ is a martingale with increments bounded by $3b$, we obtain from Azuma's inequality that
	\begin{align*}
		\bP(\tau > t_n ) &\leq \bP(m_n \leq -b(n-1) )
		\\ &= \exp \left(- \frac{b^2(n-1)^2}{18b^2 n} \right)
		\\ & \leq \exp \left( -(n - 2)/18\right).
	\end{align*}
	To conclude, we observe that if $t \in [t_n, t_{n+1})$, then 
	\begin{align*} \bP(\tau > t) \leq \bP( \tau > t_n) \leq e^{1/9} e^{ - n/18} &\leq e^{1/9 + 1/18} e^{- (n+1)/18}
		\\ &= e^{1/6} e^{- \delta t_{n+1}/(18b)}
		\\ &\leq 2 e^{-ct}, \end{align*}
	where $c = \delta/(18b)$. This completes the proof.
\end{proof}

We now specialize the PSMP to state a useful result concerning hitting times, which illustrates how it works as a substitute for the strong Markov property when working with $\sM_{\gamma,\delta}$-class semimartingales. Here and for the rest of the paper, for $x \geq 0$ we define
\begin{equation} \label{def_tau}
\tau_x(\omega) = \inf \{t \geq 0 : \omega(t) = x\} = \inf \{t \geq 0 : Y_t = x\}.
\end{equation}
Of course, $\tau_x$ is an $(\sF_t)$-stopping time (as well as an $(\sF^0_t)$-stopping time). 




\begin{lemma}\label{lemma_stop_markov} Suppose $0 \leq a \leq x \leq w \leq y \leq b$ with $x < y$ and let $\bP_w \in \sM_{\gamma,\delta}(w)$. Then provided $\bP_w(\tau_x < \tau_y) >0$,
\begin{equation}
\bP_w(\tau_x < \tau_y, \tau_a < \tau_b) = \bP_w(\tau_x < \tau_y) \bP_x'(\tau_a < \tau_b),
\end{equation}
for some $\bP_x' \in \sM_{\gamma,\delta}(x)$. Moreover, the analogous inequality holds for all variations of the ordering of the stopping times, i.e.\ exchanging $\tau_x < \tau_y$ for $\tau_y < \tau_x$, and similarly for $\tau_a$ and $\tau_b$.
\end{lemma}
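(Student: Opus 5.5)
The plan is to apply Proposition~\ref{prop_markov}(b) at the stopping time $\tau := \tau_x \wedge \tau_y$ with the event $A := \{\tau_x < \tau_y\}$. Throughout we assume $x<y$, and $w \in [x,y]$ so that $Y$ starts in $[x,y]$.

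\textbf{Step 1: $\tau$ is finite.} Under $\bP_w$ the path satisfies $Y_t \le y$ for all $t \le \tau$. Lemma~\ref{lemma_bddstop_tail} with bound $y$ (if $y=0$ the statement is trivial) then gives $\bP_w(\tau>t) \le 2e^{-ct}$, so $\tau<\infty$ a.s. Also $A \in \sF_\tau$, since both $\tau_x$ and $\tau_y$ are stopping times.

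\textbf{Step 2: the event factorizes along the shift.} On $A$, path continuity gives $Y_\tau = x$, and hence $\bP_w(Y_\tau = x \mid A)=1$. Since $a \le x$, the path has not visited $a$ before $\tau$ unless $a=x$. In the case $a=x$ we have $\tau_a=\tau$. Since $b \ge y > \ldots$, the path also has not reached $b$ before $\tau$ on $A$, because $Y_t<y\le b$ for $t<\tau$ and $Y_\tau=x<y\le b$. It follows that on $A$,
\[\tau_a = \tau + \tau_a\circ\theta_\tau \quad\text{and}\quad \tau_b = \tau + \tau_b\circ\theta_\tau.\]
This also covers $a=x$, where $\tau_a\circ\theta_\tau = 0$. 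Consequently
\[A\cap\{\tau_a<\tau_b\} = A \cap (\{\tau_a<\tau_b\}\circ\theta_\tau).\]

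\textbf{Step 3: apply the pseudo-strong Markov property.} We get
\[\bP_w(A\cap\{\tau_a<\tau_b\}) = \bP_w(A)\,\bP_w^A(\{\tau_a<\tau_b\}\circ\theta_\tau) = \bP_w(A)\,(\bP_w^A\circ\theta_\tau)(\tau_a<\tau_b).\]
By Proposition~\ref{prop_markov}(b), $\bP'_x := \bP_w^A\circ\theta_\tau \in \sM_{\gamma,\delta}(x)$, which is the claim. The variants follow the same way:
\begin{itemize}
\item for $\tau_b<\tau_a$, replace the event in the last factor;
\item for $A=\{\tau_y<\tau_x\}$, use $Y_\tau=y$ and obtain a measure in $\sM_{\gamma,\delta}(y)$;
\item in either case the hitting times of $a,b$ shift correctly, because before $\tau$ the path stays in $[x,y]\subseteq[a,b]$.
\end{itemize}

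\textbf{Main obstacle: boundary cases in the shift identity.} The only delicate point is checking $\tau_a = \tau+\tau_a\circ\theta_\tau$ on $A$ when endpoints coincide. If $a=x$ or $b=y$ the target may already be hit exactly at $\tau$. For the event $\{\tau_y<\tau_x\}$ with $y=b$, we have $\tau_b=\tau$ and $\tau_b\circ\theta_\tau=0$, so the identity holds. One also needs $a \le x$ and $y \le b$ to guarantee that neither $a$ nor $b$ was reached strictly before $\tau$, using continuity and the fact that $Y_t\in(x,y)$ for $t<\tau$ when $w\in(x,y)$. The degenerate starts $w=x$ or $w=y$ give $\tau=0$ and are immediate. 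The finiteness of $\tau$ from Step 1 is what makes the random shift well defined.
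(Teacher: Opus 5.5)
Your proof is correct and follows essentially the same route as the paper: you show that on $\{\tau_x<\tau_y\}$ the hitting times of $a$ and $b$ satisfy $\tau_a=\tau+\tau_a\circ\theta_\tau$ and $\tau_b=\tau+\tau_b\circ\theta_\tau$, so the event factorizes along the shift, and then you invoke Proposition~\ref{prop_markov}(b). Shifting at $\tau_x\wedge\tau_y$ (a.s.\ finite by Lemma~\ref{lemma_bddstop_tail}), rather than at $\tau_x$ as the paper does, is slightly more careful, since $\tau_x$ need not be a.s.\ finite and the PSMP is stated only for a.s.\ finite stopping times.
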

\begin{proof}
%
%
Since $\bP_w \in \sM_{\gamma,\delta}(w)$, there is a $\bP_w$-null set $N_1$ such that $Y_0(\omega) = w$ for $\omega \in N_1^c$. 
From continuity of the sample paths and the fact that $w \in [x,y] \subseteq [a,b]$, it follows that for all $\omega \in N_1^c$, $\tau_a \geq \tau_x$ and $\tau_b \geq \tau_y$. In particular, this implies that for all $\omega \in N_1^c \cap \{\tau_x < \tau_y\}$, $\tau_a(\omega) \geq \tau_x(\omega) $ and $\tau_b(\omega) \geq \tau_x(\omega)$. It then follows from the definition of the hitting times that
\begin{align*}
& \text{For all $\omega \in N_1^c \cap \{\tau_x < \tau_y\}$,}
\\ &\hspace{1 cm} \tau_a \circ \theta_{\tau_x}(\omega) = \tau_a(\omega) - \tau_x(\omega) \quad  \text{ and } \quad \tau_b \circ \theta_{\tau_x}(\omega) = \tau_b(\omega) - \tau_x(\omega).
\end{align*}
For all such $\omega$, it holds that 
\[ \tau_a(\omega) < \tau_b(\omega) \iff \tau_a(\omega) - \tau_x(\omega) < \tau_b(\omega) - \tau_x(\omega) \iff \tau_a \circ \theta_{\tau_x}(\omega)  < \tau_b \circ \theta_{\tau_x}(\omega).\]
In particular, 
\begin{align*}
\bP_w(\tau_x < \tau_y, \tau_a < \tau_b) &= \bP_w(\tau_x < \tau_y, \tau_a \circ \theta_{\tau_x} < \tau_b \circ \theta_{\tau_x})
\\ &= \bP_w(\tau_x < \tau_y) \cdot \bP_w^{\{\tau_x < \tau_y\}} (\tau_a \circ \theta_{\tau_x} < \tau_b \circ \theta_{\tau_x})
\\ &=  \bP_w(\tau_x < \tau_y) \cdot \bP^{\{\tau_x < \tau_y\}} \circ \theta_{\tau_x} (\tau_a  < \tau_b ).
\end{align*}
By the PSMP (i.e.\ Proposition~\ref{prop_markov}(b)), $\bP_w^{\{\tau_x < \tau_y\}} \circ \theta_{\tau_x} \in \sM_{\gamma,\delta}(x)$. This proves the result.
\end{proof}

%

To conclude the section, we remark that the process constructed in Lemma~\ref{lemma_Delta} belongs to $\sM_{\gamma,\delta}$. 
\begin{lemma} Under the assumptions of Lemma~\ref{lemma_Delta}, we have $\cL(\bar{Y}) \in \sM_{\gamma,\delta}(Y_i(0))$, where $\cL(\bar{Y})$ denotes the law of the canonical projection of $\bar{Y}$ onto $C(\R_+,\R)$.
\end{lemma}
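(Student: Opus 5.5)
The lemma is really about the process $\bar{Y} = \bar{Y}(i)$ built in Lemma~\ref{lemma_coupling}, whose construction uses the auxiliary $\Delta$ of Lemma~\ref{lemma_Delta}. The initial value is $\bar Y_0 = Y_0(i)$. Write $\bP := \cL(\bar Y)$ for its law on $\Omega = C(\R_+,\R)$. I will check the two conditions of Definition~\ref{def_M} under $\bP$, first working on the original filtered space $(\Omega',\sF',(\sF'_t),\bP')$ where $\bar Y$ lives, and then transferring to the canonical space.

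\emph{Non-negativity and initial value.} On $[0,T_\delta\wedge\tau_1]$ we have $\bar Y = \hat Y = Y + \Delta \geq 0$. After that time $\bar Y = \tilde Y$, which is a non-negative solution of $d\tilde Y = \delta\,dt + \tilde Y^\gamma d\tilde W$ (non-negative by construction, as for $\Delta$). So $\bP(Y_t \geq 0\ \forall t)=1$ and $\bP(Y_0 = Y_0(i))=1$.

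\emph{Local martingale and quadratic variation under $\bP'$.} By Lemma~\ref{lemma_coupling}, $\bar Y_t = Y_0(i) + \delta t + M'_t$, where $M'$ is a continuous $(\sF'_t)$-local martingale satisfying \eqref{eq_MQV_coupling}. On $[0,T_\delta\wedge\tau_1]$ this is \eqref{eq:hatMqv}. After that time $d\langle M'\rangle_t = \tilde Y_t^{2\gamma}dt \geq \tfrac12 \bar Y_t^{2\gamma}dt$, so the increment condition holds across the concatenation. In other words, $t\mapsto \langle M'\rangle_t - \tfrac12\int_0^t \bar Y_u^{2\gamma}du$ is non-decreasing $\bP'$-a.s.

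\emph{Transfer to the canonical filtration.} This is the main obstacle, because $M'$ is a local martingale for the large filtration $(\sF'_t)$, not for the filtration generated by $\bar Y$. I would localize with the explicit times $\sigma_n = \inf\{t : |M_t| \geq n\}$, where $M_t = Y_t - Y_0-\delta t$ on canonical space. The composition $\sigma_n\circ\bar Y$ is an $(\sF'_t)$-stopping time, since $\bar Y$ is adapted, and the stopped process $M'_{\cdot\wedge\sigma_n}$ is a bounded $(\sF'_t)$-local martingale, hence a true martingale. Its natural filtration is contained in $(\sF'_t)$, so the tower property shows $M_{\cdot\wedge\sigma_n}$ is a martingale for $\sF^0_t$ under $\bP$. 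Thus $M$ is an $(\sF^0_t)$-local martingale, and by the remark after Definition~\ref{def_M} this suffices.

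For the quadratic variation I would use that $\langle M\rangle$ is a pathwise functional of the path: it is the limit in probability of sums of squared increments along dyadic partitions. So under $\bP$, $\langle M\rangle$ is the image of $\langle M'\rangle$ under the map $\bar Y$. Likewise $I_t$ is a deterministic functional of the path. The a.s. monotonicity of $\langle M'\rangle - \tfrac12\int \bar Y^{2\gamma}$ therefore gives \eqref{assump_QV} $\bP$-a.s. Hence $\cL(\bar Y)\in\sM_{\gamma,\delta}(Y_0(i))$.
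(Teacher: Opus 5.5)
Your proposal is correct and follows the same route as the paper. The paper's proof is the one-line remark that the claim follows directly from the coupling construction together with Definition~\ref{def_M}; the citation of Lemma~\ref{lemma_Delta} there should really be to Lemma~\ref{lemma_coupling}, as you noticed. Your proposal adds details the paper leaves implicit: you transfer the local martingale property from the coupling filtration to $(\sF^0_t)$ using the hitting times $\sigma_n$ and the tower property, and you identify $\langle M\rangle$ pathwise.
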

\begin{proof}
	This follows immediately from Lemma~\ref{lemma_Delta} and Definition~\ref{def_M}.
\end{proof}

\section{The behaviour of class-$\sM_{\gamma,\delta}$ semimartingales near $0$} \label{s_intervals}
In this section we analyze the behaviour of class-$\sM_{\gamma,\delta}$ semimartingales near zero and prove several key results. These results are the basis for our analysis of the local time at zero, which is central to the proof of our main result. Throughout the section we fix
\[ \gamma \in (0,1/2).\] 
This is of critical importance to the results we prove. Viewing $\gamma$ as fixed, we will not track the dependence of constants on this parameter. We will also restrict our attention to $\delta \in (0,1]$. This is much less important, and indeed versions of our results hold for arbitrary positive $\delta$; we make the restriction for convenience, as it allows us to state various results uniformly in $\delta$ instead of tracking dependencies of various constants on the parameter $\delta$. Moreover, ultimately we apply the results here for small values of $\delta$.

Throughout the section, we are interested in the behaviour of the canonical process $Y$ under $\bP \in \sM_{\gamma,\delta}$. We recall our notation for hitting times defined in \eqref{def_tau}, that is
\begin{equation*}
\tau_a = \inf \{t\geq 0 : Y_t = a\}.
\end{equation*}
If $Y$ were a continuous martingale, we would have that for $a < x < b$, if $Y_0 =x$, then the probability that $\tau_b < \tau_a$ is equal to $\frac{x-a}{b-a}$. However, $Y$ has a non-negative drift and so is not a martingale. The first main result, which we now state, is that for intervals close enough to zero, the effect of the drift is negligible and the ``martingale probabilities" are approximately recovered.


\begin{proposition} \label{prop_hittingprob_main} There exist a constant $c_1>0$ such that for sufficiently small $\zeta$, for all $\delta \in (0,1]$ and $\bP_\zeta \in \sM_{\gamma,\delta}(\zeta)$,
\begin{equation} \label{eq_hittingprob_main}
\frac 1 2 \leq \bP_\zeta( \tau_{2 \zeta} < \tau_0) \leq  \frac 1 2 + 8 \zeta^{c_1}.
\end{equation}
\end{proposition}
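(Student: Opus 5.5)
Let $\tau := \tau_0 \wedge \tau_{2\zeta}$. Since $Y_t \le 2\zeta$ on $[0,\tau]$, Lemma~\ref{lemma_bddstop_tail} gives $\bP_\zeta(\tau > t) \le 2e^{-ct}$, so $\tau$ is a.s.\ finite with finite expectation. Moreover, $M^\tau$ is a bounded-increment martingale, as in the proof of that lemma.

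\textbf{Lower bound.} Optional stopping for the bounded martingale $M^\tau$ gives
\[
\E_\zeta[Y_\tau] = \zeta + \delta \E_\zeta[\tau] \ge \zeta .
\]
Since $Y_\tau = 2\zeta\, \indc(\tau_{2\zeta} < \tau_0)$, we get $2\zeta\, \bP_\zeta(\tau_{2\zeta}<\tau_0) \ge \zeta$, which is the lower bound $\tfrac12$.

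\textbf{Upper bound: reduction.} The same identity gives
\[
\bP_\zeta(\tau_{2\zeta}<\tau_0) = \tfrac12 + \tfrac{\delta}{2\zeta}\E_\zeta[\tau].
\]
So it suffices to show $\E_\zeta[\tau] \le 16\zeta^{1+c_1}$ uniformly in $\delta \in (0,1]$, using $\delta \le 1$. This is where the quadratic variation lower bound~\eqref{assump_QV} enters: the process diffuses at rate at least $Y^{2\gamma}$, which near zero is large compared to the drift because $\gamma < 1/2$.

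\textbf{Upper bound: Lyapunov function.} I would use $F(y) = y^{2-2\gamma}$, which is convex since $2-2\gamma > 1$. It satisfies $F''(y) = (2-2\gamma)(1-2\gamma)\, y^{-2\gamma}$, so
\[
\tfrac12 F''(Y_s)\, d\langle M\rangle_s \ge \tfrac14 (2-2\gamma)(1-2\gamma)\, ds .
\]
The singularity of $F''$ at $0$ is handled by localizing: apply Itô to $F$ up to $\tau \wedge \tau_\epsilon$-type times, or use $F_\epsilon(y) = (y+\epsilon)^{2-2\gamma}$ and let $\epsilon \to 0$. Since $F' \ge 0$ and the drift is $\delta \ge 0$, the drift contribution is nonnegative and can be dropped. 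This yields
\[
c_\gamma\, \E_\zeta[\tau] \le \E_\zeta[F(Y_\tau)] - F(\zeta) \le F(2\zeta) = (2\zeta)^{2-2\gamma}.
\]
Hence
\[
\tfrac{\delta}{2\zeta}\E_\zeta[\tau] \le C_\gamma\, \zeta^{1-2\gamma}.
\]
Taking $c_1 < 1-2\gamma$ (e.g.\ $c_1 = (1-2\gamma)/2$), we have $C_\gamma \zeta^{1-2\gamma} \le 8\zeta^{c_1}$ for small $\zeta$, which is the claimed bound.

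\textbf{Main obstacle.} The main technical point is justifying the Itô/optional stopping step. The local-martingale part $\int F'(Y)\,dM$ is bounded on $[0,\tau]$ because $F'$ is bounded on $[0,2\zeta]$ and $\E_\zeta\langle M\rangle_\tau$ is finite. To see the latter, use optional stopping on $M^2 - \langle M\rangle$ with $M$ bounded, together with $\E_\zeta[\tau] < \infty$. The other point is passing $\epsilon \to 0$ in the regularization, which follows by monotone convergence, since $F_\epsilon''(y) = (2-2\gamma)(1-2\gamma)(y+\epsilon)^{-2\gamma}$ increases to $F''(y)$ as $\epsilon \downarrow 0$.
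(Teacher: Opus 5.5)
Your proof is correct, and it takes a genuinely different and much shorter route than the paper.

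\textbf{The paper's route.} The paper never uses the exact identity $\bP_\zeta(\tau_{2\zeta}<\tau_0)=\tfrac12+\tfrac{\delta}{2\zeta}\E_\zeta[\tau_0\wedge\tau_{2\zeta}]$. Instead it works on the dyadic ladder $a_n=\zeta 2^{-n}$ in three steps:
\begin{enumerate}
\item It bounds the one-step probabilities $\bP_{a_n}(\tau_{a_{n-1}}<\tau_{a_{n+1}})$ between $\tfrac13$ and $\tfrac13+\zeta^{c_4}e^{-c_5 n}$ (Lemmas~\ref{lemma_exit_lwrbd}--\ref{lemma_exit_upperbd2}).
\item It uses the PSMP to show that $n\mapsto\sup_{\bP\in\sM_{\gamma,\delta}(a_n)}\bP(\tau_{a_0}<\tau_{a_N})$ is a subsolution of a discrete second-order equation.
\item It compares this with the explicit supersolution $q_{K_\zeta}$.
\end{enumerate}
The lower bound is only sketched there, by a parallel argument.

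\textbf{Your route.} You reduce everything to the mean exit time and control it with the Lyapunov function $y^{2-2\gamma}$. This uses only Itô's formula, optional stopping and the quadratic-variation lower bound, and no PSMP. It buys several things:
\begin{itemize}
\item The lower bound holds for all $\zeta,\delta>0$.
\item The upper bound is explicit, $\tfrac12+C_\gamma\delta\zeta^{1-2\gamma}$, and valid for every $\zeta$.
\item Run with an arbitrary initial law on $(0,2\zeta)$, the same computation gives $\E[\tau_0\wedge\tau_{2\zeta}]\le C_\gamma\zeta^{2-2\gamma}$ directly, which is Lemma~\ref{lemma_exit_mean}.
\end{itemize}
The paper reaches Lemma~\ref{lemma_exit_mean} only through Proposition~\ref{prop_stop_tail}, whose proof (via Lemma~\ref{lemma_Xk_recursion}) itself uses the present proposition. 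Your route reverses that dependency and would shortcut most of Section~\ref{s_intervals}.

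What the paper's method gives in addition is $\bP_{a_n}(\tau_\zeta<\tau_0)\le q_{K_\zeta}(n)$ for all $n$ at once, with an error that is multiplicative relative to $2^{-n}$. Only the case $n=1$ is used later, however.

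\textbf{Two small fixes.}
\begin{itemize}
\item $M^\tau$ is not bounded; it is dominated by $\zeta+\delta\tau$. Likewise $\int_0^{\cdot\wedge\tau}F_\epsilon'(Y)\,dM$ is an $L^2$-bounded martingale rather than a bounded process, since $\E_\zeta\langle M\rangle_\tau\le\E_\zeta[(\zeta+\delta\tau)^2]$. That bound needs the second moment of $\tau$, not just the first. The exponential tail from Lemma~\ref{lemma_bddstop_tail} supplies it, so nothing breaks.
\item When letting $\epsilon\downarrow 0$, it is the fact that $Y_s>0$ on $[0,\tau)$ that recovers the full $\E_\zeta[\tau]$.
\end{itemize}
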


The other main result characterizes the tails of the exit time of intervals which have lower endpoint zero. For such an interval of length $\zeta$, the expectation of the exit time has order $\zeta^{2-2\gamma}$ (see Lemma~\ref{lemma_exit_mean}). The following stronger results gives exponential decay of the tail along constant multiplies of the mean. For $A \subseteq \R_+$, we introduce the notation
\begin{equation} \label{eq:M_geninit}
    \sM_{\gamma,\delta}(A) := \left\{ \bP \in \sM_{\gamma,\delta} : \bP(Y_0 \in A) = 1 \right\}.
\end{equation}


\begin{proposition} \label{prop_exit_exp} There are constants $C_1 \geq 1$ and $c_2> 0$ such that for sufficiently small $\zeta$, for all $\delta \in (0,1]$ and $\bP \in \sM_{\gamma,\delta}((0,2\zeta))$,
	\begin{equation*}
		\bP(\tau_0 \wedge \tau_{2\zeta} > m \zeta^{2-2\gamma}) \leq C_1 e^{-c_2 m}
	\end{equation*}
	for all $m \in \N$.
\end{proposition}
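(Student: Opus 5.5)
The plan is to show that, from any starting point in $(0,2\zeta)$, the exit time $\tau:=\tau_0\wedge\tau_{2\zeta}$ exceeds one block of length $K\zeta^{2-2\gamma}$ with probability at most some $p<1$, uniformly over $\sM_{\gamma,\delta}((0,2\zeta))$ and $\delta\in(0,1]$. The pseudo-strong Markov property then iterates this bound.

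\emph{One-block estimate.} Fix $\bP\in\sM_{\gamma,\delta}((0,2\zeta))$. On $[0,\tau]$ we have $0\le Y\le 2\zeta$, so by Lemma~\ref{lemma_bddstop_tail} the time $\tau$ is a.s.\ finite, and $M^\tau$ is a bounded-increment martingale. Since $|M_{t\wedge\tau}|\le 2\zeta+\delta(t\wedge\tau)$, optional stopping gives
\[
\E[\langle M\rangle_{\tau\wedge t}]=\E[M_{\tau\wedge t}^2]\le 2(2\zeta)^2+2\delta^2\E[(\tau\wedge t)^2].
\]
This bound is too crude because of the quadratic drift term. I would instead use a Lyapunov function adapted to the quadratic-variation lower bound. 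Take
\[
g(y)=\frac{(2\zeta)^{2-2\gamma}-y^{2-2\gamma}}{(1-\gamma)(1-2\gamma)}\ge 0 \quad\text{on } [0,2\zeta],
\]
so that $g$ is concave and $g''(y)=-2y^{-2\gamma}$. By It\^o's formula,
\[
dg(Y_t)=g'(Y_t)\,\delta\,dt+g'(Y_t)\,dM_t-Y_t^{-2\gamma}\,d\langle M\rangle_t .
\]
Using \eqref{assump_QV}, the last term is at most $-\tfrac12\,dt$, and $g'\le 0$ makes the drift contribution nonpositive. (There is a singularity of $y^{-2\gamma}$ at $0$; I would handle it by stopping at $\tau_\epsilon$ and letting $\epsilon\to0$, or by smoothing $g$ near $0$, since $g'$ is bounded on $[0,2\zeta]$.) Taking expectations gives
\[
\tfrac12\,\E[\tau\wedge t]\le g(Y_0)\le C\zeta^{2-2\gamma},
\]
hence $\E\tau\le 2C\zeta^{2-2\gamma}$. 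Markov's inequality then yields
\[
\bP(\tau>K\zeta^{2-2\gamma})\le \tfrac12 \quad\text{for } K=4C,
\]
uniformly in $\delta$ and in the starting law. This step works for every $\zeta\le 1$, so the smallness of $\zeta$ is not really needed here.

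\emph{Iteration.} Let $s_k=kK\zeta^{2-2\gamma}$ and $A_k=\{\tau>s_k\}\in\sF_{s_k}$. On $A_k$ we have $Y_{s_k}\in(0,2\zeta)$, and $\tau=s_k+\tau\circ\theta_{s_k}$. By Proposition~\ref{prop_markov}(b) applied at the deterministic time $s_k$, the shifted law $\bP^{A_k}\circ\theta_{s_k}$ lies in $\sM_{\gamma,\delta}((0,2\zeta))$. The one-block estimate then gives $\bP(A_{k+1}\mid A_k)\le\tfrac12$, and induction yields $\bP(A_k)\le 2^{-k}$. For $m\in\N$, put $k=\lfloor m/K\rfloor$. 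This gives
\[
\bP(\tau>m\zeta^{2-2\gamma})\le 2^{-\lfloor m/K\rfloor}\le 2e^{-(\log 2/K)m},
\]
which is the claim with $C_1=2$ and $c_2=\log 2/K$.

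\emph{Main obstacle.} The step I expect to need the most care is the rigorous It\^o argument for $g$ near $y=0$, where $g''$ blows up. Since $g\in C^1$ with $g'(0)=0$, and $g''$ is locally integrable with $g''<0$, I would apply the It\^o--Tanaka (occupation density) formula in the form of an inequality. Alternatively, replace $g$ with
\[
g_\epsilon(y)=\frac{(2\zeta)^{2-2\gamma}-(y+\epsilon)^{2-2\gamma}}{(1-\gamma)(1-2\gamma)} .
\]
This is smooth and has $g_\epsilon''=-2(y+\epsilon)^{-2\gamma}$. Its It\^o correction is then $-(Y+\epsilon)^{-2\gamma}\,d\langle M\rangle\le -\tfrac12\bigl(Y/(Y+\epsilon)\bigr)^{2\gamma}dt$. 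Letting $\epsilon\to0$ requires $\mathrm{Leb}\{t<\tau : Y_t=0\}=0$, which holds because $\tau\le\tau_0$. Hence the bound survives in the limit by monotone convergence.
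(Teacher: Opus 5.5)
Your proof is correct, and it differs from the paper's at the key step: the bound on the mean exit time.

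\textbf{Where the two proofs agree.} Your iteration step is the paper's: Markov's inequality applied to the mean, then the PSMP at the deterministic times $s_k$ to get geometric decay.

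\textbf{How the paper gets the mean bound.} The paper obtains Lemma~\ref{lemma_exit_mean} only after first proving the weaker tail estimate of Proposition~\ref{prop_stop_tail}. That proof decomposes $\tau_0\wedge\tau_{2\zeta}=\sigma_0+\sum_k X_k$ along the dyadic ladder $\{\zeta 2^{-n}\}$ and bounds each $X_k$ through the recursion of Lemma~\ref{lemma_Xk_recursion}. The recursion uses Lemma~\ref{lemma_exit_exp} together with the return-probability bound $\bP_{a_{k+1}}(\tau_{a_k}<\tau_0)\le 9/16$ from Proposition~\ref{prop_hittingprob_main}. This is where the smallness of $\zeta$ enters.

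\textbf{What your route does instead.} You replace all of that by a single supermartingale comparison. Because $g$ is nonincreasing and concave, the drift $\delta$ and the excess of $\langle M\rangle$ over $I$ both push $g(Y)$ downward. So the one-sided information in Definition~\ref{def_M} is exactly what the argument needs. You get the explicit bound $\E[\tau_0\wedge\tau_{2\zeta}]\le 2(2\zeta)^{2-2\gamma}/((1-\gamma)(1-2\gamma))$ for every $\zeta>0$ and $\delta>0$, so even your restriction $\zeta\le 1$ is unnecessary. The role of $\gamma<1/2$ also becomes transparent: it is exactly what makes $g(0)$ finite while keeping $g'\le 0$.

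\textbf{Two cosmetic points.} First, your $g_\epsilon$ is negative at $2\zeta$. Use $\E[g_\epsilon(Y_{t\wedge\tau})]\ge g_\epsilon(2\zeta)\to 0$ instead of nonnegativity, or simply stop at $\inf\{t: Y_t\le\epsilon\}$, where $g$ itself is smooth. Second, the discarded optional-stopping estimate can be omitted.

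\textbf{What each route buys.} The paper's route has the virtue of staying inside its excursion/PSMP framework, but for this proposition it buys nothing beyond your bound. Your device is also more general. With the concave scale-type function $u(y)=\int_0^y\exp(-\tfrac{4\delta}{1-2\gamma}v^{1-2\gamma})\,dv$, the process $u(Y_{\cdot\wedge\tau})$ is a supermartingale. This would give the upper bound of Proposition~\ref{prop_hittingprob_main} as well, with error of order $\delta\zeta^{1-2\gamma}$.
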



The remainder of this section, which is divided into several subsections, is devoted to proving the two results above. If the processes under consideration were Markov, the argument would be considerably simplified using scale functions and speed measures. Lacking this machinery, we must do a number of calculations by hand.



\subsection{Exit times and exit distributions for intervals} Let $\zeta \in (0,1]$, and for $n \in \Z$, let $a_n = \zeta 2^{-n}$. We define the interval $I_n = (a_{n+1}, a_{n-1})$. Thus, if $Y_0  \in I_n$, $\tau_{a_{n-1}} \wedge \tau_{a_{n+1}}$ is the exit time from $I_n$. In order to prove Proposition~\ref{prop_hittingprob_main}, we first obtain estimates on $\bP_{a_n}(\tau_{a_{n-1}} < \tau_{a_{n+1}})$ for all $\bP_{a_n} \in \sM_{\gamma,\delta}(a_n)$ for all $n \in \N_0$. If $Y$ were a martingale this probability would equal $\frac 1 3$.  The next several results quantify the error, and show that the upper and lower exit probabilities from $I_n$ when started from $a_n$ are close to the martingale probabilities to within an error that is exponentially small in $n$. 

Recall the decomposition $Y_t = Y_0 + \delta t+ M_t$. By Dubins-Schwarz, there exists a standard Brownian motion $W$ such that $M_t = W_{\langle M\rangle_t} = W_{\langle Y\rangle_t}$ for all $t \geq 0 $. Suppose that $Y_0 \geq a > 0$. By \eqref{assump_QV}, we have
\begin{align}
\langle Y\rangle_{t \wedge \tau_{a}} \geq \frac 1 2 \int_0^{t \wedge \tau_{a}} Y_s^{2\gamma} ds &\geq \frac{a^{2\gamma}}{2} (t \wedge  \tau_a).
\end{align}
In particular, if we define the constant $\kappa_n := a_{n+1}^{2\gamma}/2$, for every $n \in \N$ we have
\begin{align} \label{eq_QVstop}
&\text{For every $\bP\in \sM_{\gamma,\delta}$ with $\bP(Y_0 \geq a_{n+1}) = 1$, $\bP$-a.s.,}
\\ & \hspace{1 cm} \langle Y\rangle_{t \wedge \tau_{a_{n+1}}} \geq \kappa_n (t \wedge \tau_{a_{n+1}}) \quad \text{ for all $t \geq 0$}. \notag
\end{align}
We will generally use $W$ to denote the Brownian motion obtained as a time-change of $M$, whereas we will use $B$ to denote a generic Brownian motion, for which we adopt the notation $\bP_x^B$ and $\E^B_x$ to denote the law and expectation when $B_0 = x$.

Suppose that $Y_0  \in (a_{n+1}, a_{n-1})$. Recalling that $M_t = Y_t - Y_0 - \delta t$ under $\bP$, in this case we may rewrite $\tau_{a_{n-1}}$ and $\tau_{a_{n+1}}$ as 
\begin{equation} \label{eq_taustop_Mrep}
\tau_{a_{n-1}}= \inf \{ t> 0 : M_t = a_{n-1} - Y_0 -\delta t\}, \quad \tau_{a_{n+1}} = \inf \{ t> 0 : M_t = a_{n+1} - Y_0 -\delta t\}.\end{equation}


We define $M_t^* := \sup_{s \in [0,t]} M_s$ and $|M_t|^* := \sup_{s \in [0,t]} |M_s|$ and will use the same notation for other processes.
Recall that $a_n = \zeta 2^{-n}$ and $I_n = (a_{n+1}, a_{n-1})$ for all $n \in \Z$, and recall from \eqref{eq:M_geninit} that $\bP \in \sM_{\gamma,\delta}(I_n)$ if $\bP \in \sM_{\gamma,\delta}$ and $\bP(Y_0 \in I_n) = 1$.


\begin{lemma} \label{lemma_Y_exit}
There exists $q \in (0,1)$ such that for all $\delta \in (0,1]$ and all $\zeta>0$ such that $2\delta\zeta^{1-2\gamma} \leq 1$, for every $n \in \N_0$ and $\bP \in \sM_{\gamma,\delta}(I_n)$,
\[ \bP (\tau_{a_{n-1}} \wedge \tau_{a_{n+1}} > a_n^{2-2\gamma}) \leq q.\]
\end{lemma}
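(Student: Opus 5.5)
Let $\sigma := \tau_{a_{n-1}} \wedge \tau_{a_{n+1}}$ and $t_n := a_n^{2-2\gamma}$. The plan is to reduce the statement to a Brownian exit estimate using the Dubins--Schwarz representation $M_t = W_{\langle M\rangle_t}$ together with the quadratic variation lower bound \eqref{eq_QVstop}.

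On the event $\{\sigma > t_n\}$ two things hold for all $s \le t_n$.
\begin{itemize}
\item The process stays in $I_n$, so by \eqref{eq_taustop_Mrep}
\[ a_{n+1} - Y_0 - \delta s < M_s < a_{n-1} - Y_0 - \delta s. \]
Since $Y_0 \in I_n$, this gives $|M_s| \le (a_{n-1}-a_{n+1}) + \delta t_n$.
\item Since $\sigma \le \tau_{a_{n+1}}$, \eqref{eq_QVstop} gives $\langle M\rangle_{t_n} \ge \kappa_n t_n$.
\end{itemize}

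First we compare the two scales. We have $a_{n-1} - a_{n+1} = \tfrac32 a_n$. The drift displacement is
\[ \delta t_n = \delta a_n^{2-2\gamma} = \delta a_n^{1-2\gamma}\, a_n \le \delta \zeta^{1-2\gamma} a_n \le \tfrac12 a_n, \]
where the last step uses the hypothesis $2\delta\zeta^{1-2\gamma}\le 1$ and $a_n \le \zeta$ for $n \ge 0$. Hence on $\{\sigma>t_n\}$ we get $|M_s| \le 2a_n$ for all $s \le t_n$.

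Next, the time change $u = \langle M\rangle_s$ sweeps at least the interval $[0,\kappa_n t_n]$. Here
\[ \kappa_n t_n = \tfrac12 (a_n/2)^{2\gamma} a_n^{2-2\gamma} = 2^{-1-2\gamma} a_n^2. \]
Therefore
\[ \{\sigma > t_n\} \subseteq \Big\{ \sup_{u \le 2^{-1-2\gamma} a_n^2} |W_u| \le 2 a_n \Big\}. \]

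By Brownian scaling, the probability of the right-hand event equals
\[ \bP^B_0\Big(\sup_{u\le 2^{-1-2\gamma}}|B_u| \le 2\Big) =: q. \]
This $q$ is strictly less than $1$, and it depends only on $\gamma$, not on $n$, $\delta$, $\zeta$ or $\bP$. This proves the lemma.

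There is one technical point: Dubins--Schwarz may require enlarging the probability space if $\langle M\rangle_\infty<\infty$. We only use $W$ on $[0,\langle M\rangle_{t_n}]$, and on $\{\sigma>t_n\}$ we have $\langle M\rangle_{t_n} \ge \kappa_n t_n$. So the standard enlargement handles this, and the inclusion above holds on the enlarged space without any change to the argument.
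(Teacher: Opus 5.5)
Your proof is correct and follows the paper's argument essentially step for step. On $\{\sigma > t_n\}$ you confine $M$ to $[-a_{n-1},a_{n-1}]=[-2a_n,2a_n]$ using $\delta t_n \le a_{n+1}$, then combine Dubins--Schwarz with the lower bound \eqref{eq_QVstop} and Brownian scaling. Your constant $2^{3/2+\gamma}$, coming from $\kappa_n t_n = 2^{-1-2\gamma}a_n^2$, is actually the accurate one; the paper's value $2^{1+\gamma}$ drops the factor $\tfrac12$ in $\kappa_n$, which is harmless for the conclusion.
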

\begin{proof}
Let $n \in \N$, and define $T = a_n^{2-2\gamma}$. By \eqref{eq_taustop_Mrep}, we observe that remark that $\tau_{a_{n-1}} \wedge \tau_{a_{n+1}} > T$ if and only if $M_t \in [a_{n+1} - Y_0 - \delta t, a_{n-1} - Y_0 - \delta t]$ for all $t \in [0,T]$. Since $Y_0 \in I_n$, we have $a_{n+1} - Y_0 \geq -3\zeta 2^{-n-1}$ and $a_{n-1} - Y_0 \leq 3  \zeta 2^{-n-1}$. In particular, we obtain that 
\begin{equation*}
\{ \tau_{a_{n-1}}  \wedge \tau_{a_{n+1}}  > T \} \subseteq  \{ M_t \in [-3 a_{n+1} - \delta t, 3 a_{n+1} - \delta t] \, \text{ for all } t \in [0,T]\}.
\end{equation*}
Note that since $\delta \in (0,1]$ and $2\delta\zeta^{1-2\gamma} \leq 1$ that $\delta T \leq a_{n+1}$, and hence the above implies that
\begin{equation*}
\{ \tau_{a_{n-1}} \wedge \tau_{a_{n+1}}  > T \} \subseteq  \{ M_t \in [-a_{n-1}, a_{n-1}] \, \text{ for all } t \in [0,T]\},
\end{equation*}
where we have used the fact that $4 a_{n+1} = a_{n-1}$. The event on the right hand side above is equivalent to $\{|M_T|^* \leq a_{n-1}\}$. Hence, using the Dubins-Schwarz representation for $M$ and \eqref{eq_QVstop},
%
\begin{align} \label{e_exitlemma1}
\bP (  \tau_{a_{n-1}}  \wedge \tau_{a_{n+1}}  > T ) &= \bP_x ( \tau_{a_{n-1}}  \wedge \tau_{a_{n+1}}  > T,  |M_T|^* \leq a_{n-1} ) \notag
\\ &\leq \bP_x ( T < \tau_{a_{n+1}}  ,  |W_{\kappa_n T}|^* \leq a_{n-1} ) \notag
\\ &\leq  \bP^B_0 ( |B_{\kappa_n T}|^* \leq a_{n-1} ).
\end{align}
The probability above can easily be bounded using Brownian scaling as follows:
\begin{align*}
\bP^B_0 ( |B_{\kappa_n T}|^* \leq a_{n-1} )&= \bP^B_0 ((\kappa_n T)^{1/2} |B_1|^* \leq a_{n-1})
\\ &= \bP^B_0 ( |B_1|^* \leq a_{n-1}(\kappa_n T)^{-1/2}).
\end{align*}
Recalling that $T = a_n^{2-2\gamma}$ and $\kappa_n =  a_{n+1}^{2\gamma}$, we find that $a_{n-1}(\kappa_n T)^{-1/2} =  2^{1 + \gamma}$. Substituting this into the above, \eqref{e_exitlemma1} now implies that
\begin{equation*}
\bP ( \tau_{a_{n-1}} \wedge \tau_{a_{n+1}}   > T ) \leq \bP^B_0 ( |B_1|^* \leq  2^{1 + \gamma})=:q .
\end{equation*}
This proves the result. \end{proof}

The previous lemma leads directly to the following.
\begin{lemma} \label{lemma_exit_exp} There are constants $c_3>0$ and $C_2 \geq 1$ such that for all $\delta \in (0,1]$ and all $\zeta >0$ satisfying $2\delta \zeta^{1-2\gamma} \leq 1$, for all $n \in \N_0$ and $\bP \in \sM_{\gamma,\delta}(I_n)$, 
\begin{equation*} \bP ( \tau_{a_{n-1}} \wedge \tau_{a_{n+1}}  > m a_n^{2-2\gamma}) \leq e^{-c_3 m} \end{equation*}
for all $m \in \N$, and 
\begin{equation*}
\E[\tau_{a_{n-1}} \wedge \tau_{a_{n+1}}] \leq C_2 a_n^{2-2\gamma}.
\end{equation*}
\end{lemma}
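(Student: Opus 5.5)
We iterate Lemma~\ref{lemma_Y_exit} using the pseudo-strong Markov property, Proposition~\ref{prop_markov}(b), applied at the deterministic times $k a_n^{2-2\gamma}$. Write $T = a_n^{2-2\gamma}$ and $\rho := \tau_{a_{n-1}} \wedge \tau_{a_{n+1}}$. For $k \in \N_0$, let $A_k := \{\rho > kT\} \in \sF_{kT}$. On $A_k$ we have $Y_{kT} \in I_n$ by path continuity.

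\textbf{Shifting the event.} Also on $A_k$, the exit time satisfies $\rho = kT + \rho \circ \theta_{kT}$. Hence, whenever $\bP(A_k)>0$,
\[
\bP(A_{k+1}) = \bP(A_k)\, \bP^{A_k}\circ\theta_{kT}(\rho > T).
\]
By Proposition~\ref{prop_markov}(b), $\bP^{A_k}\circ\theta_{kT} \in \sM_{\gamma,\delta}$. Since $\bP(Y_{kT} \in I_n \mid A_k) = 1$, this shifted measure belongs to $\sM_{\gamma,\delta}(I_n)$. Lemma~\ref{lemma_Y_exit} therefore bounds the second factor by $q$. This step is the only point needing care: we must use the class $\sM_{\gamma,\delta}(I_n)$ with a random initial value, rather than the fixed-start classes $\sM_{\gamma,\delta}(x)$. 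The hypothesis $2\delta\zeta^{1-2\gamma}\le 1$ is exactly what Lemma~\ref{lemma_Y_exit} requires, and it does not depend on $k$.

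\textbf{Geometric decay.} If $\bP(A_k) = 0$, the bound below holds trivially. Otherwise, induction from $\bP(A_0) \le 1$ gives
\[
\bP(\rho > mT) \le q^m = e^{-c_3 m}, \qquad c_3 := -\log q > 0 .
\]

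\textbf{Mean bound.} Since $\rho \ge 0$,
\[
\E[\rho] = \int_0^\infty \bP(\rho > t)\,dt \le \sum_{m \ge 0} T\, \bP(\rho > mT) \le T \sum_{m\ge0} q^m = \frac{T}{1-q}.
\]
This gives the second claim with $C_2 := (1-q)^{-1} \ge 1$. A finite expectation also implies $\rho < \infty$ a.s., although the shift argument above does not need this, because the times $kT$ are deterministic.
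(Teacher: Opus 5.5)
Your proof is correct and follows essentially the same route as the paper: you iterate Lemma~\ref{lemma_Y_exit} at the deterministic times $kT$ via the PSMP to get the bound $q^m$, then sum the tail to bound the mean. The only cosmetic differences are that you use the naive-conditioning form, Proposition~\ref{prop_markov}(b), on $A_k$ (checking directly that the shifted law starts in $I_n$), whereas the paper uses the regular conditional distribution $\Q_{(m-1)a_n^{2-2\gamma}}$ from part (a), and that you make the constant $C_2=(1-q)^{-1}$ explicit.
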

\begin{proof}
Let $\zeta>0$ and $\delta>0$ be as in the statement, and let $n \in \N_0$, $\bP \in \sM_{\gamma,\delta}(I_n)$ and $m \in \N$. We recall that $\Q_t$ denotes the regular conditional distribution of $Y \circ \theta_t$ given $\sF_t$. Conditioning on $\sF_{(m-1) a_n^{2-2\gamma}}$ and applying the PSMP (Proposition~\ref{prop_markov}), we obtain
\begin{align*}
&\bP(\tau_{a_{n-1}} \wedge \tau_{a_{n+1}} > m a_n^{2-2\gamma}) 
\\&\hspace{.5 cm}= \E\left[ \indc(\tau_{a_{n-1}} \wedge \tau_{a_{n+1}} > (m-1) a_n^{2-2\gamma}) \cdot \E\left[\tau_{a_{n-1}} \wedge \tau_{a_{n+1}} > m a_n^{2-2\gamma} \, | \, \sF_{(m-1) a_n^{2-2\gamma}}\right] \right]
\\&\hspace{.5 cm}= \E\left[ \indc(\tau_{a_{n-1}} \wedge \tau_{a_{n+1}} > (m-1) a_n^{2-2\gamma}) \cdot \Q_{(m-1)a_n^{2-2\gamma}}\left(\tau_{a_{n-1}} \wedge \tau_{a_{n+1}} > a_n^{2-2\gamma}\right) \right]
\\ &\hspace{.5 cm}\leq \E[ \indc(\tau_{a_{n-1}} \wedge \tau_{a_{n+1}} \geq (m-1) a_n^{2-2\gamma})] \cdot q.
\end{align*}
In the last line, we apply Lemma~\ref{lemma_Y_exit} to $\Q_{(m-1)a_n^{2-2\gamma}} \in \sM_{\gamma,\delta}(I_n)$, which holds because $\Q^\omega_{(m-1)a_n^{2-2\gamma}} \in \sM_{\gamma,\delta}(Y_{(m-1)a_n^{2-2\gamma}})$ for $\bP$-a.e.\ $\omega$ by the PSMP, and $Y_{(m-1)a_n^{2-2\gamma}} \in I_n$ on $\{\tau_{a_{n-1}} \wedge \tau_{a_{n+1}} \geq (m-1) a_n^{2-2\gamma}\}$. (Lemma~\ref{lemma_Y_exit} is also applicable because $\delta \in (0,1]$ and $2\delta \zeta^{1-2\gamma} \leq 1$.) Applying the same procedure iteratively proves that $\bP(\tau_{a_{n-1}} \wedge \tau_{a_{n+1}} > m a_n^{2-2\gamma}) \leq q^m$, and hence the result holds if $c_3$ is defined by $e^{-c_3} = q$. The bound on the expectation follows immediately.
\end{proof}

%


We now proceed to obtain upper and lower bounds on $\bP_{a_n}(\tau_{a_{n-1}} < \tau_{a_{n+1}})$, starting with the following elementary lower bound.
\begin{lemma}  \label{lemma_exit_lwrbd}
For all $\delta>0$ and $\zeta > 0$, for all $n \in \N_0$ and $\bP_{a_n} \in \sM_{\gamma,\delta}(a_n)$, $\bP_{a_n}(\tau_{a_{n-1}} < \tau_{a_{n+1}}) \geq \frac 1 3$.
\end{lemma}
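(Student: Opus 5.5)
We argue via optional stopping, using that $Y$ is a local submartingale because its drift $\delta$ is non-negative. Write $\tau := \tau_{a_{n-1}} \wedge \tau_{a_{n+1}}$ and fix $\bP_{a_n} \in \sM_{\gamma,\delta}(a_n)$.

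\textbf{Step 1: $\tau$ is finite.} Since $Y_t \in [0, a_{n-1}]$ for all $t \in [0,\tau]$, Lemma~\ref{lemma_bddstop_tail} (with $b = a_{n-1}$) gives that $\tau$ is a.s.\ finite. Its tail is exponentially decaying, so $\E[\tau] < \infty$. Hence $Y_\tau \in \{a_{n-1}, a_{n+1}\}$ a.s.

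\textbf{Step 2: optional stopping.} The proof of Lemma~\ref{lemma_bddstop_tail} shows that the stopped process $M^\tau$ is a true martingale, since it is bounded on compact time intervals. In fact $|M^\tau_t| \leq a_{n-1} + \delta\tau$, which is integrable, so dominated convergence gives $\E[M_\tau] = \lim_{t\to\infty}\E[M^\tau_t] = 0$. Therefore
\[ \E[Y_\tau] = a_n + \delta \E[\tau] + \E[M_\tau] = a_n + \delta\E[\tau] \geq a_n. \]

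\textbf{Step 3: conclusion.} Set $p := \bP_{a_n}(\tau_{a_{n-1}} < \tau_{a_{n+1}})$. Then
\[ \E[Y_\tau] = p\, a_{n-1} + (1-p)\, a_{n+1} \geq a_n. \]
Using $a_{n-1} = 4a_{n+1}$ and $a_n = 2a_{n+1}$, this becomes $3p + 1 \geq 2$, that is, $p \geq \tfrac13$.

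The only step needing care is justifying $\E[M_\tau] = 0$. This is handled by the integrability of $\tau$ from Lemma~\ref{lemma_bddstop_tail} together with the bound on $M^\tau$ above. Note that the quadratic variation condition \eqref{assump_QV} is not used here, except implicitly through Lemma~\ref{lemma_bddstop_tail}, which needs only $\delta > 0$.
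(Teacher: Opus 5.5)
Your proof is correct, and it takes a different route from the paper's. The paper never stops $Y$ itself. It compares $Y$ pathwise with the driftless local martingale $N_t = Y_t - \delta t$: if $N$ reaches $a_{n-1}$ before $a_{n+1}$, then so does $Y$. It then uses the quadratic variation bound \eqref{eq_QVstop} together with Dubins--Schwarz to see that $N$ a.s.\ leaves $(a_{n+1},a_{n-1})$. Optional stopping of a bounded martingale gives exactly $\frac13$ for $N$, so no integrability of an exit time is needed. You instead stop the submartingale $Y$ at its own exit time. The unbounded drift term $\delta\tau$ is why you need $\E[\tau]<\infty$, which Lemma~\ref{lemma_bddstop_tail} supplies using only $\delta>0$ and $Y\ge 0$. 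So \eqref{assump_QV} plays no role in your argument at all; Lemma~\ref{lemma_bddstop_tail} does not use it either, contrary to the ``implicitly'' in your closing remark. What your route buys is an identity rather than an inequality: with $p$ as in your Step 3, $p = \frac13 + \delta\,\E[\tau]/(3a_{n+1})$. Feeding in the bound $\E[\tau]\le C_2 a_n^{2-2\gamma}$ from Lemma~\ref{lemma_exit_exp} gives at once $p \le \frac13 + \frac{2C_2}{3}\,\delta\,\zeta^{1-2\gamma}2^{-(1-2\gamma)n}$. This yields Lemma~\ref{lemma_exit_upperbd2} directly and makes the truncation and shifted-barrier argument of Lemma~\ref{lemma_exit_upperbd} unnecessary. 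The paper's comparison argument, for its part, avoids any appeal to exit-time moments, and it is the template the paper reuses for that upper bound.
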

\begin{proof}
Working under $\bP_{a_n}\in \sM_{\gamma,\delta}(a_n)$, we define $N_t = M_t + a_n$. In other words, $N_t = Y_t - \delta t$. We define the stopping times
\[\sigma_1= \inf \{ t \geq 0 : N_t = a_{n-1} \}, \quad \sigma_2= \inf \{ t \geq 0 : N_t = a_{n+1} \}.\] 
Since $N_0 = Y_0$ and $N_t < Y_t$ for all $t > 0$, it is immediate that $\tau_{a_{n-1}} \leq \sigma_1$ and $\tau_{a_{n+1}} \geq \sigma_2$. In particular
\begin{equation*}
\bP_{a_n}(\tau_{a_{n-1}} < \tau_{a_{n+1}} ) \geq \bP_{a_n}(\sigma_1 < \sigma_2).
\end{equation*}
Thus, it suffices to prove that $\bP_{a_n}(\sigma_1 < \sigma_2)= \frac 1 3$. Let $\sigma = \sigma_1 \wedge \sigma_2$. Since $\sigma \leq \tau_{a_{n+1}}$, \eqref{eq_QVstop} implies that $\langle Y\rangle_{t \wedge \sigma} \geq  \kappa_n (t \wedge \sigma)$ for all $t \geq 0$ almost surely. 
We recall the Brownian motion $W$ from the Dubins-Schwarz representation of $M_t$ and observe that $N_t = a_n + W_{\langle Y\rangle_t}$. Suppose that $\sigma = + \infty$, so that $N_t \in (a_{n+1}, a_{n-1})$ for all $t \geq 0$. Then our previous observation implies that $\langle Y \rangle_t \to \infty$ as $t  \to \infty$, and hence $W_s \in (a_{n+1},a_{n-1})$ for all $s \geq 0$. This has probability zero, and hence $\sigma$ is a.s.\ finite. 


The claim just shown is equivalent to 
\[ \bP_{a_n}( \sigma_1 < \sigma_2) + \bP_{a_n}( \sigma_1 > \sigma_2) = 1.\]
The result now follows by applying optional stopping to the bounded martingale $(N_{t \wedge \sigma})_{t \geq 0}$.
\end{proof}

Next we obtain an upper bound for $\bP_{a_n}(\tau_{a_{n-1}} < \tau_{a_{n+1}})$ which matches the lower bound up to an exponentially small error. First, we prove a preliminary version, which we then refine to obtain the desired bound. 

\begin{lemma} \label{lemma_exit_upperbd} For all $\delta \in (0,1]$ and $\zeta>0$ satisfying $2\delta\zeta^{1-2\gamma} \leq 1$, and for all $n_0 \in \N$ and $\bP_{a_n} \in \sM_{\gamma,\delta}(a_n)$,
\[ \bP_{a_n}(\tau_{a_{n-1}} < \tau_{a_{n+1}}) \leq \frac 1 3 + e^{-c_3 m} +  m \delta \zeta^{1-2\gamma} 2^{-(1-2\gamma)n}.\]
for all $m \in \N$ satisfying $m \leq \frac 1 2 (\delta \zeta^{1-2\gamma})^{-1} 2^{(1-2\gamma)n}$.
\end{lemma}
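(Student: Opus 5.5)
Write $\sigma := \tau_{a_{n-1}} \wedge \tau_{a_{n+1}}$ and $T := m a_n^{2-2\gamma}$, working under $\bP_{a_n} \in \sM_{\gamma,\delta}(a_n)$. The plan is to mirror the proof of Lemma~\ref{lemma_exit_lwrbd}, this time applying optional stopping to the martingale part $M$ itself rather than to $N = Y - \delta t$. The drift then contributes an error of size at most $\delta \E[\sigma \wedge T]$, and the event $\{\sigma > T\}$ is handled with Lemma~\ref{lemma_exit_exp}.

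First, stop at $\sigma \wedge T$. On $[0,\sigma]$ we have $Y_t \in [a_{n+1}, a_{n-1}]$, so
\[ M_{t\wedge \sigma} = Y_{t \wedge \sigma} - a_n - \delta (t\wedge\sigma) \in \left[-a_{n+1} - \delta t,\; a_n\right]. \]
Hence $M^{\sigma \wedge T}$ is a bounded martingale, and $\E[M_{\sigma\wedge T}] = 0$. This gives
\[ \E[Y_{\sigma \wedge T}] = a_n + \delta \E[\sigma \wedge T] \leq a_n + \delta T. \]

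Next, decompose the left-hand side according to how the stopped process ends:
\[ \E[Y_{\sigma\wedge T}] = a_{n-1} \bP(\tau_{a_{n-1}} < \tau_{a_{n+1}}, \sigma \leq T) + a_{n+1}\bP(\tau_{a_{n+1}} < \tau_{a_{n-1}}, \sigma \le T) + \E[Y_T \indc(\sigma > T)]. \]
Write $p = \bP(\tau_{a_{n-1}} < \tau_{a_{n+1}})$ and $\epsilon = \bP(\sigma > T)$. Every term is non-negative and $Y \ge a_{n+1}$ on $\{\sigma > T\}$, so the left-hand side is at least
\[ a_{n-1}(p - \epsilon) + a_{n+1}(1 - p - \epsilon) + a_{n+1}\epsilon \;\geq\; a_{n+1} + (a_{n-1} - a_{n+1})(p - \epsilon). \]
Since $a_{n-1} - a_{n+1} = 3a_{n+1}$ and $a_n = 2a_{n+1}$, combining the two bounds yields
\[ p \leq \frac13 + \epsilon + \frac{\delta T}{3 a_{n+1}}. \]

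Finally, insert the two estimates. By Lemma~\ref{lemma_exit_exp}, which applies because $2\delta\zeta^{1-2\gamma}\le1$ and $a_n \in I_n$, we have $\epsilon \leq e^{-c_3 m}$. For the drift term,
\[ \frac{\delta T}{3a_{n+1}} = \frac{2 m\delta a_n^{1-2\gamma}}{3} \le m\delta \zeta^{1-2\gamma}2^{-(1-2\gamma)n}. \]
This is exactly the claimed bound.

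The condition $m \le \tfrac12(\delta\zeta^{1-2\gamma})^{-1}2^{(1-2\gamma)n}$ is not needed for the inequality itself. It only guarantees that the error term is at most $\tfrac12$, so the bound is non-trivial, and it keeps $\delta T \le a_n/2$, which gives a clean two-sided bound on $M$ if one prefers. The only step needing care is the boundedness of the stopped martingale, which justifies optional stopping; the inclusion of the $\{\sigma > T\}$ contribution with the correct sign is routine once it is lower bounded by $a_{n+1}\epsilon$ as above.
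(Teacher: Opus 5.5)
One displayed inequality in your proof goes the wrong way. The middle expression $a_{n-1}(p-\epsilon)+a_{n+1}(1-p-\epsilon)+a_{n+1}\epsilon$ equals $a_{n+1}+(a_{n-1}-a_{n+1})p-a_{n-1}\epsilon$. This is \emph{smaller} than $a_{n+1}+(a_{n-1}-a_{n+1})(p-\epsilon)$ by $a_{n+1}\epsilon$, not larger. Bounding the two exit probabilities separately from below only gives $p\le\frac13+\frac43\epsilon+\frac{\delta T}{3a_{n+1}}$, which misses the stated constant in front of $e^{-c_3m}$.

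The repair is one line. Set $A_+:=\{\tau_{a_{n-1}}<\tau_{a_{n+1}},\,\sigma\le T\}$ and $A_-:=\{\tau_{a_{n+1}}<\tau_{a_{n-1}},\,\sigma\le T\}$. These are disjoint with union $\{\sigma\le T\}$, so $\bP(A_+)+\bP(A_-)=1-\epsilon$ exactly, and
\[
\E[Y_{\sigma\wedge T}]\;\ge\; a_{n-1}\bP(A_+)+a_{n+1}\bigl(\bP(A_-)+\epsilon\bigr)\;=\;a_{n+1}+3a_{n+1}\bP(A_+)\;\ge\; a_{n+1}+3a_{n+1}(p-\epsilon).
\]
Everything after that is correct as written.

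With this repair, your proof is a correct and genuinely different route from the paper's.

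\textbf{The paper's route.} It does not stop $M$ at a bounded time. On $\{\sigma\le T\}$ it compares $Y$ with the drift-free local martingale $N=Y-\delta t$ and the shifted constant barriers $a_{n\pm1}-\delta T$. It then computes $\bP(\sigma_1<\sigma_2)=\frac13+\frac{\delta T}{3a_{n+1}}$ by a gambler's-ruin identity, mirroring Lemma~\ref{lemma_exit_lwrbd}. This requires two extra ingredients:
\begin{itemize}
\item $\delta T\le a_{n+1}$, so that the shifted barriers bracket $a_n$ and stay non-negative; this is exactly where the constraint on $m$ enters;
\item a separate Dubins--Schwarz argument, using the quadratic-variation lower bound, that $N$ leaves the shifted interval almost surely.
\end{itemize}

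\textbf{What your route buys.} Optional stopping at the bounded time $\sigma\wedge T$ needs neither ingredient. The drift is accounted for directly as $\delta\E[\sigma\wedge T]$, so you prove the bound for every $m\in\N$, and the restriction on $m$ is indeed superfluous. It also gives more. If you keep $\delta\E[\sigma\wedge T]$ instead of bounding it by $\delta T$ and let $T\to\infty$, Lemma~\ref{lemma_exit_exp} gives $\epsilon\to0$ and $\E[\sigma]\le C_2a_n^{2-2\gamma}$. Hence $p\le\frac13+\frac23C_2\,\delta a_n^{1-2\gamma}$ directly. That would make the choice $m=K+n$ and the logarithmic factor in the proof of Lemma~\ref{lemma_exit_upperbd2} unnecessary.
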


\begin{proof}
Let $m \in \N$ be as in the statement and let $T = m a_n^{2-2\gamma}$. Then a short calculation shows that 
\begin{equation} \label{eq:deltaTbd}
    \delta T \leq a_{n+1}.
\end{equation}
By Lemma~\ref{lemma_exit_exp}, for $\bP_{a_n} \in \sM_{\gamma,\delta}(a_n)$,
\[ \bP_{a_n} ( \tau_{a_{n-1}} \wedge \tau_{a_{n+1}} > T) \leq e^{-c_3 m},\]
and hence
\begin{equation} \label{eq_approxmartlemmapf1}
 \bP_{a_n}(\tau_{a_{n-1}} < \tau_{a_{n+1}}) \leq e^{-c_3 m} + \bP_{a_n}(\tau_{a_{n-1}} < \tau_{a_{n+1}}, \tau_{a_{n-1}} \wedge \tau_{a_{n+1}} \leq T).
 \end{equation}
We again work with $N_t = a_n + M_t$, which is a continuous local martingale started from $a_n$, and define the stopping times
\[\sigma_1= \inf \{ t> 0 : N_t = a_{n-1} - \delta T \}, \quad \sigma_2= \inf \{ t \geq 0 : N_t = a_{n+1} - \delta T  \}.\] 
Note that by \eqref{eq:deltaTbd}, we have $a_{n-1} - \delta T \geq a_{n-1} - a_{n+1} > a_n$, and hence $\sigma_1$ is the hitting time of $N$ of a point above $N_0 = a_n$. Observing that $\tau_{a_{n-1}}$ and $\tau_{a_{n+1}}$ are respectively the first times at which $N_t$ hits the curves $t \mapsto a_{n-1} - \delta t$ and $t \mapsto a_{n+1} - \delta t$, we conclude that on the event $\{  \tau_{a_{n-1}} \wedge \tau_{a_{n+1}} \leq T\}$, we have $ \tau_{a_{n-1}} \geq \sigma_1$ and  $\tau_{a_{n+1}} \leq \sigma_2$. This implies that
\begin{align} \label{eq_approxmartlemmapf2}
\bP_{a_n}(\tau_{a_{n-1}} < \tau_{a_{n+1}}, \tau_{a_{n-1}} \wedge \tau_{a_{n+1}} \leq T) &\leq \bP_{a_n}( \sigma_1 < \sigma_2, \tau_{a_{n-1}} \wedge \tau_{a_{n+1}}\leq T) \notag
\\ &\leq \bP_{a_n}( \sigma_1 < \sigma_2 ).
\end{align}
Similarly to the proof of the Lemma~\ref{lemma_exit_lwrbd}, it is straightforward to show that $\sigma_1 \wedge \sigma_2 < \infty$ a.s.\ using a lower bound on $\langle Y\rangle_t$. In particular, since $Y_t = N_t + \delta t$, by definition of $\sigma_1$ we have $Y_t \geq a_{n+1} - \delta T>0$ for $t \leq \sigma_1$, where the strict positivity of the lower bound is a consequence of our assumption on $m$. Writing $N_t = a_n + W_{\langle Y\rangle_t}$, one can then conclude in identical fashion to the proof of the Lemma~\ref{lemma_exit_lwrbd} that $\bP_{a_n}(\sigma_1  \wedge \sigma_2 < \infty) =1$. The martingale convergence theorem applied to the bounded martingale $(N_{t \wedge \sigma_1 \wedge \sigma_2})_{t\geq 0}$ then implies that 
\begin{align*}
(a_{n-1} - \delta T)  \bP_{a_n}(\sigma_1 < \sigma_2 ) + (a_{n+1} - \delta T) (1 - \bP_{a_n}(\sigma_1 < \sigma_2)) = a_n.
\end{align*}
Rearranging and substituting the values of $a_n$ and $T$, we obtain
\begin{align*}
\bP_{a_n}(\sigma_1 < \sigma_2) = \frac{\zeta 2^{-n-1}+ \delta m  \zeta^{2-2\gamma} 2^{-(2-2\gamma)n}}{3 \zeta 2^{-n-1}} = \frac 1 3 + 2 m \delta \zeta^{1-2\gamma} 2^{-(1-2\gamma)n} /3.
\end{align*}
The above combined with \eqref{eq_approxmartlemmapf1} and \eqref{eq_approxmartlemmapf2} implies the desired result.\end{proof}

\begin{lemma} \label{lemma_exit_upperbd2} There are constants $c_4, c_5 > 0$ such that for sufficiently small $\zeta$, for all $\delta \in (0,1]$, $n \in \N_0$ and $\bP_{a_n} \in \sM_{\gamma,\delta}(a_n)$,
\begin{equation}
\bP_{a_n} (\tau_{a_{n-1}} < \tau_{a_{n+1}} ) \leq \frac 1 3 + \zeta^{c_4} \cdot e^{-c_5 n}.
\end{equation}
\end{lemma}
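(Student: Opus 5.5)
The plan is to obtain the refined bound directly from Lemma~\ref{lemma_exit_upperbd} by optimizing over the free parameter $m$, with $m$ chosen depending on $n$ and $\zeta$. Write $\epsilon_n := \delta \zeta^{1-2\gamma} 2^{-(1-2\gamma)n}$. Since $\delta \le 1$, we have $\epsilon_n \le \zeta^{1-2\gamma} 2^{-(1-2\gamma)n}$, so all bounds can be made uniform in $\delta \in (0,1]$. We first take $\zeta$ small enough that $2\zeta^{1-2\gamma}\le 1$. This gives the hypothesis $2\delta\zeta^{1-2\gamma}\le 1$ of Lemma~\ref{lemma_exit_upperbd} for every $\delta\in(0,1]$.

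Lemma~\ref{lemma_exit_upperbd} then gives
\[
\bP_{a_n}(\tau_{a_{n-1}}<\tau_{a_{n+1}}) \le \tfrac13 + e^{-c_3 m} + m\epsilon_n
\]
for every $m\in\N$ with $m \le 1/(2\epsilon_n)$. Set $\beta := 1-2\gamma > 0$ and $u_n := \zeta^{\beta}2^{-\beta n} \ge \epsilon_n$. Choose
\[
m = m_n := \big\lceil u_n^{-1/2} \big\rceil .
\]
We check admissibility. Because $u_n\le \zeta^\beta$ is small, $m_n \le 2u_n^{-1/2} \le 1/(2u_n) \le 1/(2\epsilon_n)$ once $u_n \le 1/16$, which holds for small $\zeta$ uniformly in $n\ge 0$.

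With this choice the drift term satisfies $m_n \epsilon_n \le 2 u_n^{1/2} = 2\zeta^{\beta/2}2^{-\beta n/2}$. The exponential term satisfies $e^{-c_3 m_n} \le \exp(-c_3 \zeta^{-\beta/2}2^{\beta n/2})$. We need this to be at most $\zeta^{c_4}e^{-c_5 n}$ as well. Use $e^{-x}\le C_k x^{-k}$ with, say, $k=1$: this gives $e^{-c_3 m_n} \le C c_3^{-1}\zeta^{\beta/2}2^{-\beta n/2}$. Combining, the total error is at most $C'\zeta^{\beta/2}2^{-\beta n/2}$.

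To absorb the constant $C'$, take $c_4 = \beta/4$ and $c_5 = (\beta/2)\log 2$. Then $C'\zeta^{\beta/2}\le \zeta^{\beta/4}$ for $\zeta$ sufficiently small, and the claim follows.

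The main point needing care is the admissibility constraint $m \le 1/(2\epsilon_n)$, which must hold uniformly in $n$ and $\delta$. The choice $m_n \asymp u_n^{-1/2}$ balances the two error terms while staying well inside that range. The remaining steps are routine. The case $n=0$ is included, since $u_0=\zeta^\beta$ is small. The lemma's hypothesis reads ``$n_0\in\N$'', but we apply it for all $n\in\N_0$, which is what the statement intends.
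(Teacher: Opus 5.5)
Your argument is correct and follows the same route as the paper: you apply Lemma~\ref{lemma_exit_upperbd} with an $n$- and $\zeta$-dependent choice of $m$, check the admissibility constraint uniformly in $\delta\in(0,1]$ and $n\in\N_0$, and absorb constants by taking $\zeta$ small. The only difference is the choice of $m$: you take $m_n=\lceil u_n^{-1/2}\rceil$, which grows geometrically in $n$, whereas the paper takes $m_n=K+n$ with $K$ of order $\log(1/\zeta)$; the paper's choice carries a harmless $\log(1/\zeta)+n$ factor but yields slightly better exponents ($c_4<1-2\gamma$, $c_5<(1-2\gamma)\log 2\wedge c_3$) than your $c_4=(1-2\gamma)/4$, $c_5=\tfrac{1-2\gamma}{2}\log 2$.
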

\begin{proof}%
Let $K = \lceil \frac{1-2\gamma}{c_1} \log (1/\zeta) \rceil \in \N$. For $n \in \N_0$ and $\bP_{a_n} \in \sM_{\gamma,\delta}(a_n)$, we will apply Lemma~\ref{lemma_exit_upperbd} with $m_n = K + n$. To do so, we require
\[\frac 1 2 (\delta \zeta^{1-2\gamma})^{-1} 2^{(1-2\gamma)n} \geq m_n.\]
To see that this holds for small enough $\zeta$, we observe that for every $\delta \in (0,1]$, the left hand side above is bounded below by  
\begin{align*}
 \frac{1}{4} \zeta^{-(1-2\gamma)} +  \frac{1}{4} \zeta^{-(1-2\gamma)}  2^{(1-2\gamma)n}.
\end{align*}
It is then immediate that there exists $\zeta_0>0$ such that for all $\zeta \leq \zeta_0$, the first term exceeds $K$, and the second exceeds $n$ (for all $n\in \N_0$), and hence the desired inequality for $m_n$ holds and we may apply Lemma~\ref{lemma_exit_upperbd} with $m = m_n$ for any $n \in \N$. Doing so, we obtain
\begin{align*}\bP(\tau_{a_{n-1}} < \tau_{a_{n+1}}) &\leq \frac 1 3 + e^{-c_3 (K + n)} +  (K  + n )\delta \zeta^{1-2\gamma} 2^{-(1-2\gamma)n}
\\ & \leq \frac 1 3 + \zeta^{1-2\gamma}e^{-c_3 n}  + \zeta^{1-2\gamma}  \left(\frac{1-2\gamma}{c_3} \log (1/\zeta) + n + 1\right) 2^{-(1-2\gamma)n}.
\end{align*}
For any positive $c_4  < 1-2\gamma$ and $c_5 <(\log 2)(1-2\gamma) \wedge c_3$, for sufficiently small $\zeta$ the above is bounded above by $ \frac 1 3 + \zeta^{c_4} e^{-c_5 n}$ for all $n \in \N_0$. Thus, making $\zeta_0$ smaller if necessary, the result is proved.
\end{proof}

\subsection{Some discrete operators and approximate harmonic functions}

We now define a family of discrete second order operators. To do so, we first introduce some notation. We will consider functions $g(n)$ with domains $\{0,1,\dots , N\}$ and $\N_0 := \N \cup \{0\}$. The variable $n$ is always understood to be an integer and we will use e.g. $0 < n < N$ as a shorthand for the integers $n = 1,\dots,N-1$. 

Let $p = (p_n)_{n \in \N_0}$ be a sequence such that $p_n \in [0,1]$ for all $n \in \N_0$. For a real-valued function $g$ with domain $\{0,1,\dots , N\}$ (resp. $\N_0$), we define the discrete operator $\sL_p$ by
\begin{align*} \sL_p g(n) := p_n g(n-1) + (1-p_n) g(n+1) - g(n)
\end{align*}
for $0 < n < N$ (resp. $n>0$). We remark that, if $\Delta$ is the discrete Laplacian, i.e.\ $\Delta g(n) = \frac 1 2 (g(n-1) + g(n+1)) - g(n)$, then
\begin{equation} \label{e_discreteop2}
\sL_p g(n) = \Delta g(n) + \left(p_n - \frac 1 2 \right)( g(n-1) - g(n+1)).
\end{equation}


We will consider the following boundary value problem: for coefficients $(p_n)_{n\in \N}$, $b \in [0,1]$ and $N \in \N$,
\begin{equation}\label{eq_diff_boundary}
		\left\lbrace
		\begin{aligned}
			 &-\sL_p g(n) = 0, \quad 0 < n < N
            \\  &g(0) = 1, \qquad
             g(N) = b. 
		\end{aligned}
		\right.
	\end{equation}
If the equation $-\sL_p g(n) = 0$ is replaced by the inequalities $-\sL_p g(n) \leq 0$ and $-\sL_p g(n) \geq 0$, $g$ is respectively called a subsolution or supersolution to \eqref{eq_diff_boundary}.

Heuristically, we are interested in \eqref{eq_diff_boundary} when $p_n =  \bP_{a_n}(\tau_{a_{n-1}} < \tau_{a_{n-1}})$, where $\bP_{a_n}$ is ``the law of $Y$ started from $a_n$". In this case, the natural candidate for a solution to \eqref{eq_diff_boundary} is $n \mapsto \bP_{a_n} (\tau_{a_0} < \tau_{a_N})$. However, as there is no unique law $\bP_{a_n}$, this choice does not make sense, which complicates the problem of finding an appropriate solution to \eqref{eq_diff_boundary}. We resolve this in the following way: in the previous section, we have obtained uniform bounds on exit probabilities of the form $\bP_{a_n}(\tau_{a_{n-1}} < \tau_{a_{n-1}})$ which hold uniformly for all $\bP_{a_n} \in \sM_{\gamma,\delta}(a_n)$. We will consider the equation \eqref{eq_diff_boundary} with coefficients $p_n$ given by these upper and lower bounds. Combined with the pseudo-strong Markov property, this will allow us to treat the problem almost as though we were working with a Markov process. Instead of exact $\sL_p$-harmonic functions, we work with certain approximate harmonic functions which we now introduce.

We remind the reader that $a_n = \zeta 2^{-n}$, and hence $a_0 = \zeta$. For $N \in \N$ and $ 0 \leq n \leq N$, we define
\begin{align} \label{def:HN}
&H^N_{\zeta,\delta}(n) := \sup_{\bP \in \sM_{\gamma,\delta}(a_n)} \bP(\tau_{a_0} < \tau_{a_N}).
\end{align}
We note that 
$H^N_{\zeta,\delta}(0) = 1$ and $H^N_{\zeta,\delta}(N) = 0$ by definition. We also define, for $n \in \N_0$,
\begin{align} \label{def:H}
&H_{\zeta,\delta}(n) := \sup_{\bP \in \mathscr{M}_{\gamma,\delta}(a_n)} \bP(\tau_{a_0} < \tau_0),
\end{align}
and observe that $H_{\zeta,\delta}(0) = 1$. As noted, we cannot define $\sL_{p}$ with coefficients $p_n =  \bP_{a_n}(\tau_{a_{n-1}} < \tau_{a_{n-1}})$ directly, because they are not uniquely defined. We will instead work with coefficients corresponding to the upper and lower bounds for $p_n$ obtained in Lemmas~\ref{lemma_exit_lwrbd} and \ref{lemma_exit_upperbd2}. For $n \in \N_0$, we define
\begin{equation}
\op_n = \frac 1 3 + \zeta^{c_4} e^{-c_5 n}.
\end{equation}
We will consider the operator $\sL_{\op}$ associated to these coefficients, as well as $\sL_{1/3}$, where the latter corresponds to the case $p_n = 1/3$ for all $ n \in \N_0$. 

Ultimately it is $H_{\zeta,\delta}$ which is of interest to us, because to obtain the upper bound stated in Proposition~\ref{prop_hittingprob_main}, it suffices to prove that $H_{\zeta,\delta}(1)$ is bounded above by the right-hand side of \eqref{eq_hittingprob_main}. First, we establish the following.

\begin{lemma} \label{lemma_HN_limit} For any $\delta,\zeta >0$, for every $n \in \N$, $\lim_{N \to \infty} H^N_{\zeta,\delta}(n) = H_{\zeta,\delta}(n)$.
\end{lemma}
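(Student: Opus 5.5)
We prove the two inequalities $\liminf_N H^N_{\zeta,\delta}(n)\ge H_{\zeta,\delta}(n)$ and $\limsup_N H^N_{\zeta,\delta}(n)\le H_{\zeta,\delta}(n)$ separately, after a monotonicity observation. For a fixed path started at $a_n$ with $n<N$, we have $\tau_{a_N}\le\tau_{a_{N+1}}\le\tau_0$ by continuity. Hence the events $\{\tau_{a_0}<\tau_{a_N}\}$ increase in $N$, and their union is $\{\tau_{a_0}<\tau_{a_N} \text{ for some } N\}$. On $\{\tau_{a_0}<\tau_0\}$, the path stays bounded away from $0$ on $[0,\tau_{a_0}]$, so $\tau_{a_0}<\tau_{a_N}$ for all large $N$. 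Therefore
\[
\{\tau_{a_0}<\tau_0\}\subseteq \bigcup_N\{\tau_{a_0}<\tau_{a_N}\}.
\]
Conversely, each event $\{\tau_{a_0}<\tau_{a_N}\}$ is a subset of $\{\tau_{a_0}\le \tau_0\}$. In particular, for every fixed $\bP\in\sM_{\gamma,\delta}(a_n)$, the sequence $\bP(\tau_{a_0}<\tau_{a_N})$ is non-decreasing and converges to $\bP(\tau_{a_0}<\lim_N\tau_{a_N})$.

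\textbf{Lower bound.} For each $\bP$ and each $N$, we have $\bP(\tau_{a_0}<\tau_0)\le\bP(\bigcup_{N'}\{\tau_{a_0}<\tau_{a_{N'}}\}) = \lim_{N'}\bP(\tau_{a_0}<\tau_{a_{N'}})\le \liminf_{N'} H^{N'}_{\zeta,\delta}(n)$. Taking the supremum over $\bP$ gives $H_{\zeta,\delta}(n)\le \liminf_N H^N_{\zeta,\delta}(n)$. Note also that $H^N_{\zeta,\delta}(n)$ is non-decreasing in $N$, so the limit exists.

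\textbf{Upper bound.} This is the substantive direction, since the supremum and the limit must be exchanged. It requires a bound, uniform over $\bP\in\sM_{\gamma,\delta}(a_n)$, on the event $\{\tau_{a_N}<\tau_{a_0}\}\cap\{\tau_{a_0}<\tau_0\}$, on which the process dips to $a_N$ but later reaches $a_0$ before $0$. We decompose at $\tau_{a_N}$ using Lemma~\ref{lemma_stop_markov}, applied with $w=a_n$, $x=a_N$, $y=a_0$ and the interval $[a,b]=[0,a_0]$:
\[
\bP(\tau_{a_N}<\tau_{a_0},\,\tau_{a_0}<\tau_0)=\bP(\tau_{a_N}<\tau_{a_0})\,\bP'_{a_N}(\tau_{a_0}<\tau_0)\le \bP'_{a_N}(\tau_{a_0}<\tau_0)
\]
for some $\bP'_{a_N}\in\sM_{\gamma,\delta}(a_N)$. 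It then suffices to show that $\sup_{\bP\in\sM_{\gamma,\delta}(a_N)}\bP(\tau_{a_0}<\tau_0)\to0$ as $N\to\infty$. Granting this, we obtain
\[
\bP(\tau_{a_0}<\tau_0)\ \ge\ \bP(\tau_{a_0}<\tau_{a_N}) - \sup_{\bP'\in\sM_{\gamma,\delta}(a_N)}\bP'(\tau_{a_0}<\tau_0)
\]
uniformly in $\bP$, which gives $\limsup_N H^N\le H$.

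\textbf{Main obstacle.} The remaining step is to show that, uniformly over $\bP\in\sM_{\gamma,\delta}(x)$, we have $\bP(\tau_{a_0}<\tau_0)\to0$ as $x\to0$. I would use an optional stopping argument that dominates the drift crudely. Let $\sigma=\tau_0\wedge\tau_{a_0}$. By Lemma~\ref{lemma_bddstop_tail}, $\sigma$ is a.s. finite with exponential tails, and since $Y\le a_0$ on $[0,\sigma]$, the stopped process $M^\sigma$ is a bounded-increment martingale, as in the proof of that lemma. Optional stopping gives $\E[Y_\sigma]=x+\delta\E[\sigma]$, and hence
\[
a_0\,\bP(\tau_{a_0}<\tau_0)\le x+\delta\,\E[\sigma].
\]
This requires $\E[\sigma]\to0$ as $x\to0$, uniformly in $\bP$, which I expect to be the crux. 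Alternatively, I would split at $T$: we have $\bP(\tau_{a_0}<\tau_0)\le \bP(\sigma>T)+\bP(\tau_{a_0}<\tau_0\wedge T)$. Comparing the hitting time of $a_0$ with that of the line $a_0-\delta T$ by the martingale $N=x+M$, as in Lemma~\ref{lemma_exit_upperbd}, the second term is at most $x/(a_0-\delta T)$. For the first term, I would chain Lemma~\ref{lemma_exit_exp} over the dyadic levels $a_k$ with $n\le k\le N$, using the PSMP, to bound the expected exit time from $(0,a_0)$ by $C\sum_k (\text{visits to level } k)\,a_k^{2-2\gamma}$. The number of visits to each level has geometric tails by Lemma~\ref{lemma_exit_lwrbd}, which gives $\E[\sigma]\le C(\zeta)$. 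Here I must choose $T$ with $\delta T$ small relative to $a_0$ and with $\bP(\sigma>T)$ small; if Lemma~\ref{lemma_bddstop_tail} only gives a rate $\delta/(18a_0)$ that is too weak for this, I would instead fall back on the chained dyadic bound.
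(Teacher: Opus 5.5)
Your monotonicity observation and your lower bound $H_{\zeta,\delta}(n)\le\liminf_N H^N_{\zeta,\delta}(n)$ are correct. They coincide with the paper's proof, which consists only of showing $\tau_{a_N}\uparrow\tau_0$ a.s., so that $\bP(\tau_{a_0}<\tau_{a_N})\uparrow\bP(\tau_{a_0}<\tau_0)$ for each fixed $\bP\in\sM_{\gamma,\delta}(a_n)$.

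The gap is your upper bound. As written, it rests on the claim $\sup_{\bP\in\sM_{\gamma,\delta}(x)}\bP(\tau_{a_0}<\tau_0)\to0$ as $x\downarrow0$. You leave this unproved, and your sketches do not deliver it for all $\delta,\zeta>0$:
\begin{itemize}
\item The split at $T$ needs $\delta T<a_0$. But Lemma~\ref{lemma_bddstop_tail} only makes $\bP(\sigma>T)$ small when $\delta T\gg a_0$.
\item The comparison with $x+M$ gives $(x+\delta T)/a_0$, not $x/(a_0-\delta T)$, because $Y>0$ only forces $x+M_t>-\delta t$.
\item A bound $\E[\sigma]\le C(\zeta)$ that does not vanish as $x\downarrow 0$ does not make $\delta\E[\sigma]$ small.
\item Lemma~\ref{lemma_exit_exp} requires $\delta\le1$ and $2\delta\zeta^{1-2\gamma}\le1$.
\end{itemize}
Moreover, your decomposition at $\tau_{a_N}$ points the wrong way for this direction. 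It yields $\bP(\tau_{a_0}<\tau_0)\le\bP(\tau_{a_0}<\tau_{a_N})+\bP'_{a_N}(\tau_{a_0}<\tau_0)$, i.e.\ uniform convergence, which would only reprove the lower bound. The inequality you actually display is a weakening of the trivial $\bP(\tau_{a_0}<\tau_0)\ge\bP(\tau_{a_0}<\tau_{a_N})$.

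The missing observation is that no uniform estimate is needed. For general sequences you would be right that $\limsup_N\sup_{\bP}\le\sup_{\bP}\lim_N$ requires uniformity. Here, though, each sequence $N\mapsto\bP(\tau_{a_0}<\tau_{a_N})$ is non-decreasing, so every term is dominated by its limit.

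Concretely, for $N>n$ and any path with $Y_0=a_n$, continuity gives $\tau_{a_N}\le\tau_0$. So $\{\tau_{a_0}<\tau_{a_N}\}\subseteq\{\tau_{a_0}<\tau_0\}$ with strict inequality; your version with $\le$ would need the extra remark that $\tau_0\wedge\tau_{a_0}<\infty$ a.s. Hence $\bP(\tau_{a_0}<\tau_{a_N})\le\bP(\tau_{a_0}<\tau_0)\le H_{\zeta,\delta}(n)$ for every $\bP\in\sM_{\gamma,\delta}(a_n)$, i.e.\ $H^N_{\zeta,\delta}(n)\le H_{\zeta,\delta}(n)$ for all $N>n$.

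Equivalently, $H_{\zeta,\delta}(n)=\sup_{\bP}\sup_N\bP(\tau_{a_0}<\tau_{a_N})=\sup_N\sup_{\bP}\bP(\tau_{a_0}<\tau_{a_N})=\lim_N H^N_{\zeta,\delta}(n)$. This is the implicit last step of the paper's proof. Replacing your upper-bound section and the ``main obstacle'' by this line gives a complete proof for all $\delta,\zeta>0$.
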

\begin{proof} Fix $n \in \N$, and for arbitrary $\bP \in \sM_{\gamma,\delta}(a_n)$ consider the mapping $N \mapsto \bP(\tau_{a_0} < \tau_{a_N})$. Since $Y_0 = a_n > a_N$ for all $N > n$, continuity of the sample paths of $Y$ implies that $\tau_{a_N}$ is an increasing family of random variables. The monotonicity of the stopping times implies that there exists a (possibly infinite) random variable $\tau^* \in \R_+ \cup \{+\infty\}$ such that $\tau_{a_N} \uparrow \tau^*$. Since $\tau_{a_N} < \tau_0$ for all $N > n$, we have $\tau^* \leq \tau_0$; moreover, $Y_{\tau_{a_N}} = a_N$ for all $N$, and so continuity of the sample paths of $Y$ implies that $Y_{\tau^*} = 0$ when $\tau^* < \infty$. We therefore conclude that $\tau^* = \tau_0$ a.s., and hence by continuity of measure we have
\begin{equation} \label{eq:probNlimit}
\lim_{N \to \infty} \bP(\tau_{a_0} < \tau_{a_N}) = \bP(\tau_{a_0} < \tau_0).
\end{equation}
The result now follows from \eqref{def:HN} and \eqref{def:H}.
\end{proof}


Our next goal is to establish that $H^N_{\zeta,\delta}$ is a subsolution to $\sL_{\op} g = 0$ on $\{0,\dots,N\}$. We start with a monotonicity property.

\begin{lemma} \label{lemma_harmonic_mono} For every $N \in \N$, $\zeta >0$ and $\delta>0$, $n \mapsto H^N_{\zeta,\delta}(n)$ is non-increasing on $\{0,\dots,N\}$.
\end{lemma}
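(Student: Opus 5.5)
We compare $H^N_{\zeta,\delta}(n)$ and $H^N_{\zeta,\delta}(n+1)$ for $0 \le n < N$ directly, using Lemma~\ref{lemma_stop_markov}. The idea is that a process started from $a_{n+1}$ must pass through $a_n$ before it can reach $a_0$. After that passage, the pseudo-strong Markov property shows it is again a class-$\sM_{\gamma,\delta}$ process, now started from $a_n$. This gives the inequality $H^N_{\zeta,\delta}(n+1) \le H^N_{\zeta,\delta}(n)$.

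\textbf{The trivial cases.} If $n+1 = N$, then $H^N_{\zeta,\delta}(N) = 0 \le H^N_{\zeta,\delta}(n)$. So assume $1 \le n+1 < N$. If $n=0$, the bound $H^N_{\zeta,\delta}(1)\le 1 = H^N_{\zeta,\delta}(0)$ is immediate. So we may take $0 < n < n+1 < N$, though the argument below also works for $n=0$.

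\textbf{The main case.} Fix an arbitrary $\bP \in \sM_{\gamma,\delta}(a_{n+1})$. Since $Y_0 = a_{n+1} < a_n \le a_0$ and paths are continuous, on $\{\tau_{a_0} < \tau_{a_N}\}$ we have $\tau_{a_n} \le \tau_{a_0}$ and $\tau_{a_n} < \tau_{a_N}$. Hence
\[
\{\tau_{a_0} < \tau_{a_N}\} = \{\tau_{a_n} < \tau_{a_N},\ \tau_{a_0} < \tau_{a_N}\}.
\]
If $\bP(\tau_{a_n} < \tau_{a_N}) = 0$, then $\bP(\tau_{a_0}<\tau_{a_N}) = 0$ and there is nothing to prove. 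Otherwise we apply Lemma~\ref{lemma_stop_markov} with $a = a_N$, $x = a_N$, $w = a_{n+1}$, $y = a_n$ and $b = a_0$. We use the version of that lemma with the ordering $\tau_y < \tau_x$: the conditioning event is $\{\tau_{a_n} < \tau_{a_N}\}$ and the outcome event is $\{\tau_b < \tau_a\} = \{\tau_{a_0} < \tau_{a_N}\}$. The lemma gives some $\bP' \in \sM_{\gamma,\delta}(a_n)$ with
\[
\bP(\tau_{a_0}<\tau_{a_N}) = \bP(\tau_{a_n}<\tau_{a_N})\,\bP'(\tau_{a_0}<\tau_{a_N}) \le \bP'(\tau_{a_0}<\tau_{a_N}) \le H^N_{\zeta,\delta}(n).
\]
Taking the supremum over $\bP \in \sM_{\gamma,\delta}(a_{n+1})$ yields $H^N_{\zeta,\delta}(n+1) \le H^N_{\zeta,\delta}(n)$.

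\textbf{The delicate step.} The main point to check is that the hypotheses of Lemma~\ref{lemma_stop_markov} hold with the ordering $\tau_y<\tau_x$. The required ordering $0 \le a \le x \le w \le y \le b$ with $x<y$ holds, since $a=x=a_N<a_{n+1}=w<a_n=y\le a_0=b$. The positivity of the conditioning probability is handled by the split above.

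If there is any doubt about the variant with the ordering exchanged, we can instead redo that lemma's proof directly. The key identity is $\tau_{a_0}\circ\theta_{\tau_{a_n}} = \tau_{a_0}-\tau_{a_n}$, and similarly for $\tau_{a_N}$, on $\{\tau_{a_n}<\tau_{a_N}\}$. We then apply Proposition~\ref{prop_markov}(b) with $A=\{\tau_{a_n}<\tau_{a_N}\}\in\sF_{\tau_{a_n}}$, on which $Y_{\tau_{a_n}}=a_n$.

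One further check: Lemma~\ref{lemma_stop_markov} applies the PSMP at $\tau_{a_n}$ and so needs this stopping time to be a.s.\ finite. Conditionally on $A$ it is finite, and the naive conditioning in Proposition~\ref{prop_markov}(b) only uses $\omega\in A$. If needed, one can apply the PSMP at $\tau_{a_n}\wedge\tau_{a_N}$ instead, which is a.s.\ finite by Lemma~\ref{lemma_bddstop_tail}, since $Y$ is bounded by $a_n$ up to that time.
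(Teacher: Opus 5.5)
Your proof is correct and is essentially the paper's argument. Like the paper, you reduce to the event $\{\tau_{a_n}<\tau_{a_N}\}$, apply Lemma~\ref{lemma_stop_markov} to factor out a law in $\sM_{\gamma,\delta}(a_n)$, bound that factor by $H^N_{\zeta,\delta}(n)$, and take the supremum. Your extra remarks usefully spell out two points the paper leaves implicit: the positivity hypothesis, and applying the PSMP at the a.s.\ finite time $\tau_{a_n}\wedge\tau_{a_N}$ via Lemma~\ref{lemma_bddstop_tail}.
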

\begin{proof} We observe that the result is trivial for $N = 1$ and $N=2$, so let $N \geq 3$. Let $ n \leq N-2$ and $\bP_{a_{n+1}} \in \sM_{\gamma,\delta}(a_{n+1})$. Then
\begin{align*}
\bP_{a_{n+1}}(\tau_{a_0} < \tau_{a_N}) &= \bP_{a_{n+1}} (\tau_{a_n} < \tau_{a_N}, \tau_{a_0} < \tau_{a_N}) + \bP_{a_{n+1}}(\tau_{a_N} < \tau_{a_n}, \tau_{a_0} < \tau_{a_N}) 
\\ &=   \bP_{a_{n+1}} (\tau_{a_n} < \tau_{a_N}, \tau_{a_0} < \tau_{a_N})
\end{align*}
where the second probability vanishes because, recalling that $Y_0 = a_{n +1} < a_n < a_0$, we must have $\bP_{a_{n+1}}(\tau_{a_0} < \tau_{a_N} < \tau_{a_n}) = 0$. Applying Lemma~\ref{lemma_stop_markov} to the above, we obtain that for some $\bP_{a_n} \in \sM_{\gamma,\delta}(a_n)$,
\[ \bP_{a_{n+1}}(\tau_{a_0} < \tau_{a_N}) = \bP_{a_{n+1}} (\tau_{a_n} < \tau_{a_N})\cdot \bP_{a_n}(\tau_{a_0} < \tau_{a_N}), \]
and hence 
\[  \bP_{a_{n+1}}(\tau_{a_0} < \tau_{a_N}) \leq \bP_{a_{n+1}} (\tau_{a_n} < \tau_{a_N}) H^N_{\zeta,\delta}(n) \leq H^N_{\zeta,\delta}(n).\]
Taking the supremum over $\bP_{a_{n+1}} \in\sM_{\gamma,\delta}(a_{n+1})$ gives $H^N_{\zeta,\delta}(n+1) \leq H^N_{\zeta,\delta}(n)$, proving the result.
\end{proof}

We now verify that $H^N_\zeta$ is a subsolution to the equation associated to the operator $\sL_{\op}$. 
\begin{lemma}\label{lemma_HN_subsolution} For sufficiently small $\zeta>0$, the following holds for any $\delta \in (0,1]$: for any $N \in \N$, \[- \sL_{\op} H^N_{\zeta,\delta}(n) \leq 0 \, \text{ for all } \,0 < n < N.\] In particular, $H^N_{\zeta,\delta}$ is a subsolution to \eqref{eq_diff_boundary} with boundary condition $H^N_{\zeta,\delta}(0) = 1$ and $H^N_{\zeta,\delta}(N) = 0$.
\end{lemma}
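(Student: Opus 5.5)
We want $H^N(n) \le \op_n H^N(n-1) + (1-\op_n) H^N(n+1)$ for $0<n<N$, writing $H^N = H^N_{\zeta,\delta}$. The plan is a first-step decomposition at the exit time of $I_n$, combined with the pseudo-strong Markov property and the bounds on the exit probabilities from the previous subsection. Take $\zeta$ small enough that Lemma~\ref{lemma_exit_upperbd2} applies, and also small enough that $2\delta\zeta^{1-2\gamma}\le 1$ for every $\delta\in(0,1]$.

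\textbf{Step 1: first-step decomposition.} Fix $0<n<N$ and $\bP_{a_n}\in\sM_{\gamma,\delta}(a_n)$, and write $\sigma=\tau_{a_{n-1}}\wedge\tau_{a_{n+1}}$. By Lemma~\ref{lemma_exit_exp}, $\sigma<\infty$ a.s. By path continuity, starting from $a_n$ the process cannot reach $a_0$ or $a_N$ before $\sigma$. Splitting on which endpoint of $I_n$ is hit first, Lemma~\ref{lemma_stop_markov} gives measures $\bP'_{a_{n-1}}\in\sM_{\gamma,\delta}(a_{n-1})$ and $\bP''_{a_{n+1}}\in\sM_{\gamma,\delta}(a_{n+1})$ with
\[
\bP_{a_n}(\tau_{a_0}<\tau_{a_N}) = p\,\bP'_{a_{n-1}}(\tau_{a_0}<\tau_{a_N}) + (1-p)\,\bP''_{a_{n+1}}(\tau_{a_0}<\tau_{a_N}),
\]
where $p=\bP_{a_n}(\tau_{a_{n-1}}<\tau_{a_{n+1}})$. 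If $p\in\{0,1\}$, the term with zero weight is simply dropped, so the positivity hypothesis of Lemma~\ref{lemma_stop_markov} causes no problem. The lemma applies here with $(x,y)=(a_{n+1},a_{n-1})$ or the reverse, and $(a,b)=(a_N,a_0)$, with the appropriate orderings of the stopping times.

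\textbf{Step 2: pass to the suprema.} Bounding each conditional probability by the supremum in \eqref{def:HN} gives
\[
\bP_{a_n}(\tau_{a_0}<\tau_{a_N}) \le p\,H^N(n-1)+(1-p)\,H^N(n+1).
\]

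\textbf{Step 3: replace $p$ by $\op_n$.} This is the only point needing care. The right-hand side is affine in $p$ with slope $H^N(n-1)-H^N(n+1)$, which is $\ge 0$ by Lemma~\ref{lemma_harmonic_mono}, so it is non-decreasing in $p$. Lemma~\ref{lemma_exit_upperbd2} gives $p\le\op_n$ uniformly over $\delta\in(0,1]$, $n$ and $\bP_{a_n}$. Hence
\[
\bP_{a_n}(\tau_{a_0}<\tau_{a_N}) \le \op_n H^N(n-1)+(1-\op_n)H^N(n+1).
\]
This bound does not depend on $\bP_{a_n}$, so taking the supremum over $\sM_{\gamma,\delta}(a_n)$ yields $-\sL_{\op}H^N(n)\le 0$.

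\textbf{Step 4: boundary values.} The values $H^N(0)=1$ and $H^N(N)=0$ hold by definition: at $n=0$ we have $\tau_{a_0}=0<\tau_{a_N}$, and at $n=N$ we have $\tau_{a_N}=0$. This completes the subsolution claim.

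The main potential obstacle is the monotonicity used in Step 3. It is what allows the uniform bound on the exit probability to be substituted for the unknown, law-dependent $p$, and it is supplied by Lemma~\ref{lemma_harmonic_mono}.
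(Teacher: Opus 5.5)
Your proposal is correct and follows the paper's proof essentially step for step. The paper also uses the first-step decomposition at the exit from $I_n$ via Lemma~\ref{lemma_stop_markov} and bounds the shifted probabilities by the suprema $H^N_{\zeta,\delta}(n\pm 1)$. It then replaces $p$ by $\op_n$ using Lemma~\ref{lemma_exit_upperbd2} together with the monotonicity from Lemma~\ref{lemma_harmonic_mono}, and finally takes the supremum over $\sM_{\gamma,\delta}(a_n)$. Your extra remarks on finiteness of the exit time and on the degenerate cases $p\in\{0,1\}$ are harmless. Those cases in fact cannot occur, since $p\geq 1/3$ by Lemma~\ref{lemma_exit_lwrbd} and $p\leq \op_n<1$ for small $\zeta$.
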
 
\begin{proof} Let $N \geq 2$ and suppose that $\zeta_0>0$ is small enough so that Lemma~\ref{lemma_exit_exp} holds for all $\zeta \in (0,\zeta_0]$. It is immediate that $H^N_{\zeta,\delta}$ satisfies the desired boundary condition, as we have already remarked upon this. Let $0 < n < N$ and $\bP_{a_n} \in \sM_{\gamma,\delta}(a_n)$. To begin, we partition on the point via which $Y$ exits $I_n$, i.e.
\begin{align*}
\bP_{a_n}(\tau_{a_0} < \tau_{a_N}) = \bP_{a_n}(\tau_{a_{n+1}} < \tau_{a_{n-1}}, \tau_{a_0} < \tau_{a_N}) +\bP_{a_n}(\tau_{a_{n-1}} < \tau_{a_{n+1}}, \tau_{a_0} < \tau_{a_N}). \end{align*}
We next apply Lemma~\ref{lemma_stop_markov} to each term in the above. This yields that for some $\bP_{a_{n-1}} \in \sM_{\gamma,\delta}(a_{n-1})$ and $\bP_{a_{n+1}} \in \sM_{\gamma,\delta}(a_{n+1})$,
\begin{align*}
\bP_{a_n}(\tau_{a_0} < \tau_{a_N}) = \bP_{a_n}(\tau_{a_{n-1}} < \tau_{a_{n+1}})\cdot \bP_{a_{n-1}}(\tau_{a_0} < \tau_{a_N}) + \bP_{a_{n}}(\tau_{a_{n+1}} < \tau_{a_{n-1}}) \cdot \bP_{a_{n+1}}(\tau_{a_0} < \tau_{a_N}), 
\end{align*}
and hence, by \eqref{def:HN},
\begin{align*}
\bP_{a_n}(\tau_{a_0} < \tau_{a_N}) \leq p \cdot H^N_{\zeta,\delta}(n-1) + (1-p)\cdot H^N_{\zeta,\delta}(n+1),
\end{align*}
where we write $p = \bP_{a_n}(\tau_{a_{n-1}} < \tau_{a_{n+1}})$. Since $\delta \leq 1$ and $\zeta \leq \zeta_0$, we have $p \leq \op_n$ by Lemma~\ref{lemma_exit_upperbd2}. By Lemma~\ref{lemma_harmonic_mono}, $H^N_\zeta(n+1) \leq H^N_\zeta(n-1)$. Together these imply that
\[ \bP_{a_n}(\tau_{a_0} < \tau_{a_N}) \leq \op_n \cdot H^N_{\zeta,\delta}(n-1) + (1-\op_n)\cdot H^N_{\zeta,\delta}(n+1).\]
Since the above holds for any $\bP \in \sM_{\gamma,\delta}(a_n)$, we may take the supremum over $\sM_{\gamma,\delta}(a_n)$ to obtain the same bound with the left-hand side replaced by $H^N_{\zeta,\delta}(n)$. Rearranging the resulting inequality gives $-\sL_{\op} H^N_{\zeta,\delta}(n) \leq 0$, which is the desired result.
\end{proof}

Next, we construct an appropriate supersolution to \eqref{eq_diff_boundary} with operator $\sL_{\op}$. For $K>0$, we define the function $q_K : \N_0 \to \R$ by
\[ q_K(n) = c_K \left( \frac{K + n}{K + n+1}\right)2^{-n} = c_K \left( 2^{-n} - \frac {1}{K+ n+1} 2^{-n}\right),\]
where $c_K = (K+1)/K$. Recall that $\op_n = \tfrac 1 3 + \zeta^{c_4} e^{-c_5 n}$. Given $\zeta > 0$, we define $K_\zeta$ by
\[ K_\zeta := \zeta^{-c_4/2}/4.\] 
\begin{lemma} \label{lemma_supersolution}
For sufficiently small $\zeta >0$, $- \sL_{\bar{p}} q_{K_\zeta}(n) \geq 0$ for all $n \in \N$. 
\end{lemma}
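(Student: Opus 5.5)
The plan is a direct computation. I split $\sL_{\op}$ into the operator $\sL_{1/3}$ plus a perturbation, using the analogue of \eqref{e_discreteop2}:
\[
\sL_{\op} g(n) = \sL_{1/3} g(n) + \eps_n\bigl(g(n-1)-g(n+1)\bigr), \qquad \eps_n := \op_n - \tfrac13 = \zeta^{c_4}e^{-c_5 n}.
\]
Write $K = K_\zeta$ and $q_K = c_K(2^{-n} - r(n))$ with $r(n) := 2^{-n}/(K+n+1)$. Since $c_K>0$, it suffices to show $-\sL_{\op}(2^{-n})(n) + \sL_{\op} r(n) \geq 0$ for all $n \in \N$.

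\textbf{Step 1 (the leading term).} The function $n\mapsto 2^{-n}$ is exactly $\sL_{1/3}$-harmonic, because $\tfrac13 \cdot 2 + \tfrac23 \cdot \tfrac12 - 1 = 0$. Hence
\[
-\sL_{\op}(2^{-n})(n) = -\eps_n\bigl(2^{-n+1}-2^{-n-1}\bigr) = -\tfrac32\,\eps_n 2^{-n}.
\]
This is the only negative contribution, and the correction $r$ has to beat it.

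\textbf{Step 2 (the correction term).} Put $x = K+n+1$. A short computation with a common denominator gives
\[
\sL_{1/3} r(n) = 2^{-n}\Bigl(\tfrac{2/3}{x-1} + \tfrac{1/3}{x+1} - \tfrac1x\Bigr) = 2^{-n}\,\frac{x/3+1}{x(x^2-1)} \geq \frac{2^{-n}}{3x^2}.
\]
Moreover, $r$ is decreasing, so the perturbation term $\eps_n(r(n-1)-r(n+1))$ is nonnegative and can simply be dropped. For $n = 1$ the value $r(0)$ is used; this is fine because $q_K$ is defined on $\N_0$.

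\textbf{Step 3 (comparison).} It remains to show, for all $n\in\N$,
\[
\frac{1}{3(K+n+1)^2} \geq \frac32\,\zeta^{c_4}e^{-c_5 n}, \quad\text{i.e.}\quad (K+n+1)^2\zeta^{c_4}e^{-c_5n} \leq \tfrac29 .
\]
I would use $(K+n+1)^2 \leq (1+\eta)K^2 + C_\eta(n+1)^2$ for a small fixed $\eta$ (for instance $\eta = 1/2$). Then $(1+\eta)K^2\zeta^{c_4} = (1+\eta)/16$, which is strictly less than $2/9$ for such $\eta$. The second contribution is at most $C_\eta\,\zeta^{c_4}\sup_{n}(n+1)^2e^{-c_5 n}$, which is a constant times $\zeta^{c_4}$. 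This tends to $0$, so for $\zeta$ small the total is at most $2/9$ uniformly in $n$.

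The main obstacle is choosing the scale of $K$: it must be large enough, of order $\zeta^{-c_4/2}$, that $1/x^2 \gtrsim \zeta^{c_4}$ at small $n$, yet the constant $1/4$ in $K_\zeta$ must leave enough slack ($1/16 < 2/9$) for the estimate. The exponential factor $e^{-c_5 n}$ takes care of large $n$. I expect Steps 1 and 2 to be routine, and Step 3 is where the constants have to be checked with care.
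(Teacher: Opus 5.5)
Your argument is correct and is essentially the paper's direct computation. In both, $\sL_{1/3}$ acting on the correction $-c_K 2^{-n}/(K+n+1)$ produces a negative term of size about $2^{-n}/(3(K+n)^2)$, and the choice $K_\zeta^2\zeta^{c_4}=1/16$ makes this term dominate the perturbation of order $\zeta^{c_4}e^{-c_5 n}2^{-n}$.

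The differences are only in bookkeeping:
\begin{itemize}
\item You use the exact $\sL_{1/3}$-harmonicity of $2^{-n}$ and the favourable sign of the perturbation acting on $r$. You therefore only need to beat $\tfrac32\zeta^{c_4}e^{-c_5 n}2^{-n}$, whereas the paper expands everything and bounds the whole perturbation bracket by $2$.
\item You finish with $(a+b)^2\le(1+\eta)a^2+C_\eta b^2$ together with the boundedness of $(n+1)^2e^{-c_5 n}$. The paper instead uses a monotonicity argument for the ratio $f/g$.
\end{itemize}
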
 
\begin{proof}
The proof follows by a direct calculation. For the time being let $K>0$ be arbitrary. Then for $n \in \N$,
\begin{align*}
\sL_{\bar{p}} q_K(n) &= \left( \frac 1 3 +  \zeta^{c_4} e^{-c_5 n} \right) q_K(n-1)+  \left( \frac 2 3 -  \zeta^{c_4} e^{-c_5 n} \right)  q_K(n+1) - q_K(n)
\\ &= c_K \bigg[ \frac 1 3 \left(\frac{K+n-1}{K+ n} \right) 2^{-n+1} + \frac 2 3 \left( \frac{K+n+1}{K+n+2} \right)2^{-n-1}  - \left( \frac{K+n}{K+n+1} \right)2^{-n} \bigg]
\\ &\hspace{3 cm} + c_K \zeta^{c_4} e^{-c_5 n} \bigg[ \left(\frac{K+n-1}{K+ n} \right) 2^{-n+1}  - \left( \frac{K+n}{K+n+1} \right)2^{-n-1} \bigg] 
\\ &= \frac{c_K 2^{-n}}{3} \bigg[ 2 \left(\frac{K+n-1}{K+ n} \right) +  \left( \frac{K+n+1}{K+n+2} \right)  - 3\left( \frac{K+n}{K+n+1} \right) \bigg]
\\ &\hspace{3 cm} + c_K 2^{-n}  \zeta^{c_4} e^{-c_5 n} \bigg[ 2\left( \frac{K+n-1}{K+ n} \right)   - \frac 1 2 \left( \frac{K+n}{K+n+1} \right) \bigg].
\end{align*}
We remark that the last square-bracketed term in the above is at most $2$. Elementary manipulations of the first term of the right-hand side above then yield that
\begin{align*}
    \sL_{\bar{p}} q(n) & \leq c_K 2^{-n} \left[ \frac{-(K+n + 4)}{3(K+n)(K+n+1)(K+n+2) } + 2  \zeta^{c_4} e^{-c_5 n} \right]
    \\ &\leq  c_K 2^{-n} \left[ -\frac 1 3(K+n + 4)^{-2}+ 2  \zeta^{c_4} e^{-c_5 n} \right].
\end{align*}
Now we choose $K = \zeta^{-c_4/2}/4$. For sufficiently small $\zeta$ we have $\zeta^{-c_4/2}/4 + n + 4 \leq \zeta^{-c_4/2}/3 +n$ for all $n \in \N_0$, and hence for such $\zeta$ we have
\begin{align*}
    \sL_{\bar{p}} q(n) &\leq c_K 2^{-n} \left[ -\frac 1 3(\zeta^{-c_4/2}/3 +n)^{-2} + 2 \zeta^{c_4} e^{-c_5 n} \right], \quad n \in \N_0.
\end{align*}
To complete the proof it suffices to prove that $f(x) \geq g(x)$ for all $x \geq 0$, where
\[ f(x) = \frac 1 3(\zeta^{-c_4/2}/3 +x)^{-2}, \quad g(x) = 2 \zeta^{c_4} e^{-c_5 x}.\]
This will follow if we show that $f(0) \geq g(0)$ and $f/g$ is increasing. The first property is immediate. For the second, we note that
\begin{align*}
\left(\frac{f}{g}\right)'(x) = \frac{e^{c_5x}}{6\zeta^{c_4}}\left(\frac{c_5 - 2(\zeta^{-c_4/2}/3 + x)^{-1}}{(\zeta^{-c_4/2}/3 + x)^2} \right) \geq  \frac{e^{c_5x}}{6\zeta^{c_4}}\left(\frac{c_5 - 6\zeta^{c_4/2}}{(\zeta^{-c_4/2}/3 + x)^2} \right),
\end{align*}
where the last inequality holds for all $x \geq 0$. The right-hand side above is positive provided $\zeta^{c_4/2} < c_5 / 6$, and hence $f/g$ is increasing as desired for $\zeta$ sufficiently small. This completes the proof.
\end{proof}

We are now able to complete the proof of (the upper bound from) Proposition~\ref{prop_hittingprob_main}.

\begin{proof}[Proof of Proposition~\ref{prop_hittingprob_main}] Let $\zeta_0 >0$ be small enough so that we can apply Lemmas~\ref{lemma_HN_subsolution} and \ref{lemma_supersolution} for all $\delta \in (0,1]$, $\zeta \in (0,\zeta_0]$ and $n \in \N$. Let $N \in \N$. By Lemma~\ref{lemma_HN_subsolution}, $H^N_{\zeta,\delta}$ is a subsolution to \eqref{eq_diff_boundary} with boundary conditions $H^N_{\zeta,\delta}(0) = 1$ and $H^N_{\zeta,\delta}(N) = 0$. By Lemma~\ref{lemma_supersolution}, $q_{K_\zeta}$ is a supersolution to \eqref{eq_diff_boundary} with boundary conditions $q_{K_\zeta}(0) = 1$ and $q_{K_\zeta}(N) > 0$. Hence, by the comparison principle,
\[ H^N_{\zeta,\delta}(n) \leq q_{K_\zeta}(n), \quad 0 \leq n \leq N.\] 
For fixed $n$, the above holds for all $N > n$. Thus, taking $N \to \infty$, it follows from Lemma~\ref{lemma_HN_limit} that $H_{\zeta,\delta} (n) \leq q_{K_\zeta}(n)$ for all $n \in \N$.

To complete the proof, we compute
\begin{align*}
 H_{\zeta,\delta}(1) \leq q_{K_\zeta}(1) &=\frac 1 2 \left( \frac{\zeta^{-c_4/2}/4 + 1}{\zeta^{-c_4/2}/4}\right). \left(  \frac{\zeta^{-c_4/2}/4 + 1}{\zeta^{-c_4/2}/4+2} \right)
\\ & = \frac 1 2 + \frac 1 2 \left( \frac{1}{\zeta^{-c_4}/16+ \zeta^{-c_4/2}/2} \right)
\\ & \leq \frac 1 2 + 8 \zeta^{c_4}.
\end{align*}
Recall that $a_0 = \zeta$. By \eqref{def:H}, this implies that for any $\zeta \in (0,\zeta_0]$, for all $\bP_{\zeta/2} \in \sM_{\gamma,\delta}(\zeta/2)$, 
\[ \bP_{\zeta/2} (\tau_\zeta < \tau_0) \leq \frac 1 2 + 8 \zeta^{c_4}.\]
Taking $c_1 = c_4$, this completes the proof.
\end{proof}

We end this section with a brief discussion of the lower bound in \eqref{eq_hittingprob_main}. While the lower bound is less important for the proof of the main result, it is still used, and moreover it is useful to compare it with the upper bound. The proof of the lower bound uses the same ideas as the proof of the upper bound but is simpler. Thanks to Lemma~\ref{lemma_exit_lwrbd}, the ``transition probabilities" satisfy $\bP_{a_n}(\tau_{a_{n+1}} < \tau_{a_{n-1}}) \geq \tfrac 1 3$ for all $n \in \N$ and $\bP_{a_n} \in \sM_{\gamma,\delta}(a_n)$. Similarly to above, we can define another family of approximate harmonic functions $h^N_{\zeta,\delta}$ and $h_{\zeta,\delta}$ by replacing the supremum with an infimum. A parallel argument then shows that $h_{\zeta,\delta}^N$ is a supersolution (with boundary data $h^N_{\zeta,\delta}(0) = 1$ and $h^N_{\zeta,\delta}(N) = 0$) of \eqref{eq_diff_boundary} with the operator $\sL_{1/3}$, which is obtained by taking $p_n = \tfrac 1 3$ for all $n \in \N$. Because $\sL_{1/3}$ has constant coefficients, its harmonic functions can be computed directly, and a straightforward comparison argument shows that $h_{\zeta,\delta}(n) \geq  2^{-n}$ for all $n$.

\subsection{Exponential tails for the exit time of an interval}


We now turn our attention to the distribution of $\tau_0 \wedge \tau_{2\zeta}$ under $\bP_\zeta \in \sM_{\gamma,\delta}(\zeta)$; in fact, the bounds that we show hold for all $\bP\in \sM_{\gamma,\delta}(\zeta)$. In particular, we will prove Proposition~\ref{prop_exit_exp}, which establishes that mean of $\tau_0 \wedge \tau_{2\zeta}$ is of order $\zeta^{2-2\gamma}$, and its tail decays exponentially along multiples of its mean.


We first prove the following bound on the expected value of $\tau_0 \wedge \tau_{2\zeta}$. We recall the definition of $\sM_{\gamma,\delta}((0,2\zeta))$ as in \eqref{eq:M_geninit}.

\begin{lemma} \label{lemma_exit_mean}
There exists a constant $C_3 \geq 1$ such that for sufficiently small $\zeta$, for all $\delta \in (0,1]$ and $\bP \in \sM_{\gamma,\delta}((0,2\zeta))$,
\[ \E[\tau_0 \wedge \tau_{2\zeta}] \leq C_3 \zeta^{2-2\gamma}.\]
\end{lemma}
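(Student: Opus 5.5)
The plan is to decompose the path of $Y$ up to $\tau_0 \wedge \tau_{2\zeta}$ according to its visits to the dyadic levels $a_n = \zeta 2^{-n}$, $n \geq 0$, and sum the expected sojourn times. Lemma~\ref{lemma_exit_exp} bounds each sojourn by $C_2 a_n^{2-2\gamma}$. The number of sojourns near level $n$ will be controlled by a uniform upper bound on ``upward'' transition probabilities (Lemma~\ref{lemma_exit_upperbd2}), combined with the PSMP. Note that $2\delta\zeta^{1-2\gamma}\le 1$ holds for small $\zeta$ and all $\delta\in(0,1]$, so all earlier lemmas apply; the top level is handled by the interval $I_0 = (a_1, a_{-1}) = (\zeta/2, 2\zeta)$.

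First reduce to starting points on the grid. If $Y_0 \in (0,2\zeta)$, then $Y_0 \in I_n$ for some $n\geq 0$ (the $I_n$ cover $(0,2\zeta)$). The time to exit $I_n$ has mean at most $C_2 a_n^{2-2\gamma}\le C_2\zeta^{2-2\gamma}$ by Lemma~\ref{lemma_exit_exp}, and the exit happens at some $a_{m}$ or at $2\zeta$. By the PSMP (Proposition~\ref{prop_markov}), it then suffices to bound $\sup_{\bP\in\sM_{\gamma,\delta}(a_m)}\E[\tau_0\wedge\tau_{2\zeta}]$ uniformly in $m\geq 0$.

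Next, define successive stopping times $\rho_0 = 0$ and $\rho_{k+1}$ the exit time after $\rho_k$ from $I_{n_k}$, where $Y_{\rho_k} = a_{n_k}$. This gives an embedded ``walk'' $n_k$ on $\{-1,0,1,\dots\}$, killed at $-1$ (level $2\zeta$); on the event $\tau_0<\tau_{2\zeta}$ the walk is sent off to infinity. Under the PSMP, each step goes up (to $n-1$) with conditional probability at most $\op_n \le \frac13+\zeta^{c_4}\le \frac25$ for small $\zeta$, by Lemma~\ref{lemma_exit_upperbd2}. Then
\[
\E[\tau_0\wedge\tau_{2\zeta}] \le C_2\zeta^{2-2\gamma}\sum_{n\ge 0} 2^{-(2-2\gamma)n}\,\E[\#\{k : n_k = n\}],
\]
again via the PSMP and Lemma~\ref{lemma_exit_exp}. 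It remains to bound the expected number of visits to level $n$ uniformly, or at least subexponentially in $n$. For this I would build a Lyapunov function: $g(n)=2^{\beta n}$ with small $\beta>0$, or simply compare with a random walk with downward drift. Since the upward probability is at most $2/5$, the embedded walk is dominated by a biased walk drifting toward $+\infty$, which visits each site $O(1)$ times in expectation. This comparison can be made rigorous with the supremum-over-$\sM_{\gamma,\delta}$ approximate-harmonic device of the previous subsection. Concretely, let $G(n) = \sup_{\bP \in \sM_{\gamma,\delta}(a_n)}\E[\text{time spent before } \tau_0\wedge\tau_{2\zeta}]$, truncated at level $N$ to ensure finiteness. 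One shows $G(n)\le C_2 a_n^{2-2\gamma} + \op_n G(n-1)+(1-\op_n)G(n+1)$ after using monotonicity appropriately. The explicit supersolution is $C\zeta^{2-2\gamma}$ times a bounded function.

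The main obstacle is this last comparison step. Monotonicity of $G$ in $n$ is not obvious, because the expected time may fail to be monotone; it is needed to replace the true step probability $p$ by $\op_n$. One can avoid monotonicity entirely by using the exact identity $G(n)\le C_2a_n^{2-2\gamma}+\sup_{p\le\op_n}[pG(n-1)+(1-p)G(n+1)]$. One then checks that the supersolution $\phi(n)=A\zeta^{2-2\gamma}(1+\sum_{m\le n}2^{-(2-2\gamma)m}\cdot\text{const})$ satisfies the required inequality for every $p\in[0,\op_n]$. A simpler supersolution is the constant-type $\phi(n)=A\zeta^{2-2\gamma}(2-2^{-n})$. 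Because $\phi$ is increasing in $n$, the worst case is $p=0$, which is handled since $\phi(n+1)-\phi(n)$ decays geometrically at rate $1/2$. The source term $a_n^{2-2\gamma}$ decays faster, at rate $2^{-(2-2\gamma)n}$, which is why a bounded supersolution exists. Finiteness for the truncated problem follows from Lemma~\ref{lemma_bddstop_tail}. Letting $N\to\infty$ by monotone convergence then gives $C_3$.
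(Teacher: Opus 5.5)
Your architecture differs from the paper's. The paper obtains this lemma by integrating the tail bound of Proposition~\ref{prop_stop_tail}. That tail bound comes from splitting $\tau_0\wedge\tau_{2\zeta}$ into total sojourn times $X_k$ at each level $a_k$, and showing via Proposition~\ref{prop_hittingprob_main} that each return to $a_k$ has probability at most $\frac34$. You instead go for the mean directly. You use the inequality $G^N(n)\le C_2a_n^{2-2\gamma}+\sup_{p\le\op_n}[pG^N(n-1)+(1-p)G^N(n+1)]$ for $G^N(n)=\sup_{\bP\in\sM_{\gamma,\delta}(a_n)}\E[\tau_{a_N}\wedge\tau_{2\zeta}]$, with $G^N(-1)=G^N(N)=0$, together with a comparison principle. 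This framework is sound, but the step you flag as the crux fails as written.

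Because you let $p$ range over $[0,\op_n]$, the supersolution inequality at $p=0$ reads $\phi(n)-\phi(n+1)\ge C_2a_n^{2-2\gamma}>0$. So $\phi$ must be strictly decreasing on $\{0,1,2,\dots\}$. Both of your candidates are increasing, so the ``worst case $p=0$'' is not handled at all, whatever the decay rate of the increments. The candidate $2-2^{-n}$ fails even at $p=\frac13$: it is exactly $\sL_{1/3}$-harmonic, so $-\sL_{1/3}\phi\equiv 0<C_2a_n^{2-2\gamma}$. The heuristic is also reversed. Started from $a_n$ with $n$ large, $Y$ is absorbed at $0$ almost at once, so the true $G(n)$ decays in $n$, heuristically like $\zeta^{2-2\gamma}2^{-n}$.

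The repair is short.
\begin{itemize}
\item Pick $\alpha\in(0,1)$ with $\kappa:=1-\frac25 2^{\alpha}-\frac35 2^{-\alpha}>0$; for example $2^{\alpha}=\sqrt{3/2}$ gives $\kappa=1-\frac{2\sqrt6}{5}$. Set $\phi(-1)=0$ and $\phi(n)=A\zeta^{2-2\gamma}2^{-\alpha n}$ for $n\ge 0$.
\item For small $\zeta$ we have $\op_n\le\frac25$. Since $p\mapsto 1-p2^{\alpha}-(1-p)2^{-\alpha}$ is decreasing, $\phi(n)-p\phi(n-1)-(1-p)\phi(n+1)\ge \kappa A\zeta^{2-2\gamma}2^{-\alpha n}$ for all $n\ge0$ and $p\in[0,\frac25]$; at $n=0$ this uses $\phi(-1)=0$.
\item Since $\alpha<1<2-2\gamma$, the choice $A=C_2/\kappa$ gives a supersolution.
\item The maximum principle for your sup-operator holds on $\{-1,\dots,N\}$. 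The bound $1-p\ge\frac35$ forces a positive interior maximum of $G^N-\phi$ at its largest index to be strictly exceeded by itself, a contradiction. This, rather than finiteness, is why the truncation is needed: $G\le 72\zeta/\delta$ already holds by Lemma~\ref{lemma_bddstop_tail}.
\item You get $G^N\le A\zeta^{2-2\gamma}$ and, with your reduction step, $C_3=C_2(1+\kappa^{-1})$.
\end{itemize}

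Alternatively, an increasing $\phi$ can work if you use Lemma~\ref{lemma_exit_lwrbd} to restrict to $p\in[\frac13,\op_n]$. The worst case is then $p=\frac13$. Your first candidate has increments shrinking by the factor $2^{-(2-2\gamma)}<\frac12$, so it does satisfy $-\sL_{1/3}\phi(n)\ge C_2 a_n^{2-2\gamma}$ for large $A$.

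Once fixed, your route is shorter than the paper's. It uses only the one-step bounds of Lemmas~\ref{lemma_exit_exp} and~\ref{lemma_exit_upperbd2}, with neither Proposition~\ref{prop_hittingprob_main} nor a tail estimate. The paper's route also produces a tail bound, but it only uses it to extract this mean.
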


Proposition~\ref{prop_exit_exp} follows easily from this lemma, as we now show.

\begin{proof}[Proof of Proposition~\ref{prop_exit_exp}]
Let $\zeta>0$ be small enough so that Lemma~\ref{lemma_exit_mean} holds, and let $\delta \in (0,1]$. It follows from Lemma~\ref{lemma_exit_mean} and Markov's inequality that
\begin{equation} \label{eq_meanexit_impl}
\bP' \in \sM_{\gamma,\delta}((0,2\zeta)) \quad \implies \quad \bP'(\tau_0 \wedge \tau_{2\zeta} > 2C_3 \zeta^{2-2\gamma}) \leq \frac 1 2.
\end{equation}
Fix $\bP \in \sM_{\gamma,\delta}((0,2\zeta))$, let $m \in \N$, and define $A = \{\tau_0 \wedge \tau_{2\zeta} > (m-1) \cdot 2C_3 \zeta^{2-2\gamma} \}$. For $\omega \in A$, we have 
\[ (\tau_0 \wedge \tau_{2\zeta}) \circ \theta_{(m-1) \cdot 2C_3 \zeta^{2-2\gamma} }(\omega) = (\tau_0 \wedge \tau_{2\zeta})(\omega) - (m-1)\cdot 2C_3 \zeta^{2-2\gamma}.\]
We then compute
\begin{align*}
&\bP(\tau_0 \wedge \tau_{2\zeta} > m \cdot 2C_3 \zeta^{2-2\gamma})
\\ &\hspace{1 cm} =\E\left[\indc(\tau_0 \wedge \tau_{2\zeta} > (m-1) \cdot 2C_3 \zeta^{2-2\gamma}) \cdot \Q_{(m-1) \cdot 2C_3 \zeta^{2-2\gamma}}(\tau_0 \wedge \tau_{2\zeta} > 2C_3 \zeta^{2-2\gamma}) \right]
\\ &\hspace{1 cm} \leq \E\left[\indc(\tau_0 \wedge \tau_{2\zeta} > (m-1) \cdot 2C_3 \zeta^{2-2\gamma}) \right] \cdot \frac 1 2.
\end{align*}
The final inequality holds by \eqref{eq_meanexit_impl}, because for $\omega \in A$, by the PSMP (Proposition~\ref{prop_markov}), $\Q^\omega_{(m-1) \cdot 2C_3 \zeta^{2-2\gamma}} \linebreak \in \sM_{\gamma,\delta}((0,2\zeta))$. Iterating the argument above, we obtain that 
\begin{equation*}
\bP(\tau_0 \wedge \tau_{2\zeta} > m \cdot 2C_3 \zeta^{2-2\gamma})  \leq 2^{-m},
\end{equation*}
which implies the claimed result. \end{proof}

To complete the proof of Proposition~\ref{prop_exit_exp}, we need only prove Lemma~\ref{lemma_exit_mean}. We make one further reduction, arguing that Lemma~\ref{lemma_exit_mean} is itself a consequence of the following weaker tail estimate for $\tau_0 \wedge \tau_{2\zeta}$. 

\begin{proposition} \label{prop_stop_tail} There exist constants $C_4 \geq 1$ and $c_6> 0$ such that for sufficiently small $\zeta$, for all $\delta \in (0,1]$, $\bP \in \sM_{\gamma,\delta}((0,2\zeta))$ and $m \in \N$, 
\begin{equation}
\bP( \tau_0 \wedge \tau_{2\zeta} > m \zeta^{2-2\gamma}) \leq C_4 e^{-c_6 \sqrt{m}}.
\end{equation}
\end{proposition}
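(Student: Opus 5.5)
The idea is to decompose the path of $Y$ until $\tau_0 \wedge \tau_{2\zeta}$ into successive exits from the dyadic intervals $I_n = (a_{n+1}, a_{n-1})$, with $a_n = \zeta 2^{-n}$ and $a_{-1} = 2\zeta$. The level index then performs a random walk which is biased towards $+\infty$, i.e.\ towards $0$. The cost of each step is controlled by Lemma~\ref{lemma_exit_exp}, and the bias by Lemmas~\ref{lemma_exit_lwrbd} and~\ref{lemma_exit_upperbd2}.

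\textbf{Setup.} Fix $\bP \in \sM_{\gamma,\delta}((0,2\zeta))$, with $\zeta$ small enough that Lemmas~\ref{lemma_exit_exp} and~\ref{lemma_exit_upperbd2} apply and $\op_n \le \frac13 + \zeta^{c_4} \le \frac25$ for all $n$. Let $n_0$ be such that $Y_0 \in I_{n_0}$, and define stopping times recursively: $\rho_0 = 0$, and $\rho_{k+1}$ is the exit time after $\rho_k$ of the interval $I_{n_k}$, where $Y_{\rho_k} = a_{n_k}$ for $k \ge 1$. If $n_{k+1} = -1$ we have exited at $2\zeta$ and we stop. The times $\rho_k$ accumulate at $\tau_0$ if the index escapes to $\infty$. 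Thus
\[
\tau_0 \wedge \tau_{2\zeta} = \sum_k (\rho_{k+1} - \rho_k).
\]
By the PSMP (Proposition~\ref{prop_markov}), conditionally on $\sF_{\rho_k}$ the shifted law lies in $\sM_{\gamma,\delta}(a_{n_k})$ for $k \ge 1$, and in $\sM_{\gamma,\delta}(I_{n_0})$ for $k = 0$. Hence by Lemma~\ref{lemma_exit_exp},
\[
\bP\big(\rho_{k+1} - \rho_k > s\, a_{n_k}^{2-2\gamma} \,\big|\, \sF_{\rho_k}\big) \le e^{c_3} e^{-c_3 s} \quad \text{uniformly.}
\]
Likewise, by Lemma~\ref{lemma_exit_upperbd2}, the conditional probability that $n_{k+1} = n_k - 1$ is at most $\frac25$.

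\textbf{Bias.} Using these uniform conditional bounds, I will couple the index sequence $(n_k)$ step by step with a simple random walk $(\tilde n_k)$ with up-probability $\frac25$ (up meaning towards $n = -1$), with $\tilde n_k \le n_k$ in the appropriate monotone sense. For such a walk, the number $G_n$ of visits to level $n$ has a geometric tail uniformly in $n$ and in the starting level: $\bP(G_n > j) \le C\rho^j$ for some $\rho < 1$. Indeed, each return to $n$ from the level above (or below) requires an event of conditional probability bounded away from $1$. I expect a cleaner route is to count visits to $n$ via excursions from $n$ that return to $n$, each of probability at most $2 \cdot \frac25 \cdot \frac23$-type constants. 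The total time then satisfies
\[
\tau_0 \wedge \tau_{2\zeta} \le \zeta^{2-2\gamma} \sum_{n \ge 0} 2^{-n(2-2\gamma)} \sum_{j \le G_n} S_{n,j},
\]
where the $S_{n,j}$ are normalized holding times with uniform conditional exponential tails.

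\textbf{Tail estimate.} Choose weights $w_n = c\, 2^{-n(1-\gamma)}$ with $\sum_n w_n = 1$. A union bound reduces the claim to bounding
\[
\bP\Big(\sum_{j \le G_n} S_{n,j} > m w_n 2^{n(2-2\gamma)}\Big) = \bP\Big(\sum_{j \le G_n} S_{n,j} > c\, m\, 2^{n(1-\gamma)}\Big).
\]
For $x = c\,m\,2^{n(1-\gamma)}$, I will bound this by
\[
\bP(G_n > \sqrt{x}) + \bP\Big(\sum_{j \le \sqrt{x}} S_{n,j} > x\Big).
\]
The first term is at most $C\rho^{\sqrt{x}}$. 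The second is at most $e^{-c x + C\sqrt{x}}$, by an iterated conditional exponential-moment bound: $\E[e^{\lambda S} \mid \cdot\,] \le C'$ for small $\lambda$, applied via the tower property. Summing over $n$ gives
\[
\sum_n C e^{-c\sqrt{m\, 2^{n(1-\gamma)}}} \le C_4 e^{-c_6 \sqrt{m}}.
\]
The $\sqrt{m}$ loss comes precisely from decoupling the geometric count from the dependent holding times, which is why only the weaker tail is claimed.

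\textbf{Main obstacle.} The step I expect to be most delicate is the rigorous control of the visit counts $G_n$. The walk $(n_k)$ is not Markov, and the holding times $S_{n,j}$ are not independent of the visit counts. I would handle this by working entirely with conditional bounds given $\sF_{\rho_k}$, obtained from the PSMP, and by proving $\bP(G_n > j) \le \rho^j$ directly by induction on $j$. Each additional visit to level $n$ after leaving it requires either returning from $a_{n+1}$ to $a_n$ before hitting $0$, or returning from $a_{n-1}$ to $a_n$ before $2\zeta$. The first has conditional probability at most $\frac12 + 8 a_n^{c_1} < \frac34$, by Proposition~\ref{prop_hittingprob_main} applied at scale $a_n$. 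The second can have probability close to $1$, so instead I will charge such returns to the (geometric) number of up-steps from level $n$, whose probability is at most $\frac25$ at each visit. Combined, this gives a geometric tail for $G_n$ uniformly in $n$.
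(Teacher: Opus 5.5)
Your proposal is correct and is essentially the paper's argument: the same dyadic-level decomposition of $\tau_0\wedge\tau_{2\zeta}$ into holding times at the levels $a_n$ (the paper's $\sigma_0+\sum_k X_k$), uniform exponential tails for single holding times from Lemma~\ref{lemma_exit_exp}, and a geometric bound on the number of visits to a level, coming from the per-visit continuation probability $p+(1-p)\cdot\frac34<1$ (Lemmas~\ref{lemma_exit_lwrbd} and~\ref{lemma_exit_upperbd2}, Proposition~\ref{prop_hittingprob_main} at scale $a_{n+1}$, and the PSMP); the paper packages this as the recursion of Lemma~\ref{lemma_Xk_recursion} iterated $m+k$ times, where you split into $\bP(G_n>\sqrt{x})$ plus an exponential-moment bound. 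Two small remarks: the monotone coupling in your ``Bias'' paragraph is unnecessary and does not by itself give uniform geometric tails for the visit counts, so rely on the direct per-visit argument of your last paragraph; and the $\sqrt{m}$ is not forced by the dependence, since splitting at $x/A$ for a large constant $A$ in your own scheme already gives $e^{-cm}$, i.e.\ Proposition~\ref{prop_exit_exp} directly without going through Lemma~\ref{lemma_exit_mean}.
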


This trivially implies Lemma~\ref{lemma_exit_mean}, since for $\bP \in \sM_{\gamma,\delta}((0,2\zeta))$,
\begin{align*}
\E[\tau_0 \wedge \tau_{2\zeta}] = \zeta^{2-2\gamma} \int_0^\infty \bP(\tau_0 \wedge \tau_{2\zeta} > \lambda \zeta^{2-2\gamma})d \lambda &\leq \zeta^{2-2\gamma} \sum_{m=0}^\infty \bP( \tau_0 \wedge \tau_{2\zeta} > m \zeta^{2-2\gamma})
\\ &\leq C_4 \zeta^{2-2\gamma} \sum_{m=0}^\infty e^{-c_6 \sqrt m}
\\ & =: C_3 \zeta^{2-2\gamma}.
\end{align*}

The remainder of the section is devoted to the proof of Proposition~\ref{prop_stop_tail}. The proof is based on the joint analysis of the ``random walk" obtained by stopping $Y$ along the discrete set of points $\{a_n : n \in \N_0\}$, and the amount of time $Y$ spends between its visits to these points. This section makes heavy use of the PSMP, which is stated in Proposition~\ref{prop_markov}. We begin by introducing some notation.

For $\zeta > 0$, we define the set $\cA := \{\zeta 2^{-n} : n \in \Z\} = \{a_n : n \in \Z\}$. Suppose $\bP \in \sM_{\gamma,\delta}$. Let $\sigma_0 = \inf \{ t \geq 0 : Y_t \in \cA\}$ and for $k \in \N$ recursively define
\[ \sigma_k := \inf \left\{t > \sigma_{k-1} : Y_t \in \cA \backslash \{ Y_{\sigma_{k-1}} \} \right\}.\]
Whenever $\bP \in \sM_{\gamma,\delta}(a)$ for some $a \in \cA$, we have $\sigma_0 = 0$ a.s., and we will frequently use this fact without explicit reference. However, we will eventually require more general initial conditions for $Y$, in which case $\sigma_0$ may be non-zero with positive probability. It is not hard to argue using the PSMP and \eqref{eq_QVstop} that $\sigma_k < \infty$ a.s.\ for each $k$. The sequence of stopping times $(\sigma_k)_{k \in \N}$ are the sequence of times at which $Y$ visits $\cA$, ignoring consecutive visits to the same point.

The walk $S = (S_k)_{k \in \N_0}$  is defined as the height, on a logarithmic scale, of $Y$ at the times $(\sigma_k)_{k \in \N_0}$. That is,
\[ S_k = m \,\,\, \iff \,\,\, Y_{\sigma_k} = a_m.\]
This is equivalent to $S_k = - \log_2(Y_{\sigma_k} / \zeta)$. Because $Y$ is continuous, it follows that $S$ is restricted to $\pm 1$ increments. We remark that $S = S(\omega)$, and that shifting $\omega$ by $\sigma_j$ induces a shift by $j$ on $S$, i.e.\ for $j,k \in \N_0$, $S_k(\omega \circ \sigma_j) = S_{k+j}(\omega)$. This is immediate from the definition above, and it also extends to random times associated to the process $S$ as well. Because this is a discrete time process and there are very few technicalities, to ease notation we will not formally introduce the associated filtrations. We call an $\N_0$-valued random variable an $S$-stopping time if it is a stopping time associated to the filtration generated by $S$. It is elementary to argue that if $\Gamma$ is an $S$-stopping time, then $\sigma_\Gamma$ is an $(\sF_t)$-stopping time. If $\Gamma$ is an $S$-stopping time and $\Gamma(\omega) < \infty$, then
\begin{equation} \label{eq_S_shift}
S_k(\omega \circ \theta_{\sigma_\Gamma}) = S_{k+\Gamma(\omega)}(\omega).
\end{equation}

Next, we express $\tau_0 \wedge \tau_{2\zeta}$ in terms of the inter-visit times of $Y$ to $\cA$. Note that $\tau_{2\zeta} = \tau_{a_{-1}}$. 
For $N \in \N$, let $J_N := \inf \{n : S_n \in \{-1, N\}\}$, and let $J := \inf \{n : S_n = -1\}$. Then for $N \in \N$, if $Y_0 \in ( a_N,2\zeta)$, we have
\begin{equation*}
\tau_{a_N} \wedge \tau_{2\zeta} = \sigma_0 +  \sum_{i=0}^{ J_N - 1} (\sigma_{i + 1} - \sigma_i).
\end{equation*}
Recall that $\tau_0 \wedge \tau_{2\zeta} = \lim_{N \to \infty} \tau_{a_N} \wedge \tau_{2\zeta}$. On the other hand, $J_N \uparrow J$ a.s.\ as $N \to \infty$. Taking $N \to \infty$ in the above, we obtain that for any $a \in (0,\zeta)$, with with probability one on $\{Y_0 \in [a,2\zeta)\}$ we have
\begin{equation} \label{eq_exitrep1}
\tau_0 \wedge \tau_{2\zeta} = \sigma_0 + \sum_{i=0}^{ J - 1} (\sigma_{i + 1} - \sigma_i).
\end{equation}
In particular, if $\bP(Y_0 \in (0,2\zeta)) = 1$, then the representation above holds almost surely. We remark that the sum may have infinitely many terms, since $J$ is infinite with positive probability. 

Next, we define the local time of $S$ at level $k$ and time $m$ by
\begin{equation}
\ell^k_m = \sum_{j=0}^m \indc(S_j = k),
\end{equation}
For all $k \in \N_0$ we set $\Gamma^k_0 = - 1$, and for $k \in \N_0$ and $i \in \N$ we define random times 
\begin{align*}
&\Gamma^k_i =  \inf \{ n > \Gamma^k_{i-1} : S_n = k \}.
\end{align*}
That is, $\Gamma^k_i$ is the $i$th time that $S$ visits $k$, which corresponds to the $i$th (non-consecutive) visit of $Y$ to $a_k$. Next, we rewrite the sum in \eqref{eq_exitrep1} by grouping terms which correspond to visits to the same site. This yields
\begin{equation} \label{eq_tau0_decomp}
\tau_0 \wedge \tau_{2\zeta } = \sigma_0 +\sum_{k=0}^\infty \sum_{i=1}^{\infty} \indc(\Gamma^k_i < J ) (\sigma_{\Gamma^k_i + 1} - \sigma_{\Gamma^k_i}).
\end{equation}
We note as well that $\Gamma^k_i < J$ if and only if $\ell^k_{J - 1} \geq i$, so \eqref{eq_tau0_decomp} is equivalent to
\begin{align} \label{eq_hitting_Xk_decomp}
\tau_0 \wedge \tau_{2\zeta} &= \sigma_0 +\sum_{k=0}^\infty \sum_{i=1}^{\ell^k_{J- 1}} \sigma_{\Gamma^k_i + 1} - \sigma_{ \Gamma^k_i} \notag
\\ &=:\sigma_0 + \sum_{k=0}^\infty X_k
\end{align}
where $X_k$ is defined to equal zero when $\ell^k_{J - 1} = 0$. The next two lemmas concern $X_k$. First, we observe how $X_k$ behaves under shift operations. It follows from its definition in \eqref{eq_hitting_Xk_decomp} that for any $k \in \N_0$,
\begin{align} \label{eq_Xk_shift}
&\text{If $l \in \N_0$ and $\omega \in \Omega$ satisfy $l \leq \Gamma^k_1(\omega)$, then} \,\,X_k(\omega) = X_k (\omega \circ \theta_{\sigma_l}). 
\end{align}

\begin{lemma} \label{lemma_X_shift} For all $\zeta>0$ and $\delta>0$, for any $\bP \in \sM_{\gamma,\delta}$, $k \in \N_0$ and $r \geq 0$,
\begin{equation*}
\bP(X_k > r) = \bP(\Gamma^k_1< J) \cdot \bP_{a_k}(X_k > r)
\end{equation*}
for some $\bP_{a_k} \in \sM_{\gamma,\delta}(a_k)$.
\end{lemma}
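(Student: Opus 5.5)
The plan is to apply the pseudo-strong Markov property at the first visit of $Y$ to $a_k$ before $J$, using the shift identity \eqref{eq_Xk_shift}, in the same way Lemma~\ref{lemma_stop_markov} was proved.

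First, for $r \geq 0$ the event $\{X_k > r\}$ is contained in $\{\ell^k_{J-1} \geq 1\} = \{\Gamma^k_1 < J\}$, since $X_k = 0$ otherwise. If $\bP(\Gamma^k_1<J)=0$ both sides vanish for any choice of $\bP_{a_k}$, so assume it is positive. Define the $S$-stopping time $\Gamma := \Gamma^k_1 \wedge J$. On $\{\Gamma^k_1 < J\}$ it equals $\Gamma^k_1$ and is a.s.\ finite. Then $\sigma_\Gamma$ is an $(\sF_t)$-stopping time. To apply Proposition~\ref{prop_markov} we need a.s.\ finiteness. Where this fails, we can instead use $\Gamma^k_1\wedge J \wedge n$ and let $n\to\infty$, or simply note that the restriction to the event $A:=\{\Gamma^k_1<J\}\in\sF_{\sigma_\Gamma}$ is all that matters. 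To keep the conditioning clean, we redefine $\sigma_\Gamma$ to be $0$ off $A$.

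Second, on $A$ we have $\Gamma = \Gamma^k_1$. By \eqref{eq_Xk_shift} with $l=\Gamma^k_1$, this gives $X_k(\omega) = X_k(\omega\circ\theta_{\sigma_{\Gamma^k_1}})$. Here one must check that the shift identity is compatible with $J$. After shifting, $J$ becomes $J-\Gamma^k_1$ by \eqref{eq_S_shift}, since $S$ has not hit $-1$ before $\Gamma^k_1$ on $A$. The visits of $S$ to $k$ before $J$ are likewise relabeled consistently, so $X_k$ is unchanged. Hence
\[
\bP(X_k>r) = \bP(A\cap\{X_k\circ\theta_{\sigma_\Gamma}>r\}) = \bP(A)\,\bP^A\circ\theta_{\sigma_\Gamma}(X_k>r).
\]

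Third, since $Y_{\sigma_\Gamma} = a_k$ on $A$, Proposition~\ref{prop_markov}(b) gives $\bP_{a_k}:=\bP^A\circ\theta_{\sigma_\Gamma}\in\sM_{\gamma,\delta}(a_k)$, which completes the proof.

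The main obstacle is the bookkeeping in the second step: verifying that $X_k$, which depends on $J$ and on all the $\Gamma^k_i$, is genuinely shift-invariant under $\theta_{\sigma_{\Gamma^k_1}}$ on $A$. A related subtlety is that under the shifted measure $\sigma_0=0$, which matches the original indexing. The finiteness of $\sigma_\Gamma$ is handled by the redefinition convention off $A$.
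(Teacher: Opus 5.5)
Your proposal is correct and is essentially the paper's proof. On $A=\{\Gamma^k_1<J\}$ the identity \eqref{eq_Xk_shift} gives $X_k=\indc_A\, X_k\circ\theta_{\sigma_{\Gamma^k_1}}$, and Proposition~\ref{prop_markov}(b) with this $A$ gives $\bP^A\circ\theta_{\sigma_{\Gamma^k_1}}\in\sM_{\gamma,\delta}(a_k)$; your extra checks on $J$ and on the case $\bP(A)=0$ are fine additions.

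One caveat concerns finiteness of $\sigma_{\Gamma^k_1}$, which the paper does not address at all. Redefining $\sigma_\Gamma$ to be $0$ off $A$ does not give a stopping time, because $A$ is generally not in $\sF_0$. Use your truncation alternative instead: apply the PSMP at the a.s.\ finite stopping times $\sigma_{\Gamma^k_1\wedge J\wedge n}$ on $A\cap\{\Gamma^k_1\le n\}$ and let $n\to\infty$.
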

\begin{proof} If $\bP \in \sM_{\gamma,\delta}(a_k)$, then one has $\Gamma^1_k = 0$ a.s.\ and the result holds with $\bP_{a_k} = \bP$. Now take a general $\bP \in \sM_{\gamma,\delta}$. Then by \eqref{eq_Xk_shift} and the fact that $X_k = 0$ if $\Gamma^k_1  \geq J$, we have
\[ X_k(\omega) =  \indc(\Gamma^k_1(\omega) < J) X_k(\omega \circ \theta_{\sigma_{\Gamma^k_1}}),\]
and hence
\begin{align*}
\bP(X_k > r)& = \bP(\Gamma^1_k < J) \cdot \bP(X_k(\omega \circ \theta_{\sigma_{\Gamma^k_1}}) > r\, | \, \Gamma^1_k < J)
\\ &= \bP(\Gamma^1_k < J) \cdot  \bP^{\{\Gamma^1_k < J\}} \circ \theta_{\sigma_{\Gamma^1_k}} (X_k > r).
\end{align*}
By the PSMP (i.e.\ Proposition~\ref{prop_markov}(b)), $\bP^{\{\Gamma^1_k < J\}} \circ \theta_{\sigma_{\Gamma^1_k}} \in \sM_{\gamma,\delta}(a_k)$. The result follows. \end{proof}

\begin{lemma} \label{lemma_Xk_recursion} For sufficiently small $\zeta>0$, for all $\delta \in (0,1]$, $\bP \in \sM_{\gamma,\delta}(x)$, $k \in \N_0$ and $R > r \geq 0$,
\begin{equation}
\bP(X_k > R) \leq \bP_{a_k}'( \tau_{a_{k-1}} \wedge \tau_{a_{k-1}} > r) + \frac 3 4 \cdot \bP_{a_k}''(X_k > R-r)
\end{equation}
for some $\bP_{a_k}', \bP_{a_k}'' \in \sM_{\gamma,\delta}(a_k)$.
\end{lemma}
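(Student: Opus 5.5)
First, reduce to the case $\bP = \bP_{a_k} \in \sM_{\gamma,\delta}(a_k)$ using Lemma~\ref{lemma_X_shift}. That lemma gives $\bP(X_k > R) = \bP(\Gamma^k_1 < J)\,\bP_{a_k}(X_k>R) \le \bP_{a_k}(X_k > R)$. It therefore suffices to prove the bound for a process started at $a_k$. (The interval in the statement is $(a_{k+1},a_{k-1})$; its exit time is $\sigma_1 = \tau_{a_{k-1}}\wedge\tau_{a_{k+1}}$ under $\bP_{a_k}$.)

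Under $\bP_{a_k}$ we have $\Gamma^k_1 = 0$. Split $X_k$ into the first inter-visit time $\sigma_1 - \sigma_0 = \sigma_1$ and the remainder:
\[
X_k = \sigma_1 + \indc(\Gamma^k_2 < J)\,X_k\circ\theta_{\sigma_{\Gamma^k_2}} ,
\]
where the second identity follows from \eqref{eq_Xk_shift}-type reasoning: after the first return of $S$ to $k$ (before $J$), the remaining contributions are exactly $X_k$ of the shifted path. On $\{X_k>R\}$, either $\sigma_1 > r$, or $\sigma_1\le r$ and the shifted quantity exceeds $R-r$ with $\Gamma^k_2<J$. The first event contributes $\bP_{a_k}(\tau_{a_{k-1}}\wedge\tau_{a_{k+1}}>r)$, so we take $\bP'_{a_k}=\bP_{a_k}$. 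For the second, we drop $\sigma_1\le r$ and apply the PSMP (Proposition~\ref{prop_markov}(b)) at the stopping time $\sigma_{\Gamma^k_2}$ on the event $A=\{\Gamma^k_2<J\}\in\sF_{\sigma_{\Gamma^k_2}}$, where $Y=a_k$. This yields $\bP_{a_k}(\Gamma^k_2<J)\,\bP''_{a_k}(X_k>R-r)$ with $\bP''_{a_k}\in\sM_{\gamma,\delta}(a_k)$. If $\bP_{a_k}(A)=0$, the term vanishes and $\bP''$ is arbitrary.

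The main step is the uniform bound $\bP_{a_k}(\Gamma^k_2<J)\le 3/4$. The event $A$ says that $Y$ returns to $a_k$ after leaving it and before hitting $a_{-1}=2\zeta$. Partition on the first exit direction and use Lemma~\ref{lemma_stop_markov}.

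\emph{Downward exit.} If $Y$ exits to $a_{k+1}$, it must return to $a_k$ before hitting $0$. This is at most $1$, but we need a uniform gap, so we keep the factor $\bP(\tau_{a_{k+1}}<\tau_{a_{k-1}})$.

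\emph{Upward exit.} If $Y$ exits to $a_{k-1}$, the process started at $a_{k-1}$ must hit $a_k$ before $2\zeta=a_{-1}$. For $k\ge1$, this probability is at most $1-h$, where $h$ is the lower bound for hitting $2\zeta$ before $a_k$. By the martingale comparison as in Lemma~\ref{lemma_exit_lwrbd}, with $N_t=Y_t-\delta t$ and optional stopping, upward exits from $a_{k-1}$ on $[a_k,2\zeta]$ have probability at least $(a_{k-1}-a_k)/(2\zeta-a_k)$. This is small for large $k$, so that direct route fails.

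Instead, I would use the ``escape to $0$'' mechanism for the downward exit. Started from $a_{k+1}$, the probability of hitting $0$ before $a_k$ is $1-H_{\zeta',\delta}(1)$ with $\zeta'=a_k$. By Proposition~\ref{prop_hittingprob_main}, this is at least $1/2-8a_k^{c_1}\ge 1/2-8\zeta^{c_1}$. Hence
\[
\bP_{a_k}(A)\le p + (1-p)\bigl(\tfrac12+8\zeta^{c_1}\bigr),\qquad p=\bP_{a_k}(\tau_{a_{k-1}}<\tau_{a_{k+1}}).
\]
By Lemma~\ref{lemma_exit_upperbd2}, $p\le \tfrac13+\zeta^{c_4}$. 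Substituting gives $\bP_{a_k}(A)\le \tfrac13+\tfrac23\cdot\tfrac12+O(\zeta^{c})=\tfrac23+O(\zeta^c)\le\tfrac34$ for $\zeta$ small.

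The case $k=0$ is identical, since an upward exit there hits $a_{-1}$ directly. The expected obstacle is only the bookkeeping of the PSMP application at $\sigma_{\Gamma^k_2}$, checking that $A$ lies in the right $\sigma$-algebra and that the decomposition of $X_k$ under the shift is exact.
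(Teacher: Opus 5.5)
Your proposal is correct and is essentially the paper's argument. You reduce to a law started at $a_k$ via Lemma~\ref{lemma_X_shift}, peel off the first exit time $\tau_{a_{k-1}}\wedge\tau_{a_{k+1}}$ from $I_k$, and bound the chance of returning to $a_k$ before $J$ using Lemma~\ref{lemma_exit_upperbd2} for the upward exit and Proposition~\ref{prop_hittingprob_main} for the return from $a_{k+1}$. The only difference is bookkeeping: the paper shifts by $\sigma_1$ and then applies Lemma~\ref{lemma_X_shift} on each exit branch, taking the larger of the two resulting laws, whereas your single shift at $\sigma_{\Gamma^k_2}$ yields one $\bP''_{a_k}$ directly and a slightly sharper factor (about $2/3$ instead of $3/4$).
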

\begin{proof}
Let $\zeta>0$ be sufficiently small so that Lemma~\ref{lemma_exit_upperbd2} can be applied for all $\delta \in (0,1]$ and $n \in \N_0$. First, suppose that $\bP \in \sM_{\gamma,\delta}(a_k)$.  In this case, $\Gamma^k_1 = \sigma_0 = 0$, and so
\[ \sigma_{\Gamma^k_1 + 1} - \sigma_{\Gamma^k_1} = \sigma_1 - \sigma_0 = \tau_{a_{k-1}} \wedge \tau_{a_{k+1}}.\]
From \eqref{eq_hitting_Xk_decomp}, it follows that
\begin{align}
X_k(\omega) = \tau_{a_{k-1}} \wedge \tau_{a_{k+1}}(\omega) + \sum_{i=2}^{\ell^k_{J - 1}(\omega)} (\sigma_{\Gamma^k_i + 1} - \sigma_{\Gamma^k_i})(\omega). \notag
\end{align}
Next we observe that for $i \geq 1$, $\Gamma^k_i(\omega \circ \theta_{\sigma_1}(\omega)) = \Gamma^k_{i+1}(\omega) - 1$, since the $i$th visit of $S \circ \theta_{\sigma_1}$ to $k$ is the ($i + 1$)th visit of $S$ to $k$, because $S_0 = k$ under $\bP$. In particular, 
\begin{align*}
X_k(\omega) &= \tau_{a_{k-1}} \wedge \tau_{a_{k+1}}(\omega) + \sum_{i=1}^{\ell^k_{J - 1}(\omega \circ \theta_{\sigma_1}(\omega))} (\sigma_{\Gamma^k_i + 1} - \sigma_{\Gamma^k_i})(\omega \circ \theta_{\sigma_1}(\omega)) \notag
\\ &=  \tau_{a_{k-1}} \wedge \tau_{a_{k+1}}(\omega) + X_k(\omega \circ \theta_{\sigma_1}(\omega)).
\end{align*}
In the above, we have used our convention that $X_k$ and the sum defining it are zero if $\ell^k_{J-1} = 0$. It follows that 
\begin{equation} \label{eq_Xkrec1}
\bP(X_k > R) = \bP(\tau_{a_{k-1}} \wedge \tau_{a_{k+1}} > r) + \bP \circ \theta_{\sigma_1} (X_k > R-r).
\end{equation}
To handle the second term, we partition on $\{\sigma_1 = \tau_{a_{k+1}}\}$, which we note is the same as $\{\tau_{a_{k+1}} < \tau_{a_{k-1}}\}$. By the PSMP,
\begin{align*}
\bP \circ \theta_{\sigma_1} (X_k > R-r) &= \bP^{\{\sigma_1 = \tau_{a_{k+1}}\}} \circ \theta_{\sigma_1} (X_k > R-r)  \cdot \bP(\tau_{a_{k+1}} < \tau_{a_{k-1}})
\\ &\quad + \bP^{\{\sigma_1 = \tau_{a_{k-1}}\}} \circ \theta_{\sigma_1} (X_k > R-r) \cdot \bP(\tau_{a_{k-1}} < \tau_{a_{k+1}})
\\ &= \bP^1(X_k > R-r) \cdot \bP(\tau_{a_{k+1}} < \tau_{a_{k-1}})
\\ &\quad +  \bP^2 (X_k > R-r) \cdot \bP(\tau_{a_{k-1}} < \tau_{a_{k+1}}),
\end{align*}
where $\bP^1_{a_{k+1}} \in \sM_{\gamma,\delta}(a_{k+1})$ and $\bP^2_{a_{k-1}} \in \sM_{\gamma,\delta}(a_{k-1})$. Thus, by Lemmas~\ref{lemma_exit_lwrbd} and \ref{lemma_exit_upperbd2}, we have
\[\bP \circ \theta_{\sigma_1} \,(X_k > R-r)  \leq \frac 2 3 \cdot \bP^1_{a_{k+1}}(X_k > R-r) + \frac 3 8 \cdot\bP^2_{a_{k-1}}(X_k > R-r).\]
Next, we decompose the $\bP_{a_{k+1}}^1$ and $\bP^2_{a_{k-1}}$ probabilities using Lemma~\ref{lemma_X_shift}. By this result, there exist $\bP^3_{a_k}, \bP^4_{a_k} \in \sM_{\gamma,\delta}(a_k)$ such that the above yields
\[\bP \circ \theta_{\sigma_1} (X_k > R-r)  =  \frac 2 3 \cdot \bP_{a_{k+1}}^1(\Gamma^k_1 < J) \cdot \bP^3 (X_k > R-r)+ \frac 3 8 \cdot \bP^2_{a_{k-1}}(\Gamma^k_1 < J) \cdot \bP^4(X_k > R-r).\] 
To conclude, we observe that, since $\bP^1_{a_{k+1}} \in \sM_{\gamma,\delta}(a_{k+1})$, 
\[ \bP^1(\Gamma^k_1 < J) \leq \bP^1(\Gamma^k_1 < \infty)  = \bP^1(\tau_{a_k} < \tau_0) \leq \frac{9}{16},\]
where final inequality follows by Proposition~\ref{prop_hittingprob_main}, provided $\zeta$ is sufficiently small. Bounding $ \bP^2_{a_{k-1}}(\Gamma^k_1 < J) $ above by $1$, we conclude that 
\[\bP \circ \theta_{\sigma_1} (X_k > R-r)  =  \frac 2 3 \cdot \frac{9}{16} \cdot \bP^3_{a_k} (X_k > R-r)+ \frac 3 8 \cdot \bP^4_{a_k}(X_k > R-r).\] 
Without loss of generality, suppose that $\bP^3_{a_k} (X_k > R-r) \geq \bP^4_{a_k} (X_k > R-r)$. Then we obtain that
\[\bP \circ \theta_{\sigma_1} (X_k > R-r)  = \frac 3 4 \cdot \bP^3_{a_k} (X_k > R-r). \] 
Substituting the above into \eqref{eq_Xkrec1}, we obtain that 
\begin{equation*} 
\bP(X_k > R) = \bP(\tau_{a_{k-1}} \wedge \tau_{a_{k+1}} > r) + \frac 3 4 \cdot \bP^3 (X_k > R-r).
\end{equation*}
This proves the result for $\bP \in \sM_{\gamma,\delta}(a_k)$. For general $\bP \in \sM_{\gamma,\delta}$, by the PSMP there exists $\bP'_{a_k} \in \sM_{\gamma,\delta}(a_k)$ such that
\begin{align*}
    \bP(X_k > R) = \bP(\Gamma^k_1 < J) \cdot \bP^{\{\tau_{a_k} < \tau_{a_0} \wedge \tau_0 \}} \circ \theta_{\tau_{a_k}} (X_k > R) &= \bP(\Gamma^k_1 < J) \cdot \bP'_{a_k} (X_k > R) 
    \\ &\leq \bP'_{a_k}(X_k > R).
\end{align*}
The claimed inequality for $\bP(X_k > R)$ now follows from the proven inequality for $\bP'_{a_k} \in \sM_{\gamma,\delta}(a_k)$. This completes the proof.
\end{proof}

We now prove Proposition~\ref{prop_stop_tail}.
\begin{proof}[Proof of Proposition~\ref{prop_stop_tail}]
Fix $m \in \N$. Later on we will assume that $m$ is sufficiently large depending on certain universal parameters. For $k \in \N_0$ we define
\begin{align*}
R_k := (m+k)^2 \zeta^{2-2\gamma} 2^{-(2-2\gamma)k}.
\end{align*}
It is clear from this definition that there exists $\nu > 0$, independent of $m$, such that 
\begin{align*}
R_0 + \sum_{k=0}^\infty R_k \leq  \nu (m^2+1) \zeta^{2-2\gamma}.
\end{align*}
The above, \eqref{eq_hitting_Xk_decomp} and a union bound then imply that
\begin{align} \label{eq_propstoptail1}
\bP(\tau_0 \wedge \tau_{2\zeta} > \nu(m^2 + 1) \zeta^{2-2\gamma}) &\leq \bP(\sigma_0 > R_0) + \bP(X_k > R_k \text{ for some } k \in \N)  \notag
\\ &\leq \bP(\sigma_1 > R_0) + \sum_{k =1}^\infty \bP(X_k > R_k).
\end{align}
First, we note that $\sigma_0$ is the time of the first visit of $Y$ to $\cA$. First suppose without loss of generality that $\bP(Y_0 \in \cA) = 0$, which gives an upper bound because $\sigma_0 = 0$ if $Y_0 \in \cA$. We then partition over the events $\{Y_0 \in I_n\}$ for $n \in \N$ and observe from a (trivial) application of the PSMP that $\bP^{\{Y_0 \in I_n\}} \in \sM_{\gamma,\delta}(I_n)$. Hence
\begin{align}\label{eq:sigma0bd}
    \bP(\sigma_0 > R_0) &= \sum_{n = 1}^\infty \bP(Y_0 \in I_n)\cdot \bP^{\{Y_0 \in I_n\}}(\tau_{a_{n+1}} \wedge \tau_{a_{n-1}} > R_0) \notag
    \\ &=\sum_{n = 1}^\infty \bP(Y_0 \in I_n)\cdot \bP^{\{Y_0 \in I_n\}}(\tau_{a_{n+1}} \wedge \tau_{a_{n-1}} > m^2 a_1^{2-2k} ) \notag
    \\ &=\sum_{n = 1}^\infty \bP(Y_0 \in I_n)\cdot \bP^{\{Y_0 \in I_n\}}(\tau_{a_{n+1}} \wedge \tau_{a_{n-1}} > m^2 a_k^{2-2k} ) \notag
    \\ &\leq e^{-c_3 m^2}.
\end{align}

Our next goal is to obtain a bound for $\bP(X_k > R_k)$ for each $k \in \N$. By Lemma~\ref{lemma_Xk_recursion} applied with $R= R_k > (m+k)a_k^{2-2\gamma} = r$, we have
\[ \bP(X_k > R_k ) \leq \bP^0(\tau_{a_{k+1}}\wedge \tau_{a_{k-1}} > (m+k) a_k^{2-2\gamma}) + \frac 3 4 \cdot \bP'( X_k > R_k -  (m+k) a_k^{2-2\gamma}),\]
where $\bP^0, \bP' \in \sM_{\gamma,\delta}(a_k)$. Provided $R_k - 2(m+k) a_k^{2-2\gamma} \geq 0$, we may apply the lemma again to the second probability on the right-hand side above (now with $R = R_k -  (m+k) a_k^{2-2\gamma}$ and $r = (m+k)a_k^{2-2\gamma}$). We then continue apply this procedure for a total of $(m+k)$ iterations, so that remainder on the final iteration is $R_k - (m+k)^2 a_k^{2-2\gamma} = 0$. This yields that, for some collection of probabilities $\bP^i \in \sM_{\gamma,\delta}(a_k)$, $i =0,\dots,m+k$, we have
\begin{align*}
 \bP(X_k > R_k ) &\leq \sum_{i=0}^{m+k-1} \left( \frac 3 4\right)^i \bP^i(\tau_{a_{k+1}}\wedge \tau_{a_{k-1}} > (m+k)a_k^{2-2\gamma}) + \left( \frac 3 4 \right)^{m+k} \bP^{m+k }( X_k > 0)
 \\ &\leq \sum_{i=0}^{m+k-1} \left( \frac 3 4\right)^i \bP^i(\tau_{a_{k+1}}\wedge \tau_{a_{k-1}} > (m+k) a_k^{2-2\gamma}) + \left( \frac 3 4 \right)^{m+k}.
\end{align*}
By Lemma~\ref{lemma_exit_exp}, we have $\bP^i(\tau_{a_{k+1}}\wedge \tau_{a_{k-1}} > (m+k) a_k^{2-2\gamma}) \leq e^{-c_3 (m+k)}$ for each $i = 0,\dots,m+k-1$. It follows that, if $c = \min(c_3, -\log(3/4))$, then
\begin{equation*}
\bP(X_k > R_k) \leq 5 e^{-c(m+k)}. 
\end{equation*}
Substituting the above into \eqref{eq_propstoptail1} and using \eqref{eq:sigma0bd}, we deduce that for any $m \in \N$,
\begin{align} \label{eq_propstoptail2}
\bP(\tau_0 \wedge \tau_\zeta > \nu(m^2 + 1) \zeta^{2-2\gamma}) \leq e^{-c_3 m^2} +  \sum_{k=0}^\infty 5e^{-c(m+k)} \leq C e^{-cm}
\end{align}
for some $C>0$. Possibly enlarging $C$, it also holds for $m = 0$.

We now conclude the stated bound from the above. Let $m \in \N$ such that $m \geq \nu$. Then 
\begin{align*}
\bP(\tau_0 \wedge \tau_{2\zeta} > m \zeta^{2-2\gamma}) &= \bP( \tau_0 \wedge \tau_{2\zeta} > \nu (\sqrt{(m/\nu) - 1}^2 + 1)\zeta^{2-2\gamma} )
\\ &\leq \bP( \tau_0 \wedge \tau_{2\zeta} > \nu (\lfloor\sqrt{(m/\nu) - 1}\rfloor^2 + 1)\zeta^{2-2\gamma} )
\\ &\leq C e^{-c \lfloor\sqrt{(m/\nu) - 1}\rfloor}
\\ &\leq C' e^{-\frac{c}{\sqrt \nu} \sqrt m },
\end{align*} 
for some $C' > C$, where the second last inequality uses \eqref{eq_propstoptail2} and the fact that $\lfloor\sqrt{(m/\nu) - 1}\rfloor$ is a non-negative integer. By possibly further enlarging the constant, we may obtain that the same inequality holds if $m < \nu$. This implies the desired result.
\end{proof}

\section{The zero sets of class-$\sM_{\gamma,\delta}$ semimartingales} \label{s_zeros}
We are now able to assemble the results from Section~\ref{s_intervals} and prove our main result about the zero sets of $\sM_{\gamma,\delta}$-class semimartingales. Theorem~\ref{thm_main_compact} will follow from this result along with some coupling arguments.

Given $\bP \in \sM_{\gamma,\delta}$, we define $\cZ_t := \{s \in [0,t] : Y_t = 0\}$ to be the zero set of $Y$ up to time $t>0$, and we denote its Lebesgue measure by $|\cZ_t|$.

\begin{theorem} \label{thm_Y_zeroset}
(a) For any $t>0$ and $\eps >0$, for sufficiently small $\delta>0$,
\[ \bP(|\cZ_t| < (1-2 \eps) t) < \eps \]
for any $\bP \in \sM_{\gamma,\delta}(0)$.

(b) For any $t >0$, $\delta >0$ and $\bP \in \sM_{\gamma,\delta}(0)$, $|\cZ_t| > 0$ $\bP$-a.s.
\end{theorem}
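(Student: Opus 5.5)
The strategy is to work with the local time of $Y$ at $0$ and the occupation times formula. I will show that the time $Y$ spends in $(0,\zeta)$ is small compared with the local time at zero, while the drift forces the local time to be comparable to $\delta$ times the zero time.

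\textbf{Step 1: identify the zero time through the drift.} Apply Tanaka's formula to $Y_t=Y_t^+$. Since $Y\ge 0$,
\[ Y_t = \delta t + M_t \quad\text{and}\quad Y_t = \int_0^t \indc(Y_s>0)\,dY_s + \tfrac12 L^0_t, \]
where $L^0$ is the semimartingale local time at $0$. Comparing the two expressions gives
\[ \int_0^t \indc(Y_s=0)\,dM_s + \delta|\cZ_t| = \tfrac12 L^0_t. \]
On the zero set we have $d\langle M\rangle \ge 0$, and any part of $\langle M\rangle$ charged there only adds a martingale term. A cleaner route is to use the occupation times formula: it gives $\int_0^t\indc(Y_s=0)\,d\langle M\rangle_s=0$, so the martingale term vanishes. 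Hence
\[ L^0_t = 2\delta|\cZ_t|. \]
Part (b) therefore reduces to showing $L^0_t>0$ a.s. under $\bP_0$.

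\textbf{Step 2: bound the time spent near zero.} By occupation times and \eqref{assump_QV},
\[ \int_0^t \indc(0<Y_s<\zeta)\,\tfrac12 Y_s^{2\gamma}\,ds \le \int_0^\zeta L^a_t\,da. \]
This alone is too weak near $a=0$, where the weight $Y_s^{2\gamma}$ degenerates. So instead I will use an excursion decomposition. Define successive stopping times: $Y$ goes from $0$ up to $\zeta$, then back to $0$, and so on. Each passage from $\zeta/2$ either reaches $\zeta$ before $0$ (probability at most $\tfrac12+8\zeta^{c_1}$, by Proposition~\ref{prop_hittingprob_main}) or returns to $0$.

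By Proposition~\ref{prop_exit_exp} and the PSMP, each visit to $(0,2\zeta)$ costs time of order $\zeta^{2-2\gamma}$ with exponential tails. I would then count the number of "downcrossings" of $[0,\zeta/2]$ before time $t$ and compare it with $L^0$. Tanaka/optional stopping on $[\tau_0,\tau_{\zeta/2}]$ intervals shows that each upcrossing from $0$ to $\zeta/2$ requires local time increment of order $\zeta$. Precisely: $Y_t - \tfrac12 L^0_t$ is a continuous local martingale plus $\delta\cdot(\text{time in }(0,\infty))$, so over an upcrossing of duration $O(\zeta^{2-2\gamma})$, the expected $\tfrac12 L^0$ gained is $\zeta/2 - O(\delta\zeta^{2-2\gamma})$.

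Summing over upcrossings, the time outside $\{0\}$ up to $t$ is at most roughly
\[ (\#\text{upcrossings})\cdot C\zeta^{2-2\gamma} + \text{time above }\zeta/2, \]
and $\#\text{upcrossings}\lesssim L^0_t/\zeta \le 2\delta t/\zeta$. This yields a bound of order $\delta t\,\zeta^{1-2\gamma}$, which is small since $\gamma<1/2$.

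\textbf{Step 3: control the time above $\zeta/2$.} From level $\zeta/2$, I need $Y$ to return to $0$ quickly, and to rarely climb far. Since $Y-\delta t$ is a martingale with quadratic variation at least $I_t$, large excursions are rare when $\delta$ is small. I will bound the time spent above $\zeta$ by applying Lemma~\ref{lemma_bddstop_tail}-type arguments together with the dyadic ladder, using Lemma~\ref{lemma_exit_lwrbd}: from each level $a_{-n}$ the process falls before doubling with probability at least $1/3$. A Markov-type estimate then bounds the expected duration of each excursion above $\zeta$, with returns through the PSMP.

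\textbf{Step 4: conclude.} For part (a), choose $\zeta$ small and then $\delta$ small. Markov's inequality then gives
\[ \bP\bigl(t-|\cZ_t|>2\eps t\bigr)<\eps \]
uniformly over $\bP\in\sM_{\gamma,\delta}(0)$.

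For part (b) with $\delta$ fixed, I will use scaling. On a short time interval $[0,s]$, the effective drift relative to the natural scale is small, since the drift is negligible at small levels. Then (a) gives $|\cZ_s|>0$ with probability bounded below. A 0–1 argument via the PSMP at successive zeros $\tau_0$ upgrades this to probability one: consider the times $2^{-k}$ and the independent-like trials furnished by Proposition~\ref{prop_markov}.

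The main obstacle is Step 2–3: rigorously comparing the number of small excursions with $L^0_t$ without a Markov structure. I expect to handle this by uniform bounds over $\sM_{\gamma,\delta}(0)$ and $\sM_{\gamma,\delta}(\zeta/2)$ glued together with the PSMP, in the same manner as Lemma~\ref{lemma_Xk_recursion}.
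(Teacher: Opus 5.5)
\textbf{There is a genuine gap.} Your Step~1 is correct; it is the paper's identity $L_t=2\delta|\cZ_t|$. Steps~2--3 carry all the content, and they do not go through.

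\textbf{Step 2.} An upcrossing from $0$ to $\zeta/2$ does not last $O(\zeta^{2-2\gamma})$. By Lemma~\ref{lemma_upcrossing}(b) its mean is exactly $\zeta/(2\delta)$, most of it spent at $0$. If you mean only its time off $0$, Proposition~\ref{prop_exit_exp} does not give that. That proposition concerns initial laws on $(0,2\zeta)$, whereas an upcrossing started at $0$ consists of infinitely many small excursions from $0$, which you never count.

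\textbf{Step 3.} This is the more serious failure: it does not control the time spent high up.
\begin{itemize}
\item Lemma~\ref{lemma_bddstop_tail} only gives exit times of order $b/\delta$, which is useless as $\delta\downarrow 0$.
\item Lemma~\ref{lemma_exit_lwrbd} is a \emph{lower} bound on the probability of doubling before halving, so it cannot limit heights.
\item Even the sharp upper bound of Proposition~\ref{prop_hittingprob_main}, available only at small levels, says an excursion from $\zeta$ reaches $2^k\zeta$ with probability about $2^{-k}$. It may then take time of order $(2^k\zeta)^{2-2\gamma}$. The resulting first-moment bound $\sum_k 2^{-k}(2^k\zeta)^{2-2\gamma}$ diverges.
\item For \eqref{eq_Zsde} the expected return time to $0$ is genuinely infinite, since the process escapes to $\infty$ with positive probability.
\end{itemize}
So a Markov-type estimate on ``the expected duration of each excursion'' is not available. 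The paper handles this in two ways. It truncates at the maximal height among the $\approx\delta t/\zeta$ downcrossings (Proposition~\ref{prop_tau0_tail}, Lemma~\ref{lemma_downtimefinal}). It also uses concentration of the upcrossing times around $\zeta/\delta$ (Lemma~\ref{lemma_uptime}). Your sketch has no substitute for either.

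\textbf{The missing idea.} It is in the inequality you discarded in Step~2: put the weight on the other side. Since $d\langle Y\rangle_s\ge\frac12Y_s^{2\gamma}ds$, the occupation times formula gives
\[
\int_0^t\indc(0<Y_s<H)\,ds\le 2\int_0^H a^{-2\gamma}L^a_t\,da .
\]
Here $a^{-2\gamma}$ is integrable at $0$ exactly because $\gamma<1/2$. Tanaka's formula for $(Y-a)^+$, after localizing, gives $\E[L^a_t]\le 2\delta t$ for every $a>0$. Hence the left side has expectation at most $4\delta tH^{1-2\gamma}/(1-2\gamma)$.

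Next, $s\mapsto \delta t+M_s$ is a nonnegative local martingale on $[0,t]$ that dominates $Y$. So $\bP(\sup_{s\le t}Y_s\ge H)\le \delta t/H$. Combining the two bounds, uniformly over $\sM_{\gamma,\delta}(0)$,
\[
\bP\bigl(t-|\cZ_t|>2\eps t\bigr)\le \frac{\delta t}{H}+\frac{2\delta H^{1-2\gamma}}{(1-2\gamma)\eps}.
\]
\begin{itemize}
\item Taking $H=1$ gives (a).
\item Taking $t=s$, $H=\sqrt s$ and $\eps=1/4$ gives $\bP(|\cZ_s|<s/2)\to 0$ as $s\downarrow 0$. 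Part (b) then follows from monotonicity of $t\mapsto|\cZ_t|$, with no $0$--$1$ argument.
\end{itemize}
Your scaling idea is also sound: $\lambda^{-1}Y_{\lambda^{2-2\gamma}\cdot}$ lies in $\sM_{\gamma,\lambda^{1-2\gamma}\delta}(0)$. Combined with (a), it already gives probability tending to one, so the $0$--$1$ step is superfluous in any case. With this repair, neither your excursion bookkeeping nor the paper's downcrossing analysis is needed for this statement.
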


Part (a) is the key to proving Theorem~\ref{thm_main_compact}, and the weaker claim of part (b) is a by-product of our methods. We remark that part (b) implies Theorem~\ref{thm_sde}, which follows immediately when one observes that a solution $Z$ to \eqref{eq_Zsde} with $Z_0 = 0$ belongs to $\sM_{\gamma,\delta}(0)$. The proof of Theorem~\ref{thm_Y_zeroset} is based on a formula for $|\cZ_t|$ in terms of the local time of $Y$ at level $0$, which we denote by $L_t$. Then, we use the downcrossing formulation of the local time and results from the previous section to obtain lower bounds on downcrossing counts until time $t$, which allow us to obtain lower bounds on the local time, and hence on $|\cZ_t|$, proving Theorem~\ref{thm_Y_zeroset}. 

To keep track of downcrossings, we now introduce some notation. Let $\zeta > 0$ and $\gamma^0_\zeta = 0$. For $n \in \N$ we then recursively define stopping times
\begin{equation} \label{def:taungamman}
\tau^n_\zeta := \inf \{t > \gamma^{n-1}_\zeta : Y_t = \zeta \}, \quad \gamma^n_\zeta = \inf \{t > \tau^n_{\zeta} : Y_t = 0 \},
\end{equation}
with the convention that $\inf \emptyset = + \infty$, and hence if $\gamma^m_\zeta = \infty$ for some $m \in \N$, then $\tau^n_\zeta = \gamma^n_\zeta = \infty$ for all $n > m$. A downcrossing of the interval $(0,\zeta)$ is said to occur each time that $Y$ visits zero after a new visit to $\zeta$. That is, each $\gamma^n_\zeta$ marks a downcrossing of $(0,\zeta)$. We denote by $N_\zeta(t)$ the number of downcrossings of $(0,\zeta)$ which have occurred by time $t$, i.e.\ $N_\zeta(t) := \max \{ n \in \N_0 : \gamma^{n}_\zeta \leq t\}$.

As $Y$ is a semimartingale, its local time at $0$, denoted by $(L_t)_{t \geq 0}$, exists (see e.g.\ \cite[Theorem VI.1.2]{RevuzYor}). 
Moreover, for $t \geq 0$, $L_t$ can be obtained as the 
re-normalized limit of $N_\zeta(t)$ as $\zeta \downarrow 0$. In particular, it is a consequence of \cite[Theorem VI.1.10]{RevuzYor} that 
\begin{equation} \label{eq_downcross_localtime} \zeta N_\zeta(t) \xrightarrow[]{p} \frac 1 2 L_t \, \text{ as $\zeta \downarrow 0$.} \end{equation}
To connect $L_t$ to $|\cZ_t|$, we use \cite[Theorem VI.1.7]{RevuzYor}. This result states that for any semimartingale $Y_t = Y_0 + M_t + A_t$, $a \in \R$ and $t \geq 0$, with probability one,
\begin{equation*}
L^a_t - L^{a-}_t = 2 \int_0^t \indc_{\{Y_s = a\}} dA_s,
\end{equation*}
where $L^a_t$ is the local time of $Y$ at level $a$ and time $t$. Applying this to $Y$, the canonical process of $\bP \in \sM_{\gamma,\delta}(0)$, with $a = 0$, we remark that $L^0_t - L^{0-}_t = L^0_t = L_t$, since $Y$ is a.s. non-negative. Since $A_t \equiv \delta t$, we obtain that $\bP$-a.s.,
\begin{equation} \label{eq_localtime_zeroset}
L_t = 2\int_0^t \indc_{\{Y_s = 0\}} \delta ds = 2 \delta |\cZ_t|.
\end{equation}
Combining \eqref{eq_downcross_localtime} and \eqref{eq_localtime_zeroset}, we conclude that under $\bP \in \sM_{\gamma,\delta}(0)$,
\begin{equation} \label{eq_downcross_zeroset} 
 \frac{\zeta N_\zeta(t)}{\delta} \xrightarrow[]{p} |\cZ_t| \, \text{ as $\zeta \downarrow 0$.} 
\end{equation}
Our strategy to prove Theorem~\ref{thm_Y_zeroset} is therefore to prove that with high probability, $N_\zeta(t)$ is sufficiently large. Our main result on the number of downcrossings is the following.
\begin{proposition} \label{prop_downcrosscount} (a) For $t>0$ and $\eps>0$, if $\delta$ is sufficiently small, then for sufficiently small $\zeta>0$ and any $\bP \in \sM_{\gamma,\delta}(0)$,
\begin{equation} \label{eq_prop_downcount}
\bP(N_\zeta(t) < \lceil (1-2 \eps) \delta t \zeta^{-1}\rceil) < \eps.
\end{equation}
(b) For any $t>0$, $\delta>0$ and $\bP \in \sM_{\gamma,\delta}(0)$,
\[\lim_{\eps \downarrow 0}\lim_{\zeta\downarrow 0} \bP(N_\zeta(t) < \eps \zeta^{-1}) = 0. \]
\end{proposition}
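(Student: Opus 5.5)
The plan is to decompose $[0,\gamma^n_\zeta]$ into $n$ cycles, each an upcrossing $U_i := \tau^i_\zeta - \gamma^{i-1}_\zeta$ from $0$ to $\zeta$ followed by a downcrossing $D_i := \gamma^i_\zeta - \tau^i_\zeta$ from $\zeta$ to $0$. Since $\{N_\zeta(t) < n\} = \{\sum_{i\le n}(U_i + D_i) > t\}$, it suffices to show that this sum is at most $t$ with high probability when $n = \lceil (1-2\eps)\delta t\zeta^{-1}\rceil$. By the PSMP (Proposition~\ref{prop_markov}), applied at the stopping times $\gamma^{i-1}_\zeta$ and $\tau^i_\zeta$, each conditional law of a cycle piece given the past lies in $\sM_{\gamma,\delta}(0)$ or $\sM_{\gamma,\delta}(\zeta)$. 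So it is enough to prove bounds that hold uniformly over these classes.

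\emph{Upcrossings.} Under any $\bP_0 \in \sM_{\gamma,\delta}(0)$, the stopped process $M_{t\wedge\tau_\zeta}$ lies in $[-\delta(t\wedge\tau_\zeta),\zeta]$. By Lemma~\ref{lemma_bddstop_tail} (with $b=\zeta$), $\tau_\zeta$ has exponential tails with rate $\delta/(18\zeta)$. Hence optional stopping applies to $M_{\tau_\zeta} = \zeta - \delta\tau_\zeta$, and gives $\E_0[\tau_\zeta] = \zeta/\delta$ and $\E_0[\tau_\zeta^2] \le C(\zeta/\delta)^2$. Write $\sum_{i \le n} U_i$ as its sum of conditional means (each equal to $\zeta/\delta$) plus a martingale. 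The martingale part has variance $\le Cn(\zeta/\delta)^2 \le Ct\zeta/\delta \to 0$ as $\zeta\downarrow 0$. By Chebyshev, $\sum_{i\le n}U_i \le (1-2\eps)t + \zeta/\delta + \eps t/2$ with probability $\ge 1-\eps/3$ for small $\zeta$.

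\emph{Downcrossings.} Fix a small level $b = 2^{j_0}\zeta$, with $b$ small enough that Propositions~\ref{prop_hittingprob_main} and~\ref{prop_exit_exp} apply at all dyadic scales $2^j\zeta \le b$. From $\zeta$, use Proposition~\ref{prop_hittingprob_main} and Lemma~\ref{lemma_stop_markov} iteratively along the ladder $\zeta, 2\zeta, 4\zeta, \dots$. Since $\prod_i (1+16(2^i\zeta)^{c_1})$ is bounded uniformly, this gives $\bP_\zeta(\tau_{2^j\zeta} < \tau_0) \le C 2^{-j}$ for $2^j\zeta\le b$. In particular the ``bad'' event $\{\tau_b < \tau_0\}$ has probability $\le C\zeta/b$. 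On the good event, decompose the downcrossing time according to the highest dyadic level reached. The time spent at scale $2^j\zeta$ is controlled by Proposition~\ref{prop_exit_exp}/Lemma~\ref{lemma_exit_mean}, with mean $\le C(2^j\zeta)^{2-2\gamma}$, and the level is reached with probability $\le C2^{-j}$. Summing over $j$ gives
\[
\E_\zeta\big[\tau_0\,\indc(\tau_0<\tau_b)\big] \le C\zeta^{2-2\gamma}\sum_{j\le j_0}2^{j(1-2\gamma)} \le C\zeta b^{1-2\gamma}.
\]
A union bound shows that some cycle is bad with probability $\le Cn\zeta/b \le C\delta t/b$. Markov's inequality bounds the total good downcrossing time: with probability $\ge 1-\eps/3$ it is $\le (3/\eps)\,n\,C\zeta b^{1-2\gamma} \le C\eps^{-1}\delta t\, b^{1-2\gamma}$.

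\emph{Conclusion.} For (a), fix $b$ small, then take $\delta$ so small that $C\delta t/b < \eps/3$ and $C\eps^{-1}\delta b^{1-2\gamma} < \eps/2$. Then for all small $\zeta$ the total cycle time is $\le t$ off an event of probability $<\eps$. For (b), fix $\delta$ and take $n=\lceil\eps\zeta^{-1}\rceil$. The upcrossings then contribute about $\eps/\delta$, the bad-cycle probability is $\le C\eps/b$, and the good downcrossings contribute $O(\eps b^{1-2\gamma})$ in expectation. Letting $\zeta\downarrow0$ and then $\eps\downarrow0$ (with $b$ fixed) gives the claim. I expect the main obstacle to be the downcrossing estimate: making the dyadic ladder argument rigorous through repeated PSMP conditioning, including the possibly repeated returns to intermediate levels before hitting $0$, while keeping all constants uniform over $\sM_{\gamma,\delta}$.
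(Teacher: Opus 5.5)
Your proposal is correct. The upcrossing part is essentially the paper's Lemma~\ref{lemma_uptime}: conditional means $\zeta/\delta$ via the PSMP, orthogonal increments, then Chebyshev. The downcrossing part, however, takes a genuinely different route.

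\emph{The paper's route.} The paper first proves an untruncated tail bound for a single downcrossing, Proposition~\ref{prop_tau0_tail}: $\bP_\zeta(\tau_0 > C_6(2^k\zeta)^{2-2\gamma}) \le 2^{-k}$. This requires a delicate union bound over which rung $T_j$ of the ladder is too long. It then sorts the $n$ downcrossings into dyadic duration bins $J_k$, controls each $|J_k|$ by Markov's inequality, and rules out durations beyond level $L=\lfloor \log_2 n\rfloor+K$. The outcome is $T_{\mathrm{down}}\le C_7(\delta t)^{2-2\gamma-a}$ off an event of probability $O((\delta t)^{a/(2-2\gamma)})$. This is the natural scale for a sum of $n$ heavy-tailed durations with infinite mean.

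\emph{Your route.} You truncate in space at a fixed height $b$ instead. The probability that one of the $n$ excursions reaches $b$ is at most $Cn\zeta/b$, using the ladder product alone. On the complement you only need the first moment
$\E_\zeta[\tau_0\wedge\tau_b]=\sum_{i\le j_0}\E_\zeta[\tau_{2^i\zeta}\wedge\tau_0-\tau_{2^{i-1}\zeta}\wedge\tau_0]\le C\zeta b^{1-2\gamma}$.
This follows from Lemma~\ref{lemma_exit_mean}, Proposition~\ref{prop_hittingprob_main} and linearity of expectation, with no tail estimate for $\tau_0$ at all. The same telescoping identity disposes of the obstacle you anticipate. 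All returns to lower levels after the first visit to $2^{i-1}\zeta$ are contained in the exit time of $(0,2^i\zeta)$, and Lemma~\ref{lemma_exit_mean} controls that exit time from an arbitrary starting point.

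\emph{Trade-off.} The price is a cruder bound, $T_{\mathrm{down}}=O(\eps^{-1}\delta t\,b^{1-2\gamma})$ rather than of order $(\delta t)^{2-2\gamma}$. This is still small compared with $\eps t$ as $\delta\downarrow 0$, which is all that (a) and (b) need. If you wanted quantitative information, letting $b$ shrink with $\delta t$ (e.g.\ $b\approx(\delta t)^{1-a'}$) recovers a polynomial bound comparable to the paper's.

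\emph{Points to tidy.} These are routine.
\begin{enumerate}
\item $\gamma^{i-1}_\zeta$ and $\tau^i_\zeta$ may be infinite. Apply the PSMP at $\gamma^{i-1}_\zeta\wedge K$ (or on the corresponding events), and carry indicators $\indc(\gamma^{i-1}_\zeta<\infty)$ in the cycle sums, as the paper does.
\item Take $b$ of the form $2^{j_0}\zeta$ inside a fixed window $[b^*/2,b^*]$, with $b^*$ below the threshold of Propositions~\ref{prop_hittingprob_main} and~\ref{prop_exit_exp}. This keeps the constants uniform as $\zeta\downarrow0$.
\end{enumerate}
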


We can now obtain Theorem~\ref{thm_Y_zeroset} as a consequence of this result and the previous discussion.
\begin{proof}[Proof of Theorem~\ref{thm_Y_zeroset}]
Suppose that $t, \eps$ and $\delta$ are such that Proposition~\ref{prop_downcrosscount} holds. By \eqref{eq_downcross_zeroset}, there exists a subsequence $(\zeta_n)_{n \in \N}$ converging to zero such that, $\bP$-a.s., $\delta^{-1} \zeta_n N_{\zeta_n}(t) \to |\cZ_t|$ as $n \to \infty$. We then compute
\begin{align*}
\bP(|\cZ_t| < (1-2\eps)t) &= \bP( \lim_{n \to \infty} \delta^{-1} \zeta_n N_{\zeta_n}(t) < (1-2\eps)t)
 \\ &\leq  \liminf_{n \to \infty} \bP(N_{\zeta_n}(t) < \lceil (1-2\eps) \delta t \zeta_n^{-1} \rceil)
\\ & < \eps,
\end{align*}
where in the second-to-last inequality we use Fatou's Lemma, and the final inequality uses \eqref{eq_prop_downcount}. This completes the proof of part (a). The proof of part (b) is the same, but uses Proposition~\ref{prop_downcrosscount}(b) instead of \eqref{eq_prop_downcount}. \end{proof}

The rest of this section is dedicated to the proof of Proposition~\ref{prop_downcrosscount}. To ease notation, we introduce $n_0 = n_0(t,\delta,\eps,\zeta) = \lceil (1-2\eps)\delta t \zeta^{-1} \rceil$. We remark that for $\bP \in \sM_{\gamma,\delta}(0)$
and for $\gamma^{n_0}_\zeta$ defined in~\eqref{def:taungamman},
\begin{equation} \label{eq:downcount_stop}
\bP(N_{\zeta}(t) < n_0) = \bP(\gamma^{n_0}_\zeta > t),
\end{equation} so it suffices to bound the latter probability. To this end, for $n \in \N$ we decompose $\gamma^{n}_\zeta$ in terms of the upcrossings and downcrossings of $(0,\zeta)$ by writing 
 \begin{align} \label{eq_UpDownDecomp}
\gamma^{n}_\zeta &= \sum_{i=1}^{n} \big(\tau^i_{\zeta} - \gamma^{i-1}_{\zeta}\big) \indc(\gamma^{i-1}_\zeta < \infty) + \sum_{i=1}^{n} \big( \gamma^{i}_{\zeta} - \tau^i_\zeta \big) \indc(\tau^{i}_\zeta < \infty)\notag
\\ &=: \sum_{i = 1}^{n} U_i + \sum_{i = 1}^{n} D_i. 
\end{align}
To see that the above is true, first note that it holds when $\gamma^{n}_\zeta < \infty$, since then $\tau^i_\zeta, \gamma^i_\zeta < \infty$ for all $i \in [n]$ and hence all of the indicator functions are equal to $1$ and the sum telescopes into $\gamma^n_\zeta - \tau^0_\zeta = \gamma^n_\zeta$. If $\gamma^{n}_\zeta = \infty$, then it must be the case that $Y$ never returned to $0$ after visiting $\zeta$ for the $i$th time for some $i \in [n]$, and hence $D_i = \infty$ for some $i \in [n]$, and the formula holds. 

The two terms on the right-hand side of \eqref{eq_UpDownDecomp} correspond respectively to the amount of time spent going from $0$ to $\zeta$ (up) and the amount of time spent going from $\zeta$ to $0$ (down). To denote these quantities, we write
\begin{align} \label{defTupdown}
T_{\text{up}}(n) :=  \sum_{i = 1}^{n} U_i, \quad T_{\text{down}}(n) := \sum_{i = 1}^{n} D_i.
\end{align}
These are analyzed separately in the next two sections.

\subsection{Time spent going up} \label{s_up} We recall that $n_0 = n_0(t,\delta,\eps,\zeta) = \lceil (1-2\eps)\delta t \zeta^{-1} \rceil$. The following is our main result on the duration of upcrossings.

\begin{lemma} \label{lemma_uptime} There is a constant $C_5 \geq 1$ such that the following holds: for any $\delta >0$, $t > 0$, $\eps \in (0,1]$ and $\zeta \in (0, \delta t\eps / 2)$, for any $\bP \in \sM_{\gamma,\delta}(0)$, 
\[\bP(T_{\mathrm{up}}(n_0) > (1 - \eps) t) < \frac{C_5}{\eps^2} \cdot \frac{\zeta}{\delta t}.\]
\end{lemma}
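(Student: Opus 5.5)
Each upcrossing time $U_i$ is the time for $Y$, started at $0$, to reach $\zeta$. The natural plan is to bound its mean by roughly $\zeta/\delta$ via the drift, bound its second moment, and then use Chebyshev on the sum $T_{\mathrm{up}}(n_0)$. The scales match: $n_0 \zeta/\delta \approx (1-2\eps)t$, which leaves a gap of $\eps t$ to absorb fluctuations.

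\emph{Step 1: uniform moment bounds for a single upcrossing.} Fix any $\bP \in \sM_{\gamma,\delta}(0)$ and set $\tau=\tau_\zeta$. By Lemma~\ref{lemma_bddstop_tail} (with $b=\zeta$), $\tau$ has exponential tails, so it is integrable. Since $M^{\tau}$ is a true martingale (shown in that proof), optional stopping in $Y_{\tau\wedge s}=\delta(\tau\wedge s)+M_{\tau\wedge s}$ gives $\E[\tau\wedge s]\le \zeta/\delta$. Letting $s\to\infty$ gives $\E[\tau]\le \zeta/\delta$.

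For the second moment, I will write $\tau=(\zeta-M_\tau)/\delta$. On $[0,\tau]$ we have $0\le Y\le\zeta$, so $M_\tau\ge -\delta\tau$; this alone is not useful. Instead I will use $\E[\tau^2]\le 2\E\int_0^\tau(\tau-s)\,ds$ together with the PSMP. Conditionally on $\sF_s$ and on $\{\tau>s\}$, the shifted law $\Q_s$ lies in $\sM_{\gamma,\delta}([0,\zeta])$. The drift argument above applies from any starting point $x\in[0,\zeta]$ and gives $\Q_s(\tau_\zeta)\le(\zeta-Y_s)/\delta\le\zeta/\delta$. Hence $\E[\tau^2]\le 2(\zeta/\delta)\E[\tau]\le 2\zeta^2/\delta^2$. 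Consequently $\mathrm{Var}$-type bounds hold uniformly over the class.

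\emph{Step 2: decoupling the sum.} For $i\ge1$, on $\{\gamma^{i-1}_\zeta<\infty\}$ the PSMP at $\gamma^{i-1}_\zeta$ makes the conditional law of the shifted path, given $\sF_{\gamma^{i-1}_\zeta}$, an element of $\sM_{\gamma,\delta}(0)$, and $U_i=\tau_\zeta\circ\theta_{\gamma^{i-1}_\zeta}$. This is the main obstacle. The $U_i$ are neither independent nor identically distributed, so independence cannot be used directly. Instead I will use a martingale decomposition. Let $m_i:=\E[U_i\mid\sF_{\gamma^{i-1}_\zeta}]\le\zeta/\delta$ (with $U_i=0$ on $\{\gamma^{i-1}_\zeta=\infty\}$), and set $\xi_i=U_i-m_i$. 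The $\xi_i$ are martingale differences with respect to $\sF_{\gamma^{i}_\zeta}$, because $U_i$ is $\sF_{\tau^i_\zeta}\subseteq\sF_{\gamma^i_\zeta}$-measurable; the increasing, possibly infinite, stopping times need a little care here. They also satisfy $\E[\xi_i^2]\le\E[U_i^2]\le 2\zeta^2/\delta^2$ by Step 1 applied to $\Q_{\gamma^{i-1}_\zeta}$. By orthogonality,
\[
\E\Big[\Big(\sum_{i\le n_0}\xi_i\Big)^2\Big]\le 2n_0\zeta^2/\delta^2 .
\]

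\emph{Step 3: Chebyshev.} We have $\sum_i m_i\le n_0\zeta/\delta\le (1-2\eps)t+\zeta/\delta$. The assumption $\zeta<\delta t\eps/2$ gives $\zeta/\delta<\eps t/2$, so $\sum m_i\le (1-\tfrac32\eps)t$. Therefore
\[
\bP(T_{\mathrm{up}}(n_0)>(1-\eps)t)\le\bP\Big(\sum\xi_i>\tfrac{\eps t}{2}\Big)\le \frac{8n_0\zeta^2}{\delta^2\eps^2t^2}.
\]
Finally, $n_0\le \delta t/\zeta+1\le 2\delta t/\zeta$, since $\zeta<\delta t$. This bounds the probability by $C_5\eps^{-2}\zeta/(\delta t)$ with $C_5=16$.
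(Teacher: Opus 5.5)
Your proposal is correct and follows the paper's proof. The conditional mean of each upcrossing is exactly $\zeta/\delta$ on $\{\gamma^{i-1}_\zeta<\infty\}$ by Lemma~\ref{lemma_upcrossing}(b), so your $\sum_i m_i$ is the paper's centering $N_0\zeta/\delta$. The paper likewise applies the PSMP at $\gamma^{i-1}_\zeta\wedge K$ and lets $K\to\infty$, showing the centered increments are orthogonal before applying Chebyshev.

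The one genuine difference is your second-moment bound. You combine the identity $\E[\tau_\zeta^2]=2\E\int_0^{\tau_\zeta}(\tau_\zeta-s)\,ds$ with the PSMP at fixed times and $\E_x[\tau_\zeta]\le(\zeta-x)/\delta$, giving $\E[\tau_\zeta^2]\le 2\zeta^2/\delta^2$. This is a cleaner route with a better constant than the paper's, which integrates the Azuma-based exponential tail of Lemma~\ref{lemma_bddstop_tail} to get $1296\,\zeta^2/\delta^2$.
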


We recall the random variable $\tau_\zeta = \inf \{t \geq 0 : Y_t = \zeta\}$. We remark that the random variables $\big(\tau^i_{\zeta} - \gamma^{i-1}_{\zeta}\big)$, for $i \in \N$,  are essentially ``iid" copies of $\tau_\zeta$ under $\bP \in \sM_{\gamma,\delta}(0)$, with the usual caveat that $\bP$ is not in fact unique. Nonetheless, we can obtain sufficient control over the first and second moments of $\tau_\zeta$ which hold uniformly for all $\bP \in \sM_{\gamma,\delta}(0)$. 


\begin{lemma} \label{lemma_upcrossing} For any $\delta>0$, $\zeta >0$, any $\bP \in \sM_{\gamma,\delta}(0)$, the following hold: \\
(a) $\bP(\tau_\zeta > t) \leq 2e^{-\frac{\delta \zeta^{-1}}{18} t}$ for all $t \geq 0$. \\
(b) $\E[\tau_\zeta] = \frac \zeta \delta$. \\ 
(c) $\E[ \tau_\zeta^2] \leq 1296 \cdot \frac{ \zeta^2}{ \delta^2}$.
\end{lemma}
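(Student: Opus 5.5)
Under $\bP \in \sM_{\gamma,\delta}(0)$ we have $Y_t = \delta t + M_t$, and $Y_t \in [0,\zeta]$ for $t \le \tau_\zeta$.

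\emph{Part (a).} Apply Lemma~\ref{lemma_bddstop_tail} with $b=\zeta$ and $\tau = \tau_\zeta$. The path is bounded by $\zeta$ on $[0,\tau_\zeta]$ by continuity, since $Y_0=0<\zeta$. The lemma gives $\bP(\tau_\zeta>t)\le 2e^{-ct}$ with $c=\delta/(18\zeta)$, which is exactly (a).

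\emph{Part (b).} By (a), $\tau_\zeta$ is integrable and a.s.\ finite. The stopped process $M^{\tau_\zeta}$ satisfies $|M_{t\wedge\tau_\zeta}| \le \zeta + \delta (t\wedge\tau_\zeta) \le \zeta+\delta\tau_\zeta$. This bound is integrable, so $M^{\tau_\zeta}$ is a true martingale, and the same domination justifies optional stopping at $\tau_\zeta$ via dominated convergence as $t\to\infty$. It follows that $\E[M_{\tau_\zeta}]=0$. Since $M_{\tau_\zeta} = \zeta - \delta\tau_\zeta$, we get $\E[\tau_\zeta]=\zeta/\delta$.

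\emph{Part (c).} I will use the tail bound (a) directly:
\[
\E[\tau_\zeta^2] = \int_0^\infty 2s\,\bP(\tau_\zeta>s)\,ds \le 4\int_0^\infty s e^{-cs}\,ds = \frac{4}{c^2} = 4\cdot 18^2\,\frac{\zeta^2}{\delta^2} = 1296\,\frac{\zeta^2}{\delta^2}.
\]
This is exactly the stated constant, since $1296 = 4\cdot 324$.

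The only point needing care is the uniform integrability in (b). It is handled by the domination $\zeta+\delta\tau_\zeta$ together with the exponential tail from (a). Nothing in the argument depends on the particular choice of $\bP\in\sM_{\gamma,\delta}(0)$, so all three bounds hold uniformly over the class.
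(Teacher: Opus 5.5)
Your proposal is correct and matches the paper's proof. Part (a) applies Lemma~\ref{lemma_bddstop_tail} with $b=\zeta$, part (b) uses optional stopping for $M^{\tau_\zeta}$, and part (c) integrates the tail from (a); your $\int_0^\infty 2s\,\bP(\tau_\zeta>s)\,ds$ is the paper's $\int_0^\infty \bP(\tau_\zeta>\sqrt t)\,dt$ after a substitution, and your domination $|M_{t\wedge\tau_\zeta}|\le \zeta+\delta\tau_\zeta$ is, if anything, a slightly cleaner justification of uniform integrability than the paper's remark that $|M_{t\wedge\tau_\zeta}|\le \zeta+\delta t$.
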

\begin{proof} Let $\bP \in \sM_{\gamma,\delta}(0)$. Part (a) follows immediately from Lemma~\ref{lemma_bddstop_tail} and the fact that $\bP(Y_{t \wedge \tau_\zeta} \leq \zeta \text{ for all } t \geq 0) =1$. Under $\bP$, we have $Y_t = \delta t + M_t$ for all $t \geq 0$, and hence
\[\zeta = Y_{\tau_\zeta} = \delta \tau_\zeta + M_{\tau_\zeta}.\]
Next, we claim that $t\mapsto M_{t \wedge \tau_\zeta}$ is a uniformly integrable martingale. To see this, note that $|M_{t \wedge \tau_\zeta}| \leq \zeta + \delta t$ for all $t \geq 0$, which along with the tail bound from part (a) proves that $M_{\cdot \wedge \tau_\zeta}$ is bounded in $L^2$, and hence uniformly integrable. Thus, we can apply the optional stopping theorem at $\tau_\zeta$, implying $\E[M_{\tau_\zeta}] = 0$, from which the claim from part (b) follows.

To prove part (c), we use the tail bound from part (a) to compute directly
\begin{align*}
\E[\tau_\zeta^2] = \int_0^\infty \bP( \tau_\zeta > \sqrt t) dt &\leq 2 \int_0^\infty e^{-\frac{\delta \zeta^{-1}}{18} \sqrt t} dt = 1296  \cdot \frac{\zeta^2}{\delta^2},
\end{align*}
with the last equality following from elementary arguments.
\end{proof}

We now use these estimates to prove Lemma~\ref{lemma_uptime}.

\begin{proof}[Proof of Lemma~\ref{lemma_uptime}]
Let $\bP \in \sM_{\gamma,\delta}(0)$. We define a random integer $N_0$ by
\[ N_0 := \sum_{i = 1}^{n_0} \indc(\gamma_\zeta^{i-1} < \infty) = (N_\zeta(\infty)+1) \wedge n_0.\]
We note that 
\[ N_0 \leq n_0 < (1 - (3/2)\eps) \delta t \zeta^{-1},\]
where the final inequality holds for $\zeta\leq \delta t \eps/2$. Hence, writing $T_{\text{up}} := T_{\text{up}}(n_0)$, we have
\begin{align} \label{eq_upcrossfinal00}
\bP( T_{\text{up}}> (1-\eps) t) &= \bP \left(T_{\text{up}} - N_0 \frac \zeta \delta > (1-\eps) t - N_0 \frac \zeta \delta \right) \notag
\\ &\leq \bP \left(T_{\text{up}} - N_0 \frac \zeta \delta > \frac{\eps t}{2 }  \right) \notag
\\ &\leq \frac{4 \E[ (T_{\text{up}} - N_0 \frac \zeta \delta)^2 ]}{\eps^2 t^2}
\end{align}
by Markov's inequality. We now bound the expectation above. Separating the diagonal and off-diagonal terms, we have
\begin{align}\label{eq_upcrossfinal0}
&\E \bigg[ \left(T_{\text{up}} - N_0 \frac \zeta \delta \right)^2 \bigg] 
\\ &\hspace{1 cm}= \sum_{i=1}^{n_0} \E \bigg[ \indc(\gamma^{i-1}_\zeta < \infty) \left(\tau^{i}_\zeta - \gamma^{i-1}_\zeta - \frac \zeta \delta \right)^2 \bigg] \notag
\\ &\hspace{1 cm} \quad + 2 \sum_{i=1}^{n_0} \sum_{j= 1}^{i-1}\E \bigg[ \indc(\gamma^{j-1}_\zeta < \infty) \left(\tau^{j}_\zeta - \gamma^{j-1}_\zeta - \frac \zeta \delta \right) \indc(\gamma^{i-1}_\zeta < \infty) \left(\tau^{i}_\zeta - \gamma^{i-1}_\zeta - \frac \zeta \delta \right)\bigg]. \notag
\end{align}
We remark that for any $i =1,\dots,n_0$, we have $(\tau^{i}_\zeta - \gamma^{i-1}_\zeta)(\omega) = \tau_\zeta  \circ \theta_{\gamma^{i-1}}(\omega)$ provided $\gamma^{i-1}_\zeta(\omega) < \infty$. Furthermore, $Y_{\gamma^{i-1}_\zeta} = 0$ on $\{\gamma^{i-1}_\zeta < \infty\}$. Hence, for each $i$, for every $K >0$ we can apply the PSMP at the finite stopping time $\gamma^{i-1}_\zeta \wedge K$, which implies that for some $\bP_0' \in \sM_{\gamma,\delta}(0)$ (with expectation $\E_0'$) such that
\begin{align*}
 \E \bigg[ \indc(\gamma^{i-1}_\zeta \leq K) \left(\tau^{i}_\zeta - \gamma^{i-1}_\zeta - \frac \zeta \delta \right)^2 \bigg]  &= \bP(\gamma^{i-1}_\zeta \leq K) \cdot \E^{\{\gamma^{i-1}_\zeta \leq K \}} \circ \theta_{\gamma^{i-1}_\zeta} \bigg [ \left(\tau_\zeta - \frac \zeta \delta \right)^2 \bigg] 
\\ &\leq \E'_0 [\tau_\zeta^2] + \frac{\zeta^2}{\delta^2} 
\\ &\leq 1297 \cdot \frac{\zeta^2}{\delta^2},
 \end{align*}
where the final inequality follows from Lemma~\ref{lemma_upcrossing}(c).
Since this holds for all $K >0$, we may conclude by monotone convergence that 
\begin{equation} \label{eq_upcrossfinal1}
    \E \bigg[ \indc(\gamma^{i-1}_\zeta < \infty ) \left(\tau^{i}_\zeta - \gamma^{i-1}_\zeta - \frac \zeta \delta \right)^2 \bigg] \leq 1297 \cdot \frac{\zeta^2}{\delta^2}.
\end{equation}

 Now let us handle the off-diagonal terms appearing in \eqref{eq_upcrossfinal0}. 
 We remark that $(\tau^{i}_\zeta - \gamma^{i-1}_\zeta)(\omega) = \tau_\zeta  \circ \theta_{\gamma^{i-1}}(\omega)$ on $\{\gamma^{i-1}_\zeta < \infty\}$. 
 By the (regular conditional form of the) PSMP, for $1 \leq j < i \leq n_0$, for any $K>0$ we have
 \begin{align*}
 &\E \bigg[ \indc(\gamma^{j-1}_\zeta < \infty) \left(\tau^{j}_\zeta - \gamma^{j-1}_\zeta - \frac \zeta \delta \right) \indc(\gamma^{i-1}_\zeta < K) \left(\tau^{i}_\zeta - \gamma^{i-1}_\zeta - \frac \zeta \delta \right)\bigg] \notag
 \\ &\hspace{1 cm} =\E \bigg[\left(\tau^{j}_\zeta - \gamma^{j-1}_\zeta - \frac \zeta \delta \right) \indc(\gamma^{i-1}_\zeta \leq K) \cdot \mathbb{Q}^\omega_{\gamma^{i-1}_\zeta \wedge K}\bigg[\tau_\zeta  - \frac \zeta \delta \bigg]\bigg] \notag
 \\ &\hspace{1 cm}=0,
 \end{align*}
 where the final equality is by Lemma~\ref{lemma_upcrossing}(b), using $\mathbb{Q}^\omega_{\gamma^{i-1}_\zeta \wedge K} \in \sM_{\gamma,\delta}(0)$ a.s.\ on $\{\gamma^{i-1}_\zeta \leq K\}$. Letting $K \to \infty$, we obtain
 \begin{equation*}
      \E \bigg[ \indc(\gamma^{j-1}_\zeta < \infty) \left(\tau^{j}_\zeta - \gamma^{j-1}_\zeta - \frac \zeta \delta \right) \indc(\gamma^{i-1}_\zeta < \infty) \left(\tau^{i}_\zeta - \gamma^{i-1}_\zeta - \frac \zeta \delta \right)\bigg] = 0
 \end{equation*}
 for $1 \leq j < i \leq n_0$.
Substituting this and \eqref{eq_upcrossfinal1} into \eqref{eq_upcrossfinal0}, we obtain
\begin{align*}
\E \bigg[ \left(T_{\text{up}} - N_0 \frac \zeta \delta \right)^2 \bigg] \leq C \cdot n_0 \cdot \frac{\zeta^2}{\delta^2} \leq 2C \cdot t \zeta/\delta.
\end{align*}
where the final inequality holds provided $\zeta \leq \delta t$ (which holds, since $\zeta \leq \delta t \eps/2$), and $C \geq 1$ does not depend on any parameters. Hence, by \eqref{eq_upcrossfinal00}, we have
\begin{align*}
\bP( T_{\text{up}}> (1-\eps) t) \leq \frac{8C}{\eps^2} \cdot \frac{\zeta}{\delta t}.
\end{align*}
This completes the proof. \end{proof}


\subsection{Time spent going down} \label{s_down}

We now quantify the time spent on downcrossings of the interval $(0,\zeta)$, which is more involved than handling the upcrossings. Our approach is intuitively based on a height decomposition in the following sense. Suppose that $\bP \in \sM_{\gamma,\delta}(\zeta)$, and let $H = \sup\{ m \in \N : \tau_{2^{m-1} \zeta} < \tau_0\}$, i.e.\ $H$ is the maximal ``height'' reached by $Y$ on a logarithmic scale before returning to $0$. Due to Proposition~\ref{prop_hittingprob_main}, provided $Y$ does not go \emph{too} high before returning to $0$, $H$ is well approximated by a $\mathrm{Geometric}(1/2)$ random variable. Thus, out of $n$ excursions started from $\zeta$, we expect roughly $n/2$ with height $1$, $n/4$ with height $2$, and so on, with approximately $n/2^k$ at height $k$, and the maximum height near $\log_2 n$. 

While conceptually it is convenient to argue with the height of downcrossings, we must control their durations. To do this, we decompose their durations into the time spent on each rung of the dyadic ladder. If a downcrossing started from $\zeta$ reaches maximal height $H = k$, we consider the time spent going from $\zeta$ to $2\zeta$, from $2 \zeta$ to $4 \zeta$, up to $2^{k-1} \zeta$, and finally the time spent returning from $2^{k-1} \zeta$ to zero. It follows from Proposition~\ref{prop_exit_exp} that the last contribution is of order $(2^k \zeta)^{2-2\gamma}$. The next result, which is the key result in our quantification of downcrossing times, establishes that, for a downcrossing attaining maximal height $H=k$, the combined time spent at lower levels is comparable in length to the final crossing from $2^{(k-1)}\zeta$ to $0$. In particular, we show that the probability that a downcrossing has duration of order $(2^k \zeta)^{2-2\gamma}$ is comparable to the probability that it reaches height $k$, which we know is approximately $2^{-k}$.


\begin{proposition} \label{prop_tau0_tail} There exist constants $C_6 \geq 1$ and $L_0 \in \N$ such that the following holds. For sufficiently small $\zeta > 0$, for any $\delta \in (0,1]$ and $\bP_\zeta \in \sM_{\gamma,\delta}(\zeta)$, for all $k \in \N$ such that $k \leq \log_2 (1/\zeta) - L_0$, 
\begin{equation}
\bP_\zeta(\tau_0 > C_6 (2^k \zeta)^{2-2\gamma}) \leq 2^{-k}.
\end{equation}
\end{proposition}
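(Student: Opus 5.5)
We will decompose the downcrossing from $\zeta$ along the dyadic ladder $b_j := 2^j\zeta$, $j=0,1,\dots$, and bound separately the probability of climbing too high and the time spent on each rung. Write $H := \sup\{m : \tau_{b_{m-1}} < \tau_0\}$ as in the discussion preceding the statement. For $j\ge 1$ and $\bP\in\sM_{\gamma,\delta}(b_{j-1})$, Proposition~\ref{prop_hittingprob_main} applied with $\zeta$ replaced by $b_{j-1}$ gives $\bP(\tau_{b_j}<\tau_0)\le \tfrac12 + 8 b_{j-1}^{c_1}$. This requires $b_{j-1}$ to be small, which is guaranteed by $j\le \log_2(1/\zeta)-L_0$ with $L_0$ large. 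Iterating through Lemma~\ref{lemma_stop_markov} (the PSMP), we get
\[
\bP_\zeta(H\ge m)\le \prod_{j=1}^{m-1}\bigl(\tfrac12+8b_{j-1}^{c_1}\bigr)\le C 2^{-m}.
\]
The constant is uniform because $\sum_j b_{j-1}^{c_1}\le C(2^{-L_0})^{c_1}$ is a geometric sum dominated by its top term. Choosing $L_0$ large makes $C\le 2$, say.

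Next we bound the time. On $\{H\le m\}$ we have $\tau_0 \le \tau_0\wedge\tau_{b_m}$. We split this into successive pieces: the path starts at $\zeta$, and each return to $b_{j-1}$ followed by exit from $(0,b_j)$ is governed by Proposition~\ref{prop_exit_exp}, applied with $\zeta$ replaced by $b_{j-1}$. More simply, we apply Proposition~\ref{prop_exit_exp} once with $\zeta$ replaced by $b_{m-1}$, using the initial point $\zeta\in(0,2b_{m-1})$:
\[
\bP_\zeta\bigl(\tau_0\wedge\tau_{b_m} > r\, b_{m-1}^{2-2\gamma}\bigr)\le C_1e^{-c_2 r}.
\]
This single application already controls all lower-level time, because the exit time of $(0,b_m)$ includes all of it. Hence
\[
\bP_\zeta\bigl(\tau_0> C_6(2^k\zeta)^{2-2\gamma}\bigr)\le \bP_\zeta(H> m)+\bP_\zeta\bigl(\tau_0\wedge\tau_{b_m}>C_6(2^k\zeta)^{2-2\gamma}\bigr),
\]
and we choose $m = k+L$ for a fixed integer $L$.

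With this choice the first term is at most $C2^{-k-L}\le 2^{-k-1}$ once $L$ is large. The second term is at most $C_1\exp\bigl(-c_2 C_6 2^{-(2-2\gamma)(L-1)}\bigr)$, which is a constant bound not decaying in $k$. This is the main obstacle, since one application of Proposition~\ref{prop_exit_exp} cannot produce the $2^{-k}$ decay.

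To fix this, we use a sum over levels rather than a single top level. We bound
\[
\bP(\tau_0>T)\le \sum_{m> k+L}\bP(H=m,\ \tau_0>T)+\bP(H\le k+L,\ \tau_0\wedge\tau_{b_{k+L}}>T).
\]
The first sum is $\le 2^{-k-1}$. For the second term we need the exponential tail to beat $2^{-k}$. We therefore take $T = C_6 (2^k\zeta)^{2-2\gamma}$ and, instead of the top level, sum over $j\le k+L$ the time spent in $(0,b_j)$ before first hitting $b_j$, weighted by the probability $\lesssim 2^{-j}$ of reaching level $j$. Concretely, let $E_j$ be the time between $\tau_{b_{j-1}}$ and the exit of $(0,b_j)$. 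By the PSMP and Proposition~\ref{prop_exit_exp},
\[
\bP(\tau_{b_{j-1}}<\tau_0,\ E_j>s_j)\le C2^{-j}C_1e^{-c_2 s_j b_{j-1}^{-(2-2\gamma)}}.
\]
Now choose $s_j = \lambda (k-j+1)\, b_{j-1}^{2-2\gamma}$. Then $\sum_j s_j\le C\lambda b_k^{2-2\gamma}$, since the factors $2^{(2-2\gamma)j}$ are geometric and the linear weight $(k-j+1)$ is harmless. Each term is at most $C2^{-j}e^{-c_2\lambda(k-j+1)}$, and for $\lambda$ large this sums to $\le 2^{-k-1}$. Taking $C_6 = C\lambda$ completes the argument.
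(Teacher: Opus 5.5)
Your final argument has the right structure and uses exactly the paper's ingredients; your $E_j$ is the paper's $T_j$. The structure is: discard $\{H>k+L\}$, then take a union bound over rungs $j\le k+L$ with time budgets $s_j$ and weights $\bP_\zeta(\tau_{b_{j-1}}<\tau_0)\lesssim 2^{-j}$.

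However, the specific choice $s_j=\lambda(k-j+1)\,b_{j-1}^{2-2\gamma}$ fails on the rungs $j\in\{k+1,\dots,k+L\}$. These rungs are still present, because you only discarded $\{H>k+L\}$, and on them $s_{k+1}=0$ and $s_j<0$ for $j\ge k+2$.

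For these $j$ the event $\{E_j>s_j\}$ holds a.s.\ on $\{\tau_{b_{j-1}}<\tau_0\}$, so Proposition~\ref{prop_exit_exp} gives no decay. These terms therefore contribute $\sum_{j=k+1}^{k+L}\bP_\zeta(\tau_{b_{j-1}}<\tau_0)$. By the lower bound in Proposition~\ref{prop_hittingprob_main} this sum is at least $2^{-k}$. So your claim that the second term is at most $2^{-k-1}$ is false as written.

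The repair is routine. Use $s_j=\lambda(k+L-j+1)\,b_{j-1}^{2-2\gamma}$ for all $1\le j\le k+L$. Then
\[
\sum_{j\le k+L}s_j\le C\lambda\,2^{(2-2\gamma)L}(2^k\zeta)^{2-2\gamma}
\quad\text{and}\quad
\sum_{j\le k+L}2^{-j}e^{-c_2\lambda(k+L-j+1)}\le Ce^{-c_2\lambda}2^{-k-L},
\]
so $C_6=C\lambda\,2^{(2-2\gamma)L}$ works for $\lambda\in\N$ large. Alternatively, take $L=0$, obtain a bound $C2^{-k}$, and shift $k$ by a fixed constant, which is what the paper does at the end. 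In either version, $L_0$ must be chosen depending on $L$ so that Propositions~\ref{prop_hittingprob_main} and~\ref{prop_exit_exp} apply at every scale $b_{j-1}$ with $j\le k+L$.

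With this fix, your route differs from the paper's only in bookkeeping, but the simplification is real. The paper partitions on the exact height $\{H=m\}$, $m\le k$, and gives rung $j$ the $m$-dependent budget $(2^k\zeta)^{2-2\gamma}2^{-(m-j+1)}$. It bounds $\bP_\zeta(H=m,\,T_j>\cdot)$ by the full climbing product up to level $m-1$ times the exit tail at rung $j$. It then has to control a double sum over $j$ and $m$, using an integral comparison, before shifting $k$ by $L_2$. Your budgets depend only on $j$, so the weight for rung $j$ is simply the probability of reaching $b_{j-1}$, and a single geometric sum suffices.
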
 

\begin{proof}
Let $\zeta_0>0$ be sufficiently small so that the conclusions of Propositions~\ref{prop_hittingprob_main} and \ref{prop_exit_exp} hold for all $\zeta \in (0,\zeta_0]$ and $\delta \in (0,1]$. Throughout this proof, we will make use of the estimates from Propositions~\ref{prop_hittingprob_main} and \ref{prop_exit_exp} applied at various points $\zeta'$, in particular at $\zeta' = 2^i \zeta$ for $i = 1,\dots,k$ for some fixed $\zeta>0$. Since these estimates apply when $\zeta' \leq \zeta_0$, this imposes the constraint that $k \leq \log_2 (\zeta_0) + \log_2 (1/\zeta)$. Thus, we set $L_1 = -\log_2 (\zeta_0) \vee 1$ and work with $k \leq \log_2(1/\zeta)- L_1$, which ensures that the relevant estimates always apply. At the end of the proof, we increase $L_1$ to the larger value $L_0$ appearing in the statement.

Let $\bP \in \sM_{\gamma,\delta}(\zeta)$ and $k \leq \log(1/\zeta) - L_1$. We will begin by obtaining a bound on $\bP(\tau_0 > (2^k \zeta)^{2-2\gamma})$ which can later be manipulated to obtain the stated bound. 
Recall that $H = \sup\{ m \in \N : \tau_{2^{m-1} \zeta} < \tau_0\}$ is the maximal height reached by $Y$, on a logarithmic scale, before returning to $0$. We also remark that, under $\bP \in \sM_{\gamma,\delta}(\zeta)$, $\tau_\zeta = 0$ a.s. 
We then define
\[ T_i = \tau_{2^i \zeta}\wedge \tau_0 - \tau_{2^{i-1}\zeta} \wedge \tau_0 , \quad i \geq 1.\]
Note that for $m \in \N$, on $\{H = m\}$, $T_i = \tau_{2^i \zeta} - \tau_{2^{-i-1} \zeta}$ for $i = 1,\dots,m-1$ and $T_m =  \tau_0 - \tau_{2^{m-1} \zeta}$. These increments telescope, and on $\{H = m\}$ we have
\[\tau_0 = \sum_{i = 1}^{m} T_i.\]
Next, still on $\{H = m \}$, suppose that $T_i \leq (2^k \zeta)^{2-2\gamma} / 2^{m-i + 1}$ for each $i = 1,\dots,m$. Then we have
\[ \tau_0 \leq \sum_{i = 1}^{m} (2^k \zeta)^{2-2\gamma} / 2^{m-i + 1} \leq (2^k \zeta)^{2-2\gamma}.\]
In particular, this implies that 
\begin{align} \label{eq_prop_downcross_decomp}
&\bP( \tau_0 > (2^k \zeta)^{2-2\gamma})\notag
\\ &\hspace{1 cm} \leq\bP(H > k) + \sum_{m=1}^{k} \bP\left( H = m, T_i > (2^k \zeta)^{2-2\gamma} / 2^{m-i + 1} \text{ for some } i =1,\dots,m \right).
\end{align}
For the first term, we will ultimately show that 
\begin{equation} \label{eq_prop_downcross_heightbd}
\bP (H > k) \leq \prod_{j=1}^{k} \left( \frac 1 2 + 8(2^j \zeta)^{c_1} \right),
\end{equation}
where $c_1$ is the constant from Proposition~\ref{prop_hittingprob_main}.
In fact, we will derive this as a by-product of the proof that a similar upper bound can be obtained for the second term in \eqref{eq_prop_downcross_decomp}, which we now show.

Let $m \in \{1,\dots,k\}$. We consider the contribution to \eqref{eq_prop_downcross_decomp} from the event $\{H = m\}$. By a union bound, we have
\begin{align} \label{eq_heightm_totalbd}
&\bP(H = m, T_j > (2^k \zeta)^{2-2\gamma} / 2^{m-j + 1} \text{ for some } j =1,\dots,m) 
\\ &\hspace{1 cm} \leq \sum_{j=1}^m \bP(H= m, T_j > (2^k \zeta)^{2-2\gamma} / 2^{m-j + 1}). \notag
\end{align}
To obtain a suitable bound for the above, we will now prove the following:

\textbf{Claim.} For all $j \in \{1,\dots,m\}$, 
\begin{align} \label{eq_heightm_Tj_bd}
&\bP(H= m, T_j > (2^k \zeta)^{2-2\gamma} / 2^{m-j+ 1}) 
\\ &\hspace{1 cm} \leq C_1\prod_{i = 1}^{m-1} \left( \frac 1 2 + 8 (2^{i-1}\zeta)^{c_1} \right) \cdot  \exp \left( -c_2 \left( 2^{(2-2\gamma)(k-j-1) - (m-j+1)}  - 1 \right) \right), \notag
\end{align}
where $c_1$ is the constant from Proposition~\ref{prop_hittingprob_main} and $c_2$ and $C_1$ are the constants from Proposition~\ref{prop_exit_exp}.

Let us prove this claim. For $i,j \leq m$ we define events
\[ E_i =  \left\{\tau_{2^i \zeta} < \tau_0 \right\}\]
and
\[ F_j^m =  \{T_j > (2^k \zeta)^{2-2\gamma} / 2^{m-j+ 1}\}.\]
Then we remark that 
\begin{align} \label{eq_heightm_eventrep}
\left\{H = m, T_j > (2^k \zeta)^{2-2\gamma} / 2^{m-j+ 1} \right\} = E_{m}^c \cap \left( \cap_{i=1}^{m-1} E_i \right) \cap F^m_j.
\end{align}
It is now convenient to  subdivide the proof of \eqref{eq_heightm_Tj_bd} into two cases.\\

\noindent \textbf{Case 1 ($j = m$).}  By \eqref{eq_heightm_eventrep}, we have
\begin{align*}
 \bP(H= m, T_m & > (2^k \zeta)^{2-2\gamma} / 2^{m-j + 1}) = \bP\left(\left(\cap_{i=1}^{m-1} E_i\right) \cap E_m^c \cap F^m_m\right) 
\\ &= \bP\left(\left(\cap_{i=1}^{m-1} E_i\right) \cap \left\{ \tau_0 \wedge \tau_{2^m\zeta} - \tau_{2^{m-1}\zeta} > (2^k \zeta)^{2-2\gamma} / 2, \tau_0 < \tau_{2^m\zeta} \right\} \right) 
\\ &\leq \bP\left(\left(\cap_{i=1}^{m-1} E_i\right) \cap \left\{ \tau_0 \wedge \tau_{2^m\zeta} - \tau_{2^{m-1}\zeta} > (2^k \zeta)^{2-2\gamma} / 2 \right\} \right) 
\end{align*}
Since $\tau_0 \wedge \tau_{2^m\zeta} > \tau_{2^{m-1} \zeta}$ on $\cap_{i=1}^{m-1} E_i$, on this event we have
\[(\tau_0 \wedge \tau_{2^m\zeta})\circ \theta_{\tau_{2^{m-1}\zeta}}(\omega) = \tau_0 \wedge \tau_{2^m \zeta}(\omega) - \tau_{2^{m-1}\zeta}(\omega)\]
and $\bP^{\cap_{i=1}^{m-1} E_i} \circ \theta_{\tau_{2^{m-1}\zeta}} \in \sM_{\gamma,\delta}(2^{m-1}\zeta)$ by the PSMP. Combining these two facts and substituting them into the previous expression, we obtain
\begin{align} \label{eq_excursiontime1}
& \bP(H= m, T_m > (2^k \zeta)^{2-2\gamma} / 2)  \notag
\\ &\hspace{1.5 cm} \leq  \bP\left(\left(\cap_{i=1}^{m-1} E_i\right) \right) \cdot \bP^{\cap_{i=1}^{m-1} E_i} \circ \theta_{\tau_{2^{m-1}\zeta}} \left(  \tau_0 \wedge \tau_{2^m \zeta} >  (2^k \zeta)^{2-2\gamma} / 2  \right) \notag
\\ &\hspace{1.5 cm} \leq  \bP\left(\left(\cap_{i=1}^{m-1} E_i\right) \right) \cdot \bP^{\cap_{i=1}^{m-1} E_i} \circ \theta_{\tau_{2^{m-1}\zeta}} \left(  \tau_0 \wedge \tau_{2^m \zeta} >  \lfloor 2^{(2-2\gamma)(k-m)-1}\rfloor (2^m \zeta)^{2-2\gamma}\right)  \notag
\\ &\hspace{1.5 cm} \leq \bP\left(\left(\cap_{i=1}^{m-1} E_i\right) \right) \cdot C_1 \exp \left(-c_2 \left(2^{(2-2\gamma)(k-m)-1} - 1\right) \right),
\end{align}
where the last line follows from Proposition~\ref{prop_exit_exp} and the fact that $\bP^{\cap_{i=1}^{m-1} E_i} \circ \theta_{\tau_{2^{m-1}\zeta}} \in \sM_{\gamma,\delta}(2^{m-1}\zeta)$. 

To handle the remaining term, we remark that for each $i =2,\dots,m-1$, on $\cap_{l=1}^{i-1} E_l$ we have
\begin{align*} 
\tau_0 \circ \theta_{\tau_{2^{i-1}\zeta}}(\omega)  = \tau_0(\omega) - \tau_{2^{i-1}\zeta} (\omega) \quad \text{ and } \quad \tau_{2^i \zeta} \circ \theta_{\tau_{2^{i-1}\zeta}}(\omega)  = \tau_{2^i\zeta} (\omega) - \tau_{2^{i-1}\zeta} (\omega),
\end{align*}
which implies that the ordering of $\tau_{2^i \zeta}$ and $\tau_0$ is invariant under the shift $\theta_{\tau_{2^{i-1}\zeta}}$. Moreover, by the PSMP,
\[\bP^{\cap_{l=1}^{i-1} E_l} \circ \theta_{\tau_{2^{i-1}\zeta}} \in \sM_{\gamma,\delta}(2^{i-1}\zeta).\]
Together, these observations imply for each $i = 1,\dots,m-1$, for some $\bP^{(i)} \in \sM_{\gamma,\delta}(2^{i-1}\zeta)$,
\[ \bP^{\cap_{l=1}^{i-1} E_l} \circ \theta_{\tau_{2^{i-1}\zeta}}(E_i) = \bP^{(i)} ( \tau_{2^i \zeta} < \tau_0).\]
(This holds for $i=1$ as well, if we define the empty intersection to be the entire space and remark that $\theta_{\tau_\zeta} = \theta_0$ a.s.\ under $\bP \in \sM_{\gamma,\delta}(\zeta)$.) Starting at $i=m-1$, conditioning on $\cap_{l=1}^{i-2} E_l$, and repeating, we conclude that
\begin{align*}
\bP\left(\left(\cap_{i=1}^{m-1} E_i\right) \right) = \prod_{i=1}^{m-1} \bP^{(i)}( \tau_{2^i \zeta} < \tau_0) \leq \prod_{i=1}^{m-1} \left( \frac 1 2 + 8(2^{i-1} \zeta)^{c_1} \right).
\end{align*}
The final inequality follows from applying Proposition~\ref{prop_hittingprob_main} to each term. Substituting this bound into \eqref{eq_excursiontime1}, we have shown that,
\begin{align*}
 \bP(H= m, T_m > (2^k \zeta)^{2-2\gamma} / 2) &\leq C_1 \prod_{i=1}^{m-1} \left( \frac 1 2 + 8(2^{i-1} \zeta)^{c_1} \right) \cdot \exp \left(-c_2\left(2^{(2-2\gamma)(k-m)-1} - 1\right) \right),
\end{align*}
which is \eqref{eq_heightm_Tj_bd} for $j=m$.\\

\noindent \textbf{Case 2 ($j < m$).} The argument here is similar to the first case but with some reordering, so we just sketch the necessary changes. We begin with the representation \eqref{eq_heightm_eventrep} for our event. 
First, conditioning on every sub-event except for $E_m^c$ and shifting by $\tau_{2^{m-1}\zeta}$, we obtain a term $\bP'(\tau_0 < \tau_{2^m \zeta})$ with $\bP' \in \sM_{\gamma,\delta}(2^{m-1}\zeta)$. By Proposition~\ref{prop_hittingprob_main}, this is bounded above by $1/2$, and hence 
\begin{align} \label{}
\bP(H= m, T_j > (2^k \zeta)^{2-2\gamma} / 2^{m-i + 1}) &\leq \bP\left(\left(\cap_{i=1}^{m-1} E_i \cap F^m_j \right)\right) \cdot \frac 1 2.
\end{align}
For $i \in \{j + 1, m-1\}$, we proceed as in the final step of the previous case. That is, we condition on $\left(\cap_{l = 1}^{i-1} E_l \right) \cap F^m_{ j}$ and shift by $\tau_{2^{i-1}\zeta}$. Just like in the previous case, the conditioned and shifted law belongs to $\sM_{\gamma,\delta}(2^{i-1}\zeta)$, and the shifted event is $\{\tau_{2^i \zeta} < \tau_0\}$. Thus, subsequent applications of Proposition~\ref{prop_hittingprob_main} imply that
\begin{align*}
\bP(H= m, T_j > (2^k \zeta)^{2-2\gamma} / 2^{m-j + 1}) &\leq \bP\left(\left(\cap_{i=1}^{j} E_i \cap F^m_j \right)\right) \cdot \prod_{i=j+1}^{m-1} \left( \frac 1 2 + 8 (2^{i-1}\zeta)^{c_2} \right) \cdot \frac 1 2.
\end{align*}
For $i = j$, we remark that on the event $\cap_{i =1}^{j-1}E_i$, 
\[E_j \cap F^m_j \subset \left\{ \tau_{2^j \zeta} \wedge \tau_0 - \tau_{2^{j-1} \zeta} > (2^k \zeta)^{2-2\gamma} / 2^{m-j+1} \right\}.\]
Conditioned on $\cap_{i =1}^{j-1}E_i$, shifting by $\tau_{2^{j-1} \zeta}$ simply removes the term $- \tau_{2^{j-1}\zeta}$ from the event above, and $\bP^{\cap_{i =1}^{j-1}E_i} \circ \theta_{\tau_{2^{j-1} \zeta}} =: \bP^{(j)} \in \sM_{\gamma,\delta}(2^{j-1}\zeta)$ by the PSMP. It then follows from Proposition~\ref{prop_exit_exp} that
\begin{align*}
&\bP\left(\left(\cap_{i=1}^{j-1} E_i \cap F^m_j \right)\right) 
\\ &\hspace{1 cm}\leq \bP\left( \left( \cap_{i=1}^{j-1} E_i \right) \right) \cdot \bP^{(j)}(\tau_{2^j \zeta} \wedge \tau_0 > (2^k \zeta)^{2-2\gamma} / 2^{m-j + 1})
\\ &\hspace{1 cm}\leq  \bP\left( \left( \cap_{i=1}^{j-1} E_i \right) \right) \cdot \bP^{(j)}(\tau_{2^j \zeta} \wedge \tau_0 > \lfloor(2^{(2-2\gamma)(k-j-1) - (m-j+1)})\rfloor (2^{j-1} \zeta)^{2-2\gamma})
\\ &\hspace{1 cm}\leq \bP\left( \left( \cap_{i=1}^{j-1} E_i \right) \right) \cdot C_1 \exp \left( -c_2 \left( 2^{(2-2\gamma)(k-j-1) - (m-j+1)}  - 1 \right) \right). 
\end{align*}
The remaining terms are handled identically to the previous case using Proposition~\ref{prop_hittingprob_main}. We therefore obtain that for $j =1,\dots,m-1$, 
\begin{align*}
&\bP(H= m, T_j > (2^k \zeta)^{2-2\gamma} / 2^{m-j+ 1}) 
\\ &\hspace{1.5 cm} \leq \prod_{i  \in \{1,\dots,m-1\} \backslash \{j\} } \left( \frac 1 2 + 8 (2^{i-1}\zeta)^{c_1} \right) \cdot \frac 1 2\cdot  C_1 \exp \left( -c_2 \left( 2^{(2-2\gamma)(k-j-1) - (m-j+1)}  - 1 \right) \right).
\end{align*}
Bounding the $1/2$ above by the missing term in the product (corresponding to $i=j$), we have that \eqref{eq_heightm_Tj_bd} holds for $j \in \{0,\dots,m-1\}$. Combined with the previous case, this proves that $ \eqref{eq_heightm_Tj_bd}$ holds for all $j \in \{1,\dots,m\}$. 

We observe that $\{H > k\} = \cap_{i=1}^{k} E_i$, and that the probability of this event can be handled using the arguments above, which immediately yields \eqref{eq_prop_downcross_heightbd}. Let us now return to \eqref{eq_heightm_totalbd}. Applying \eqref{eq_heightm_Tj_bd} to each term in the right-hand side, we obtain
\begin{align*}
&\bP(H = m, T_j > (2^k \zeta)^{2-2\gamma} / 2^{m-j + 1} \text{ for some } j =1,\dots,m) 
\\ &\hspace{1 cm} \leq \prod_{i = 1}^{m-1} \left( \frac 1 2 + 8 (2^{i-1}\zeta)^{c_1} \right) \cdot C_1 \cdot \sum_{j=1}^m  \exp \left( -c_2 \left( 2^{(2-2\gamma)(k-j-1) - (m-j+1)}  - 1 \right) \right)\notag
\\ &\hspace{1 cm} \leq \prod_{i = 1}^{m-1} \left( \frac 1 2 + 8 (2^{i-1}\zeta)^{c_1} \right) \cdot C_1 e^{c_2}\cdot \sum_{l=0}^\infty  \exp \left( -c_2 \left( 2^{(2-2\gamma)(k-m-1) -1} \cdot 2^{l(1-2\gamma)} \right) \right),\notag
\end{align*}
where in the final line we substitute $l = m-j$ and bound the finite sum above by the infinite series. We use elementary methods to bound the series. Let $\lambda_m = c_2 2^{(2-2\gamma)(k-m-1)-1}$ and $b = \log 2^{1-2\gamma}$. Then it is straightforward to obtain the bound 
\begin{align*}
\sum_{l=0}^\infty  \exp \left( -c_2 \left( 2^{(2-2\gamma)(k-m-1) -1} \cdot 2^{l(1-2\gamma)} \right) \right) &\leq e^{-\lambda_m} +  \int_{0}^\infty e^{-\lambda_m e^{b x}} dx
\\ &= e^{-\lambda_m} + \frac 1 b \int_{1}^\infty \frac{e^{-\lambda_mu}}{u} du
\\ &\leq  e^{-\lambda_m} + \frac{1}{\lambda_m b}
\\ &\leq \frac{2}{\lambda_m b},
\end{align*}
where we use $e^{-x} \leq 1/x$ and $b < 1$ in the last line. Using this upper bound and substituting the values of $\lambda_m$ and $b$ into our previous upper bound, it now follows from \eqref{eq_heightm_totalbd} that
\begin{align*}
&\bP(H = m, T_j > (2^k \zeta)^{2-2\gamma} / 2^{m-j + 1} \text{ for some } j =1,\dots,m) 
\\ &\hspace{1 cm} \leq \left(\frac{4 C_1e^{c_2}}{c_2 (1-2\gamma)\log 2} \right) \cdot \prod_{i = 1}^{m-1} \left( \frac 1 2 + 8 (2^{i-1}\zeta)^{c_1} \right) \cdot 2^{-(2-2\gamma)(k-m-1)}.
\end{align*}
Denote the leading constant by $C>0$. We now reorder terms to obtain
\begin{align*}
&\bP(H = m, T_j > (2^k \zeta)^{2-2\gamma} / 2^{m-j + 1} \text{ for some } j =1,\dots,m) 
\\ &\hspace{1 cm} \leq C 2^{-(k-m-1)} \prod_{i = 1}^{m-1} \left( \frac 1 2 + 8 (2^{i-1}\zeta)^{c_1} \right) \cdot 2^{-(1-2\gamma)(k-m-1)}
\\ &\hspace{1 cm} \leq 4C \prod_{i = 1}^{k} \left( \frac 1 2 + 8 (2^{i-1}\zeta)^{c_1} \right) \cdot 2^{-(1-2\gamma)(k-m-1)}
.\end{align*}
The last line follows by simply adding extraneous errors to the factors of $1/2$, but the resulting expression is one we have to bound anyway, as it appears in \eqref{eq_prop_downcross_heightbd}. We now return to \eqref{eq_prop_downcross_decomp}. Using the bound above for $m=1,\dots,k$ along with \eqref{eq_prop_downcross_heightbd}, we obtain
\begin{align} \label{eq_downcross_assembly1}
\bP(\tau_0 > (2^k \zeta)^{2-2\gamma}) &\leq \left(1 + 4C \sum_{m=1}^{k} 2^{-(1-2\gamma)(k-m-1)} \notag
\right) \cdot \prod_{i=1}^{k} \left( \frac 1 2 + 8(2^{i}\zeta)^{c_1} \right) \notag
\\ &\leq \left( 1 + 4C \sum_{m=-1}^\infty 2^{-(1-2\gamma)m} \right)  \cdot \prod_{i=1}^{k}  \left( \frac 1 2 + 8(2^{i}\zeta)^{c_1} \right) \notag
\\ &\leq C  \prod_{i=1}^{k}  \left( \frac 1 2 + 8(2^{i}\zeta)^{c_1} \right),
\end{align}
where $C$ is increased in the last line in a way which only depends on $\gamma$. To complete the proof, we show that the above is at most a constant multiplied by $2^{-k}$. We have 
\begin{align*}
  \prod_{i=1}^{k}   \left( \frac 1 2 + 8(2^{i}\zeta)^{c_1} \right) 
   & = \exp \left( \sum_{i=1}^{k} \log   \left( \frac 1 2 + 8(2^{i}\zeta)^{c_1} \right)  \right)
  \\ & \leq \exp \left( \sum_{i=1}^{k} \left(\log \left(\frac 1 2 \right) +  16(2^{i} \zeta)^{c_1} \right) \right)
  \\ & = \left( \frac 1 2 \right)^{k} \cdot \exp \left(16 \zeta^{c_1} \cdot 2^{c_1} \frac{2^{c_1(k-2) }-1}{2^{c_1} -1}\right)
  \\ & \leq \left( \frac 1 2 \right)^{k} \cdot \exp \left(C (2^k \zeta)^{c_1}\right),
\end{align*}
  for a constant $C>0$ which depends only on $c_1$. Since $k \leq  \log_2 (1/\zeta ) - L_1$ the exponential is bounded above by a constant factor. In particular, it follows from the above and \eqref{eq_downcross_assembly1} that, for some constant $C'$ depending only on $\gamma$, for all $k \leq  \log_2(1/\zeta) - L_1$, 
  \begin{equation*}
\bP(\tau_0 > (2^k \zeta)^{2-2\gamma}) \leq C' 2^{-k}. 
\end{equation*}
Let $L_2$ be the smallest positive integer such that $C' 2^{-L_2} \leq 1$. Then the above applied with $k$ replaced by $k+L_2$ implies that, for all $k \leq \log_2(1/\zeta) - L_1 - L_2$,
\begin{equation*}
\bP(\tau_0 > 2^{(2-2\gamma)L_2} (2^k \zeta)^{2-2\gamma}) \leq 2^{-k}.
\end{equation*}
This implies the result with $C_6 = 2^{(2-2\gamma)L_2}$ and $L_0 = L_1 + L_2$.\end{proof}

We can now quantify the time spent on downcrossings. Roughly speaking, we show that for small $\delta t$, the total duration of $\delta t \zeta^{-1}$ downcrossings of $(0,\zeta)$ is of order $(\delta t)^{2-2\gamma}$. Because it is slightly more convenient for the proof of our main theorem, we show instead that for any $a >0$, with high probability as $\delta t \to 0$, the first $\delta t \zeta^{-1}$ downcrossings take at most a constant multiple of $(\delta t)^{2-2\gamma - a}$.

\begin{lemma} \label{lemma_downtimefinal} There exist constants $C_7, C_8 \geq 1$ depending only on $\gamma$ such that the following holds: for $t > 0$ and $\delta \in (0,1]$, for any $a \in (0,2-2\gamma)$, if $\delta t$ is sufficiently small, then for sufficiently small $\zeta>0$ and any $\bP \in \sM_{\gamma,\delta}(0)$,
\begin{equation} \bP\left( T_{\mathrm{down}}  \geq C_7(\delta t)^{2-2\gamma - a} \right) \leq C_8 (\delta t)^{a/(2-2\gamma)}, \end{equation}
where $T_{\mathrm{down}} = T_{\mathrm{down}} (\lceil \delta t \zeta^{-1} \rceil)$. 
\end{lemma}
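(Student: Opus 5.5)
Write $n = \lceil \delta t \zeta^{-1}\rceil$, so that $T_{\mathrm{down}} = \sum_{i=1}^n D_i$. By the PSMP applied at $\tau^i_\zeta$ (on $\{\tau^i_\zeta<\infty\}$, truncated at a finite level $K$ and then $K\to\infty$ as in the proof of Lemma~\ref{lemma_uptime}), each $D_i$ is distributed as $\tau_0$ under some $\bP^{(i)}_\zeta\in\sM_{\gamma,\delta}(\zeta)$. The regular conditional form also lets us condition successively on $\sF_{\tau^i_\zeta}$. Proposition~\ref{prop_tau0_tail} then supplies the key tail bound
\[
\bP^{(i)}_\zeta\big(\tau_0 > C_6(2^k\zeta)^{2-2\gamma}\big)\le 2^{-k}, \qquad k\le \log_2(1/\zeta)-L_0 .
\]
The plan is to truncate the $D_i$ at a scale chosen so that, with high probability, no downcrossing exceeds it. We then bound the sum of the truncated variables in expectation and apply Markov's inequality.

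For the truncation, choose $k^*$ with $2^{k^*}\zeta \approx (\delta t)^{1-a/(2-2\gamma)}$, i.e.\ $2^{k^*}\approx n(\delta t)^{-a/(2-2\gamma)}$. Since $\delta t$ is small, $2^{k^*}\zeta \le 2^{-L_0}$, so $k^*$ is admissible once $\zeta$ is small. The union bound gives
\[
\bP\big(\exists\, i\le n : D_i > C_6(2^{k^*}\zeta)^{2-2\gamma}\big) \le n2^{-k^*} \approx (\delta t)^{a/(2-2\gamma)} .
\]
This is exactly the error term $C_8(\delta t)^{a/(2-2\gamma)}$ in the statement.

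On the complementary event, consider $\tilde D_i = D_i\wedge C_6(2^{k^*}\zeta)^{2-2\gamma}$. Integrating the tail over dyadic levels $k\le k^*$ gives
\[
\E[\tilde D_i \mid \sF_{\tau^i_\zeta}] \le C\sum_{k=0}^{k^*} 2^{-k}(2^{k}\zeta)^{2-2\gamma} \le C' \zeta^{2-2\gamma}2^{(1-2\gamma)k^*},
\]
where the geometric sum is dominated by its top term because $1-2\gamma>0$. Summing over $i$ yields
\[
\E\Big[\sum_{i\le n} \tilde D_i\Big] \le C' n\zeta^{2-2\gamma}2^{(1-2\gamma)k^*} \approx C'\,\delta t\,\big(2^{k^*}\zeta\big)^{1-2\gamma} \approx C'(\delta t)^{1+(1-2\gamma)(1-a/(2-2\gamma))}.
\]
Markov's inequality at level $C_7(\delta t)^{2-2\gamma-a}$ then gives probability at most $C(\delta t)^{\beta}$. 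The exponent is
\[
\beta = 1+(1-2\gamma) - \frac{(1-2\gamma)a}{2-2\gamma} - (2-2\gamma) + a = \frac{a}{2-2\gamma},
\]
as required, using $a - \frac{(1-2\gamma)a}{2-2\gamma} = \frac{a}{2-2\gamma}$. Adding the two error terms proves the lemma.

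The main obstacle is rigorously justifying the conditional tail bound for each $D_i$ uniformly over $i$. One must handle the events $\{\tau^i_\zeta=\infty\}$, where $D_i=0$ by convention, and the application of the PSMP at possibly infinite stopping times, done by truncating at $K$. It must also be checked that the rounding in $k^*$ and the constraint $k^*\le\log_2(1/\zeta)-L_0$ hold once $\delta t$ is small, uniformly in small $\zeta$. Only expectations of sums are needed, so no independence is required: linearity and the conditional bounds suffice.
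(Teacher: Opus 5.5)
Your proof is correct. It reaches the bound by a different route from the paper, although the inputs are the same: the PSMP at $\tau^i_\zeta$ combined with Proposition~\ref{prop_tau0_tail}.

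Your truncation level $2^{k^*}\approx n(\delta t)^{-a/(2-2\gamma)}$ is exactly the paper's $L=\lfloor\log_2 n\rfloor+K$ with $K=-\lfloor\frac{a}{2-2\gamma}\log_2(\delta t)\rfloor$. So your union bound is the paper's estimate $\bP(F^c)\le 2n2^{-L}$ for the event that some downcrossing exceeds $C_6(2^L\zeta)^{2-2\gamma}$.

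The two arguments differ on the good event. The paper sorts the downcrossings into dyadic duration classes $J_k=\{i\le n: D_i\in I_k\}$ and uses $\E|J_k|\le 2n2^{-k}$. It then applies Markov's inequality separately to each $|J_k|$ at the inflated thresholds $2^{(1+\eta)(L-k)}$. The auxiliary $\eta\in(0,1-2\gamma)$ is chosen so that both the union bound over $k$ and the sum $\sum_k 2^{(1+\eta)(L-k)}2^{(2-2\gamma)k}$ are geometric. This yields the deterministic bound $T_{\mathrm{down}}\le C'(\delta t)^{2-2\gamma}2^{(2-2\gamma)K}$ on $(\cap_k E_k)\cap F$. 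You instead take the expectation of the truncated sum directly, which amounts to summing $\E|J_k|$ against the class durations, and then apply a single Markov inequality.

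Both give a tail of order $2^{-K}$ at level $(\delta t)^{2-2\gamma}2^{(2-2\gamma)K}$, so nothing is lost, and your constants still depend only on $\gamma$. Your version dispenses with $\eta$ and the events $E_k$ and is shorter. The paper's version gives pathwise control of the class counts, which this lemma does not need. Your remark about applying the PSMP at the possibly infinite time $\tau^i_\zeta$ via truncation also addresses a point the paper passes over quickly.
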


\begin{proof}
Let $\zeta>0$ be sufficiently small so that the conclusions of Proposition~\ref{prop_tau0_tail} hold. Let $t>0$, $\delta\in (0,1]$, and let $n = \lceil \delta t \zeta^{-1} \rceil$ and $T_{\text{down}} = T_{\text{down}}(n)$. We begin by recalling that, for $i \in \N$,
\[D_i = (\gamma^i_\zeta - \tau^i_\zeta)\indc(\tau^i_\zeta < \infty).\]
We then define 
 \[J_0 = \left\{ i \in [n] : D_i \leq C_6 \zeta^{2-2\gamma} \right\},\]
and, for $k \in \N$, let $I_k = [C_6 (2^{k-1} \zeta)^{2-2\gamma}, C_6 (2^{k}\zeta)^{2-2\gamma}]$, for $C_6$ from Proposition~\ref{prop_tau0_tail}, and define 
\[J_k = \left\{ i \in [n] : D_i \in I_k \right\}.\]
Next, let $K \in \N$ and set $L = \lfloor \log_2 ( n) \rfloor + K$. The value of $K$ will be specified later. For the time being, we suppose that $K$ is such that $L \leq \log_2(1/\zeta) - L_0$, where $L_0$ is defined in Proposition~\ref{prop_tau0_tail}, so that we can apply the bound from that result for all $k \leq L$.

We fix $\eta \in (0,1-2\gamma)$ and set 
\begin{equation*}
E_k = \left\{ |J_k| \leq 2^{(1+\eta)(L-k)} \right\}. 
\end{equation*}
We also define
\begin{equation*}
F = \left\{ |J_k| = 0 \text{ for all } k > L \right\}.
\end{equation*}
We remark that
\begin{align} \label{eq:Tdownbd1}
T_{\text{down}} = \sum_{i=1}^{n} D_i &\leq C_6 \zeta^{2-2\gamma} \sum_{k=0}^\infty |J_k| \cdot 2^{(2-2\gamma)k} \notag
 \\ &\leq C_6 \zeta^{2-2\gamma} \left[n  + \sum_{k=1}^\infty |J_k| \cdot 2^{(2-2\gamma)k}\right],
\end{align}
where all but finitely many of the summands must be zero, and in the second line we use $|J_0| \leq n$. In particular, on $(\cap_{k=1}^{L} E_k ) \cap F$ we have
\begin{align*}
\sum_{k=1}^\infty |J_k| \cdot 2^{(2-2\gamma)(k+1)}  &\leq \sum_{k=1}^L 2^{(1+\eta)(L-k)} \cdot 2^{(2-2\gamma)(k+1)}
\\ &\leq  2^{(1+\eta) L}  \sum_{k=1}^L 2^{(1 - 2\gamma - \eta)k}
\\ &\leq C 2^{(2-2\gamma)L}
\\ &\leq C 2^{(2-2\gamma)K} n^{2-2\gamma} ,
\end{align*}
where the the constant depends only on $\gamma$ and our choice of $\eta$, and the last line uses the definition of $L$. Substituting the above into \eqref{eq:Tdownbd1}, we obtain that on $(\cap_{k=1}^{L} E_k) \cap F$, 
\begin{align} \label{eq:Tdownsizebd}
T_{\text{down}} &\leq C \zeta^{2-2\gamma}  \left[\lceil \delta t \zeta^{-1} \rceil +(\lceil \delta t \zeta^{-1} \rceil )^{2-2\gamma}2^{(2-2\gamma)K} \right]  \leq C' (\delta t)^{2-2\gamma} 2^{(2-2\gamma)K}. 
\end{align}
where we enlarge $C'$ in the second inequality, and $C'$ depends only on $\gamma$, $\eta$ and $C_6$.

It remains to bound the probability of the event on which the above fails. To this end, we first obtain a bound for $\bP(E_k^c)$ for $i = 1,\dots,L$. For $\bP' \in \sM_{\gamma,\delta}(\zeta)$, we note that
\[ \bP'(\tau_0 \in I_k) \leq \bP'(\tau_0 > C_6 (2^{k-1}\zeta)^{2-2\gamma}) \leq 2^{-(k-1)}\]
by Proposition~\ref{prop_tau0_tail}. By definition of $D_i$, we have
\[ D_i \circ \theta_{\tau^{i}_\zeta} (\omega) = \tau_0(\omega)\]
provided $\tau^i_\zeta < \infty$. The two facts above combined with a routine application of the PSMP imply that for $i \in [n]$ and $k \in [L]$,
\begin{align*}
\bP(\tau_\zeta^i < \infty, D_i \in I_k) &= \bP(\tau_\zeta^i < \infty) \cdot \bP(D_i \in I_k \, | \, \tau_\zeta^i < \infty)
\\ &= \bP(\tau_\zeta^i < \infty) \cdot \bP^{\{\tau^i_\zeta < \infty\}} \circ \theta_{\tau_\zeta^i}(\tau_0 \in I_k)
\\ &\leq 2^{-(k-1)}.
\end{align*}
Hence
\begin{align*}
\E[|J_k|]  &= \sum_{i=1}^{n} \bP(D_i \in I_k)
\\ &= \sum_{i=1}^{n}  \bP(\tau^i_\zeta < \infty, D_i \in I_k)
\\ &\leq 2n 2^{-k}.
\end{align*}
Hence, by Markov's inequality, 
\begin{align*}
\bP(E_k^c) = \bP(|J_k| > 2^{(1+\eta)(L-k)}) &\leq 2 n 2^{-k} 2^{-(1+\eta)(L-k)}.
\end{align*}
Since $n \leq 2^{\lfloor \log_2 n \rfloor + 1} = 2^{L-K + 1}$, we obtain from the above that
\begin{align*}
\bP(E_k^c) = \bP(|J_k| > 2^{(1+\eta)(L-k)}) &\leq 2^{-(K-2)} 2^{-\eta(L-k)},
\end{align*}
and hence
\begin{align*}
\bP\left( \cup_{k=1}^{L} E_k^c\right) &\leq 2^{-(K-2)}  \sum_{k=1}^{L} 2^{-\eta(L-k)} \leq C 2^{-K},
\end{align*}
where $C$ depends on $\eta$ only. We can similarly obtain from Markov's inequality that
\begin{align*}
\bP(F^c) \leq 2 n 2^{-L} \leq 2^{-(K-2)}.
\end{align*}
Combining \eqref{eq:Tdownsizebd} with the two inequalities above, we obtain that for a constant $C'' \geq 1$ depending only on $\gamma$ and $\eta$, we have
\begin{align} \label{eq:TdownboundK}
\bP\left( T_{\text{down}} > C' (\delta t)^{2-2\gamma} 2^{(2-2\gamma)K} \right) \leq C''2^{-K}.
\end{align}
Finally, we recall that we have proved the above under the assumption $L \leq \log_2(1/\zeta) - L_0$, where $L = \lceil \log_2(\delta t \zeta^{-1}) \rceil + K$. Let $a \in (0,2-2\gamma)$ and take $K = - \lfloor \frac{a}{2-2\gamma} \log_2 (\delta t) \rfloor$. Then
\begin{align*}
    L = \lceil \log_2(\delta t \zeta^{-1}) \rceil + K \leq 2 + \left(1 - \frac{a}{2-2\gamma}\right)\log_2(\delta t) + \log_2(1/\zeta).
\end{align*}
In particular, since $a < 2-2\gamma$, if $\delta t$ is sufficiently small then $L \leq \log_2(1/\zeta) - L_0$, as desired. Substituting $K$ into \eqref{eq:TdownboundK} then gives the desired result.
\end{proof}


\subsection{Proof of Proposition~\ref{prop_downcrosscount}}
Proposition~\ref{prop_downcrosscount} now follows from the results of the previous two subsections.

\begin{proof}[Proof of Proposition~\ref{prop_downcrosscount}]
    Let us prove part (a). Fix $t>0$ and let $\eps >0$. For $\delta > 0$ and $\zeta > 0$ let $n_0 = \lceil (1-2\eps)\delta t \zeta^{-1} \rceil $. Let $\bP \in \sM_{\gamma,\delta}(0)$. By \eqref{eq:downcount_stop}, \eqref{eq_UpDownDecomp} and \eqref{defTupdown}, 
    \begin{align*}
        \bP\left( N_\zeta(t) < n_0 \right) &= \bP\left(T_{\mathrm{up}}(n_0) + T_{\mathrm{down}}(n_0) > t  \right)
        \\ &\leq \bP\left(T_{\mathrm{up}}(n_0) > (1-\eps)t\right) + \bP\left(T_{\mathrm{down}}(n_1) > \eps t  \right),
    \end{align*}
    where $n_1 = \lceil \delta t \zeta^{-1} \rceil \geq n_0$. Let $a \in (0,2-2\gamma)$. Then for sufficiently small $\delta$, we have $C_7 (\delta t)^{2-2\gamma - a} < \eps t$, where $C_7$ is one of the constants from Lemma~\ref{lemma_downtimefinal}. Hence, for sufficiently small $\delta$ and $\zeta$, for any $\bP \in \sM_{\gamma,\delta}(0)$, by Lemma~\ref{lemma_uptime} and Lemma~\ref{lemma_downtimefinal}, 
     \begin{align*}
        \bP\left( N_\zeta(t) < n_0 \right) &\leq \frac{C_5}{\eps^2} \cdot \frac{\zeta}{\delta t} + C_8 (\delta t)^{a/(2-2\gamma)}.
    \end{align*}
    For sufficiently small $\delta$, the right-hand side of the above is smaller than $\eps$. This completes the proof. 

    The proof of part (b) follows from a simpler version of the arguments in this section. Let $\delta,t>0$. Using Lemma~\ref{lemma_upcrossing} and Proposition~\ref{prop_tau0_tail}, a simpler version of the arguments from Sections~\ref{s_up} and~\ref{s_down} can prove versions of Lemmas~\ref{lemma_uptime} and~\ref{lemma_downtimefinal} which state, respectively, that for every $\eps'>0$, we may take $\eps>0$ to be sufficiently small so that for any $\bP \in \sM_{\gamma,\delta}(0)$,
    \begin{equation} \label{eq:weakupdown}
        \bP(T_{\mathrm{up}}(\lceil\eps \zeta^{-1} \rceil) > t/2) < \eps'/2 \quad \text{and} \quad \bP(T_{\mathrm{down}}(\lceil\eps \zeta^{-1}\rceil)> t/2) < \eps'/2
    \end{equation}
    for all small enough $\zeta>0$. Indeed,  the above follow from a less tight version of the proofs of Lemmas~\ref{lemma_uptime} and~\ref{lemma_downtimefinal}. It is immediate from \eqref{eq:weakupdown} that $\bP(N_\zeta(t) < \lceil \eps \zeta^{-1} \rceil ) < \eps'$, for sufficiently small $\zeta, \eps >0$, which proves Proposition~\ref{prop_downcrosscount}(b).
\end{proof}

\section{Proof of main results} \label{s_proofs}
We now complete the proofs of our main result, Theorem~\ref{thm_main_compact}, using the results of the previous section and the coupling constructed in Lemma~\ref{lemma_coupling}. We also prove Theorem~\ref{thm_unbounded}. We consider a weak solution $X$ to \eqref{e_sdesystem} satisfying Assumptions~\ref{assumption1} and \ref{assumption2}. In keeping with Remark~\ref{remark_sigmaassumpt}, we can and will assume that $\sigma$ satisfies \eqref{eq_sigmaassumpstrong} in the sequel.


\begin{proof}[Proof of Theorem~\ref{thm_main_compact}]
    Suppose Assumptions~\ref{assumption1} and~\ref{assumption2} hold, in the second case also satisfying \eqref{eq_sigmaassumpstrong}. Let $X$ be a continuous $\ell^1_+$-valued solution to \eqref{e_sdesystem} such that $X_0$ is compactly supported. For $i \in \Z$ and $\delta>0$, recall the stopping times $T^i_\delta$ and $\tau^i_1$ from \eqref{def_T} and \eqref{def_taui1}. By Lemma~\ref{lemma_coupling} and Remark~\ref{remark_countablecoupling}, there exists a family of processes $\{(\bar{Y}_t(i,n))_{t \geq 0} : i \in \Z, n \in \N\}$ such that 
	\begin{equation} \label{eq_coupling_in}
		\text{For all $i \in\Z$ and $n \in \N$, }\, \bar{Y}_t(i,n) \geq Y_t(i) \, \text{ a.s.\ for all $t \in [0,T^i_{1/n} \wedge \tau^i_1]$},
	\end{equation}
	and the following holds: for each $i \in \Z$ and $n \in \N$, $\bar{Y}(i,n)$ has Doob-Meyer decomposition
	\[\bar{Y}_t(i,n) = Y_0(i) + \frac 1 n t + M_t(i,n), \quad t \geq 0,\]
	and with probability one,
	\begin{equation*} 
		\langle M(i,n) \rangle_t - \langle M(i,n) \rangle_s \geq \frac 1 2 \int_s^t \bar{Y}_u(i,n)^{2\gamma} du, \quad 0 \leq s \leq t.
	\end{equation*}
    It then follows from Definition~\ref{def_M} that (the law of) $\bar{Y}(i,n)$ is an element of $\sM_{\gamma,\frac 1 n}$.

	 Fix $t >0$. For $i \in \Z$ and $n \in \N$, we define
	 \begin{equation*}
	 	\mathcal{Z}_t(i) = \{s \in [0,t] : Y_s(i) = 0\}, \quad 	\tilde{\mathcal{Z}}_t(i,n) = \{s \in [0,t] : \tilde{Y}_s(i,n) = 0\}.
	 \end{equation*}
	 Then by \eqref{eq_coupling_in} and the fact that $Y(i)$ and $\tilde{Y}(i,n)$ are non-negative, it follows that for any $t >0$,
	 \begin{equation*}
	 	 |\mathcal{Z}_t(i)| \geq
        |\tilde{\mathcal{Z}}_t(i,n)| \,\, \text{ on } \,\left\{t \leq T^i_{1/n} \wedge \tau^i_1 \right\}.
	 \end{equation*}
	 Let $\eps > 0$. By the above and a union bound,
	 \begin{align} \label{eq_zeroset_unionbd}
	 	\bP(|\mathcal{Z}_t(i)| < (1-2\eps)t) \leq \bP(|\tilde{\mathcal{Z}}_t(i,n)| < (1-2\eps)t) + \bP(T^{1/n}_i \wedge \tau^i_1 < t).
	 \end{align}
Since $X_0$ has compact support, there exists some $R \in \Z$ such that if $i \geq R$, then $\tilde{Y}_0(i,n) = Y_0(i) = 0$, and so $\bar{Y}(i,n) \in \sM_{\gamma,\frac{1}{n}}(0)$. Hence, by Theorem~\ref{thm_Y_zeroset}, the first probability on the right-hand side of \eqref{eq_zeroset_unionbd} is smaller than $\eps$ for sufficiently large $n$. 
Let $n_1 \in \N$ be sufficiently large so that this holds. Then for every $i \geq R$,
	 	 \begin{align*}
	 	\bP(|\mathcal{Z}_t(i)| < (1-2\eps)t) < 2\eps + \bP(T_i^{1/n_1} \wedge \tau^i_1< t).
	 \end{align*}
	 Finally, by Lemma~\ref{lemma_Tbound}, we may let $i$ be large enough (depending on $n_1$) so that the second term is at most $\eps$. Hence, for any $\eps > 0$, for sufficiently large $i$ we have $\bP(|\mathcal{Z}_t(i)| < (1-2\eps)t) < 3\eps$. In particular, 
	 \begin{equation}
	 	\lim_{i \to \infty}  \bP(\mathcal{Z}_t(i) < (1-2\eps)t) = 0
	 \end{equation}
	 for every $\eps > 0$. Hence, by choosing a sequence which goes to infinity fast enough, we may conclude from Borel-Cantelli that
	 \begin{equation}
	 	\text{For every $\eps > 0$, there a.s.\ exists some $i_\eps \in \Z$ such that $|\mathcal{Z}_t(i_\eps)| \geq (1-2\eps)t$}.
	 \end{equation}
	 In particular, with probability one, $\cup_{n \in \N} \mathcal{Z}_t(i_{1/n})$ has Lebesgue measure equal to $t$. Moreover, for every $s \in \cup_{n \in \N} \mathcal{Z}_t(i_{1/n})$, it follows that $s \in \mathcal{Z}_t(i_{1/n})$ for some $n \in \N$, and hence $Y_s(i_{1/n}) = 0$, which implies that $X_s(j) = 0$ for all $j \in \Z^d$ with $j_1 \geq i_{1/n}$. This proves that with probability one, the support of $X_t$ is bounded to the right for almost all $t \in [0,T]$. Since the same argument can be applied to the left, and in both directions along the other $d-1$ coordinate axes, this implies that with probablity one, the support of $X_t$ is compact for a.e.\ $t \in [0,T]$. The proof is complete.
\end{proof}

We conclude the paper with a short proof of Theorem~\ref{thm_unbounded}.

\begin{proof}[Proof of Theorem~\ref{thm_unbounded}]
    Let $\cL = \Delta$, the discrete Laplacian on $\Z^d$, i.e.
    \[ \Delta g(i) = \frac{1}{2d} \sum_{j \sim i} (g(j) -g(i)),\]
    where $j \sim i$ if $|i - j| = 1$. We also set $f \equiv 0$, and assume that $\sigma$ satisfies Assumption~\ref{assumption1}(b). Let $X = (X_t)_{t\geq 0}$ be a continuous $\ell^1_+$-valued solution to \eqref{e_sdesystem} under these assumptions. We further assume that $X_0$ is not identically zero. 

    First, we observe that if $X_t(i) = 0$ for all $i \in \Z^d$ for some $t \geq 0$, then $X_s(i) = 0$ for all $i \in \Z^d$ for all $s \geq t$. To see this, we simply note from Lemma~\ref{lemma_bddweak} with $\phi \equiv 1$ that the total mass process is a non-negative continuous local martingale, and that zero is hence an absorbing state. In particular, if $X_t \equiv 0$ for some $t$, then $t \geq \eta$, where the extinction time $\eta$ is defined in \eqref{def_extinction}. Furthermore, since the total mass process is continuous and is positive at time zero, we may conclude that $\eta > 0$ with probability one.

    We recall the definition of $h$ from \eqref{eq_heaviside}. We note that if $Y_t(i) = 0$ for some $i \in \Z$ and $t \geq 0$, then
    \begin{equation} \label{eq_heaviside_null}
     \langle X_t , \Delta h(\cdot - \underline{i})\rangle  = \frac{1}{2d} \sum_{j \in \Z^d : j_1 = i - 1} X_t(j) \geq 0.
    \end{equation}
    Let $0 \leq a < b$ and $i \in \Z$ and suppose that $Y_t(i) = 0$ for all $t \in [a,b]$. Using Corollary~\ref{cor_Ystochastic}, it is straightforward to argue that the martingale term in \eqref{eq_Yibp} must be constant over $[a,b]$, and we conclude that 
    \begin{align*}
        \int_a^b \langle X_t , \Delta h(\cdot - \underline{i})\rangle dt = 0.
    \end{align*}
    Hence, by \eqref{eq_heaviside_null} and continuity we have
    \begin{align*}
         \sum_{j \in \Z^d : j_1 = i - 1} X_t(j) = 0 \quad \forall t \in [a,b].
    \end{align*} 
    In particular, since $Y_t(i) = 0$ for all $t \in [a,b]$, the above implies that $Y_t(i-1) = 0$ for all $t \in [a,b]$. Repeating the same argument, we conclude that $Y_t(j) = 0$ for all $t\in[a,b]$ for all $j \leq i$, which implies that $X_t \equiv 0$ for $t \in [a,b]$ a.s. In particular,
    \[ \bP(\eta > a \text{ and } Y_t(i) = 0 \, \forall \, t \in [a,b]) = 0.\]
    Hence, by a union bound,
    \begin{align}
        \bP\left( \exists \,a,b \in \Q \cap \R_+, i \in \Z : \eta > a \text{ and } \, Y_t(i) = 0 \, \forall t \in [a,b] \right) = 0.
    \end{align}
    Now fix $\omega \in \Omega$ outside of the null set above and let $0 \leq a \leq b$. Suppose that $\eta > a$, then there exists some $a' \in (a,\eta) \cap \Q$ and $b' \in (a',b) \cap \Q$, and by our choice of $\omega$ we have $Y_t(i) > 0$ for some $t \in (a',b') \subset (a,b)$. Thus, for such $\omega$, for every interval $[a,b] \subset \R_+$, $a < \eta$ implies that $Y_t(i) > 0 $ for some $t \in [a,b]$. Thus, for every $i \in \Z$, $\{t \in [0,\eta] : Y_t(i) > 0\}$ is $\bP$-a.s.\ a dense subset of $[0,\eta]$, and in particular, with probability one, $\{t \in [0,\eta] : Y_t(i) > 0\}$ is dense for all $i \in \Z$. Since $t \mapsto Y_t(i)$ is a.s.\ continuous, $\{t \in [0,\eta] : Y_t(i) > 0\}$ is a.s.\ open (with respect to the relative topology on $[0,\eta]$).
    
   Let $\mathcal{U}:= \cap_{i \in \Z} \{t \in [0,\eta] : Y_t(i) > 0\}$. By the above, with probability one,\, $\mathcal{U}$ is a countable intersection of dense open subsets of $[0,\eta]$, and hence\, $\mathcal{U}$ is a dense subset of $[0,\eta]$ by the Baire category theorem. A dense countable intersection of open sets of $[0,\eta]$ is also necessarily uncountable. For every $t \in \cU$, $Y_t(i) > 0$ for all $i \in \Z$, and hence for all $i \in \Z$ there exists $j \in \Z^d$ with $j_1 \geq i$ such that $X_t(j) > 0$. In particular, $X_t$ has unbounded support for all $t \in \cU$. This completes the proof. \end{proof}

\noindent \textbf{Acknowledgements.} The authors thank Gerónimo Uribe Bravo for some very helpful discussions while this work was in an early stage and Ed Perkins for suggesting an improved version of Theorem~\ref{thm_unbounded}. TH is supported by an enhancement award to Royal Society grant URF/R1/211179 and MO is partially supported by EPSRC research grant EP/X040089/1.

\bibliographystyle{alpha} 
\bibliography{support}

@book {Bass,
    AUTHOR = {Bass, Richard F.},
     TITLE = {Diffusions and elliptic operators},
    SERIES = {Probability and its Applications (New York)},
 PUBLISHER = {Springer-Verlag, New York},
      YEAR = {1998},
     PAGES = {xiv+232},
      ISBN = {0-387-98315-5},
   MRCLASS = {60J10 (35B99 35J99 35R60 60H07 60H10)},
  MRNUMBER = {1483890},
MRREVIEWER = {Elton\ Pei\ Hsu},
}

@article {BBC2007,
    AUTHOR = {Bass, Richard F. and Burdzy, Krzysztof and Chen, Zhen-Qing},
     TITLE = {Pathwise uniqueness for a degenerate stochastic differential
              equation},
   JOURNAL = {Ann. Probab.},
  FJOURNAL = {The Annals of Probability},
    VOLUME = {35},
      YEAR = {2007},
    NUMBER = {6},
     PAGES = {2385--2418},
      ISSN = {0091-1798,2168-894X},
   MRCLASS = {60H10 (60J60)},
  MRNUMBER = {2353392},
MRREVIEWER = {Isamu\ D\^oku},
       DOI = {10.1214/009117907000000033},
       URL = {https://doi.org/10.1214/009117907000000033},
}

@article {BG1997,
    AUTHOR = {Bertini, Lorenzo and Giacomin, Giambattista},
     TITLE = {Stochastic {B}urgers and {KPZ} equations from particle
              systems},
   JOURNAL = {Comm. Math. Phys.},
  FJOURNAL = {Communications in Mathematical Physics},
    VOLUME = {183},
      YEAR = {1997},
    NUMBER = {3},
     PAGES = {571--607},
      ISSN = {0010-3616,1432-0916},
   MRCLASS = {60K35 (60H15 82C22 82C24)},
  MRNUMBER = {1462228},
MRREVIEWER = {Ellen\ Saada},
       DOI = {10.1007/s002200050044},
       URL = {https://doi.org/10.1007/s002200050044},
}

@article {BMP2010,
    AUTHOR = {Burdzy, Krzysztof and Mueller, Carl and Perkins, Edwin},
     TITLE = {Nonuniqueness for nonnegative solutions of parabolic
              stochastic partial differential equations},
   JOURNAL = {Illinois J. Math.},
  FJOURNAL = {Illinois Journal of Mathematics},
    VOLUME = {54},
      YEAR = {2010},
    NUMBER = {4},
     PAGES = {1481--1507},
      ISSN = {0019-2082,1945-6581},
   MRCLASS = {60H15 (60G60 60H10)},
  MRNUMBER = {2981857},
MRREVIEWER = {Jie\ Xiong},
       URL = {http://projecteuclid.org/euclid.ijm/1348505538},
}

@article {C2015,
    AUTHOR = {Chen, Yu-Ting},
     TITLE = {Pathwise nonuniqueness for the {SPDE}s of some
              super-{B}rownian motions with immigration},
   JOURNAL = {Ann. Probab.},
  FJOURNAL = {The Annals of Probability},
    VOLUME = {43},
      YEAR = {2015},
    NUMBER = {6},
     PAGES = {3359--3467},
      ISSN = {0091-1798,2168-894X},
   MRCLASS = {60H15 (35A02 35R60 60J68)},
  MRNUMBER = {3433584},
MRREVIEWER = {Hui\ He},
       DOI = {10.1214/14-AOP962},
       URL = {https://doi.org/10.1214/14-AOP962},
}

@article {D1978,
    AUTHOR = {Dawson, Donald A.},
     TITLE = {Geostochastic calculus},
   JOURNAL = {Canad. J. Statist.},
  FJOURNAL = {The Canadian Journal of Statistics. La Revue Canadienne de
              Statistique},
    VOLUME = {6},
      YEAR = {1978},
    NUMBER = {2},
     PAGES = {143--168},
      ISSN = {0319-5724,1708-945X},
   MRCLASS = {60J60 (60G44 60H05 92A09)},
  MRNUMBER = {532855},
MRREVIEWER = {Yu.\ L.\ Daletski\u i},
       DOI = {10.2307/3315044},
       URL = {https://doi.org/10.2307/3315044},
}

@article {DP1998,
    AUTHOR = {Dawson, Donald A. and Perkins, Edwin},
     TITLE = {Long-time behavior and coexistence in a mutually catalytic
              branching model},
   JOURNAL = {Ann. Probab.},
  FJOURNAL = {The Annals of Probability},
    VOLUME = {26},
      YEAR = {1998},
    NUMBER = {3},
     PAGES = {1088--1138},
      ISSN = {0091-1798,2168-894X},
   MRCLASS = {60K35 (60H10 60H15 60J80)},
  MRNUMBER = {1634416},
MRREVIEWER = {Klaus\ Fleischmann},
       DOI = {10.1214/aop/1022855746},
       URL = {https://doi.org/10.1214/aop/1022855746},
}

@book {DellacherieMeyerA,
    AUTHOR = {Dellacherie, Claude and Meyer, Paul-Andr\'e},
     TITLE = {Probabilit\'es et potentiel},
    SERIES = {Publications de l'Institut de Math\'ematique de
              l'Universit\'e{} de Strasbourg},
    VOLUME = {No. XV},
      NOTE = {Chapitres I \`a{} IV,
              \'Edition enti\`erement refondue,
              Actualit\'es Scientifiques et Industrielles, No. 1372.
              [Current Scientific and Industrial Topics]},
 PUBLISHER = {Hermann, Paris},
      YEAR = {1975},
     PAGES = {x+291},
   MRCLASS = {60-02 (28-02 28A05 31C15)},
  MRNUMBER = {488194},
MRREVIEWER = {M.\ G.\ Shur},
}

@article {EP2014,
    AUTHOR = {Engelbert, Hans-J\"urgen and Peskir, Goran},
     TITLE = {Stochastic differential equations for sticky {B}rownian
              motion},
   JOURNAL = {Stochastics},
  FJOURNAL = {Stochastics. An International Journal of Probability and
              Stochastic Processes},
    VOLUME = {86},
      YEAR = {2014},
    NUMBER = {6},
     PAGES = {993--1021},
      ISSN = {1744-2508,1744-2516},
   MRCLASS = {60H10 (60H20 60J50 60J55 60J60 60J65)},
  MRNUMBER = {3271518},
MRREVIEWER = {Chang-Song\ Deng},
       DOI = {10.1080/17442508.2014.899600},
       URL = {https://doi.org/10.1080/17442508.2014.899600},
}

@article {ES1985,
    AUTHOR = {Engelbert, Hans-J\"urgen and Schmidt, Wolfgang},
     TITLE = {On solutions of one-dimensional stochastic differential
              equations without drift},
   JOURNAL = {Z. Wahrsch. Verw. Gebiete},
  FJOURNAL = {Zeitschrift f\"ur Wahrscheinlichkeitstheorie und Verwandte
              Gebiete},
    VOLUME = {68},
      YEAR = {1985},
    NUMBER = {3},
     PAGES = {287--314},
      ISSN = {0044-3719},
   MRCLASS = {60H10},
  MRNUMBER = {771468},
MRREVIEWER = {M.\ M\'etivier},
       DOI = {10.1007/BF00532642},
       URL = {https://doi.org/10.1007/BF00532642},
}

@article {EP1991,
    AUTHOR = {Evans, Steven and Perkins, Edwin},
     TITLE = {Absolute continuity results for superprocesses with some
              applications},
   JOURNAL = {Trans. Amer. Math. Soc.},
  FJOURNAL = {Transactions of the American Mathematical Society},
    VOLUME = {325},
      YEAR = {1991},
    NUMBER = {2},
     PAGES = {661--681},
      ISSN = {0002-9947,1088-6850},
   MRCLASS = {60G30 (60J80)},
  MRNUMBER = {1012522},
MRREVIEWER = {Klaus\ Fleischmann},
       DOI = {10.2307/2001643},
       URL = {https://doi.org/10.2307/2001643},
}

@article {HKY2023,
    AUTHOR = {Han, Beom-Seok and Kim, Kunwoo and Yi, Jaeyun},
     TITLE = {The compact support property for solutions to the stochastic
              partial differential equations with colored noise},
   JOURNAL = {SIAM J. Math. Anal.},
  FJOURNAL = {SIAM Journal on Mathematical Analysis},
    VOLUME = {55},
      YEAR = {2023},
    NUMBER = {6},
     PAGES = {7665--7703},
      ISSN = {0036-1410,1095-7154},
   MRCLASS = {60H15 (35D30 35R60)},
  MRNUMBER = {4665042},
       DOI = {10.1137/23M1557283},
       URL = {https://doi.org/10.1137/23M1557283},
}

@article {HKY2024,
    AUTHOR = {Han, Beom-Seok and Kim, Kunwoo and Yi, Jaeyun},
     TITLE = {On the support of solutions to nonlinear stochastic heat
              equations},
   JOURNAL = {Electron. J. Probab.},
  FJOURNAL = {Electronic Journal of Probability},
    VOLUME = {29},
      YEAR = {2024},
     PAGES = {Paper No. 201, 29},
      ISSN = {1083-6489},
   MRCLASS = {60H15 (35K10 35K55 35R60)},
  MRNUMBER = {4842851},
       DOI = {10.1214/24-ejp1261},
       URL = {https://doi.org/10.1214/24-ejp1261},
}

@article {HM2025,
    AUTHOR = {Hong, Jieliang and Mytnik, Leonid},
     TITLE = {Exceptional times for the instantaneous propagation of
              superprocess},
   JOURNAL = {Trans. Amer. Math. Soc. Ser. B},
  FJOURNAL = {Transactions of the American Mathematical Society. Series B},
    VOLUME = {12},
      YEAR = {2025},
     PAGES = {470--515},
      ISSN = {2330-0000},
   MRCLASS = {60J68 (60G17)},
  MRNUMBER = {4887195},
MRREVIEWER = {Xiaowen\ Zhou},
       DOI = {10.1090/btran/226},
       URL = {https://doi.org/10.1090/btran/226},
}

@article {H2025,
    AUTHOR = {Hughes, Thomas},
     TITLE = {The compact support property for solutions to stochastic heat
              equations with stable noise},
   JOURNAL = {Electron. J. Probab.},
  FJOURNAL = {Electronic Journal of Probability},
    VOLUME = {30},
      YEAR = {2025},
     PAGES = {Paper No. 106, 60},
      ISSN = {1083-6489},
   MRCLASS = {60H15 (60G17 60G52)},
  MRNUMBER = {4926009},
       DOI = {10.1214/25-ejp1350},
       URL = {https://doi.org/10.1214/25-ejp1350},
}

@article {I1988,
    AUTHOR = {Iscoe, Ian},
     TITLE = {On the supports of measure-valued critical branching
              {B}rownian motion},
   JOURNAL = {Ann. Probab.},
  FJOURNAL = {The Annals of Probability},
    VOLUME = {16},
      YEAR = {1988},
    NUMBER = {1},
     PAGES = {200--221},
      ISSN = {0091-1798,2168-894X},
   MRCLASS = {60G57 (34B15 60J80)},
  MRNUMBER = {920265},
MRREVIEWER = {Ralf\ Manthey},
       URL =
              {http://links.jstor.org/sici?sici=0091-1798(198801)16:1<200:OTSOMC>2.0.CO;2-Z&origin=MSN},
}

@article {KS1988,
    AUTHOR = {Konno, Norio and Shiga, Tokuzo},
     TITLE = {Stochastic partial differential equations for some
              measure-valued diffusions},
   JOURNAL = {Probab. Theory Related Fields},
  FJOURNAL = {Probability Theory and Related Fields},
    VOLUME = {79},
      YEAR = {1988},
    NUMBER = {2},
     PAGES = {201--225},
      ISSN = {0178-8051,1432-2064},
   MRCLASS = {60H15 (60J60)},
  MRNUMBER = {958288},
MRREVIEWER = {Ali\ S\"uleyman\ \"Ust\"unel},
       DOI = {10.1007/BF00320919},
       URL = {https://doi.org/10.1007/BF00320919},
}

@article {K1997,
    AUTHOR = {Krylov, Nicolai V.},
     TITLE = {On a result of {C}. {M}ueller and {E}. {P}erkins},
   JOURNAL = {Probab. Theory Related Fields},
  FJOURNAL = {Probability Theory and Related Fields},
    VOLUME = {108},
      YEAR = {1997},
    NUMBER = {4},
     PAGES = {543--557},
      ISSN = {0178-8051,1432-2064},
   MRCLASS = {60H15},
  MRNUMBER = {1465641},
MRREVIEWER = {Ya.\ \=I.\ B\=\i lopol\cprime s\cprime ka},
       DOI = {10.1007/s004400050120},
       URL = {https://doi.org/10.1007/s004400050120},
}

@article {MF2014,
    AUTHOR = {Moreno Flores, Gregorio R.},
     TITLE = {On the (strict) positivity of solutions of the stochastic heat
              equation},
   JOURNAL = {Ann. Probab.},
  FJOURNAL = {The Annals of Probability},
    VOLUME = {42},
      YEAR = {2014},
    NUMBER = {4},
     PAGES = {1635--1643},
      ISSN = {0091-1798,2168-894X},
   MRCLASS = {60H15 (60K35 60K37)},
  MRNUMBER = {3262487},
MRREVIEWER = {Xinghua\ Zheng},
       DOI = {10.1214/14-AOP911},
       URL = {https://doi.org/10.1214/14-AOP911},
}

@article {M1990,
    AUTHOR = {Mueller, Carl},
     TITLE = {On the support of solutions to the heat equation with noise},
   JOURNAL = {Stochastics Stochastics Rep.},
  FJOURNAL = {Stochastics and Stochastics Reports},
    VOLUME = {37},
      YEAR = {1991},
    NUMBER = {4},
     PAGES = {225--245},
      ISSN = {1045-1129},
   MRCLASS = {60H15 (35R60)},
  MRNUMBER = {1149348},
MRREVIEWER = {A.\ Badrikian},
       DOI = {10.1080/17442509108833738},
       URL = {https://doi.org/10.1080/17442509108833738},
}

@article {M2000,
    AUTHOR = {Mueller, Carl},
     TITLE = {The critical parameter for the heat equation with a noise term
              to blow up in finite time},
   JOURNAL = {Ann. Probab.},
  FJOURNAL = {The Annals of Probability},
    VOLUME = {28},
      YEAR = {2000},
    NUMBER = {4},
     PAGES = {1735--1746},
      ISSN = {0091-1798,2168-894X},
   MRCLASS = {60H15 (35B40 35K05 35R60 60H40)},
  MRNUMBER = {1813841},
MRREVIEWER = {Richard\ B.\ Sowers},
       DOI = {10.1214/aop/1019160505},
       URL = {https://doi.org/10.1214/aop/1019160505},
}

@article {MMP2014,
    AUTHOR = {Mueller, Carl and Mytnik, Leonid and Perkins, Edwin},
     TITLE = {Nonuniqueness for a parabolic {SPDE} with
              {$\frac{3}{4}-\epsilon$}-{H}\"older diffusion coefficients},
   JOURNAL = {Ann. Probab.},
  FJOURNAL = {The Annals of Probability},
    VOLUME = {42},
      YEAR = {2014},
    NUMBER = {5},
     PAGES = {2032--2112},
      ISSN = {0091-1798,2168-894X},
   MRCLASS = {60H15 (35A02 35K15 35K91 35R60)},
  MRNUMBER = {3262498},
MRREVIEWER = {Stefano\ Bonaccorsi},
       DOI = {10.1214/13-AOP870},
       URL = {https://doi.org/10.1214/13-AOP870},
}

@article {MMR2021,
    AUTHOR = {Mueller, Carl and Mytnik, Leonid and Ryzhik, Lenya},
     TITLE = {The speed of a random front for stochastic reaction-diffusion
              equations with strong noise},
   JOURNAL = {Comm. Math. Phys.},
  FJOURNAL = {Communications in Mathematical Physics},
    VOLUME = {384},
      YEAR = {2021},
    NUMBER = {2},
     PAGES = {699--732},
      ISSN = {0010-3616,1432-0916},
   MRCLASS = {35K57 (60H15 92D10)},
  MRNUMBER = {4259374},
       DOI = {10.1007/s00220-021-04084-0},
       URL = {https://doi.org/10.1007/s00220-021-04084-0},
}

@article {MP1992,
    AUTHOR = {Mueller, Carl and Perkins, Edwin},
     TITLE = {The compact support property for solutions to the heat
              equation with noise},
   JOURNAL = {Probab. Theory Related Fields},
  FJOURNAL = {Probability Theory and Related Fields},
    VOLUME = {93},
      YEAR = {1992},
    NUMBER = {3},
     PAGES = {325--358},
      ISSN = {0178-8051,1432-2064},
   MRCLASS = {60H15},
  MRNUMBER = {1180704},
MRREVIEWER = {Ya.\ \=I.\ B\=\i lopol\cprime s\cprime ka},
       DOI = {10.1007/BF01193055},
       URL = {https://doi.org/10.1007/BF01193055},
}

@article {M1998,
    AUTHOR = {Mytnik, Leonid},
     TITLE = {Weak uniqueness for the heat equation with noise},
   JOURNAL = {Ann. Probab.},
  FJOURNAL = {The Annals of Probability},
    VOLUME = {26},
      YEAR = {1998},
    NUMBER = {3},
     PAGES = {968--984},
      ISSN = {0091-1798,2168-894X},
   MRCLASS = {60H15 (35K05 35R60)},
  MRNUMBER = {1634410},
MRREVIEWER = {Fred\ Espen\ Benth},
       DOI = {10.1214/aop/1022855740},
       URL = {https://doi.org/10.1214/aop/1022855740},
}

@article {P1990,
    AUTHOR = {Perkins, Edwin},
     TITLE = {Polar sets and multiple points for super-{B}rownian motion},
   JOURNAL = {Ann. Probab.},
  FJOURNAL = {The Annals of Probability},
    VOLUME = {18},
      YEAR = {1990},
    NUMBER = {2},
     PAGES = {453--491},
      ISSN = {0091-1798,2168-894X},
   MRCLASS = {60G17 (31C15 60G57 60J45)},
  MRNUMBER = {1055416},
MRREVIEWER = {Steven\ N.\ Evans},
       URL =
              {http://links.jstor.org/sici?sici=0091-1798(199004)18:2<453:PSAMPF>2.0.CO;2-3&origin=MSN},
}

@incollection {Perkins,
    AUTHOR = {Perkins, Edwin},
     TITLE = {Dawson-{W}atanabe superprocesses and measure-valued
              diffusions},
 BOOKTITLE = {Lectures on probability theory and statistics
              ({S}aint-{F}lour, 1999)},
    SERIES = {Lecture Notes in Math.},
    VOLUME = {1781},
     PAGES = {125--324},
 PUBLISHER = {Springer, Berlin},
      YEAR = {2002},
      ISBN = {3-540-43736-3},
   MRCLASS = {60G57 (60H15 60J80 60K35)},
  MRNUMBER = {1915445},
MRREVIEWER = {Anton\ Wakolbinger},
}

@article {R1989,
    AUTHOR = {Reimers, Mark},
     TITLE = {One-dimensional stochastic partial differential equations and
              the branching measure diffusion},
   JOURNAL = {Probab. Theory Related Fields},
  FJOURNAL = {Probability Theory and Related Fields},
    VOLUME = {81},
      YEAR = {1989},
    NUMBER = {3},
     PAGES = {319--340},
      ISSN = {0178-8051,1432-2064},
   MRCLASS = {60H15 (03H05 35R60)},
  MRNUMBER = {983088},
       DOI = {10.1007/BF00340057},
       URL = {https://doi.org/10.1007/BF00340057},
}

@book {RevuzYor,
    AUTHOR = {Revuz, Daniel and Yor, Marc},
     TITLE = {Continuous martingales and {B}rownian motion},
    SERIES = {Grundlehren der mathematischen Wissenschaften [Fundamental
              Principles of Mathematical Sciences]},
    VOLUME = {293},
   EDITION = {Third},
 PUBLISHER = {Springer-Verlag, Berlin},
      YEAR = {1999},
     PAGES = {xiv+602},
      ISBN = {3-540-64325-7},
   MRCLASS = {60G44 (60G07 60H05)},
  MRNUMBER = {1725357},
       DOI = {10.1007/978-3-662-06400-9},
       URL = {https://doi.org/10.1007/978-3-662-06400-9},
}

@book {RogersWilliams2,
    AUTHOR = {Rogers, L. C. G. and Williams, David},
     TITLE = {Diffusions, {M}arkov processes, and martingales. {V}ol. 2},
    SERIES = {Cambridge Mathematical Library},
      NOTE = {It\^o{} calculus,
              Reprint of the second (1994) edition},
 PUBLISHER = {Cambridge University Press, Cambridge},
      YEAR = {2000},
     PAGES = {xiv+480},
      ISBN = {0-521-77593-0},
   MRCLASS = {60J60 (60G07 60H05 60J25)},
  MRNUMBER = {1780932},
       DOI = {10.1017/CBO9781107590120},
       URL = {https://doi.org/10.1017/CBO9781107590120},
}

@article {S1994,
    AUTHOR = {Shiga, Tokuzo},
     TITLE = {Two contrasting properties of solutions for one-dimensional
              stochastic partial differential equations},
   JOURNAL = {Canad. J. Math.},
  FJOURNAL = {Canadian Journal of Mathematics. Journal Canadien de
              Math\'ematiques},
    VOLUME = {46},
      YEAR = {1994},
    NUMBER = {2},
     PAGES = {415--437},
      ISSN = {0008-414X,1496-4279},
   MRCLASS = {60H15 (35R60)},
  MRNUMBER = {1271224},
MRREVIEWER = {Ralf\ Manthey},
       DOI = {10.4153/CJM-1994-022-8},
       URL = {https://doi.org/10.4153/CJM-1994-022-8},
}

@article {SS1980,
    AUTHOR = {Shiga, Tokuzo and Shimizu, Akinobu},
     TITLE = {Infinite-dimensional stochastic differential equations and
              their applications},
   JOURNAL = {J. Math. Kyoto Univ.},
  FJOURNAL = {Journal of Mathematics of Kyoto University},
    VOLUME = {20},
      YEAR = {1980},
    NUMBER = {3},
     PAGES = {395--416},
      ISSN = {0023-608X},
   MRCLASS = {60H10 (60J60 60K35 82A05 92A10)},
  MRNUMBER = {591802},
MRREVIEWER = {Yu.\ L.\ Daletski\u i},
       DOI = {10.1215/kjm/1250522207},
       URL = {https://doi.org/10.1215/kjm/1250522207},
}

\end{document}